\newtheorem{thm}{Theorem}[section]
\newtheorem{lem}{Lemma}[section]
\newtheorem{prop}{Proposition}[section]
\theoremstyle{definition}
\newtheorem{Def}{Definition}[section]
\newtheorem*{rmk*}{Remark}
\newtheorem{rmk}{Remark}[section]
    \renewcommand*{\section}{\@startsection{section}{1}{\z@}%
    {21pt}{12pt}{\reset@font\normalsize\bfseries}}
    \renewcommand*{\subsection}{\@startsection{subsection}{2}{\z@}%
    {15pt}{6pt}{\reset@font\normalsize\mdseries\itshape}}
\def\@listi{\leftmargin\leftmargini
  \topsep=.5\baselineskip 
  \partopsep=0pt \parsep=0pt \itemsep=0pt}
\let\@listI\@listi
\def\@listii{\leftmargin\leftmarginii
  \labelwidth\leftmarginii \advance\labelwidth-\labelsep
  \topsep=0pt \partopsep=0pt \parsep=0pt \itemsep=0pt}
\def\@listiii{\leftmargin\leftmarginiii
  \labelwidth\leftmarginiii \advance\labelwidth-\labelsep
  \topsep=0pt \partopsep=0pt \parsep=0pt \itemsep=0pt}
\def\@listiv{\leftmargin\leftmarginiv
  \labelwidth\leftmarginiv \advance\labelwidth-\labelsep
  \topsep=0pt \partopsep=0pt \parsep=0pt \itemsep=0pt}
\title{Limit theorems for the pre-averaged Hayashi-Yoshida estimator with random sampling}
\author{Yuta Koike\thanks{University of Tokyo, Graduate School of Mathematical Sciences, 3-8-1 Komaba, Meguro-ku, Tokyo 153, Japan, Email: kyuta@ms.u-tokyo.ac.jp}}
\begin{document}

\maketitle 

\begin{abstract}
We will focus on estimating the integrated covariance of two diffusion processes observed in a nonsynchronous manner. The observation data is contaminated by some noise, which is possibly correlated with the returns of the diffusion processes, while the sampling times also possibly depend on the observed processes. 
In a high-frequency setting, we consider a modified version of the pre-averaged Hayashi-Yoshida estimator, and we show that such a kind of estimators has the consistency and the asymptotic mixed normality, and attains the optimal rate of convergence. \vspace{3mm}

\noindent \textit{Keywords}: Endogenous noise; Hayashi-Yoshida estimator; Integrated covariance; Market microstructure noise; Nonsynchronous observations; Pre-averaging; Stable convergence; Strong predictability
\end{abstract}

\if0
\begin{keyword}
Endogenous noise; Hayashi-Yoshida estimator; Integrated covariance; Market microstructure noise; Nonsynchronous observations; Pre-averaging; Stable convergence; Strong predictability
\end{keyword}
\fi


\section{Introduction}\label{intro}

In financial econometrics, measuring the covariation of two assets is the central problem because it serves as a basis for many areas of finance, such as risk management, portfolio allocation and hedging strategies. In recent years there has been a considerable development of the statistical approaches to this problem using high frequency data. Such approaches were pioneered by \citet{AB1998} and \citet{BNS2002}, and their methods are based on the semimartingale theory. 
In fact, no-arbitrage based characterizations of asset prices suggest that price processes must follow a semimartingale (see \cite{DS1994} for instance). 
Recently, however, it has become common recognition that at ultra-high frequencies the financial data is contaminated by market microstructure noise such as rounding errors, bid-ask bounds and misprints. 
Motivated by this, the statistical inference for semimartingales observed at a high frequency with additive observation noise has become an active research area during the past decade.

On the other hand, since in this paper we are interested in the statistical inference for two assets observed at a high frequency, we face another important problem. That is, we may observe the data in a nonsynchronous manner. The classical theory of stochastic calculus suggests that the so-called realized covariance can be used for measuring the covariation of two assets if the sampling is synchronous. Therefore, it is a naive idea that first we fix a sampling frequency (e.g. per five minutes) and generate new data sampled at this fixed grid by the previous-tick interpolation scheme and then we compute the realized covariance from the synchronized data. However, \citet{HY2005} shows that this method suffers from a serious bias known as the \textit{Epps effect} described in \cite{Epps1979}, so we need a different approach to deal with this problem. 
\cite{HY2005} proposed the so-called \textit{Hayashi-Yoshida estimator}, which is identical with the realized covariance in the synchronous case and a consistent estimator for the quadratic covariation of two discretely observed continuous semimartingales even in the nonsynchronous case. 
The asymptotic theory of the Hayahsi-Yoshida estimator has further been developed in \cite{HK2008}, \cite{HY2008,HY2011} and \cite{DY2011}. Another important theoretical approach to nonsynchronicity, a Fourier analytic approach, has been developed in \citet{MM2002,MM2009} and \citet{CG2011}; besides \citet{OY2012} have recently developed the quasi-likelihood analysis of nonsynchronously observed diffusions in a parametric setting.

In this paper we consider two diffusion processes observed in a nonsynchronous manner as well as contaminated by microstructure noise. Our aim is to estimate the integrated covariance of the diffusion processes in a high-frequency setting by coping with both of the observation noise and the nonsynchronous sampling simultaneously. Recently, various authors proposed hybrid approaches combining a method to de-noise the data with another method to deal with the nonsynchronicity in order to attack this problem. One direction in such approaches is that we first use the \textit{refresh sampling method} for synchronizing the data and then construct a noise-robust estimator. This method was first applied in \citet{BNHLS2011} in which the \textit{realized kernel method} proposed in \cite{BNHLS2008} was used for de-noising, and further developed by \cite{AFX2010}, \cite{CKP2010}, \cite{Ikeda2010}, \cite{Varneskov2011b} and \cite{Zhang2011} with using other de-noising methods. Another direction is using a Hayashi-Yoshida type approach to deal with the nonsynchronicity. 
\citet{B2011a} proposed to synchronize the data by applying a Hayashi-Yoshida type synchronization called the \textit{pseudo-aggregation algorithm} first. In a second step, a multiscale type estimator like in \cite{Z2006} is constructed from this synchronized data. 
The obtained estimator is called the \textit{generalized multiscale estimator}.
\citet{CKP2010} proposed to de-noise the data by applying the \textit{pre-averaging method} introduced in \cite{PV2009} (and further studied in \cite{JLMPV2009}) first. After that, they construct a Hayashi-Yoshida type estimator called the \textit{pre-averaged Hayashi-Yoshida estimator} from the pre-averaged data. 
On the other hand, recently \citet{CPA2012} proposed a new estimator which is not the hybrid one.

Our estimation approach is based on the pre-averaged Hayashi-Yoshida estimator in the above, but we slightly modify this estimator for a technical reason. In \citet{CPV2011} the associated central limit theorem for that estimator has been shown, but it is restricted to the case when observation times are deterministic (or random but independent of the observed processes) and some of important sampling schemes in practice, like the Poisson sampling schemes, are excluded. In fact, the asymptotic variance given in their theorem has a quite complex form which depends on the special forms of the sampling times considered in that paper, so that the author guesses one cannot expect to extend this result to more general sampling involving the Poisson sampling schemes. For this reason, we first synchronize the sampling times partly. After that, we construct the pre-averaged Hayashi-Yoshida estimator from this new data. Then, we can compute the asymptotic variance due to Lemma \ref{key} in Section \ref{main}. This procedure is done in the same spirit as that discussed in Section 6.3 of \cite{Bibinger2012}.

In addition to the above problems, we also consider two kinds of endogeneity; one is the dependency between the microstructure noise and the diffusion processes, and the other is the dependency between the sampling times and the observed processes. The first one is motivated by an empirical analysis of \citet{HL2006} as well as some microstructure noise modeling in microeconomics such as \cite{GM1985}. Especially we will involve an asymptotically non-degenerate term which is correlated with the returns of the diffusion processes in the model of microstructure noise since \cite{HL2006} indicates the presence of such a structure. On the other hand, the second one is motivated by the recent studies on this topics in the absence of microstructure noise. See \cite{Fu2010b}, \cite{HJY2011}, \cite{HY2011}, \cite{LMRZZ2012} and \cite{PY2008} for details. \citet{RR2012} also considers these two types of endogeneity in a framework different from the model with additive observation noise, which they call the model with \textit{uncertainty zones}. In their framework, however, the observation errors are asymptotically degenerate. In this paper, we will show a central limit theorem for the estimation error of the proposed estimator in the situation explained in the above (see Theorem \ref{mainthm}). 

Usually a certain blocking technique is used in the proofs of the central limit theorems for pre-averaging estimators (see \cite{JLMPV2009}, \cite{PV2009SPA}, \cite{JPV2010} and \cite{CPV2011}). In this paper, however, we do not rely on such a technique but a technique used in \cite{HY2011} for the proof of the central limit theorem for the Hayashi-Yoshida estimator. This is based on Lemma \ref{lembasic} which tells us that in the first order the estimation error process of our estimator is asymptotically equivalent to the process $\mathbf{M}^n$ defined in Section \ref{secstable}, which has a structure similar to that of the estimation error process of the Hayashi-Yoshida estimator. This enables us to apply arguments that mimic those in \cite{HY2011}. The only thing different from \cite{HY2011} is the computation of the asymptotic variance process, but we can pass through this problem due to the modification explained in the above.

Lemma \ref{lembasic} has another important implication for the asymptotic theory of our estimator. That is, we can deduce a law of large number for our estimator, which can be regarded as a counterpart to Theorem 2.3 in \cite{HK2008}. This will be presented in Section \ref{consistency}.

The organization of this paper is the following. In Section \ref{setting} we introduce the mathematical model and explain the construction of our estimator. In Section \ref{main} the main result in this paper is stated. Section \ref{secstable} provides a brief sketch of the proof of the main result, while in Section \ref{topics} we deal with some topics related to statistical application of our estimator. Most of the proofs will be put in Section \ref{prooflembasic}-\ref{proofHKthm2.3}.


\section{The setting}\label{setting}


We start by introducing an appropriate stochastic basis on which our observation data is defined. Let $\mathcal{B}^{(0)}=(\Omega^{(0)},\mathcal{F}^{(0)},\mathbf{F}^{(0)}=(\mathcal{F}^{(0)}_t)_{t\in\mathbb{R}_+} ,P^{(0)})$ be a stochastic basis. For any $t\in\mathbb{R}_+$ we have a transition probability $Q_t(\omega^{(0)},\mathrm{d}z)$ from $(\Omega^{(0)},\mathcal{F}^{(0)}_t)$ into $\mathbb{R}^2$, which satisfies
\begin{equation}\label{centered}
\int z Q_t(\omega^{(0)},\mathrm{d}z)=0.
\end{equation}
We endow the space $\Omega^{(1)}=(\mathbb{R}^2)^{[0,\infty)}$ with the product Borel $\sigma$-field $\mathcal{F}^{(1)}$ and with the probability $Q(\omega^{(0)},\mathrm{d}\omega^{(1)})$ which is the product $\otimes_{t\in\mathbb{R}_+}Q_t(\omega^{(0)},\cdot)$. We also call $(\epsilon_t)_{t\in\mathbb{R}_+}$ the ``canonical process'' on $(\Omega^{(1)},\mathcal{F}^{(1)})$ and the filtaration $\mathcal{F}^{(1)}_t=\sigma(\epsilon_s;s\leq t)$. Then we consider the stochastic basis $\mathcal{B}=(\Omega,\mathcal{F},\mathbf{F}=(\mathcal{F}_t)_{t\in\mathbb{R}_+} ,P)$ defined as follows:
\begin{gather*}
\Omega=\Omega^{(0)}\times\Omega^{(1)},\qquad
\mathcal{F}=\mathcal{F}^{(0)}\otimes\mathcal{F}^{(1)},\qquad
\mathcal{F}_t=\cap_{s>t}\mathcal{F}^{(0)}_s\otimes\mathcal{F}^{(1)}_s,\\
P(\mathrm{d}\omega^{(0)},\mathrm{d}\omega^{(1)})=P^{(0)}(\mathrm{d}\omega^{(0)})Q(\omega^{(0)},\mathrm{d}\omega^{(1)}).
\end{gather*}
Any variable or process which is defined on either $\Omega^{(0)}$ or $\Omega^{(1)}$ can be considered in the usual way as a variable or a process on $\Omega$.

Next we introduce our observation data. Let $X$ and $Y$ be two continuous semimartingales on $\mathcal{B}^{(0)}$. Also, we have two sequences of $\mathbf{F}^{(0)}$-stopping times $(S^i)_{i\in\mathbb{Z}_+}$ and $(T^j)_{j\in\mathbb{Z}_+}$ that are increasing a.s., 
\begin{equation}\label{increace}
S^i\uparrow\infty\qquad \textrm{and}\qquad T^j\uparrow\infty.
\end{equation}
As a matter of convenience we set $S^{-1}=T^{-1}=0$. These stopping times implicitly depend on a parameter $n\in\mathbb{N}$, which represents the frequency of the observations. Denote by $(b_n)$ a sequence of positive numbers tending to 0 as $n\to\infty$. Let $\xi'$ be a constant satisfying $0<\xi'<1$. In this paper, we will always assume that
\begin{equation}\label{A4}
r_n(t):=\sup_{i\in\mathbb{Z}_+}(S^i\wedge t-S^{i-1}\wedge t)\vee\sup_{j\in\mathbb{Z}_+}(T^j\wedge t-T^{j-1}\wedge t)=o_p(b_n^{\xi'})
\end{equation}
as $n\to\infty$ for any $t\in\mathbb{R}_+$.

The processes $X$ and $Y$ are observed at the sampling times $(S^i)$ and $(T^j)$ with observation errors $(U^X_{S^i})_{i\in\mathbb{Z}_+}$ and $(U^Y_{T^j})_{j\in\mathbb{Z}_+}$ respectively. We assume that the observation errors have the following representations:
\begin{equation*}
U^X_{S^i}=b_n^{-1/2}(Z^X_{S^i}-Z^X_{S^{i-1}})+\epsilon^X_{S^i},\qquad
U^Y_{T^j}=b_n^{-1/2}(Z^Y_{T^j}-Z^Y_{T^{j-1}})+\epsilon^Y_{T^j}.
\end{equation*}
Here, $\epsilon_t=(\epsilon^X_t,\epsilon^Y_t)$ for each $t$, while $Z^X$ and $Z^Y$ are two continuous semimartingales on $\mathcal{B}^{(0)}$. We can take $Z^X=\phi^X X$ and $Z^Y=\phi^Y Y$ for some constants $\phi^X$ and $\phi^Y$, so that the observation errors can be correlated with the returns of the latent processes $X$ and $Y$. For this reason we will refer to $(b_n^{-1/2}(Z^X_{S^i}-Z^X_{S^{i-1}}))_{i\in\mathbb{Z}_+}$ and $(b_n^{-1/2}(Z^Y_{T^j}-Z^Y_{T^{j-1}}))_{j\in\mathbb{Z}_+}$ as the endogenous noise. The factor $b_n^{-1/2}$ is necessary for the endogenous noise not to degenerate asymptotically. Such a kind of noise appears in \cite{BNHLS2011}, \cite{ES2007}, \cite{HL2006}, \cite{KL2008}, \cite{KS2011} and \cite{NV2012}. After all, we have the observation data $\mathsf{X}=(\mathsf{X}_{S^i})_{i\in\mathbb{Z}_+}$ and $\mathsf{Y}=(\mathsf{Y}_{T^j})_{j\in\mathbb{Z}_+}$ of the form
\begin{equation*}
\mathsf{X}_{S^i}=X_{S^i}+U^X_{S^i},\qquad\mathsf{Y}_{T^j}=Y_{T^j}+U^Y_{T^j}.
\end{equation*}


Now we explain the construction of our estimator. First we introduce some notation. We choose a sequence $k_n$ of integers and a number $\theta\in(0,\infty)$ satisfying
\begin{equation}\label{window}
k_n=\theta b_n^{-1/2}+o(b_n^{-1/4})
\end{equation}
(for example $k_n=\lceil\theta b_n^{-1/2}\rceil$). We also choose a continuous function $g:[0,1]\rightarrow\mathbb{R}$ which is piecewise $C^1$ with a piecewise Lipschitz derivative $g'$ and satisfies
\begin{equation}\label{weight}
g(0)=g(1)=0,\qquad \psi_{HY}:=\int_0^1 g(x)\mathrm{d}x\neq 0
\end{equation}
(for example $g(x)=x\wedge(1-x)$). We associate the random intervals $I^i=[S^{i-1},S^i)$ and $J^j=[T^{j-1},T^j)$ with the sampling scheme $(S^i)$ and $(T^j)$ and refer to $\mathcal{I}=(I^i)_{i\in\mathbb{N}}$ and $\mathcal{J}=(J^j)_{j\in\mathbb{N}}$ as the sampling designs for $X$ and $Y$. We introduce the \textit{pre-averaging observation data} of $X$ and $Y$ based on the sampling designs $\mathcal{I}$ and $\mathcal{J}$ respectively as follows:
\begin{align*}
\overline{\mathsf{X}}(\mathcal{I})^i=\sum_{p=1}^{k_n-1}g\left (\frac{p}{k_n}\right)\left(\mathsf{X}_{S^{i+p}}-\mathsf{X}_{S^{i+p-1}}\right),\qquad
\overline{\mathsf{Y}}(\mathcal{J})^j=\sum_{q=1}^{k_n-1}g\left (\frac{q}{k_n}\right)\left(\mathsf{Y}_{T^{j+q}}-\mathsf{Y}_{T^{j+q-1}}\right),\qquad
i,j=0,1,\dots.
\end{align*}

The following quantity was introduced in Christensen et al. \cite{CKP2010} :
\begin{Def}[Pre-averaged Hayashi-Yoshida estimator]\label{Defphy}
The \textit{pre-averaged Hayashi-Yoshida estimator}, or \textit{pre-averaged HY estimator} of $\mathsf{X}$ and $\mathsf{Y}$ associated with sampling designs $\mathcal{I}$ and $\mathcal{J}$ is the process
\begin{equation*}
PHY(\mathsf{X},\mathsf{Y};\mathcal{I},\mathcal{J})^n_t
=\frac{1}{(\psi_{HY}k_n)^2}\sum_{\begin{subarray}{c}
i,j=0\\
S^{i+k_n}\vee T^{j+k_n}\leq t
\end{subarray}}^{\infty}\overline{\mathsf{X}}(\mathcal{I})^i\overline{\mathsf{Y}}(\mathcal{J})^j 1_{\{[S^i,S^{i+k_n})\cap[T^j,T^{j+k_n})\neq\emptyset\}},\qquad t\in\mathbb{R}_+.
\end{equation*}
\end{Def}

\begin{rmk*}
In order to improve the performance of the above estimator in finite samples, it will be efficient to replace the quantity $\psi_{HY}$ with $\frac{1}{k_n}\sum_{p=1}^{k_n-1}g(p/k_n)$ in the above definition. Such a kind of adjustments often appears in the literature on pre-averaging estimators.
\end{rmk*}


As mentioned in Section \ref{intro}, we modify the above pre-averaged HY estimator by applying an interpolation method similar to the refresh sampling method for the technical reason. The following notion was introduced to this area in \cite{BNHLS2011}:
\begin{Def}[Refresh time]
The first refresh time of sampling designs $\mathcal{I}$ and $\mathcal{J}$ is defined as $R^0=S^0\vee T^0$, and then subsequent refresh times as
\begin{align*}
R^k:=\min\{S^i|S^i>R^{k-1}\}\vee\min\{T^j|T^j>R^{k-1}\},\qquad k=1,2,\dots.
\end{align*}
\end{Def}

We introduce new sampling scheme by a kind of the next-tick interpolations to the refresh times. That is, we define $\widehat{S}^0:=S^0$, $\widehat{T}^0:=T^0$, and
\begin{align*}
\widehat{S}^k:=\min\{S^i|S^i>R^{k-1}\},\quad\widehat{T}^k:=\min\{T^j|T^j>R^{k-1}\},\qquad k=1,2,\dots.
\end{align*}
Then, we create new sampling designs as follows:
\begin{align*}
\widehat{I}^k:=[\widehat{S}^{k-1},\widehat{S}^k),\qquad\widehat{J}^k:=[\widehat{T}^{k-1},\widehat{T}^k),\qquad
\widehat{\mathcal{I}}:=(\widehat{I}^i)_{i\in\mathbb{N}},\qquad\widehat{\mathcal{J}}:=(\widehat{J}^j)_{j\in\mathbb{N}}.
\end{align*}
For the sampling designs $\widehat{\mathcal{I}}$ and $\widehat{\mathcal{J}}$ obtained in such a manner, we will consider the pre-averaged Hayashi-Yoshida estimator $\widehat{PHY}(\mathsf{X},\mathsf{Y})^n:=PHY(\mathsf{X},\mathsf{Y};\widehat{\mathcal{I}},\widehat{\mathcal{J}})^n$.

One of the advantage of working with the refresh time is described by the following proposition:
\begin{prop}\label{advantage}
The following statements are true.
\begin{enumerate}[\normalfont (a)]
\item $\widehat{S}^k\vee\widehat{T}^k=R^k$ for every $k$.
\item $(\widehat{S}^i<\widehat{T}^j)\Rightarrow(i\leq j)$ and $(\widehat{S}^i>\widehat{T}^j)\Rightarrow(i\geq j)$ for every $i,j$.
\end{enumerate}
\end{prop}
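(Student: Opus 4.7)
The plan is to read off part (a) directly from the definitions and to deduce part (b) from (a) together with the strict monotonicity that is built into the construction of the refresh times.

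For (a), I would simply unwind the definitions. When $k=0$ we have $\widehat{S}^0\vee\widehat{T}^0=S^0\vee T^0=R^0$ by definition. For $k\geq 1$ the identity
\[
\widehat{S}^k\vee\widehat{T}^k=\min\{S^i:S^i>R^{k-1}\}\vee\min\{T^j:T^j>R^{k-1}\}=R^k
\]
is literally the definition of $R^k$. The only thing to check is that the minima exist, which is guaranteed by the almost-sure divergence $S^i\uparrow\infty$ and $T^j\uparrow\infty$ of (\ref{increace}).

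For (b), the key observation I would extract from (a) is the one-step interlacing
\[
\widehat{S}^{k+1}>R^k\geq \widehat{T}^k,\qquad \widehat{T}^{k+1}>R^k\geq \widehat{S}^k,\qquad k\geq 0,
\]
where the strict inequalities come from the fact that $\widehat{S}^{k+1}$ and $\widehat{T}^{k+1}$ are defined as minima over values strictly exceeding $R^k$, and the weak inequalities come from (a). In particular both sequences $(\widehat{S}^k)$ and $(\widehat{T}^k)$ are strictly increasing in $k$.

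The two implications in (b) then follow by contraposition. Suppose $i>j$, i.e.\ $i\geq j+1$. Monotonicity of $(\widehat{S}^k)$ and the first interlacing inequality above give $\widehat{S}^i\geq\widehat{S}^{j+1}>\widehat{T}^j$, so $\widehat{S}^i<\widehat{T}^j$ forces $i\leq j$; the second implication is obtained by exchanging the roles of $S$ and $T$. No step looks like a genuine obstacle here — the whole content of the proposition is essentially bookkeeping once one notes that $\widehat{S}^{k+1}$ and $\widehat{T}^{k+1}$ both lie strictly beyond $R^k=\widehat{S}^k\vee\widehat{T}^k$.
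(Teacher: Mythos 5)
Your argument is correct and matches the paper's proof in substance: part (a) is read off from the definition of the refresh times, and part (b) hinges on exactly the same interlacing inequality $\widehat{T}^j\leq R^j<\widehat{S}^{j+1}$ together with the strict monotonicity of $(\widehat{S}^k)$, the only difference being that you phrase (b) by contraposition while the paper argues directly.
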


\begin{proof}
(a) Obvious.

\noindent (b) Since $\widehat{T}^j\leq R^j<\widehat{S}^{j+1}$, $(\widehat{S}^i<\widehat{T}^j)$ implies $\widehat{S}^{i}<\widehat{S}^{j+1}$, hence $i\leq j$. Consequently, we obtain the former statement. By symmetry we also obtain the latter statement.
\end{proof}


\section{Main results}\label{main}
We start with introducing some notation and conditions in order to state our main result. We write the canonical decompositions of $X$, $Y$, $Z^X$ and $Z^Y$ as follows:
\begin{equation}\label{CSM}
X=A^X+M^X,\qquad Y=A^Y+M^Y,\qquad Z^X=\underline{A}^X+\underline{M}^X,\qquad Z^Y=\underline{A}^Y+\underline{M}^Y.
\end{equation}
Here, $A^X$, $A^Y$, $\underline{A}^X$ and $\underline{A}^Y$ are continuous $\mathbf{F}^{(0)}$-adapted processes with locally finite variations, while $M^X$, $M^Y$, $\underline{M}^X$ and $\underline{M}^Y$ are continuous $\mathbf{F}^{(0)}$-local martingales. For each $i,j\in\mathbb{Z}_+$, let 
\begin{align*}
\bar{I}^i=[\widehat{S}^i,\widehat{S}^{i+k_n}),\qquad\bar{J}^j=[\widehat{T}^j,\widehat{T}^{j+k_n}),\qquad
\bar{K}^{ij}=1_{\{\bar{I}^i\cap\bar{J}^j\neq\emptyset\}}.
\end{align*}
Since $(\bar{I}^i\cap\bar{J}^j\neq\emptyset)\Rightarrow(|i-j|\leq k_n)$ by Proposition \ref{advantage}(b), we have
\begin{equation}\label{sumbarK}
\sum_{j=0}^\infty\bar{K}^{ij}\leq 2 k_n+1,\qquad\sum_{i=0}^\infty\bar{K}^{ij}\leq 2 k_n+1
\end{equation}
for any $i,j\in\mathbb{Z}_+$.

For two real-valued bounded measurable functions $\alpha,\beta$ on $\mathbb{R}$, set
\begin{align*}
c_{\alpha,\beta}(p,q)&:=\frac{1}{k_n^2}\sum_{i=(p-k_n+1)\vee1}^p\sum_{j=(q-k_n+1)\vee1}^q\alpha\left(\frac{p-i}{k_n}\right)\beta\left(\frac{q-j}{k_n}\right)\bar{K}^{i j},\\
\psi_{\alpha,\beta}(x)&=\int_0^1\int_{x+u-1}^{x+u+1}\alpha(u)\beta(v)\mathrm{d}v\mathrm{d}u.
\end{align*}
The following lemma, which is a counterpart of Lemma 7.2 of \citet{CPV2011}, is the key to calculation of the asymptotic variance of the estimation error of our estimator:
\begin{lem}\label{key}
Let $\alpha,\beta:\mathbb{R}\rightarrow\mathbb{R}$ be two piecewise Lipschitz functions satisfying $\alpha(x)=\beta(x)=0$ with $x\notin[0,1]$. Then we have
\begin{equation*}
\sup_{p,q:p,q\geq k_n}\left|c_{\alpha,\beta}(p,q)-\psi_{\alpha,\beta}\left(\frac{q-p}{k_n}\right)\right|=O_p\left(b_n^{1/2}\right)
\end{equation*}
as $n\rightarrow\infty$.
\end{lem}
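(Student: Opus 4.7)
The plan is to introduce the deterministic proxy
\begin{equation*}
\tilde{c}_{\alpha,\beta}(p,q) := \frac{1}{k_n^2}\sum_{i=(p-k_n+1)\vee 1}^p \sum_{j=(q-k_n+1)\vee 1}^q \alpha\left(\frac{p-i}{k_n}\right)\beta\left(\frac{q-j}{k_n}\right) 1_{\{|i-j|<k_n\}},
\end{equation*}
and to bound each of $|c_{\alpha,\beta}(p,q) - \tilde{c}_{\alpha,\beta}(p,q)|$ and $|\tilde{c}_{\alpha,\beta}(p,q) - \psi_{\alpha,\beta}((q-p)/k_n)|$ by $O(k_n^{-1})$ uniformly in $p,q \geq k_n$. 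Since $k_n^{-1} \sim \theta^{-1} b_n^{1/2}$ by (\ref{window}), this proves the claim; in fact the argument will be entirely pathwise, so the $O_p$ qualification in the statement is automatic.

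For the first term, I would argue that $\bar{K}^{ij}$ and $1_{\{|i-j|<k_n\}}$ can only disagree on the diagonal $|i-j|=k_n$. By the very definitions of $\widehat{S}^m$ and $\widehat{T}^m$, one has $R^{m-1} < \widehat{S}^m \leq R^m$ and $R^{m-1} < \widehat{T}^m \leq R^m$. Hence if $i \leq j+k_n-1$, then $\widehat{S}^i \leq R^i \leq R^{j+k_n-1} < \widehat{T}^{j+k_n}$, and symmetrically $\widehat{T}^j < \widehat{S}^{i+k_n}$ when $j \leq i+k_n-1$; thus $\bar{K}^{ij}=1$ whenever $|i-j|<k_n$. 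On the other hand, Proposition \ref{advantage}(b) forces $\bar{K}^{ij}=0$ whenever $|i-j|>k_n$. Within the double sum there are at most $2k_n$ pairs with $|i-j|=k_n$, and each contributes at most $\|\alpha\|_\infty\|\beta\|_\infty/k_n^2$, so the first error is $O(k_n^{-1})$ uniformly in $p,q$.

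For the second term, substitute $u=(p-i)/k_n$ and $v=(q-j)/k_n$: the condition $|i-j|<k_n$ becomes $|u-v+x|<1$ with $x=(q-p)/k_n$, and $\tilde{c}_{\alpha,\beta}(p,q)$ is exactly the left Riemann sum on $\{0,1/k_n,\ldots,(k_n-1)/k_n\}^2$ of $f_x(u,v):=\alpha(u)\beta(v)1_{\{|u+x-v|\leq 1\}}$. The vanishing of $\beta$ outside $[0,1]$ yields $\int_{[0,1]^2} f_x(u,v)\,du\,dv = \psi_{\alpha,\beta}(x)$. Now $f_x$ is piecewise Lipschitz on $[0,1]^2$ with discontinuity locus contained in finitely many axis-parallel segments (from the piecewise structure of $\alpha,\beta$) together with the two diagonals $v=u+x\pm 1$, whose total length inside $[0,1]^2$ is bounded independently of $x$. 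A standard Riemann sum estimate then gives $|\tilde{c}_{\alpha,\beta}(p,q) - \psi_{\alpha,\beta}(x)| = O(k_n^{-1})$: the piecewise Lipschitz region contributes $O(k_n^{-1})$, while the $O(k_n)$ grid cells crossed by the discontinuity each contribute $O(k_n^{-2})$, again totalling $O(k_n^{-1})$. The main step to verify carefully is the uniformity of this estimate in $x$ (equivalently in $p,q$), which holds because the Lipschitz constants, sup norms, and discontinuity geometry of $f_x$ are all controlled by $x$-independent quantities; combining the two bounds then proves the lemma.
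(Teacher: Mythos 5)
Your proof is correct and follows essentially the same two-step strategy as the paper's: replace the random indicator $\bar K^{ij}$ by a deterministic index-condition indicator at $O(k_n^{-1})$ cost (counting the $O(k_n)$ boundary pairs with $|i-j|=k_n$), then bound the remaining deterministic Riemann-sum error by $O(k_n^{-1})$ uniformly in $(p,q)$, so $k_n^{-1}=O(b_n^{1/2})$ finishes. If anything you flesh out a step the paper leaves implicit, namely the direct verification via $\widehat S^i\le R^i\le R^{j+k_n-1}<\widehat T^{j+k_n}$ that $\bar K^{ij}=1$ whenever $|i-j|<k_n$, in place of the paper's more terse two-sided ``sandwich'' of inclusions.
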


\begin{proof}
For $p,q\geq k_n$, we can rewrite $c_{\alpha,\beta}(p,q)$ as
\begin{align*}
c_{\alpha,\beta}(p,q)=\frac{1}{k_n^2}\sum_{i,j=0}^{k_n-1}\alpha\left(\frac{i}{k_n}\right)\beta\left(\frac{j}{k_n}\right)\bar{K}^{p-i, q-j}.
\end{align*}
By definition, we have $\bar{K}^{p-i, q-j}=1_{\{\widehat{S}^{p-i}<\widehat{T}^{q-j+k_n}, \widehat{S}^{p-i+k_n}>\widehat{T}^{q-j}\}}$. Moreover, by Proposition \ref{advantage}(b) we have
\begin{align*}
(\widehat{S}^{p-i}<\widehat{T}^{q-j+k_n}, \widehat{S}^{p-i+k_n}> \widehat{T}^{q-j})&\Rightarrow (q-p+i-k_n\leq j\leq q-p+i+k_n),\\
(q-p+i-k_n\leq j\leq q-p+i+k_n)&\Rightarrow (\widehat{S}^{p-i}\leq\widehat{T}^{q-j+k_n}, \widehat{S}^{p-i+k_n}\geq \widehat{T}^{q-j}).
\end{align*}
Hence we obtain
\begin{align*}
c_{\alpha,\beta}(p,q)
=\frac{1}{k_n^2}\sum_{i=0}^{k_n-1}\alpha\left(\frac{i}{k_n}\right)
\sum_{j=[(q-p+i-k_n)\wedge (k_n-1)]\vee 0}^{[(q-p+i+k_n)\vee0]\wedge (k_n-1)}\beta\left(\frac{j}{k_n}\right)+O_p(b_n^{1/2})
\end{align*}
uniformly in $p,q$. Note that $q-p+i-k_n\leq q-p+i+k_n$ and $\alpha(x)=\beta(x)=0$ if $x\notin[0,1]$, we have
\begin{align*}
c_{\alpha,\beta}(p,q)
=\frac{1}{k_n^2}\sum_{i=0}^{k_n-1}\alpha\left(\frac{i}{k_n}\right)
\sum_{j=q-p+i-k_n}^{q-p+i+k_n}\beta\left(\frac{j}{k_n}\right)+O_p(b_n^{1/2})
\end{align*}
uniformly in $p,q$. Therefore, the piecewise Lipschitz continuity of $\alpha$ and $\beta$ completes the proof.
\end{proof} 

Next, let $N^n_t=\sum_{k=1}^{\infty}1_{\{R^k\leq t\}}$, $N^{n,1}_t=\sum_{k=1}^{\infty}1_{\{\widehat{S}^k\leq t\}}$ and $N^{n,2}_t=\sum_{k=1}^{\infty}1_{\{\widehat{T}^k\leq t\}}$ for each $t\in\mathbb{R}_+$ and 
\begin{align*}
\Gamma^k=[R^{k-1},R^k),\qquad \check{I}^k:=[\check{S}^k,\widehat{S}^k),\qquad\check{J}^k:=[\check{T}^k,\widehat{T}^k)
\end{align*}
for each $k\in\mathbb{N}$. Here, for each $t\in\mathbb{R}_+$ we write $\check{S}^k=\sup_{S^i<\widehat{S}^k}S^i$ and $\check{T}^k=\sup_{T^j<\widehat{T}^k}T^j$. Note that $\check{S}^k$ and $\check{T}^k$ may not be stopping times.  

Let $\xi$ be a positive constant satisfying $\frac{1}{2}<\xi<1$. Furthermore, let $\mathbf{H}^n=(\mathcal{H}^n_t)_{t\in\mathbb{R}_+}$ be a sequence of filtrations of $\mathcal{F}$ to which $N^n$, $N^{n,1}$ and $N^{n,2}$ are adapted, and for each $n$ and each $\rho\geq0$ we define the processes $\chi^n$, $G(\rho)^n$, $F(\rho)^{n,1}$, $F(\rho)^{n,2}$ and $F(1)^{n,1* 2}$ by
\begin{gather*}
\chi^n_{s}=P(\widehat{S}^k=\widehat{T}^k\big|\mathcal{H}_{R^{k-1}}^n),\qquad
G(\rho)^n_s=E\left[\left(b_n^{-1}|\Gamma^{k}|\right)^\rho\big|\mathcal{H}_{R^{k-1}}^n\right],\\
F(\rho)^{n,1}_{s}=E\left[\left(b_n^{-1}|\check{I}^{k}|\right)^\rho\big|\mathcal{H}_{\widehat{S}^{k-1}}^n\right],\qquad
F(\rho)^{n,2}_{s}=E\left[\left(b_n^{-1}|\check{J}^{k}|\right)^\rho\big|\mathcal{H}_{\widehat{T}^{k-1}}^n\right],\\
F(1)^{n,1*2}_{s}=b_n^{-1}E\left[|\check{I}^{k}\cap\check{J}^k|+|\check{I}^{k+1}\cap\check{J}^k|+|\check{I}^{k}\cap\check{J}^{k+1}|\big|\mathcal{H}_{R^{k-1}}^n\right]
\end{gather*}
when $s\in\Gamma^k$.

The following condition is necessary to compute the asymptotic variance of the estimation error of our estimator explicitly. For a sequence $(X^n)$ of c\`adl\`ag processes and a c\`adl\`ag process $X$, we write $X^n\xrightarrow{\text{Sk.p.}}X$ if $(X^n)$ converges to $X$ in probability for the Skorokhod topology.
\begin{enumerate}
\item[{[A1$'$]}]

(i) For each $n$, we have a c\`adl\`ag $\mathbf{H}^{n}$-adapted process $G^n$ and a random subset $\mathcal{N}^0_n$ of $\mathbb{N}$ such that $(\#\mathcal{N}^0_n)_{n\in\mathbb{N}}$ is tight, $G(1)^n_{R^{k-1}}=G^n_{R^{k-1}}$ for any $k\in\mathbb{N}-\mathcal{N}^0_n$, and there exists a c\`adl\`ag $\mathbf{F}^{(0)}$-adapted process $G$ satisfying that $G$ and $G_{-}$ do not vanish and that $G^n\xrightarrow{\text{Sk.p.}}G$ as $n\to\infty$.

(ii) There exists a constant $\rho\geq1/\xi'$ such that $\left(\sup_{0\leq s\leq t}G(\rho)^n_{s}\right)_{n\in\mathbb{N}}$ is tight for all $t>0$.

(iii) For each $n$, we have a c\`adl\`ag $\mathbf{H}^{n}$-adapted process $\chi^{\prime n}$ and a random subset $\mathcal{N}'_n$ of $\mathbb{N}$ such that $(\#\mathcal{N}'_n)_{n\in\mathbb{N}}$ is tight, $\chi^n_{R^{k-1}}=\chi^{\prime n}_{R^{k-1}}$ for any $k\in\mathbb{N}-\mathcal{N}'_n$, and there exists a c\`adl\`ag $\mathbf{F}^{(0)}$-adapted process $\chi$ such that $\chi^{\prime n}\xrightarrow{\text{Sk.p.}}\chi$ as $n\to\infty$.

(iv) For each $n$ and $l=1,2,1*2$, we have a c\`adl\`ag $\mathbf{H}^{n}$-adapted process $F^{n,l}$ and a random subset $\mathcal{N}^l_n$ of $\mathbb{N}$ such that $(\#\mathcal{N}^l_n)_{n\in\mathbb{N}}$ is tight, $F(1)^{n,l}_{R^{k-1}}=F^{n,l}_{R^{k-1}}$ for any $k\in\mathbb{N}-\mathcal{N}^l_n$, and there exists a c\`adl\`ag $\mathbf{F}^{(0)}$-adapted processes $F^l$ satisfying $F^{n,l}\xrightarrow{\text{Sk.p.}}F^l$ as $n\to\infty$.

(v) There exists a constant $\rho'\geq1/\xi'$ such that $\left(\sup_{0\leq s\leq t}F(\rho')^{n,l}_{s}\right)_{n\in\mathbb{N}}$ is tight for all $t>0$ and $l=1,2$.

\end{enumerate}

\begin{rmk}
A kind of conditions such as [A1$'$](i)-(ii) and [A1$'$](iv)-(v) appears in \cite{BNHLS2011}, \cite{HJY2011} and \cite{PY2008}. The condition [A1$'$](iii) is satisfied when $(S^i)=(T^j)$ (a synchronous case) with $\chi\equiv1$ or when $S^i\neq T^j$ for all $i,j\geq 1$ (a completely nonsynchronous case) with $\chi\equiv0$, for example.
\end{rmk}

Next, we introduce the following strong predictability condition for the sampling designs, which is an analog to the condition [A2] in \cite{HY2011}.
\begin{enumerate}
\item[{[A2]}] For every $n,i\in\mathbb{N}$, $S^i$ and $T^i$ are $\mathbf{G}^{(n)}$-stopping times, where $\mathbf{G}^{(n)}=(\mathcal{G}^{(n)}_t)_{t\in\mathbb{R}_+}$ is the filtration given by $\mathcal{G}^{(n)}_t=\mathcal{F}_{(t-b_n^{\xi-1/2})_+}^{(0)}$ for $t\in\mathbb{R}_+$.
\end{enumerate}

The following conditions are analogs to the conditions [A3] and [A4] in \cite{HY2011}: 
\begin{enumerate}
\item[[{A3]}] For each $V,W=X,Y,Z^X,Z^Y$, $[V,W]$ is absolutely continuous with a c\`adl\`ag derivative, and for the density process $f=[V,W]'$ there is a sequence $(\sigma_k)$ of $\mathbf{F}^{(0)}$-stopping times such that $\sigma_k\uparrow\infty$ as $k\to\infty$ and for every $k$ and any $\lambda>0$ we have a positive constant $C_{k,\lambda}$ satisfying
\begin{equation}\label{eqA3}
E\left[|f^{\sigma_k}_{\tau_1}-f^{\sigma_k}_{\tau_2}|^2\big|\mathcal{F}_{\tau_1\wedge\tau_2}\right]\leq C_{k,\lambda}E\left[|\tau_1-\tau_2|^{1-\lambda}\big|\mathcal{F}_{\tau_1\wedge\tau_2}\right]
\end{equation}
for any bounded $\mathbf{F}^{(0)}$-stopping times $\tau_1$ and $\tau_2$, and $f$ is adapted to $\mathbf{H}^n$.
\item[{[A4]}] $\xi\vee\frac{9}{10}<\xi'$ and $(\ref{A4})$ holds for every $t\in\mathbb{R}_+$.
\end{enumerate}

The following conditions, which are analogs to the conditions [A5] and [A6] in \cite{HY2011}, are necessary to deal with the drift parts. For a (random) interval $I$ and a time $t$, we write $I(t)=I\cap[0,t)$.
\begin{enumerate}
\item[[{A5]}] $A^{X}$, $A^{Y}$, $\underline{A}^X$ and $\underline{A}^Y$ are absolutely continuous with c\`adl\`ag derivatives, and there is a sequence $(\sigma_k)$ of $\mathbf{F}^{(0)}$-stopping times such that $\sigma_k\uparrow\infty$ as $k\to\infty$ and for every $k$ we have a positive constant $C_{k}$ and $\lambda_k\in(0,3/4)$ satisfying
\begin{equation}\label{eqA5}
E\left[|f^{\sigma_k}_t-f^{\sigma_k}_{\tau}|^2\big|\mathcal{F}_{\tau\wedge t}\right]\leq C_{k}E\left[|t-\tau|^{1-\lambda_k}\big|\mathcal{F}_{\tau\wedge t}\right]
\end{equation}
for every $t>0$ and any bounded $\mathbf{F}^{(0)}$-stopping time $\tau$, for the density processes $f=(A^X)'$, $(A^{Y})'$, $(\underline{A}^X)'$ and $(\underline{A}^Y)'$.
\item[{[A6]}] For each $t\in\mathbb{R}_+$, $b_n^{-1}H_n(t)=O_p(1)$ as $n\to\infty$, where $H_n(t)=\sum_{k=1}^{\infty}|\Gamma^k(t)|^2$.
\end{enumerate}

The following condition is a regularity condition for the exogenous noise process:
\begin{enumerate}
\item[{[N]}] $(\int |z|^8Q_t(\mathrm{d}z))_{t\in\mathbb{R}_+}$ is a locally bounded process, and the covariance matrix process
\begin{equation}\label{defPsi}
\Psi_t(\omega^{(0)})=\int zz^*Q_t(\omega^{(0)},\mathrm{d}z).
\end{equation}
is c\`adl\`ag and quasi-left continuous. Furthermore, there is a sequence $(\sigma^k)$ of $\mathbf{F}^{(0)}$-stopping times such that $\sigma^k\uparrow\infty$ as $k\to\infty$ and for every $k$ and any $\lambda>0$ we have a positive constant $C_{k,\lambda}$ satisfying
\begin{equation}\label{eqN}
E\left[|\Psi^{ij}_{\sigma^k\wedge t}-\Psi^{ij}_{\sigma^k\wedge (t-h)_+}|^2\big|\mathcal{F}_{(t-h)_+}\right]\leq C_{k,\lambda} h^{1-\lambda}
\end{equation}
for every $i,j\in\{1,2\}$ and every $t,h>0$.
\end{enumerate}

\begin{rmk}
The inequalities $(\ref{eqA3})$, $(\ref{eqA5})$ and $(\ref{eqN})$ are satisfied when $w(f;h,t)=O_p(h^{\frac{1}{2}-\lambda})$ as $h\to\infty$ for every $t,\lambda\in(0,\infty)$, for example. Here, for a real-valued function $x$ on $\mathbb{R}_+$, the \textit{modulus of continuity} on $[0,T]$ is denoted by $w(x;\delta,T)=\sup\{|x(t)-x(s)|;s,t\in[0,T],|s-t|\leq\delta\}$ for $T,\delta>0$. This is the original condition in \cite{HY2011}. Another such example is the case that there exist an $\mathbf{F}^{(0)}$-adapted process $B$ with a locally integrable variation and a locally square-integrable martingale $L$ such that $f=B+L$ and both of the predictable compensator of the variation process of $B$ and predictable quadratic variation of $L$ are absolutely continuous with locally bounded derivatives. This type of condition is familiar in the context of the estimation of volatility-type quantities; see \cite{HJY2011} and \cite{JPV2010} for instance. Furthermore, in both of the cases $f$ is c\`adl\`ag and quasi-left continuous.
\end{rmk}

We extend the functions $g$ and $g'$ to the whole real line by setting $g(x)=g'(x)=0$ for $x\notin[0,1]$. Then we put
\begin{gather*}
\kappa:=\int_{-2}^{2}\psi_{g,g}(x)^2\mathrm{d}x,\qquad
\widetilde{\kappa}:=\int_{-2}^{2}\psi_{g',g'}(x)^2\mathrm{d}x,\qquad
\overline{\kappa}:=\int_{-2}^{2}\psi_{g,g'}(x)^2\mathrm{d}x.
\end{gather*}

We denote by $\mathbb{D}(\mathbb{R}_+)$ the space of c\`adl\`ag functions on $\mathbb{R}_+$ equipped with the Skorokhod topology. A sequence of random elements $X^n$ defined on a probability space $(\Omega,\mathcal{F},P)$ is said to \textit{converge stably in law} to a random element $X$ defined on an appropriate extension $(\tilde{\Omega},\tilde{\mathcal{F}} ,\tilde{P})$ of $(\Omega,\mathcal{F},P)$ if $E[Yg(X^n)]\rightarrow E[Yg(X)]$ for any $\mathcal{F}$-measurable and bounded random variable $Y$ and any bounded and continuous function $g$. We then write $X^n\rightarrow^{d_s}X$. A sequence $(X^n)$ of stochastic processes is said to converge to a process $X$ \textit{uniformly on compacts in probability} (abbreviated \textit{ucp}) if, for each $t>0$, $\sup_{0\leq s\leq t}|X^n_s-X_s|\rightarrow^p0$ as $n\rightarrow\infty$. We then write $X^n\xrightarrow{ucp}X$. 

Now we are ready to state our main result.
\begin{thm}\label{mainthm}

$(\mathrm{a})$ Suppose $[\mathrm{A}1'](\mathrm{i})$-$(\mathrm{iii})$, $[\mathrm{A}2]$-$[\mathrm{A}6]$ and $[\mathrm{N}]$ are satisfied. Suppose also $Z^X=Z^Y=0$. Then
\begin{align*}
b_n^{-1/4}\{\widehat{PHY}(\mathsf{X},\mathsf{Y})^n-[X,Y]\}\to^{d_s}\int_0^\cdot w_s\mathrm{d}\widetilde{W}_s\qquad\mathrm{in}\ \mathbb{D}(\mathbb{R}_+)
\end{align*}
as $n\to\infty$, where $\tilde{W}$ is a one-dimensional standard Wiener process (defined on an extension of $\mathcal{B}$) independent of $\mathcal{F}$ and $w$ is given by
\begin{align}
w_s^2=\psi_{HY}^{-4}[&\theta\kappa\{[X]'_s[Y]'_s+([X,Y]'_s)^2\}G_s
+\theta^{-3}\widetilde{\kappa}\{\Psi^{11}_s\Psi^{22}_s+\left(\Psi^{12}_s\chi_s\right)^2\}G_s^{-1}\nonumber\\
&+\theta^{-1}\overline{\kappa}\{[X]'_s\Psi^{22}_s+[Y]'_s\Psi^{11}_s+2[X,Y]'_s\Psi^{12}_s\chi_s\}].\label{avar}
\end{align}

\begin{enumerate}
\item[$(\mathrm{b})$] Suppose $[\mathrm{A}1']$, $[\mathrm{A}2]$-$[\mathrm{A}6]$ and $[\mathrm{N}]$ are satisfied. Then
\begin{align*}
b_n^{-1/4}\{\widehat{PHY}(\mathsf{X},\mathsf{Y})^n-[X,Y]\}\to^{d_s}\int_0^\cdot w_s\mathrm{d}\widetilde{W}_s\qquad\mathrm{in}\ \mathbb{D}(\mathbb{R}_+)
\end{align*}
as $n\to\infty$, where $\tilde{W}$ is as in the above and $w$ is given by
\begin{align}
w_s^2=\psi_{HY}^{-4}\bigg[&\theta\kappa\left\{[X]'_s[Y]'_s+([X,Y]'_s)^2\right\}G_s
+\theta^{-3}\widetilde{\kappa}\left\{\overline{\Psi}^{11}_s\overline{\Psi}^{22}_s+\left(\overline{\Psi}^{12}_s\right)^2\right\}G_s^{-1}\nonumber\\
&+\theta^{-1}\overline{\kappa}\left\{[X]'_s\overline{\Psi}^{22}_s+[Y]'_s\overline{\Psi}^{11}_s+2[X,Y]'_s\overline{\Psi}^{12}_s-\left([Z^X,Y]'_s F^1_s-[X,Z^Y]'_s F^2_s\right)^2 G_s^{-1}\right\}\Bigg],\label{avarend}
\end{align}
where
\begin{align*}
\overline{\Psi}^{11}_s=\Psi^{11}_s+[Z^X]'_s F^1_s,\qquad
\overline{\Psi}^{22}_s=\Psi^{22}_s+[Z^Y]'_s F^2_s,\qquad
\overline{\Psi}^{12}_s=\Psi^{12}_s\chi_s+[Z^X,Z^Y]'_s F^{1* 2}_s.
\end{align*}
\end{enumerate} 
\end{thm}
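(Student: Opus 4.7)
The plan is to follow the strategy of \cite{HY2011} adapted to the pre-averaged setting, which is precisely what the author signals in the introduction. First I would reduce the problem to a tractable martingale-like approximation. Using the canonical decomposition $(\ref{CSM})$, expand $\widehat{PHY}(\mathsf{X},\mathsf{Y})^n-[X,Y]$ into signal--signal, signal--noise, noise--noise, and drift pieces. By Lemma~\ref{lembasic} (stated earlier in the paper), the $b_n^{-1/4}$-scaled estimation error is asymptotically equivalent, uniformly on compacts, to $b_n^{-1/4}\mathbf{M}^n$, where $\mathbf{M}^n$ is the process of Section~\ref{secstable}. Its $k$-th block lives on the refresh interval $\Gamma^k=[R^{k-1},R^k)$, and carries pre-averaging weights $g(p/k_n)$ for the continuous-martingale contributions and $g'(p/k_n)$ (via summation by parts) for the noise contributions. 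The refresh-time modification is used crucially here: the one-to-one block indexing given by Proposition~\ref{advantage}(b) lets every block be written with a single index $k$, rather than two independent indices $i$ and $j$.

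Second, I would establish stable convergence of $b_n^{-1/4}\mathbf{M}^n$ to $\int_0^{\cdot}w_s\,\mathrm{d}\widetilde{W}_s$. Under the strong predictability condition [A2], the block endpoints are $\mathbf{F}^{(0)}_{\cdot-b_n^{\xi-1/2}}$-measurable, which allows one to treat $\mathbf{M}^n$ as a sum of martingale differences by conditioning on the $\sigma$-field at the beginning of each block, up to an $o_p(1)$ remainder. One then applies a triangular-array martingale stable CLT (e.g.\ Theorem IX.7.28 of Jacod--Shiryaev): it is enough to check (i) convergence in probability of the predictable quadratic variation of $b_n^{-1/4}\mathbf{M}^n$ to $\int_0^{\cdot}w_s^2\,\mathrm{d}s$; (ii) a conditional Lyapunov condition, proved via fourth- or eighth-moment block bounds using [N] together with the tightness of $\sup_s G(\rho)^n_s$ and $\sup_s F(\rho')^{n,l}_s$ from [A1$'$](ii),(v); and (iii) asymptotic orthogonality of $\mathbf{M}^n$ with every bounded $\mathbf{F}^{(0)}$-martingale and with the canonical exogenous-noise martingales, which upgrades the convergence to stable.

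The third and central step is the computation of the limit of the predictable quadratic variation, which yields $w_s^2$. Each block's conditional second moment decomposes into a sum of quantities of the form $c_{g,g}(p,q)\cdot[V,W]'_{R^{k-1}}$, $c_{g',g'}(p,q)\cdot\Psi^{ab}_{R^{k-1}}$, and $c_{g,g'}(p,q)\cdot(\text{cross terms})$, multiplied by block-size factors $|\Gamma^k|$, $|\check{I}^k|$, $|\check{J}^k|$, and $|\check{I}^k\cap\check{J}^k|$. I would invoke Lemma~\ref{key} to replace $c_{\alpha,\beta}(p,q)$ by $\psi_{\alpha,\beta}((q-p)/k_n)$, so that summation in $(p,q)$ becomes a Riemann approximation of $\int_{-2}^{2}\psi_{\alpha,\beta}(x)^2\,\mathrm{d}x$, which produces the constants $\kappa$, $\widetilde{\kappa}$, $\overline{\kappa}$. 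The conditional-expectation factors are then replaced by the limits $G_s$, $F^1_s$, $F^2_s$, $F^{1*2}_s$, $\chi_s$ thanks to [A1$'$]; the spot quantities $[X]'_s$, $[Y]'_s$, $[X,Y]'_s$, $[Z^X]'_s$, $[Z^Y]'_s$, $[Z^X,Z^Y]'_s$, $\Psi^{ab}_s$ are then pulled out at time $R^{k-1}$ using the $L^2$-continuity estimates of [A3] and [N]. The drift contributions from $A^X, A^Y, \underline{A}^X, \underline{A}^Y$ are shown to be $o_p(b_n^{1/4})$ via [A5]--[A6] and the condition [A4], and part (a) follows as the specialization $Z^X=Z^Y=0$.

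The main obstacle will be the variance computation in part (b), specifically the emergence of the negative correction $-([Z^X,Y]'_s F^1_s-[X,Z^Y]'_s F^2_s)^2 G_s^{-1}$ in $(\ref{avarend})$. This term does not come from any single block's contribution in isolation but from the covariance across the three groups (signal--signal, signal--endogenous, endogenous--endogenous) within the same refresh block $\Gamma^k$: the endogenous-noise increments telescope on the short subintervals $\check{I}^k$ and $\check{J}^k$, and therefore share the $g'$-weighted factor with the $\Psi$-noise while also covarying with the $g$-weighted signal. Disentangling this coupling requires a careful diagonalization of the $3\times3$ conditional covariance matrix of the block contributions, where the factor $G_s^{-1}$ appears as the Schur complement associated with the full $|\Gamma^k|$ weighting against the partial $|\check{I}^k|, |\check{J}^k|$ weightings; the resulting quadratic form is what produces the minus sign. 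Once this algebraic identification is carried out, the conclusion of part (b) follows, and the $\mathcal{F}$-conditional Gaussianity of the limit is inherited from the martingale stable CLT with $\widetilde{W}$ independent of $\mathcal{F}$.
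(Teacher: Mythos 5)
Your high-level architecture matches the paper's: reduce via Lemma~\ref{lembasic} to $\mathbf{M}^n$, verify the hypotheses of a stable martingale CLT (the paper invokes Theorem 2-2 of Jacod (1997) through Propositions~\ref{jacod} and~\ref{jacod2}, rather than Jacod--Shiryaev IX.7.28, but these are cousins), identify the limit of the predictable bracket via Lemmas~\ref{key}, \ref{lemHYprop3.2} and~\ref{HYlem4.1}, and eliminate drift and orthogonality terms via Lemma~\ref{HYlem13.1and13.2} and Lemma~\ref{final}. That is essentially the route taken in Section~\ref{secstable} and Sections~\ref{proofHYprop3.2}--\ref{drift}. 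One cosmetic difference: the paper never discretizes $\mathbf{M}^n$ into block-indexed martingale differences; Lemma~\ref{HYlem3.1} shows directly that $\mathbf{M}^n$ is a genuine continuous-time locally square-integrable martingale and the bracket is computed in that framework, which spares you the bookkeeping of conditioning block by block.

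The one place where your sketch goes off-track is the explanation of the negative correction $-\bigl([Z^X,Y]'_s F^1_s-[X,Z^Y]'_s F^2_s\bigr)^2 G_s^{-1}$ as a ``Schur complement'' of a $3\times3$ block covariance matrix. That is not the mechanism the paper uses and does not look like a viable alternative without considerable reworking. The minus sign is a symmetry effect: the bracket of a product $\bar L^{ij}_{\alpha,\beta}(V,W)$ against $\bar L^{i'j'}_{\alpha',\beta'}(V',W')$ splits (see the definition of $V^{iji'j'}_{\alpha,\beta;\alpha',\beta'}$ and Lemma~\ref{lemHYprop3.2}) into a ``diagonal'' piece $\langle\bar V_\alpha,\bar V'_{\alpha'}\rangle\langle\bar W_\beta,\bar W'_{\beta'}\rangle$ and a ``cross'' piece $\langle\bar V_\alpha,\bar W'_{\beta'}\rangle\langle\bar V'_{\alpha'},\bar W_\beta\rangle$. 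After Lemma~\ref{key} turns the $c_{\alpha,\beta}(p,q)$'s into $\psi_{\alpha,\beta}((q-p)/k_n)$'s, the cross piece for the mixed processes $\mathbf{M}(3)^n$ and $\mathbf{M}(4)^n$ carries the weight
\begin{align*}
\psi_{g,g'}\!\left(\tfrac{q-p}{k_n}\right)\psi_{g,g'}\!\left(\tfrac{p-q}{k_n}\right)=-\psi_{g,g'}\!\left(\tfrac{q-p}{k_n}\right)^2\leq 0,
\end{align*}
the flip coming purely from the fact that $\psi_{g,g'}$ is odd (Lemma~\ref{psi}(d)). The factor $G_s^{-1}$ that multiplies it is the block-counting density coming from Lemma~\ref{HJYlem2.2} ($b_nN^n_t\to^p\int_0^t G_s^{-1}\mathrm{d}s$), not a Schur complement. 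So if you pursue your outline, replace the Schur-complement heuristic by the odd-symmetry cancellation in the polarization of $\langle\mathbf{M}(3)^n\rangle$, $\langle\mathbf{M}(4)^n\rangle$ and $\langle\mathbf{M}(3)^n,\mathbf{M}(4)^n\rangle$; otherwise the argument as written would not produce the sign.
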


A sketch of the proof is given in the next section.


\section{Stable convergence of the estimation error}\label{secstable}
In this section we briefly sketch the proof of the main theorem. First we introduce some notation. For processes $V$ and $W$, $V\bullet W$ denotes the integral (either stochastic or ordinary) of $V$ with respect to $W$. For any semimartingale $V$ and any (random) interval $I$, we define the processes $V(I)_t$ and $I_t$ by $V(I)_t=\int_0^t1_I(s-)\mathrm{d}V_s$ and $I_t=1_I(t)$ respectively. We denote by $\Phi$ the set of all real-valued piecewise Lipschitz functions $\alpha$ on $\mathbb{R}$ satisfying $\alpha(x)=0$ for any $x\notin[0,1]$. For a function $\alpha$ on $\mathbb{R}$ we write $\alpha^n_p=\alpha(p/k_n)$ for each $n\in\mathbb{N}$ and $p\in\mathbb{Z}$. For any semimartingale $V$, any sampling design $\mathcal{D}=(D^i)_{i\in\mathbb{N}}$ and any $\alpha\in\Phi$, we define the process $\bar{V}(\mathcal{D})^i_t$ for each $i\in\mathbb{N}$ by
\begin{align*}
\bar{V}_\alpha(\mathcal{D})^i_t=\sum_{p=0}^{k_n-1}\alpha^n_p V(D^{i+p})_t.
\end{align*}


We introduce the following auxiliary regularity conditions:
\begin{enumerate}
\item[{[C1]}]  $A^X$, $A^Y$, $\underline{A}^X$, $\underline{A}^Y$, and $[V,W]$ for $V,W=X,Y,Z^X,Z^Y$ are absolutely continuous with locally bounded derivatives.
\item[{[C2]}] $(\int |z|^2Q_t(\mathrm{d}z))_{t\in\mathbb{R}_+}$ is a locally bounded process.
\item[{[C3]}] $b_nN^n_t=O_p(1)$ as $n\to\infty$ for every $t$.
\end{enumerate}
We define $\Psi$ by $(\ref{defPsi})$ whenever we have [C2].

 
Let
\begin{align*}
\mathfrak{E}^X_t=-\frac{1}{k_n}\sum_{p=1}^{\infty}\epsilon^X_{\widehat{S}^p}1_{\{\widehat{S}^p\leq t\}},\qquad
\mathfrak{E}^Y_t=-\frac{1}{k_n}\sum_{q=1}^{\infty}\epsilon^Y_{\widehat{T}^q}1_{\{\widehat{T}^q\leq t\}}.
\end{align*}
$\mathfrak{E}^X$ and $\mathfrak{E}^Y$ are obviously purely discontinuous locally square-integrable martingales on $\mathcal{B}$ if [C2] holds (note that both $(\widehat{S}^i)$ and $(\widehat{T}^j)$ are $\mathbf{F}^{(0)}$-stopping times). Furthermore, if $\Psi$ is c\`adl\`ag, quasi-left continuous and both $(S^i)$ and $(T^j)$ are $\mathbf{F}^{(0)}$-predictable times, then we have
\begin{gather*}
\langle\mathfrak{E}^X\rangle_t=\frac{1}{k_n^2}\sum_{p=1}^{\infty}\Psi^{11}_{\widehat{S}^p}1_{\{\widehat{S}^p\leq t\}},\qquad
\langle\mathfrak{E}^Y\rangle_t=\frac{1}{k_n^2}\sum_{q=1}^{\infty}\Psi^{22}_{\widehat{T}^q}1_{\{\widehat{T}^q\leq t\}},\qquad
\langle\mathfrak{E}^X,\mathfrak{E}^Y\rangle_t=\frac{1}{k_n^2}\sum_{p,q=1}^{\infty}\Psi^{12}_{\widehat{S}^p}1_{\{\widehat{S}^p=\widehat{T}^q\leq t\}}.
\end{gather*}


Though $\check{S}^k$ and $\check{T}^k$ may not be stopping times, we have the following result:
\begin{lem}\label{check}
The random variables $\check{I}^k_t$ and $\check{J}^k_t$ are $\mathbf{F}^{(0)}_t$-measurable for every $k,t$.
\end{lem}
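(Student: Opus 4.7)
My plan begins with the observation that, because $\widehat{S}^k=\min\{S^i:S^i>R^{k-1}\}$, the strict inequality $S^i<\widehat{S}^k$ is equivalent to $S^i\le R^{k-1}$. Consequently $\check{S}^k=\max\{S^i:S^i\le R^{k-1}\}$, and $\check{I}^k=[\check{S}^k,\widehat{S}^k)$ is precisely the unique $S$-inter-tick interval containing $R^{k-1}$. Geometrically, $t\in\check{I}^k$ if and only if $t$ and $R^{k-1}$ are bracketed by the same pair of consecutive $S$-sampling times.

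I would then decompose the indicator as $\check{I}^k_t=1_{\{t<\widehat{S}^k\}}-1_{\{t<\check{S}^k\}}$. The first piece is $\mathcal{F}^{(0)}_t$-measurable since $\widehat{S}^k$ is an $\mathbf{F}^{(0)}$-stopping time. For the second, the identity above rewrites $\{t<\check{S}^k\}$ as the event that some $S^i$ lies strictly between $t$ and $R^{k-1}$, namely $\bigcup_i\{t<S^i\le R^{k-1}\}$. I would split on the $\mathcal{F}^{(0)}_t$-measurable dichotomy $\{R^{k-1}\le t\}$ (on which the union is empty) versus $\{R^{k-1}>t\}$, and further decompose the latter using $R^{k-1}=\widehat{S}^{k-1}\vee\widehat{T}^{k-1}$. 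Whenever $\widehat{S}^{k-1}>t$, the stopping time $\widehat{S}^{k-1}$ itself provides an $S$-tick in $(t,R^{k-1}]$, so the event automatically holds and $\check{I}^k_t$ collapses to an indicator of a stopping-time event. The only non-trivial sub-case is $\widehat{S}^{k-1}\le t<\widehat{T}^{k-1}$, in which $R^{k-1}=\widehat{T}^{k-1}$ and the question reduces to comparing $\sigma_S(t):=\min\{S^i:S^i>t\}$ with the stopping time $\widehat{T}^{k-1}$.

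In this final sub-case my plan is to invoke the strong predictability condition [A2]: every $S^i$ and $T^j$ is a $\mathbf{G}^{(n)}$-stopping time whose value is $\mathcal{F}^{(0)}_t$-measurable already at time $t-b_n^{\xi-1/2}$, and because $\xi-1/2<\xi'$ the predictability window dominates the inter-tick spacing $r_n(t)=o_p(b_n^{\xi'})$ guaranteed by [A4]; so both $\sigma_S(t)$ and $\widehat{T}^{k-1}$ lie in the $\mathcal{F}^{(0)}_t$-knowable window $[t,t+b_n^{\xi-1/2}]$, and their comparison is $\mathcal{F}^{(0)}_t$-measurable. The claim for $\check{J}^k_t$ then follows by the symmetric argument with $(S^i)$ and $(T^j)$ interchanged.

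The hardest step is precisely this last sub-case: without invoking the strong predictability of the sampling scheme one cannot decide at time $t$ which of the two future stopping times $\sigma_S(t)$ and $\widehat{T}^{k-1}$ comes first. All the remaining reductions are routine bookkeeping with indicators of stopping-time events.
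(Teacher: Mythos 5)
Your decomposition is, up to complementation, identical to the paper's: the paper writes $\{\check{I}^k_t=1\}=\bigcap_i[\{S^i\leq t<\widehat{S}^k\}\cup\{t<\widehat{S}^k\leq S^i\}]$, which equals $\{t<\widehat{S}^k\}\setminus\bigcup_i\{t<S^i<\widehat{S}^k\}=\{t<\widehat{S}^k\}\setminus\bigcup_i\{t<S^i\leq R^{k-1}\}$, i.e.\ exactly your $1_{\{t<\widehat{S}^k\}}-1_{\{t<\check{S}^k\}}$. You then correctly single out the only non-routine piece, $\{t<\widehat{S}^k\leq S^i\}=\{t<\widehat{S}^k\}\cap\{S^i>R^{k-1}\}$, which on the sub-event $\{\widehat{S}^{k-1}\leq t<\widehat{T}^{k-1}\}$ reduces to comparing the two future stopping times $\sigma_S(t)=\min\{S^i:S^i>t\}$ and $\widehat{T}^{k-1}$. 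The paper's one-line proof does not explicitly address this sub-case, so you have located the genuinely delicate point.

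However, the resolution you propose does not close the gap. First, Lemma~\ref{check} carries no hypotheses; it is used to justify the $\mathbf{F}^{(0)}$-adaptedness of $\mathfrak{I}$ and $\mathfrak{J}$, hence the very definition of $\mathbf{M}^n$, and $\mathbf{M}^n$ feeds into Lemma~\ref{lembasic} and the consistency result Theorem~\ref{HKthm2.3}, both of which assume only $(\ref{A4})$ and $[\mathrm{C}1]$-$[\mathrm{C}3]$ and not the strong predictability $[\mathrm{A}2]$. Appealing to $[\mathrm{A}2]$ therefore establishes a strictly weaker statement than the one the paper needs. Second, even granting $[\mathrm{A}2]$ and $[\mathrm{A}4]$, the step ``so both $\sigma_S(t)$ and $\widehat{T}^{k-1}$ lie in the $\mathcal{F}^{(0)}_t$-knowable window $[t,t+b_n^{\xi-1/2}]$'' is not valid pathwise for fixed $n$: $[\mathrm{A}4]$ gives $r_n(t)=o_p(b_n^{\xi'})$, a convergence-in-probability bound, so for each $n$ there may be a positive-probability event on which an inter-tick gap exceeds $b_n^{\xi-1/2}$; on that event the order of $\sigma_S(t)$ and $\widehat{T}^{k-1}$ is not determined by $\mathcal{G}^{(n)}_{t+b_n^{\xi-1/2}}=\mathcal{F}^{(0)}_t$. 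Measurability is a per-$\omega$ property, so ``with probability tending to one'' is not enough; a deterministic inter-tick bound of the type $(\ref{SA4})$, which the paper only secures after the localization carried out in Section~\ref{prooflembasic}, would be required for your argument to go through.
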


\begin{proof}
Since $\{\check{I}^k_t=1\}=\{\check{S}^k\leq t<\widehat{S}^k\}=\bigcap_i[\{S^i\leq t<\widehat{S}^k\}\cup\{t<\widehat{S}^k\leq S^i\}]$, we obtain $\{\check{I}^k_t=1\}\in\mathcal{F}^{(0)}_t$ and thus $\check{I}^k_t$ is $\mathcal{F}^{(0)}_t$-measurable. Similarly we can show that $\check{J}^k_t$ is $\mathcal{F}^{(0)}_t$-measurable.
\end{proof}

Due to the above lemma, both of the processes $\mathfrak{I}_t:=\sum_{p=1}^\infty\check{I}^p_t$ and $\mathfrak{J}_t:=\sum_{q=1}^\infty\check{J}^q_t$ are $\mathbf{F}^{(0)}$-adapted. Therefore, we can define the following processes:
\begin{align*}
\mathfrak{Z}^X_t=-\mathfrak{I}_-\bullet Z^X_t,\qquad
\mathfrak{Z}^Y_t=-\mathfrak{J}_-\bullet Z^Y_t.
\end{align*}
Then we set
\begin{align*}
\mathfrak{U}^X=\mathfrak{E}^X+(k_n\sqrt{b_n})^{-1}\mathfrak{Z}^X,\qquad
\mathfrak{U}^Y=\mathfrak{E}^Y+(k_n\sqrt{b_n})^{-1}\mathfrak{Z}^Y.
\end{align*} 


For any semimartingales $V,W$ and any $\alpha,\beta\in\Phi$, set
\begin{align*}
\bar{L}_{\alpha,\beta}(V,W)^{ij}=\bar{V}_\alpha(\widehat{\mathcal{I}}^i)_-\bullet \bar{W}_\beta(\widehat{\mathcal{J}}^j)+\bar{W}_\beta(\widehat{\mathcal{J}}^j)_-\bullet \bar{V}_\alpha(\widehat{\mathcal{I}}^i)
\end{align*}
for each $i,j\in\mathbb{N}$. We define the process $\mathbf{M}(k)^n_t\ (k=1,2,3,4)$ by
\begin{align*}
&\mathbf{M}(1)^n_t=\frac{1}{(\psi_{HY}k_n)^2}\sum_{i,j=1}^{\infty}\bar{K}^{ij}_t\bar{L}_{g,g}(X,Y)^{ij}_t,\qquad
\mathbf{M}(2)^n_t=\frac{1}{(\psi_{HY}k_n)^2}\sum_{i,j=1}^{\infty}\bar{K}^{ij}_t\bar{L}_{g',g'}(\mathfrak{U}^X,\mathfrak{U}^Y)^{ij}_t,\\
&\mathbf{M}(3)^n_t=\frac{1}{(\psi_{HY}k_n)^2}\sum_{i,j=1}^{\infty}\bar{K}^{ij}_t\bar{L}_{g,g'}(X,\mathfrak{U}^Y)^{ij}_t,\qquad
\mathbf{M}(4)^n_t=\frac{1}{(\psi_{HY}k_n)^2}\sum_{i,j=1}^{\infty}\bar{K}^{ij}_t\bar{L}_{g',g}(\mathfrak{U}^X,Y)^{ij}_t,
\end{align*}
where $\bar{K}^{ij}_t=1_{\{\bar{I}^i(t)\cap\bar{J}^j(t)\neq\emptyset\}}$, and set $\mathbf{M}^n_t=\sum_{k=1}^4\mathbf{M}(k)^n_t$. Then we have the following lemma.


\begin{lem}\label{lembasic}
Suppose that $(\ref{A4})$ and $[\mathrm{C}1]$-$[\mathrm{C}3]$ are satisfied. Then
\begin{align*}
b_n^{-\gamma}\left\{ \widehat{PHY}(\mathsf{X}, \mathsf{Y})^n-[X,Y]-\mathbf{M}^n\right\}\xrightarrow{ucp}0
\end{align*}
as $n\rightarrow\infty$ for any $\gamma<\xi'-1/2$.
\end{lem}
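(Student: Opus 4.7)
The approach is to decompose the estimation error bilinearly and to match each piece against $\mathbf{M}^n$ using It\^o's product formula together with Lemma \ref{key}. First I localize under $[\mathrm{C}1]$--$[\mathrm{C}3]$ so that drifts, quadratic-variation densities and noise second moments are uniformly bounded. Substituting $\mathsf{X}=X+U^X$ and $\mathsf{Y}=Y+U^Y$ expands $\widehat{PHY}(\mathsf{X},\mathsf{Y})^n$ into four bilinear pieces. On each noise factor I apply summation by parts, using $g(0)=g(1)=0$ to discard the boundary terms, and then replace $g((p+1)/k_n)-g(p/k_n)$ by $k_n^{-1}g'(p/k_n)$ using the piecewise Lipschitz bound on $g'$. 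Since $\mathfrak{E}^X$ jumps by $-k_n^{-1}\epsilon^X_{\widehat{S}^l}$ at $\widehat{S}^l$ and, by Lemma \ref{check}, $\mathfrak{I}$ is adapted with value $1$ exactly on $[\check{S}^{i+p},\widehat{S}^{i+p})$ within $\widehat{I}^{i+p}$, the resulting discrete expression matches $\bar{\mathfrak{U}^X}_{g'}(\widehat{\mathcal{I}})^i$ with a remainder $\mathrm{rem}^{X,i}=O_p(k_n^{-1})$ per increment; the analogue holds for $\overline{U^Y}(\widehat{\mathcal{J}})^j$.

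Next, I apply It\^o's product formula $\bar V_\alpha(\widehat{\mathcal{I}})^i\bar W_\beta(\widehat{\mathcal{J}})^j=\bar L_{\alpha,\beta}(V,W)^{ij}+[\bar V_\alpha(\widehat{\mathcal{I}})^i,\bar W_\beta(\widehat{\mathcal{J}})^j]$ to each of the four pairs $(X,Y)$, $(X,\mathfrak{U}^Y)$, $(\mathfrak{U}^X,Y)$, $(\mathfrak{U}^X,\mathfrak{U}^Y)$ with weights $(g,g)$, $(g,g')$, $(g',g)$, $(g',g')$. Summed with weights $\bar K^{ij}_t/(\psi_{HY}k_n)^2$ the $\bar L$-parts reproduce exactly $\mathbf{M}(1)^n+\mathbf{M}(3)^n+\mathbf{M}(4)^n+\mathbf{M}(2)^n=\mathbf{M}^n$. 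The remaining task is to show that the four bracket sums together contribute $[X,Y]_t+o_p(b_n^\gamma)$. By Fubini and the reindexing $(i+p,j+q)\to(i'(s),j'(s))$, where $(i'(s),j'(s))$ is the unique pair with $s\in\widehat{I}^{i'(s)}\cap\widehat{J}^{j'(s)}$, the $(\alpha,\beta)$-bracket sum equals
\begin{align*}
\psi_{HY}^{-2}\int_0^t c_{\alpha,\beta}(i'(s),j'(s))\,d[V,W]_s.
\end{align*}
Lemma \ref{key} replaces $c_{\alpha,\beta}(i'(s),j'(s))$ by $\psi_{\alpha,\beta}((j'(s)-i'(s))/k_n)+O_p(b_n^{1/2})$, and since $g$ is supported on $[0,1]$, direct computation gives $\psi_{\alpha,\beta}(0)=(\int_0^1\alpha)(\int_0^1\beta)$. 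Hence $\psi_{g,g}(0)=\psi_{HY}^2$, while $\psi_{g,g'}(0)=\psi_{g',g'}(0)=0$ since $g(1)-g(0)=0$. This is the crucial cancellation: only the $(X,Y)$-bracket contributes at leading order, producing exactly $[X,Y]_t$; in the other three cases the residual $O_p(b_n^{1/2})$ from Lemma \ref{key} multiplies a covariation of tractable size, and the resulting contribution stays below $b_n^\gamma$ for any $\gamma<\xi'-1/2<1/2$.

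The summation-by-parts remainders $\mathrm{rem}^{X,i}$, $\mathrm{rem}^{Y,j}$, inserted back into the $\bar L$-integrals, yield stochastic integrals of order $O_p(b_n^{1/2})$ after BDG and the localized moment bounds. The main obstacle is achieving the uniformity in $t$: the pointwise-in-$s$ reindexing must be upgraded to ucp precision, which requires controlling the Lemma \ref{key} residual uniformly over $s\in[0,t]$ and separately bounding the boundary pairs $(i,j)$ for which $\bar I^i$ or $\bar J^j$ straddles $t$. The number of such boundary pairs is $O(k_n)$ and each carries an error of size $r_n(t)=o_p(b_n^{\xi'})$, for a total of $o_p(b_n^{\xi'-1/2})$ --- precisely the threshold rate, so a careful chaining with [A4] and the piecewise Lipschitz continuity of $g$ is needed. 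Assembling these estimates via BDG and standard localization then yields the desired ucp convergence.
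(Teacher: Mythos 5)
Your decomposition is the same as the paper's: substitute $\mathsf X=X+U^X$, $\mathsf Y=Y+U^Y$ to get four bilinear pieces (the paper's $\mathbf I,\mathbf{II},\mathbf{III},\mathbf{IV}$), convert the $k_n\Delta(g)$ weights on the noise factors to $g'$ weights, and then identify the martingale part with $\mathbf M^n$ via integration by parts. What differs is how you kill the non-martingale (bracket/overlap) contributions. The paper does not use Lemma~\ref{key} in this proof at all: instead it relies on an \emph{exact} combinatorial cancellation coming from $g(0)=g(1)=0$, namely $\sum_{w=0}^{k_n-1}\Delta(g)^n_w=0$ (respectively $\sum_i g^n_i$ constant), applied separately to the overlap region (where it kills the product directly, $\mathbf{II}_{1,t}$) and to the bracket in case (a). The discrepancy between the non-overlap product and $\mathbf M(k)^n$ on the overlap set is then a genuine martingale estimate, which the paper isolates as Lemma~\ref{lemoverlap}, proved via Lenglart/BDG. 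You replace the exact cancellation by Lemma~\ref{key} together with $\psi_{g,g'}(0)=\psi_{g',g'}(0)=0$ and $\psi_{g,g}(0)=\psi_{HY}^2$. This works out to the same order because the residual $O_p(b_n^{1/2})$ in Lemma~\ref{key}, multiplied by the $O_p(1)$ total bracket mass, is still $o_p(b_n^\gamma)$ for $\gamma<\xi'-1/2<1/2$; it is arguably a cleaner unification (all four cases handled by one mechanism), at the price of invoking a machinery that in the paper is reserved for the asymptotic variance.

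Two things in your sketch are under-specified relative to the paper's argument. First, your assertion that ``the $\bar L$-parts reproduce exactly $\mathbf M^n$'' hides the weight conversion $k_n\Delta(g)\to g'$; the remainder $\Delta^2(g)^n_p=k_n\Delta(g)^n_p-(g')^n_p=O(k_n^{-1})$ produces extra bilinear terms (the paper's $\mathbf{II}^{(2)}_{3},\mathbf{II}^{(3)}_{3},\mathbf{II}^{(4)}_{3}$), and you assert but do not show these are $O_p(b_n^{1/2})$ uniformly; a Doob/Schwarz argument as in the paper is needed. Second, your bracket sums for the three mixed/noise cases are \emph{not} monotone in $t$ (the $\epsilon^X\epsilon^Y$ terms have random sign), so passing from a pointwise $o_p(b_n^\gamma)$ to the ucp statement requires either a martingale maximal inequality or the crude bound $\sup_s|\cdot|\le\sup_{p,q}|c_{\alpha,\beta}(p,q)|\cdot(\text{absolute mass})$, which in the paper is supplied by the Lenglart argument in Lemma~\ref{lemoverlap}; you flag the issue (``upgrading to ucp precision'') but do not resolve it. Also a small miscount: the boundary pairs $(i,j)$ straddling $t$ number $O(k_n^2)$ (for each of $O(k_n)$ admissible $i$ there are $O(k_n)$ values of $j$ with $\bar K^{ij}\neq0$), each contributing $O(\bar r_n|\log b_n|/k_n)$ after the modulus-of-continuity bound (\ref{absmod2}), for a total of $O(k_n\bar r_n|\log b_n|)=O(b_n^{\xi'-1/2}|\log b_n|)$; the conclusion is the same, but the factorization is not $O(k_n)\times O(\bar r_n)$.
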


We give a proof of Lemma \ref{lembasic} in Section \ref{prooflembasic}. The above lemma implies that we may consider $\mathbf{M}^n$ instead of the estimation error of our estimator as far as $\xi'>3/4$. 


\begin{lem}\label{HYlem3.1}
Let $V,W$ be two semimartingales and $g,h$ are two real functions on $\mathbb{R}$. For any $i,j\in\mathbb{Z}_+$ and any $t\in\mathbb{R}_+$, $\bar{K}^{ij}_t\bar{L}_{g,h}(V,W)_t=\bar{K}^{ij}_-\bullet\bar{L}_{g,h}(V,W)_t$.
\end{lem}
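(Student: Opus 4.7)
The plan is to compare both sides of the identity directly and reduce the claim to a single algebraic vanishing at the time $s^\star := \widehat{S}^i \vee \widehat{T}^j$. Using the convention $\bar{I}^i(t) = \bar{I}^i \cap [0,t)$, one checks from the definitions that
\[
\bar{K}^{ij}_t = \bar{K}^{ij}\cdot 1_{\{s^\star<t\}},
\]
so $\bar{K}^{ij}$ is left-continuous in $t$ and in particular $\bar{K}^{ij}_{t-}=\bar{K}^{ij}_t$. Because $V$ and $W$ are continuous semimartingales, the process $\bar{L}:=\bar{L}_{g,h}(V,W)^{ij}$ is continuous, so
\[
\bar{K}^{ij}_-\bullet \bar{L}_t \;=\; \bar{K}^{ij}\!\int_{(s^\star,\,t]}\! d\bar{L}_s \;=\; \bar{K}^{ij}\bigl[\bar{L}_t-\bar{L}_{s^\star}\bigr]\qquad (t>s^\star),
\]
whereas the LHS of the claimed identity equals $\bar{K}^{ij}\bar{L}_t$ on the same event (both sides vanishing for $t\le s^\star$). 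The identity therefore reduces to showing $\bar{L}_{s^\star}=0$ on $\{\bar{K}^{ij}=1\}$.

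To establish this vanishing, I assume without loss of generality that $s^\star=\widehat{S}^i$. Using the standing hypothesis $g(0)=h(0)=0$ from (\ref{weight}), the $p=0$ summand of $\bar{V}_g(\widehat{\mathcal{I}}^i)=\sum_{p=0}^{k_n-1}g^n_p V(\widehat{I}^{i+p})$ drops out, and for $p\ge 1$ the integrand $1_{\widehat{I}^{i+p}}(u-)$ is supported on $u\in(\widehat{S}^{i+p-1},\widehat{S}^{i+p}]\subseteq(\widehat{S}^i,\infty)$. Hence $\bar{V}_g(\widehat{\mathcal{I}}^i)_{s-}=0$ for every $s\le\widehat{S}^i$ and the random measure $d\bar{V}_g(\widehat{\mathcal{I}}^i)$ assigns no mass to $[0,\widehat{S}^i]$. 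Both summands in
\[
\bar{L}_{s^\star} \;=\; \int_0^{\widehat{S}^i}\!\bar{V}_g(\widehat{\mathcal{I}}^i)_{s-}\,d\bar{W}_h(\widehat{\mathcal{J}}^j)_s \;+\; \int_0^{\widehat{S}^i}\!\bar{W}_h(\widehat{\mathcal{J}}^j)_{s-}\,d\bar{V}_g(\widehat{\mathcal{I}}^i)_s
\]
therefore vanish—the first because its integrand is identically zero on $[0,\widehat{S}^i]$, the second because its integrator places no mass there. The case $s^\star=\widehat{T}^j$ is symmetric.

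The only step that is not pure bookkeeping is the final vanishing $\bar{L}_{s^\star}=0$, and it is there that the support structure of the pre-averaged processes at the left endpoint of $\bar{I}^i$ (respectively $\bar{J}^j$) matters. Once the left-continuity of $t\mapsto\bar{K}^{ij}_t$ is noted—so that no boundary jump term needs to be handled in any integration-by-parts manipulation—the remainder of the argument is immediate from the structure of the intervals $\widehat{I}^{i+p}$ and the boundary vanishing of $g$ and $h$.
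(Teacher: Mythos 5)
Your route and the paper's are essentially the same: both reduce the identity to the single claim that $\bar{L}_{g,h}(V,W)^{ij}$ vanishes through $s^\star:=\widehat{S}^i\vee\widehat{T}^j$. The paper gets there by expanding $\bar{K}^{ij}_t\bar{L}_t$ by integration by parts and killing the terms $\bar{L}_-\bullet\bar{K}^{ij}$ and $[\bar{K}^{ij},\bar{L}]$ (since $\bar{K}^{ij}$ is a one-jump step function jumping at $s^\star$ and $\bar{L}_{s^\star}=0$), whereas you compute $\bar{K}^{ij}_-\bullet\bar{L}$ directly from the step structure of $\bar{K}^{ij}$. These are equivalent. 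You go further than the paper in actually arguing the key vanishing $\bar{L}_{s^\star}=0$, which the paper dismisses as ``by its definition,'' and you correctly identify that the argument hinges on the boundary condition $g(0)=h(0)=0$ killing the $p=0$ summand $g^n_0\,V(\widehat{I}^i)$, which lives on $[\widehat{S}^{i-1},\widehat{S}^i)$, \emph{before} $s^\star$.

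The gap is that you call $g(0)=h(0)=0$ a ``standing hypothesis from $(\ref{weight})$.'' It is not: $(\ref{weight})$ constrains the fixed weight function of the estimator, whereas Lemma~\ref{HYlem3.1} is stated for two arbitrary real functions $g,h$ on $\mathbb{R}$, and the paper applies it (e.g.\ in $\mathbf{M}(2)^n$) with $g'$ in the slot of $g$, where $g'(0)\neq 0$ is permitted. Without the boundary vanishing, the claim $\bar{L}_{s^\star}=0$ is genuinely false: take $g\equiv h\equiv 1$ and $V=W$ a Brownian motion; then for $t\le s^\star$ one has $\bar{L}_t=\bar{V}_t\bar{W}_t-[\bar{V},\bar{W}]_t\neq 0$ in general. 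So you have proved a strictly weaker statement than the lemma. (The paper's own terse proof suffers from the same hidden dependence, so the issue is inherited from the source; still, your proof should flag $g(0)=h(0)=0$ as a hypothesis that is \emph{missing} from the lemma rather than import it silently from $(\ref{weight})$.) A second, smaller slip: the assertion that $V,W$ (hence $\bar{L}$) are continuous is both unsupported by the hypotheses (the lemma and its applications allow jump parts, e.g.\ $\mathfrak{E}^X$) and unnecessary, since $\int_{(s^\star,t]}\mathrm{d}\bar{L}_s=\bar{L}_t-\bar{L}_{s^\star}$ holds for any c\`adl\`ag semimartingale.
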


\begin{proof}
By integration by parts we have $$\bar{K}^{ij}_t\bar{L}_{g,h}(V,W)_t=\bar{K}^{ij}_-\bullet\bar{L}_{g,h}(V,W)_t+\bar{L}_{g,h}(V,W)_-\bullet\bar{K}^{ij}_t+[\bar{K}^{ij}, \bar{L}_{g,h}(V,W)]_t,$$ hence it is sufficient to show that $\bar{L}_{g,h}(V,W)_-\bullet\bar{K}^{ij}_t=[\bar{K}^{ij}, \bar{L}_{g,h}(V,W)]_t=0$. $\bar{K}^{ij}_t$ is a step function starting from 0 at $t=0$ and jumps to $+1$ at $t=R^\vee(i,j)$ when $\bar{I}^i\cap\bar{J}^j\neq\emptyset$. So, $\bar{L}_{g,h}(V,W)_-\bullet\bar{K}^{ij}_t=\bar{L}_{g,h}(V,W)_{R^\vee(i,j)\wedge t-}\bar{K}^{ij}_t$ and $[\bar{K}^{ij}, \bar{L}_{g,h}(V,W)]_t=\bar{K}^{ij}_t \Delta\bar{L}_{g,h}(V,W)_{R^\vee(i,j)\wedge t}$. However, $\bar{L}_{g,h}(V,W)_t=0$ for $t\leq R^\vee(i,j)$ by its definition.
\end{proof}


We momentarily assume that $X$, $Y$, $Z^X$ and $Z^Y$ are continuous local martingales. Lemma \ref{HYlem3.1} implies that $\mathbf{M}^n$ is a locally square-integrable martingale. Therefore, we can define the quantities $\mathfrak{V}_t^n:=\langle\mathbf{M}^n\rangle_t$ and $\mathfrak{V}_{N,t}^n:=\langle \mathbf{M}^n,N\rangle_t$ for a locally square-integrable martingale $N$. Then we consider the following conditions.
\begin{enumerate}
\item[{[A$1^*$]}] There exists an $\mathbf{F}$-adapted, nondecreasing, continuous process $(V_t)_{t\in\mathbb{R}_+}$ such that $b_n^{-1/2}\mathfrak{V}^n_t\to^pV_t$ as $n\to\infty$ for every $t$.
\item[{[B1]}] $b_n^{-1/4}\mathfrak{V}_{N,t}^n\to^p0$ as $n\to\infty$ for every $t$ and any $N\in\{X,Y,Z^X,Z^Y\}$.
\end{enumerate}

The next two lemmas imply that the above two conditions are sufficient for our stable convergence problem. They are proved in Section \ref{prooforthogonal} and \ref{proofLindeberg}, respectively.


\begin{lem}\label{orthogonal}
Suppose that $[\mathrm{A}1^*]$, $[\mathrm{A}4]$ and $[\mathrm{C}1]$-$[\mathrm{C}3]$ are satisfied. Then for any square-integrable martingale $N$ orthogonal to $(X,Y,Z^X,Z^Y)$ we have $b_n^{-1/4}\langle \mathbf{M}^n,N\rangle_t\to^p0$ as $n\to\infty$ for every $t$.
\end{lem}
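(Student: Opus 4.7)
The strategy is to make the algebraic structure of $\mathbf{M}^n$ transparent and then leverage the orthogonality hypothesis. By Lemma~\ref{HYlem3.1}, each piece $\mathbf{M}(k)^n$ takes the form $(\psi_{HY}k_n)^{-2}\sum_{i,j}\bar{K}^{ij}_-\bullet \bar{L}_{\alpha,\beta}(V_k,W_k)$, and $\bar{L}_{\alpha,\beta}(V,W)^{ij}$ is itself a sum of predictable integrals against $V$ and $W$. Substituting $\mathfrak{U}^X=\mathfrak{E}^X+(k_n\sqrt{b_n})^{-1}\mathfrak{Z}^X$, $\mathfrak{Z}^X=-\mathfrak{I}_-\bullet Z^X$, and the analogues for $Y$, I would rewrite $\mathbf{M}^n$ as a finite linear combination of iterated stochastic integrals whose outermost drivers lie in $\{X,Y,Z^X,Z^Y,\mathfrak{E}^X,\mathfrak{E}^Y\}$.

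Applying $\langle H\bullet M,N\rangle = H\bullet\langle M,N\rangle$ termwise and invoking $\langle V,N\rangle\equiv 0$ for $V\in\{X,Y,Z^X,Z^Y\}$, every contribution to $\langle \mathbf{M}^n,N\rangle$ whose outermost integrator is $X$, $Y$, $Z^X$, or $Z^Y$ vanishes identically. The remaining contributions have outer integrator $\mathfrak{E}^X$ or $\mathfrak{E}^Y$. For these I would exploit the product structure of $\mathcal{B}$: the jumps of $\mathfrak{E}^V$ occur at the $\mathbf{F}^{(0)}$-stopping times $\widehat{S}^p$ (respectively $\widehat{T}^q$) and equal $-k_n^{-1}\epsilon^V$, which by $(\ref{centered})$ is conditionally mean zero given $\mathcal{F}^{(0)}$. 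Hence $\mathfrak{E}^X$ and $\mathfrak{E}^Y$ are orthogonal to every $\mathbf{F}^{(0)}$-adapted local martingale and, a fortiori, to every continuous martingale on $\mathcal{B}$. Splitting $N=N^c+N^d$, the continuous part gives zero covariation with the purely discontinuous $\mathfrak{E}^V$, so only the pairing of $\mathfrak{E}^V$ with $N^d$ needs quantitative control.

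That residual term is a sum of expressions of the form $\sum_p H^n_{\widehat{S}^p-}\cdot k_n^{-1}\epsilon^V_{\widehat{S}^p}\cdot\Delta N^d_{\widehat{S}^p}$, where $H^n$ is a locally bounded predictable process inherited from $\bar{K}^{ij}_-$ and the surviving inner integrands. Combining the $\mathcal{F}^{(0)}$-conditional centering of $\epsilon^V$ with a Burkholder-type inequality, together with the moment bound [C2] on the noise and the count $N^n_t=O_p(b_n^{-1})$ from [C3], yields a bound of order $o_p(b_n^{1/4})$, from which the claim follows. The main obstacle is precisely this last estimate: verifying that the locally bounded predictable weights inherited from the intertwined double sum satisfy a uniform moment bound strong enough for a clean martingale-inequality argument under only $(\ref{A4})$ and [C1]--[C3]. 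Once that is in hand, the orthogonality of $N$ to $(X,Y,Z^X,Z^Y)$ does all the remaining work, with no further interaction between the continuous-martingale and noise components of $\mathbf{M}^n$ needed.
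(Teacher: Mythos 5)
Your proposal shares the paper's first observation --- that terms whose outermost integrator is one of $X,Y,Z^X,Z^Y$ are killed by orthogonality --- but from there it diverges from the paper's actual argument and runs into a genuine gap.

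The paper does not attempt a direct estimate for arbitrary $N\in\mathcal{M}_2^\perp$. Instead it uses a totality argument borrowed from \cite{JLMPV2009} and \cite{JPV2010}: it lets $\mathcal{N}$ be the set of all square-integrable martingales orthogonal to $(X,Y,Z^X,Z^Y)$ for which the conclusion holds, shows $\mathcal{N}$ is closed in $\mathcal{M}_2^\perp$ (this is where $[\mathrm{A}1^*]$ and the Kunita--Watanabe inequality enter), and then verifies $\mathcal{N}\supset\mathcal{N}^0\cup\mathcal{N}^1$, a total subset. Here $\mathcal{N}^0$ consists of $\mathbf{F}^{(0)}$-adapted martingales (for which $\langle\mathfrak{E}^X,N\rangle=\langle\mathfrak{E}^Y,N\rangle\equiv0$ by the product structure of $\mathcal{B}$, so the whole bracket vanishes), and $\mathcal{N}^1$ consists of martingales with $N_\infty=f(\epsilon_{t_1},\dots,\epsilon_{t_q})$ for bounded $f$ and finitely many fixed $t_l$. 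For $N\in\mathcal{N}^1$ the crucial structural fact is that $N$ has locally finite variation, hence is purely discontinuous with jumps only at the fixed times $t_1,\dots,t_q$; the optional bracket $[\mathbf{M}^n,N]$ then reduces to a sum over those $q$ fixed times, which is estimated directly to be $O_p(k_n^{-1})=o_p(b_n^{1/4})$, and the predictable bracket is handled via compensation.

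The gap in your proposal is the final estimate. You claim the residual $\sum_p H^n_{\widehat S^p-}\,k_n^{-1}\epsilon^V_{\widehat S^p}\,\Delta N^d_{\widehat S^p}$ is controlled by combining the $\mathcal{F}^{(0)}$-conditional centering of $\epsilon^V$ with a Burkholder-type inequality and the counts from $[\mathrm{C}2]$--$[\mathrm{C}3]$. But for a general square-integrable martingale $N$ on $\mathcal{B}$ (not $\mathbf{F}^{(0)}$-adapted), $\Delta N^d_{\widehat S^p}$ may itself be a nontrivial function of $\epsilon^V_{\widehat S^p}$, so the conditional expectation $E\bigl[\epsilon^V_{\widehat S^p}\Delta N^d_{\widehat S^p}\,\big|\,\mathcal{F}^{(0)}\bigr]$ is \emph{not} zero, and the sum you wrote down is not a martingale increment process in $p$; the Burkholder step is unavailable. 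Worse, a crude Kunita--Watanabe bound does not save you: $\langle\mathfrak{E}^X\rangle_t=O_p(1)$ and $\langle N\rangle_t=O_p(1)$, so $\mathrm{Var}(\langle\mathfrak{E}^X,N\rangle)_t=O_p(1)$; the predictable weights inherited from $(\psi_{HY}k_n)^{-2}\sum_{i,j}\bar K^{ij}_-\bar W(\widehat{\mathcal{J}})^j_-$ are of order $\sqrt{k_n\bar r_n|\log b_n|}\asymp b_n^{-1/4+\xi'/2}|\log b_n|^{1/2}$; after multiplying by $b_n^{-1/4}$ one is left with $O_p\bigl(b_n^{\xi'/2-1/2}|\log b_n|^{1/2}\bigr)$, which diverges because $\xi'<1$. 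In short, a direct estimate for arbitrary $N$ fails; the reduction to the total set $\mathcal{N}^0\cup\mathcal{N}^1$ --- where either the $\mathfrak{E}^V$-pairing vanishes exactly or $N$ jumps only at finitely many fixed times --- is exactly the missing ingredient.
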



\begin{lem}\label{Lindeberg}
Suppose that $[\mathrm{A}4]$, $[\mathrm{C}1]$-$[\mathrm{C}3]$ and $[\mathrm{N}]$ are satisfied. Then
$\sum_{s:0\leq s\leq t}|b_n^{-1/4}\Delta\mathbf{M}^n_s|^4\to^p0$
as $n\to\infty$ for any $t>0$.
\end{lem}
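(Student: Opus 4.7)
The plan is to show $E\bigl[\sum_{s\leq t}|\Delta\mathbf{M}^n_s|^4\bigr]=o(b_n)$, which combined with $(b_n^{-1/4})^4=b_n^{-1}$ and Markov's inequality yields the claim. A standard localization using the stopping-time sequences in [C1], [C2], [N] and [A6] together with (\ref{A4}) allows me to assume that $[V,W]'$ for $V,W=X,Y,Z^X,Z^Y$, the covariance $\Psi$, and $\int|z|^8Q_t(\mathrm{d}z)$ are uniformly bounded, and that $b_nN^n_t\leq C$ and $r_n(t)\leq b_n^{\xi'}$ for some deterministic constant $C$.

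First observe that $\mathbf{M}(1)^n$ is continuous: $X$ and $Y$ are continuous, and by Lemma~\ref{HYlem3.1} $\mathbf{M}(1)^n$ is a stochastic integral against the continuous semimartingale $\bar{L}_{g,g}(X,Y)^{ij}$. For $k=2,3,4$, the processes $(k_n\sqrt{b_n})^{-1}\mathfrak{Z}^X$ and $(k_n\sqrt{b_n})^{-1}\mathfrak{Z}^Y$ are continuous, so the only jumps of $\mathfrak{U}^X,\mathfrak{U}^Y$ come from $\mathfrak{E}^X,\mathfrak{E}^Y$, of sizes $-\epsilon^X_{\widehat{S}^\ell}/k_n$ and $-\epsilon^Y_{\widehat{T}^\ell}/k_n$ at the sampling times. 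Using $\Delta(f_-\bullet g)_t=f_{t-}\Delta g_t$ together with Lemma~\ref{HYlem3.1}, at $t=\widehat{T}^\ell$ (assumed distinct from all $\widehat{S}^{\ell'}$) one computes
\[
\Delta\mathbf{M}(2)^n_t=-\frac{\epsilon^Y_t}{\psi_{HY}^2 k_n^3}\sum_{j=\ell-k_n+1}^{\ell}g'\!\left(\tfrac{\ell-j}{k_n}\right)\sum_i\bar{K}^{ij}_{t-}\,\bar{\mathfrak{U}}^X_{g'}(\widehat{\mathcal{I}}^i)_{t-},
\]
with $\bar{\mathfrak{U}}^X_{g'}$ replaced by $\bar{X}_g$ for $\mathbf{M}(3)^n$, and symmetric formulas for $\mathbf{M}(4)^n$ and for jumps at $\widehat{S}^\ell$; a coincident-sampling cross term $\sim\epsilon^X\epsilon^Y/k_n^2$ also appears when $\widehat{S}^{\ell'}=\widehat{T}^\ell$.

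By Proposition~\ref{advantage}(b) the double sum has at most $(2k_n+1)k_n=O(k_n^2)$ nonzero terms, so Jensen's inequality gives $|\Delta\mathbf{M}(k)^n_t|^4\leq Ck_n^{-6}|\epsilon^Y_t|^4\sum_{i,j}\bar{K}^{ij}_{t-}\bigl|\bar{\mathfrak{U}}^X_{g'}(\widehat{\mathcal{I}}^i)_{t-}\bigr|^4$ (and analogously for the $\bar{X}_g$ variant). Standard BDG estimates, using the disjointness of $(\widehat{I}^{i+p})_p$ and $|\bar{I}^i|\leq k_nr_n(t)=O(b_n^{\xi'-1/2})$, give $E[|\bar{X}_g(\widehat{\mathcal{I}}^i)_t|^4]=O(b_n^{2\xi'-1})$, and the same estimate holds for $\bar{\mathfrak{U}}^X_{g'}(\widehat{\mathcal{I}}^i)$ by a Rosenthal-type bound on the independent noise sum and BDG for the $\mathfrak{Z}^X$ part. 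Conditioning on $\mathcal{F}_{t-}$, using conditional independence of $\epsilon^Y_t$ together with the eighth-moment bound in [N], and summing over the $O(k_n^2)$ nonzero pairs yields $E[|\Delta\mathbf{M}(k)^n_{\widehat{T}^\ell}|^4]=O(b_n^{2\xi'+1})$. Since by [C3] there are at most $Cb_n^{-1}$ jump sites in $[0,t]$, one obtains $b_n^{-1}E\bigl[\sum_{s\leq t}|\Delta\mathbf{M}^n_s|^4\bigr]=O(b_n^{2\xi'-1})=o(1)$ as required.

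The main obstacle is the coincident-sampling case $\widehat{S}^{\ell'}=\widehat{T}^\ell$, where the $\Delta f\cdot\Delta g$ contribution from integration by parts injects an extra term of order $\epsilon^X_{\widehat{S}^{\ell'}}\epsilon^Y_{\widehat{T}^\ell}/k_n^2$ into $\Delta\mathbf{M}(2)^n$. This is handled by the same bookkeeping using the eighth-moment bound on $\epsilon$ from [N] and Cauchy-Schwarz, since the number of such coincident times is bounded by $N^n_t=O(b_n^{-1})$.
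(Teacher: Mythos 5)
Your proof follows the same overall strategy as the paper's: localize to bounded moments and bounded sampling counts, observe (via Lemma~\ref{HYlem3.1} and Eq.\ I-4.36 of \cite{JS}) that $\mathbf{M}^n$ jumps only at the sampling times through $\mathfrak{E}^X,\mathfrak{E}^Y$, use Proposition~\ref{advantage}(b) to limit the $(i,j)$-sum to $O(k_n^2)$ nonzero pairs, apply a power-mean (Jensen) inequality to the fourth power, bound the fourth/eighth moments of $\bar{X}_g$ and $\bar{\mathfrak{U}}^X_{g'}$, and sum over the $O(b_n^{-1})$ jump sites with [C3] to arrive at $O(b_n^{2\xi'-1})=o(1)$. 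The only stylistic difference is that you run BDG on each piece $\mathbf{M}(k)^n$ separately and use conditional independence of $\epsilon^Y_t$, while the paper treats the total $\Delta\mathbf{M}^n_s$ in one compact formula and applies the a.s.\ modulus-of-continuity bound (\ref{absmod2}) together with the Schwarz inequality (which also handles the dependence that arises when $\widehat{S}^m=\widehat{T}^\ell$). Either route works and the exponent bookkeeping agrees.

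There is, however, one misconception that you should correct, even though it does not invalidate the bound: you present the coincident-sampling case $\widehat{S}^{\ell'}=\widehat{T}^\ell$ as the main obstacle, claiming that an extra term of order $\epsilon^X_{\widehat{S}^{\ell'}}\epsilon^Y_{\widehat{T}^\ell}/k_n^2$ arises from ``the $\Delta f\cdot\Delta g$ contribution from integration by parts.'' There is no such term. The process $\bar{L}_{g,h}(V,W)^{ij}$ is \emph{defined} as $\bar{V}_\alpha(\widehat{\mathcal{I}})^i_-\bullet\bar{W}_\beta(\widehat{\mathcal{J}})^j+\bar{W}_\beta(\widehat{\mathcal{J}})^j_-\bullet\bar{V}_\alpha(\widehat{\mathcal{I}})^i$; it is \emph{not} the product $\bar{V}^i\bar{W}^j$. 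Consequently $\Delta\bar{L}^{ij}_s=\bar{V}^i_{s-}\Delta\bar{W}^j_s+\bar{W}^j_{s-}\Delta\bar{V}^i_s$ with no $\Delta\bar{V}^i_s\Delta\bar{W}^j_s$ cross term; that piece lives in $[\bar{V}^i,\bar{W}^j]=\bar{V}^i\bar{W}^j-\bar{L}^{ij}$ and is dealt with elsewhere (it is absorbed in the $\mathbf{II}_{1}$ term in the proof of Lemma~\ref{basicess}, not in $\mathbf{M}^n$). Since your proposed workaround for this phantom term does no damage, the end result is still correct, but be careful to distinguish $\bar{L}^{ij}$ from the product when you work with $\mathbf{M}^n$.
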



We consider the following conditions:
\begin{enumerate}
\item[{[W]}] There exists an $\mathbf{F}$-predictable process $w$ such that $V_{\cdot}=\int_0^{\cdot}w_s^2\mathrm{d}s$.
\item[{[SC]}] $b_n^{-1/4}\mathbf{M}^n\to^{d_s}\mathbf{M}$ in $\mathbb{D}(\mathbb{R}_+)$ as $n\rightarrow\infty$, where $\mathbf{M}=\int_0^{\cdot}w_s\mathrm{d}\tilde{W}_s$, $w$ is a some predictable process, and $\tilde{W}$ is a one-dimensional standard Wiener process (defined on an extension of $\mathcal{B}$) independent of $\mathcal{F}$.
\end{enumerate}

\begin{prop}\label{jacod}
Suppose that $[\mathrm{A}1^*]$, $[\mathrm{B}1]$, $[\mathrm{A}4]$, $[\mathrm{C}1]$-$[\mathrm{C}3]$, $[\mathrm{N}]$ and $[\mathrm{W}]$ are fulfilled. Then $[\mathrm{SC}]$ holds.
\end{prop}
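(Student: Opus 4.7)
The plan is to recognise that, under $[\mathrm{C}1]$ and by Lemma~\ref{HYlem3.1}, $\mathbf{M}^n$ is a locally square-integrable purely continuous/compensated martingale on $\mathcal{B}$, so that the rescaled process $\widetilde{\mathbf{M}}^n:=b_n^{-1/4}\mathbf{M}^n$ is again such a martingale, and then apply a standard stable central limit theorem for locally square-integrable martingales (for instance Theorem~IX.7.28 of Jacod--Shiryaev, or Theorem~2.6 of Jacod~(1997)). This theorem produces exactly a conditional Gaussian limit of the form $\int_0^{\cdot} w_s \mathrm{d}\widetilde{W}_s$ on an extension of $\mathcal{B}$, with $\widetilde{W}$ a Brownian motion independent of $\mathcal{F}$, as required by $[\mathrm{SC}]$.

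First I would record the bracket-convergence hypothesis: by $[\mathrm{A}1^{*}]$ one has $\langle \widetilde{\mathbf{M}}^n\rangle_t=b_n^{-1/2}\mathfrak{V}^n_t\to^p V_t$, and by $[\mathrm{W}]$ the limit process $V_t=\int_0^t w_s^2\mathrm{d}s$ is continuous with a predictable density. Next I would verify the conditional Lindeberg-type condition: since
\[
\sum_{s\leq t}|\Delta\widetilde{\mathbf{M}}^n_s|^2 1_{\{|\Delta\widetilde{\mathbf{M}}^n_s|>\varepsilon\}}\leq\varepsilon^{-2}\sum_{s\leq t}|\Delta\widetilde{\mathbf{M}}^n_s|^4,
\]
the required Lindeberg condition follows from Lemma~\ref{Lindeberg}. (The continuity of the limit $V$ already implies we are dealing with a continuous Gaussian limit, so only the $L^2$-Lindeberg needs checking.)

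The substantive step is the asymptotic orthogonality condition: $\langle\widetilde{\mathbf{M}}^n,N\rangle_t\to^p 0$ for every bounded martingale $N$ on $\mathcal{B}$. I would reduce this to the two facts supplied by the paper, namely $[\mathrm{B}1]$ for $N\in\{X,Y,Z^X,Z^Y\}$ and Lemma~\ref{orthogonal} for square-integrable martingales orthogonal to $(X,Y,Z^X,Z^Y)$. Using the product structure $\mathcal{B}=\mathcal{B}^{(0)}\otimes\mathcal{B}^{(1)}$, any bounded martingale $N$ on $\mathcal{B}$ splits (after a standard conditioning / Kunita--Watanabe argument) into (i) its projection onto the subspace generated by $(X,Y,Z^X,Z^Y)$, that is, a stochastic integral $H^{X}\bullet X+H^{Y}\bullet Y+H^{Z^X}\bullet Z^X+H^{Z^Y}\bullet Z^Y$ with locally bounded predictable integrands, and (ii) a martingale orthogonal to the four generators. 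For the piece (ii), Lemma~\ref{orthogonal} gives the required rate directly; for the piece (i), the identity $\langle\widetilde{\mathbf{M}}^n, H\bullet N\rangle=H\bullet\langle\widetilde{\mathbf{M}}^n,N\rangle$ reduces matters to $[\mathrm{B}1]$, via dominated/bounded convergence for stochastic integrals together with localisation of $H$.

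Combining the three ingredients, Jacod's stable CLT yields $\widetilde{\mathbf{M}}^n\to^{d_s}\int_0^{\cdot} w_s\,\mathrm{d}\widetilde{W}_s$ in $\mathbb{D}(\mathbb{R}_+)$ on an extension of $\mathcal{B}$, with $\widetilde{W}$ independent of $\mathcal{F}$, which is exactly $[\mathrm{SC}]$. The main obstacle is really step three: making sure the orthogonality hypotheses supplied by $[\mathrm{B}1]$ and Lemma~\ref{orthogonal} suffice to verify the bracket-with-every-bounded-martingale condition required by Jacod's theorem. The product structure of $\mathcal{B}$ and the richness of the class $\{X,Y,Z^X,Z^Y\}$ (plus its orthogonal complement) handle this, but the bookkeeping—in particular passing from orthogonality with $X,Y,Z^X,Z^Y$ to orthogonality with arbitrary integrals against them—has to be carried out carefully with appropriate localisation.
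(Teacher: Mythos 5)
Your approach matches the paper's: the paper also applies Theorem~2-2 of Jacod~(1997) directly, using $[\mathrm{A}1^*]$, $[\mathrm{B}1]$ and Lemma~\ref{orthogonal} to verify the bracket-convergence and asymptotic-orthogonality hypotheses (Eq.~(2.8)--(2.10) and (2.12) there, with $B=0$, $F=V$, $G=0$), and Lemma~\ref{Lindeberg} to verify the Lindeberg-type condition~(2.26); your decomposition of a bounded martingale $N$ into its projection on the span of $(X,Y,Z^X,Z^Y)$ and an orthogonal piece is exactly how $[\mathrm{B}1]$ and Lemma~\ref{orthogonal} are meant to combine. One small slip worth correcting: the local-martingale property of $\mathbf{M}^n$ does not follow from $[\mathrm{C}1]$ but from the standing assumption of this section that $X$, $Y$, $Z^X$, $Z^Y$ are continuous local martingales; moreover $\mathbf{M}^n$ is not continuous, since it jumps wherever $\mathfrak{E}^X$ or $\mathfrak{E}^Y$ does, which is precisely why the Lindeberg check via Lemma~\ref{Lindeberg} is nontrivial.
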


\begin{proof}
We apply Theorem 2-2 of \cite{J1997}. Eq.~(2.8), (2.9), (2.10) and (2.12) in \cite{J1997} are satisfied with $B=0, F=V$ and $G=0$ due to the assumptions and Lemma \ref{orthogonal}. Moreover, Lemma \ref{Lindeberg} yields Eq.~(2.26) in \cite{J1997} due to the definition of the compensator of a random measure. Consequently, we complete the proof.
\end{proof} 


The left problem is to check the conditions [A1$^*$], [B1] and [W]. Following \cite{HY2011}, we introduce some notation and conditions. For any locally square-integrable martingales $M$, $N$, $M'$, $N'$ and any $\alpha,\beta,\alpha',\beta'\in\Phi$, let
\begin{align*}
&V^{iji'j'}_{\alpha,\beta;\alpha',\beta'}(M,N;M',N')_t\\
:=&\langle\bar{M}_{\alpha}(\widehat{\mathcal{I}})^i,\bar{M}'_{\alpha'}(\widehat{\mathcal{I}})^{i'}\rangle_t\langle\bar{N}_\beta(\widehat{\mathcal{J}})^j,\bar{N}'_{\beta'}(\widehat{\mathcal{J}})^{j'}\rangle_t+\langle\bar{M}_\alpha(\widehat{\mathcal{I}})^i,\bar{N}'_{\beta'}(\widehat{\mathcal{J}})^{j'}\rangle_t\langle\bar{M}'_{\alpha'}(\widehat{\mathcal{I}})^{i'},\bar{N}_\beta(\widehat{\mathcal{J}})^{j}\rangle_t.
\end{align*}
Then we introduce the following condition:
\begin{enumerate}
\item[{[B2]}] For any $M,M'\in\{X,\mathfrak{E}^X,\mathfrak{Z}^X\}$, any $N,N'\in\{Y,\mathfrak{E}^Y,\mathfrak{Z}^Y\}$ and any $\alpha,\beta,\alpha',\beta'\in\Phi$,
\begin{align*}
&b_n^{-1/2}\sum_{i,j,i',j'}(\bar{K}^{ij}_-\bar{K}^{i'j'}_-)\bullet\langle\bar{L}_{\alpha,\beta}^{ij}(M,N),\bar{L}_{\alpha',\beta'}^{i'j'}(M',N')\rangle_t\\
=&b_n^{-1/2}\sum_{i,j,i',j'}(\bar{K}^{ij}_-\bar{K}^{i'j'}_-)\bullet V^{iji'j'}_{\alpha,\beta;\alpha',\beta'}(M,N;M',N')_t+o_p\left( k_n^4\right)
\end{align*}
as $n\to\infty$ for every $t\in\mathbb{R}_+$.
\end{enumerate}

Let
\begin{align*}
\bar{V}^{n,1}_t
=&\frac{1}{\psi_{HY}^4}\sum_{p,q}\psi_{g,g}\left(\frac{q-p}{k_n}\right)^2\left\{[X](\widehat{I}^p)_t[Y](\widehat{J}^q)_t+[X,Y](\widehat{I}^p)_t[X,Y](\widehat{J}^q)_t\right\},\\
\bar{V}^{n,2}_t
=&\frac{1}{(\psi_{HY}k_n)^4}\sum_{p,q}\psi_{g',g'}\left(\frac{q-p}{k_n}\right)^2\Big\{\left(\Psi^{11}_{\widehat{S}^p}+b_n^{-1}[Z^X](\check{I}^p)_t\right)\left(\Psi^{22}_{\widehat{T}^q}+b_n^{-1}[Z^Y](\check{J}^q)_t\right)\\
&+\left(\Psi^{12}_{R^p}1_{\{\widehat{S}^p=\widehat{T}^p\leq t\}}+b_n^{-1}[\mathfrak{Z}^X,\mathfrak{Z}^Y](\widehat{I}^p)_t\right)\left(\Psi^{12}_{R^q}1_{\{\widehat{S}^q=\widehat{T}^q\leq t\}}+b_n^{-1}[\mathfrak{Z}^X,\mathfrak{Z}^Y](\widehat{J}^q)_t\right)\Big\},\\
\bar{V}^{n,3}_t
=&\frac{1}{\psi_{HY}^4 k_n^2}\sum_{p,q}\psi_{g,g'}\left(\frac{q-p}{k_n}\right)^2\Big\{[X](\widehat{I}^p)_t\left(\Psi^{22}_{\widehat{T}^q}1_{\{\widehat{T}^q\leq t\}}+b_n^{-1}[Z^Y](\check{J}^q)_t\right)-b_n^{-1}[X,Z^Y](\check{J}^p)_t[X,Z^Y](\check{J}^q)_t\Big\},\\
\bar{V}^{n,4}_t
=&\frac{1}{\psi_{HY}^4 k_n^2}\sum_{p,q}\psi_{g,g'}\left(\frac{q-p}{k_n}\right)^2\Big\{[Y](\widehat{J}^q)_t\left(\Psi^{11}_{\widehat{S}^p}1_{\{\widehat{S}^p\leq t\}}+b_n^{-1}[Z^X](\check{I}^p)_t\right)-b_n^{-1}[Y,Z^X](\check{I}^p)_t[Y,Z^X](\check{I}^q)_t\Big\}
\end{align*}
and
\begin{align*}
\bar{V}^{n,12}_t
=&\frac{b_n^{-1}}{\psi_{HY}^4 k_n^2}\sum_{p,q}\psi_{g',g'}\left(\frac{q-p}{k_n}\right)\psi_{g,g}\left(\frac{q-p}{k_n}\right)\{[Z^X,X](\check{I}^p)_t[Z^Y,Y](\check{J}^q)_t+[Z^X,Y](\check{I}^p)_t[X,Z^Y](\check{J}^q)_t\},\\
\bar{V}^{n,34}_t
=&\frac{1}{\psi_{HY}^4 k_n^2}\sum_{p,q}\psi_{g,g'}\left(\frac{q-p}{k_n}\right)^2\Big\{-b_n^{-1}[X,Z^X](\check{I}^p)_t[Z^Y,Y](\check{J}^q)_t\\
&\hphantom{\frac{1}{\psi_{HY}^4 k_n^2}\sum_{p,q}\psi_{g,g'}\left(\frac{q-p}{k_n}\right)^2\Big\{}+[X,Y](\widehat{I}^p)_t\left(\Psi^{12}_{R^q}1_{\{\widehat{S}^q=\widehat{T}^q\leq t\}}+b_n^{-1}[\mathfrak{Z}^X,\mathfrak{Z}^Y](\widehat{J}^p)_t)\right)\Big\},
\end{align*}
and set $\bar{V}^n_t=\sum_{l=1}^4\bar{V}^{n,l}_t+2(\bar{V}^{n,12}_t+\bar{V}^{n,34}_t)$. Then we have the following proposition, which enables us to work with $\bar{V}^n$, a more tractable process than $\mathfrak{V}^n$. The proof is given in Section \ref{proofHYprop3.2}.
\begin{prop}\label{HYprop3.2}
Suppose that $[\mathrm{A}4]$, $[\mathrm{C}1]$-$[\mathrm{C}3]$ and $[\mathrm{B}2]$ hold. Suppose also that $\Psi$ is c\`adl\`ag and quasi-left continuous. Then $\mathfrak{V}^n_t=\bar{V}^n_t+o_p(b_n^{1/2})$ as $n\to\infty$ for all $t\in\mathbb{R}_+$.
\end{prop}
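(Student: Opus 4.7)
The plan is to expand $\mathfrak{V}^n = \langle \mathbf{M}^n\rangle$ by bilinearity into the sum of the $4\times 4$ brackets $\langle \mathbf{M}(k)^n,\mathbf{M}(l)^n\rangle$, and then reduce each piece sequentially using Lemma \ref{HYlem3.1}, condition [B2], and Lemma \ref{key}. First, Lemma \ref{HYlem3.1} lets us write each $\mathbf{M}(k)^n$ as $(\psi_{HY}k_n)^{-2}\sum_{i,j}\bar{K}^{ij}_-\bullet\bar{L}_{\alpha_k,\beta_k}(M_k,N_k)^{ij}$ with the appropriate quadruple $(M_k,N_k,\alpha_k,\beta_k)$. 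Taking predictable covariations and using Kunita--Watanabe, every pair gives
\begin{equation*}
\langle \mathbf{M}(k)^n,\mathbf{M}(l)^n\rangle=\frac{1}{(\psi_{HY}k_n)^4}\sum_{i,j,i',j'}(\bar{K}^{ij}_-\bar{K}^{i'j'}_-)\bullet\langle\bar{L}_{\alpha_k,\beta_k}^{ij}(M_k,N_k),\bar{L}_{\alpha_l,\beta_l}^{i'j'}(M_l,N_l)\rangle.
\end{equation*}
Next, I would decompose $\mathfrak{U}^X=\mathfrak{E}^X+(k_n\sqrt{b_n})^{-1}\mathfrak{Z}^X$ (and similarly for $Y$), which splits each above bracket into finitely many summands whose inside processes belong to the classes tracked by [B2]. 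Then [B2] replaces the quadratic covariations $\langle\bar{L}^{ij},\bar{L}^{i'j'}\rangle$ by the $V^{iji'j'}$-expressions with a global error of $b_n^{-1/2}\cdot o_p(k_n^4)/(\psi_{HY}k_n)^4=o_p(b_n^{1/2})$, which is exactly the tolerance we can afford.

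With [B2] applied, each $V^{iji'j'}$ becomes a sum of products of elementary brackets between pre-averaged pieces. These are computed explicitly from the disjoint-interval structure: for continuous (local) martingales $V,W$,
\begin{equation*}
\langle \bar{V}_\alpha(\widehat{\mathcal{I}})^i,\bar{W}_{\alpha'}(\widehat{\mathcal{I}})^{i'}\rangle_t=\sum_{m}\alpha^n_{m-i}\alpha'^n_{m-i'}[V,W](\widehat{I}^m)_t,
\end{equation*}
and analogously for the $\widehat{\mathcal{J}}$-indexed ones. For the noise pieces I invoke the formulas for $\langle\mathfrak{E}^X\rangle$, $\langle\mathfrak{E}^Y\rangle$ and $\langle\mathfrak{E}^X,\mathfrak{E}^Y\rangle$ stated just before Lemma \ref{check} (legitimized by [A2] giving $\mathbf{F}^{(0)}$-predictability of $\widehat{S}^k,\widehat{T}^k$ and by the quasi-left continuity of $\Psi$), together with $\langle\mathfrak{Z}^X\rangle=\mathfrak{I}_-^2\bullet [Z^X]$, $\langle\mathfrak{Z}^X,\mathfrak{Z}^Y\rangle=\mathfrak{I}_-\mathfrak{J}_-\bullet [Z^X,Z^Y]$, $\langle X,\mathfrak{Z}^Y\rangle=-\mathfrak{J}_-\bullet[X,Z^Y]$, and their variants, all of which follow from the definitions of $\mathfrak{Z}^X,\mathfrak{Z}^Y$ as adapted stochastic integrals and the orthogonality of $\mathfrak{E}$ to everything on $\Omega^{(0)}$.

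After these substitutions the outer double sum over $i,i'$ (resp.\ $j,j'$) together with the indicator product $\bar{K}^{ij}_-\bar{K}^{i'j'}_-$ reassembles, at each spatial index $(m,m')$, precisely the quantity $c_{\alpha,\beta}(m,m')$ appearing in Lemma \ref{key} (with boundary terms of index $m<k_n$ absorbed into $o_p(b_n^{1/2})$ via [A4] and the bound (\ref{sumbarK})). Lemma \ref{key} then replaces $c_{\alpha,\beta}(m,m')$ by $\psi_{\alpha,\beta}((m'-m)/k_n)$ uniformly at cost $O_p(b_n^{1/2})$. Collecting the resulting integrals, the four diagonal brackets $(k,k)$ produce $\bar{V}^{n,k}$, the symmetric pair $(1,2)\leftrightarrow(2,1)$ produces $2\bar{V}^{n,12}$, and the pair $(3,4)\leftrightarrow(4,3)$ produces $2\bar{V}^{n,34}$; the remaining cross pairs $(1,3),(1,4),(2,3),(2,4)$ either vanish (by orthogonality of $\mathfrak{E}$ to $(X,Y)$) or recombine into contributions of the same shape already accounted for.

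The main obstacle is the last two steps: tracking which weight pair $(\alpha,\beta)$ contributes to which $\psi_{\alpha,\beta}$, and verifying that the coefficients aggregate exactly into those in $\bar{V}^{n,1},\dots,\bar{V}^{n,4},\bar{V}^{n,12},\bar{V}^{n,34}$. In particular the subtractive cross-term $-b_n^{-1}[X,Z^Y](\check{J}^p)[X,Z^Y](\check{J}^q)$ inside $\bar{V}^{n,3}$ (and its sibling in $\bar{V}^{n,4}$) arises from the second summand in $V^{iji'j'}$ combined with the $(k_n\sqrt{b_n})^{-1}$ scaling of the $\mathfrak{Z}$-component of $\mathfrak{U}^Y$, and getting its sign and normalization right requires the most careful bookkeeping. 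Otherwise, all steps are routine applications of the already-quoted tools.
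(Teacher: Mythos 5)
Your overall route matches the paper's: expand $\langle\mathbf{M}^n\rangle$ bilinearly into the sixteen pair brackets $\langle\mathbf{M}(k)^n,\mathbf{M}(l)^n\rangle$, rewrite each via Lemma \ref{HYlem3.1}, pass to the $V^{iji'j'}$-form using [B2], and replace the $c_{\alpha,\beta}$-coefficients by $\psi_{\alpha,\beta}$-values via Lemma \ref{key} (the paper packages these last two steps into Lemma \ref{lemHYprop3.2}). The identification of the diagonal contributions with $\bar{V}^{n,l}$ and of the $(1,2)$ and $(3,4)$ pairs with $\bar{V}^{n,12}$, $\bar{V}^{n,34}$ is also the intended bookkeeping.

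However, there is a genuine gap in your treatment of the cross pairs $(1,3),(1,4),(2,3),(2,4)$. You claim these ``either vanish (by orthogonality of $\mathfrak{E}$ to $(X,Y)$) or recombine into contributions of the same shape already accounted for.'' Neither branch is correct in general. Orthogonality kills only the $\mathfrak{E}$-components of $\mathfrak{U}^X$ and $\mathfrak{U}^Y$; the $\mathfrak{Z}$-components remain, and since $Z^X,Z^Y$ are arbitrary semimartingales correlated with $X,Y$, brackets such as $\langle Y,\mathfrak{Z}^Y\rangle(\widehat{J}^q)=-[Y,Z^Y](\check{J}^q)$ do not vanish. A crude size estimate of the resulting contribution to, say, $b_n^{-1/2}\langle\mathbf{M}(1)^n,\mathbf{M}(3)^n\rangle_t$ gives $O_p(b_n^{\xi'-1/2})$, which is $o_p(1)$ but not $o_p(b_n^{1/2})$ since $\xi'<1$; so these terms cannot be absorbed into the error. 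They also cannot ``recombine'' since by definition $\bar{V}^n$ has no $(1,3),(1,4),(2,3),(2,4)$ contributions. The actual mechanism in the paper is a parity cancellation (Lemma \ref{lemvanish}): after the $\psi$-replacement, each such cross pair carries a weight of the form $\psi_{g,g}\!\left(\tfrac{q-p}{k_n}\right)\psi_{g,g'}\!\left(\tfrac{q-p}{k_n}\right)$ or $\psi_{g',g'}\!\left(\tfrac{q-p}{k_n}\right)\psi_{g,g'}\!\left(\tfrac{q-p}{k_n}\right)$, which is an odd function of $(q-p)/k_n$ (since $\psi_{g,g}$ and $\psi_{g',g'}$ are even while $\psi_{g,g'}$ is odd, see Lemma \ref{psi}(d)), so the Riemann sum over $|p-q|\le 2k_n$ degenerates to the required order. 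This parity identity, not orthogonality, is what makes the cross terms negligible; it is also used (more mildly) in reconciling the sign conventions inside the diagonal and $(1,2),(3,4)$ contributions. Your proof as written would not go through without invoking it.
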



We modify [A1$^*$].
\begin{enumerate}
\item[{[A1]}] There exists an $\mathbf{F}$-adapted, nondecreasing, continuous process $(V_t)_{t\in\mathbb{R}_+}$ such that $b_n^{-1/2}\bar{V}^n_t\to V_t$ as $n\to\infty$ for every $t$.
\end{enumerate}

By Proposition \ref{HYprop3.2}, we can rephrase Proposition \ref{jacod} as follows.
\begin{prop}\label{jacod2}
Suppose that $[\mathrm{A}1]$, $[\mathrm{A}4]$, $[\mathrm{B}1]$, $[\mathrm{B}2]$, $[\mathrm{C}1]$-$[\mathrm{C}3]$, $[\mathrm{N}]$ and $[\mathrm{W}]$ for $V$ in $[\mathrm{A}1]$ are satisfied. Then $[\mathrm{SC}]$ holds.
\end{prop}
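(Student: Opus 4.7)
The plan is to reduce condition [A1] to condition [A1$^*$] and then directly invoke Proposition \ref{jacod}. The key observation is that Proposition \ref{HYprop3.2} asserts, under [A4], [B2], [C1]--[C3] and the càdlàg and quasi-left-continuity of $\Psi$, that $\mathfrak{V}^n_t = \bar{V}^n_t + o_p(b_n^{1/2})$ for every $t\in\mathbb{R}_+$. All the preconditions are automatically available here: [A4], [B2] and [C1]--[C3] appear verbatim in the hypotheses of the present proposition, while the regularity properties of $\Psi$ are built into condition [N].

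First I would divide the identity coming from Proposition \ref{HYprop3.2} through by $b_n^{1/2}$ to obtain $b_n^{-1/2}\mathfrak{V}^n_t = b_n^{-1/2}\bar{V}^n_t + o_p(1)$ for each fixed $t$. Combining this with the hypothesis [A1], namely $b_n^{-1/2}\bar{V}^n_t\to V_t$, immediately yields $b_n^{-1/2}\mathfrak{V}^n_t\to^p V_t$, which is exactly condition [A1$^*$] with the same limiting process $V$. Since [W] is assumed for this $V$, the version of [W] required by Proposition \ref{jacod} is automatically satisfied.

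At this stage every hypothesis of Proposition \ref{jacod}, namely [A1$^*$], [B1], [A4], [C1]--[C3], [N] and [W], is in force, and [SC] follows directly from that proposition. I do not expect a genuine obstacle in this reduction: the substantive analytic work has already been carried out in Proposition \ref{HYprop3.2} (which replaces the predictable quadratic variation $\mathfrak{V}^n$ of $\mathbf{M}^n$ by the more explicit random sum $\bar{V}^n$ via Lemma \ref{key}) and in Lemmas \ref{orthogonal} and \ref{Lindeberg}, through which Proposition \ref{jacod} invokes Jacod's stable limit theorem. Proposition \ref{jacod2} should therefore be viewed as a convenient repackaging of Proposition \ref{jacod} in terms of $\bar{V}^n$, a quantity that can realistically be analysed when computing the asymptotic variance in concrete models such as those considered in Theorem \ref{mainthm}.
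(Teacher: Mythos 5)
Your argument is correct and is precisely what the paper intends: the text immediately preceding Proposition \ref{jacod2} reads ``By Proposition \ref{HYprop3.2}, we can rephrase Proposition \ref{jacod} as follows,'' and your reduction of [A1] to [A1$^*$] via Proposition \ref{HYprop3.2} (with the c\`adl\`ag/quasi-left-continuity of $\Psi$ supplied by [N]), followed by an appeal to Proposition \ref{jacod}, spells out exactly that remark.
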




The condition [W] for $V$ in [A1] can be checked by the following lemma, which is proved in Section \ref{proofHYlem4.1}.
\begin{lem}\label{HYlem4.1}
Suppose that $[\mathrm{A}1'](\mathrm{i})$-$(\mathrm{iii})$, $[\mathrm{A}3]$, $[\mathrm{A}4]$ and $[\mathrm{C}2]$ hold. Suppose also that $\Psi$ is c\`adl\`ag and adapted to $\mathbf{H}^n$ for every $n$. Then
\begin{enumerate}[\normalfont (a)]
\item $b_n^{-1/2}\sum_{p,q=1}^{\infty}\psi_{g,g}\left(\frac{q-p}{k_n}\right)^2\langle X\rangle(\widehat{I}^p)_t\langle Y\rangle(\widehat{J}^q)_t\to^p\theta\kappa\int_0^t\langle X\rangle'_s\langle Y\rangle'_sG_s\mathrm{d}s$,
\item $b_n^{-1/2}\sum_{p,q=1}^{\infty}\psi_{g,g}\left(\frac{q-p}{k_n}\right)^2\langle X,Y\rangle(\widehat{I}^p)_t\langle X, Y\rangle(\widehat{J}^q)_t\to^p\theta\kappa\int_0^t(\langle X,Y\rangle'_s)^2G_s\mathrm{d}s$,
\item $b_n^{-1/2}\frac{1}{k_n^4}\sum_{p,q=1}^{\infty}\psi_{g',g'}\left(\frac{q-p}{k_n}\right)^2\Psi^{11}_{\widehat{S}^p}\Psi^{22}_{\widehat{T}^q}1_{\{\widehat{S}^p\vee\widehat{T}^q\leq t\}}\to^p
\theta^{-3}\widetilde{\kappa}\int_0^t\Psi^{11}_s\Psi^{22}_s G_s^{-1}\mathrm{d}s$,
\item $b_n^{-1/2}\frac{1}{k_n^4}\sum_{p,q=1}^{\infty}\psi_{g',g'}\left(\frac{q-p}{k_n}\right)^2\Psi^{12}_{\widehat{S}^p}1_{\{\widehat{S}^p=\widehat{T}^p\leq t\}}\Psi^{21}_{\widehat{T}^q}1_{\{\widehat{S}^q=\widehat{T}^q\leq t\}}
\to^p\theta^{-3}\widetilde{\kappa}\int_0^t\left(\Psi^{12}_s\chi_s\right)^2 G_s^{-1}\mathrm{d}s$,
\item $b_n^{-1/2}\frac{1}{k_n^2}\sum_{p,q=1}^{\infty}\psi_{g,g'}\left(\frac{q-p}{k_n}\right)^2\langle X\rangle(\widehat{I}^p)_t\Psi^{22}_{\widehat{T}^q}1_{\{\widehat{T}^q\leq t\}}\to^p\theta^{-1}\overline{\kappa}\int_0^t\langle X\rangle'_s\Psi^{22}_s\mathrm{d}s$,
\item $b_n^{-1/2}\frac{1}{k_n^2}\sum_{p,q=1}^{\infty}\psi_{g,g'}\left(\frac{p-q}{k_n}\right)^2\langle Y\rangle(\widehat{J}^q)_t\Psi^{11}_{\widehat{S}^p}1_{\{\widehat{S}^p\leq t\}}\to^p\theta^{-1}\overline{\kappa}\int_0^t\langle Y\rangle'_s\Psi^{11}_s\mathrm{d}s$
\item $b_n^{-1/2}\frac{1}{k_n^2}\sum_{p,q=1}^{\infty}\psi_{g,g'}\left(\frac{q-p}{k_n}\right)^2\langle X,Y\rangle(\widehat{I}^p)_t\Psi^{12}_{R^q}1_{\{\widehat{S}^q=\widehat{T}^q\leq t\}}\to^p\theta^{-1}\overline{\kappa}\int_0^t\langle X,Y\rangle'_s\Psi^{12}_s\chi_s\mathrm{d}s$
\end{enumerate}
as $n\to\infty$ for every $t$. Furthermore, if $[\mathrm{A}1'](\mathrm{iv})$-$(\mathrm{v})$ hold in addition to the above assumption, then
\begin{enumerate}[\normalfont (a)]\setcounter{enumi}{7}
\item $b_n^{-1/2}\frac{b_n^{-2}}{k_n^4}\sum_{p,q=1}^{\infty}\psi_{g',g'}\left(\frac{q-p}{k_n}\right)^2[Z^X](\check{I}^p)_t [Z^Y](\check{J}^q)_t\to^p
\theta^{-3}\widetilde{\kappa}\int_0^t[Z^X]'_s[Z^Y]'_s F^1_s F^2_s G_s^{-1}\mathrm{d}s$,
\item $b_n^{-1/2}\frac{b_n^{-2}}{k_n^4}\sum_{p,q=1}^{\infty}\psi_{g',g'}\left(\frac{q-p}{k_n}\right)^2[\mathfrak{Z}^X,\mathfrak{Z}^Y](\widehat{I}^p)_t [\mathfrak{Z}^X,\mathfrak{Z}^Y](\widehat{J}^q)_t\to^p
\theta^{-3}\widetilde{\kappa}\int_0^t([Z^X,Z^Y]'_s F^{1* 2}_s)^2 G_s^{-1}\mathrm{d}s$,
\item $b_n^{-1/2}\frac{b_n^{-1}}{k_n^4}\sum_{p,q=1}^{\infty}\psi_{g',g'}\left(\frac{q-p}{k_n}\right)^2\Psi^{11}_{\widehat{S}^p}1_{\{\widehat{S}^p\leq t\}}[Z^Y](\check{J}^q)_t\to^p
\theta^{-3}\widetilde{\kappa}\int_0^t\Psi^{11}_s[Z^Y]'_s F^{2}_s G_s^{-1}\mathrm{d}s$,
\item $b_n^{-1/2}\frac{b_n^{-1}}{k_n^4}\sum_{p,q=1}^{\infty}\psi_{g',g'}\left(\frac{q-p}{k_n}\right)^2[Z^X](\check{I}^p)_t\Psi^{22}_{\widehat{T}^q}1_{\{\widehat{T}^q\leq t\}}\to^p
\theta^{-3}\widetilde{\kappa}\int_0^t[Z^X]'_s\Psi^{22}_s F^{1}_s G_s^{-1}\mathrm{d}s$,
\item $b_n^{-1/2}\frac{b_n^{-1}}{k_n^2}\sum_{p,q=1}^{\infty}\psi_{g,g'}\left(\frac{q-p}{k_n}\right)^2[X](\widehat{I}^p)_t[Z^Y](\check{J}^q)_t\to^p
\theta^{-1}\overline{\kappa}\int_0^t[X]'_s[Z^Y]'_s F^{2}_s\mathrm{d}s$,
\item $b_n^{-1/2}\frac{b_n^{-1}}{k_n^2}\sum_{p,q=1}^{\infty}\psi_{g,g'}\left(\frac{q-p}{k_n}\right)^2[Z^X](\check{I}^p)_t[Y](\widehat{J}^q)_t\to^p
\theta^{-1}\overline{\kappa}\int_0^t[Z^X]'_s[Y]'_s F^{1}_s\mathrm{d}s$,
\item $b_n^{-1/2}\frac{b_n^{-1}}{k_n^2}\sum_{p,q=1}^{\infty}\psi_{g,g'}\left(\frac{q-p}{k_n}\right)^2[X,Z^Y](\check{J}^p)_t[X,Z^Y](\check{J}^q)_t\to^p
\theta^{-1}\overline{\kappa}\int_0^t([X,Z^Y]'_s F^{2}_s)^2 G^{-1}_s\mathrm{d}s$,
\item $b_n^{-1/2}\frac{b_n^{-1}}{k_n^2}\sum_{p,q=1}^{\infty}\psi_{g,g'}\left(\frac{q-p}{k_n}\right)^2[Z^X,Y](\check{I}^p)_t[Z^X,Y](\check{I}^q)_t\to^p
\theta^{-1}\overline{\kappa}\int_0^t([Z^X,Y]'_s F^{1}_s)^2 G^{-1}_s\mathrm{d}s$,
\item {\small $b_n^{-1/2}\frac{b_n^{-1}}{k_n^2}\sum_{p,q=1}^{\infty}\psi_{g',g'}\left(\frac{q-p}{k_n}\right)\psi_{g,g}\left(\frac{q-p}{k_n}\right)[Z^X,X](\check{I}^p)_t[Z^Y,Y](\check{J}^q)_t
\to^p\theta^{-1}\overline{\kappa}\int_0^t[Z^X,X]'_s [Z^Y,Y]'_s F^{1}_s F^2_s G^{-1}_s\mathrm{d}s$},
\item {\small $b_n^{-1/2}\frac{b_n^{-1}}{k_n^2}\sum_{p,q=1}^{\infty}\psi_{g',g'}\left(\frac{q-p}{k_n}\right)\psi_{g,g}\left(\frac{q-p}{k_n}\right)[Z^X,Y](\check{I}^p)_t[X,Z^Y](\check{J}^q)_t
\to^p\theta^{-1}\overline{\kappa}\int_0^t[Z^X,Y]'_s [X,Z^Y]'_s F^{1}_s F^2_s G^{-1}_s\mathrm{d}s$},
\item $b_n^{-1/2}\frac{b_n^{-1}}{k_n^2}\sum_{p,q=1}^{\infty}\psi_{g,g'}\left(\frac{q-p}{k_n}\right)[X,Z^X](\check{I}^p)_t[Z^Y,Y](\check{J}^q)_t\to^p
\theta^{-1}\overline{\kappa}\int_0^t[X,Z^X]'_s [Y,Z^Y]'_s F^{1}_s F^2_s G^{-1}_s\mathrm{d}s$,
\item $b_n^{-1/2}\frac{b_n^{-1}}{k_n^2}\sum_{p,q=1}^{\infty}\psi_{g,g'}\left(\frac{q-p}{k_n}\right)^2[X,Y](\widehat{I}^p)_t[\mathfrak{Z}^X,\mathfrak{Z}^Y](\widehat{J}^q)_t\to^p
\theta^{-1}\overline{\kappa}\int_0^t[X,Y]'_s [Z^X,Z^Y]'_s F^{1* 2}_s\mathrm{d}s$
\end{enumerate}
as $n\to\infty$ for every $t$.
\end{lem}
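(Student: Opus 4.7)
The twenty convergences in (a)--(t) all share a common architecture: each is a sum of the form $b_n^{-1/2}\sum_{p,q}\psi_{\alpha,\beta}((q-p)/k_n)^{\#}\cdot A^p_t\cdot B^q_t$, where $A^p_t$ and $B^q_t$ are quadratic-variation-type quantities over the intervals $\widehat{I}^p,\widehat{J}^q,\check{I}^p,\check{J}^q$ or noise-variance quantities $\Psi^{\cdot\cdot}$ at endpoints $\widehat{S}^p,\widehat{T}^q,R^p$, normalized by an appropriate power of $k_n$. The plan is to reduce each such sum to a Riemann-sum approximation of the claimed integral through a sequence of three controlled replacements.

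First, by standard localization one may assume that the density processes ($\langle V,W\rangle'$ for $V,W\in\{X,Y,Z^X,Z^Y\}$, $\Psi$, and the limits $G,\chi,F^1,F^2,F^{1*2}$) are uniformly bounded, together with the conditional moments furnished by [A1$'$](ii) and (v). Condition [A3] then supplies an $L^2$ modulus-of-continuity estimate that allows replacement of each quadratic variation over a random interval by the density at the left endpoint times the interval length, e.g.\ $\langle X\rangle(\widehat{I}^p)_t\approx\langle X\rangle'_{R^{p-1}}|\widehat{I}^p(t)|$ and $[Z^X](\check{I}^p)_t\approx [Z^X]'_{R^{p-1}}|\check{I}^p(t)|$. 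Although $\check{S}^p$ is not a stopping time, Lemma \ref{check} guarantees that $\check{I}^k_t$ is $\mathcal{F}^{(0)}_t$-measurable, so the usual Kunita--Watanabe/BDG estimates still apply. Summed over $p,q$ and weighted by $\psi(\cdot)^2$, the collective remainder is $o_p(b_n^{1/2})$ thanks to the moment tightness in [A1$'$] and condition $(\ref{A4})$.

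Second, I invoke [A1$'$] to replace the interval lengths $|\widehat{I}^p(t)|,|\check{I}^p(t)|$ and their intersection variants by $b_n G^n_{R^{p-1}},\,b_n F^{n,1}_{R^{p-1}},\,b_n F^{n,1* 2}_{R^{p-1}}$, using that the random exceptional sets $\mathcal{N}^0_n,\mathcal{N}^l_n$ are tight; since each summand is $O_p(b_n^{3/2})$ and there are at most $\#\mathcal{N}^\cdot_n\cdot k_n$ affected summands, the error contributed is $o_p(b_n^{1/2})$. The synchronous indicator $1_{\{\widehat{S}^p=\widehat{T}^p\leq t\}}$ appearing in (d), (g), (j), (t) is similarly replaced by $\chi^{\prime n}_{R^{p-1}}1_{\{R^{p-1}<t\}}$ via [A1$'$](iii). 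After these reductions each statement becomes a sum $b_n^{-1/2}c(n)\sum_p \Phi_{R^{p-1}}|\Gamma^p(t)|\sum_q\mathcal{K}((q-p)/k_n)\Phi'_{R^{q-1}}$, where $\mathcal{K}$ is one of $\psi_{\alpha,\beta}^2$ (most statements), $\psi_{g,g}\psi_{g',g'}$ (statements (q), (r)), or $\psi_{g,g'}$ (statement (s)), and $c(n),\Phi,\Phi'$ are suitable factors. The window $|q-p|\leq 2k_n$ spans a time interval of order $k_n b_n\sim\theta\sqrt{b_n}\to 0$, so continuity of $\Phi'$ lets me pull $\Phi'_{R^{q-1}}$ out of the inner sum, which then evaluates to $k_n\Phi'_{R^{p-1}}\int\mathcal{K}(x)\,\mathrm{d}x$, producing $\kappa,\widetilde{\kappa},$ or $\overline{\kappa}$ as appropriate. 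The outer sum $\sum_p\Phi_{R^{p-1}}\Phi'_{R^{p-1}}|\Gamma^p(t)|$ is a Riemann sum for $\int_0^t\Phi_s\Phi'_s\,\mathrm{d}s$; tracking the powers of $b_n,k_n$ against $k_n=\theta b_n^{-1/2}$ yields the stated $\theta,\theta^{-1},\theta^{-3}$ prefactors and the correct combination of $G,G^{-1},F^1,F^2,F^{1*2}$.

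The main obstacle lies in the second batch (h)--(t), where the sub-intervals $\check{I}^p,\check{J}^q$ are not flanked by stopping times. The $\mathcal{F}^{(0)}_t$-measurability supplied by Lemma \ref{check} is essential for the stochastic-calculus manipulations to make sense, and conditions [A1$'$](iv)--(v) are indispensable for replacing $|\check{I}^p(t)|$ and the joint occupancy $|\check{I}^p\cap\check{J}^q|$ by their $F^l$-encoded asymptotic means. A second delicate point is the cross-kernel statements (q), (r), (t), where the Riemann-sum approximation of the inner $q$-sum must be carried out with either a mixed product $\psi_{g,g}\psi_{g',g'}$ or a single $\psi_{g,g'}$ kernel in place of $\psi_{\alpha,\beta}^2$; this is accommodated by the piecewise Lipschitz continuity of all the relevant $\psi$-kernels. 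Aside from these technical points, the twenty verifications are essentially parallel and differ only in which density and which interval-length limit is invoked.
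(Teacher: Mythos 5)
Your overall architecture (replace quadratic variations by density $\times$ interval length via the modulus of continuity from [A3], then replace interval lengths by their conditional means via [A1$'$], then perform the inner $q$-sum and a Riemann sum in $p$) is recognizably the paper's, and the final Riemann-sum step is equivalent to the paper's invocation of its Lemma~\ref{HJYlem2.2}. However, there is a genuine gap in your second step. You justify replacing $|\widehat{I}^p(t)|$ by $b_n G^n_{R^{p-1}}$ solely through tightness of the exceptional sets $\mathcal{N}^0_n,\mathcal{N}^l_n$, but that tightness only controls the \emph{substitution of $G(1)^n$ by $G^n$}; it says nothing about why $|\Gamma^q|$ may be replaced by its $\mathcal{H}^n_{R^{q-1}}$-conditional mean $b_n G(1)^n_{R^{q-1}}$ in the first place. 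Each centered difference $|\Gamma^q|-b_n G(1)^n_{R^{q-1}}$ is of the same magnitude $O(b_n)$ as the term it corrects, and there are $O(b_n^{-1})$ of them, so a crude bound on the summed error fails. The paper handles this with a martingale-difference estimate (Lemma~\ref{useful}, an analogue of Lemma~2.3 of Fukasawa), together with the $L^\rho$ moment control from [A1$'$](ii), applied after the double sum has been split into the $p<q$ and $p>q$ halves so that the inner $p$-sum is $\mathcal{H}^n_{R^{q-1}}$-measurable and the sum over $q$ genuinely has martingale-difference structure. Without identifying this Burkholder--Lenglart step, the claimed $o_p(b_n^{1/2})$ error is not established.

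Two further imprecisions. First, you cite Lemma~\ref{check} ($\mathcal{F}^{(0)}_t$-measurability of $\check{I}^k_t$) for statements (h)--(t), but what the martingale argument actually requires there is that $|\check{I}^k|$ and $|\check{J}^k|$ are measurable with respect to $\mathcal{H}^n_{\widehat{S}^k}$ and $\mathcal{H}^n_{\widehat{T}^k}$ respectively; this is the content of the paper's Lemma~\ref{checktime}, which is a different statement. Second, you assert that continuity of $\Phi'$ lets you pull $\Phi'_{R^{q-1}}$ out of the inner sum, and that $\sum_p\Phi_{R^{p-1}}\Phi'_{R^{p-1}}|\Gamma^p(t)|$ is a Riemann sum for $\int_0^t\Phi_s\Phi'_s\,\mathrm{d}s$. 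But $G^n$, $F^{n,l}$, $\chi^{\prime n}$ converge only in the Skorokhod topology and their limits are merely c\`adl\`ag, not continuous; the paper needs a genuine argument here, using the modulus $w'$ and Theorem VI-1.5 of Jacod--Shiryaev (its Lemma~\ref{riemann} and the display culminating in eq.~\eqref{cadlagG}), to justify both the shift $R^{q-1}\mapsto R^{p-1}$ and the sum-to-integral passage along a random partition.
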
 



The following proposition, which is an analog to Proposition 5.1 in \cite{HY2011}, gives a sufficient condition for [B2]. The proof is given in Section \ref{proofHYprop5.1}.
\begin{prop}\label{HYprop5.1}
$[\mathrm{B}2]$ holds true under $[\mathrm{A}2]$-$[\mathrm{A}4]$, $[\mathrm{N}]$ and $[\mathrm{C}3]$.
\end{prop}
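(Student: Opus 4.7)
My plan is to follow the strategy of Proposition 5.1 in \cite{HY2011}, adapted to the pre-averaged estimator on refresh-sampled designs. Fix indices $i,j,i',j'$ and write $A = \bar{M}_\alpha(\widehat{\mathcal{I}})^i$, $A' = \bar{M}'_{\alpha'}(\widehat{\mathcal{I}})^{i'}$, $B = \bar{N}_\beta(\widehat{\mathcal{J}})^j$, $B' = \bar{N}'_{\beta'}(\widehat{\mathcal{J}})^{j'}$, so that $\bar{L}^{ij}_{\alpha,\beta}(M,N) = A_-\bullet B + B_-\bullet A$ and analogously for the primed version. Bilinearity of the quadratic covariation yields
\begin{align*}
\langle\bar{L}^{ij}_{\alpha,\beta}(M,N),\bar{L}^{i'j'}_{\alpha',\beta'}(M',N')\rangle = (A_-A'_-)\bullet\langle B,B'\rangle + (B_-B'_-)\bullet\langle A,A'\rangle + (A_-B'_-)\bullet\langle B,A'\rangle + (B_-A'_-)\bullet\langle A,B'\rangle,
\end{align*}
while integration by parts rewrites $V^{iji'j'}_{\alpha,\beta;\alpha',\beta'}(M,N;M',N') = \langle A,A'\rangle\langle B,B'\rangle + \langle A,B'\rangle\langle B,A'\rangle$ as the analogous four-term sum with $A_-A'_-$, $B_-B'_-$, $A_-B'_-$, $B_-A'_-$ replaced by $\langle A,A'\rangle_-$, $\langle B,B'\rangle_-$, $\langle A,B'\rangle_-$, $\langle B,A'\rangle_-$ respectively (any residual bracket $[\langle A,A'\rangle,\langle B,B'\rangle]$ involves common jumps of $\mathfrak{E}^X,\mathfrak{E}^Y$, which the quasi-left continuity of $\Psi$ from [N] renders negligible). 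The proof therefore reduces to showing that, for each of the four pairings, the summed error
\begin{align*}
\sum_{i,j,i',j'}(\bar{K}^{ij}_-\bar{K}^{i'j'}_-)\bullet\bigl\{(A_-A'_- - \langle A,A'\rangle_-)\bullet\langle B,B'\rangle\bigr\}_t
\end{align*}
(and its three permutations) is $o_p(k_n^4 b_n^{1/2})$.

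The core technical work lies in these error bounds. The identity $A A' - [A, A'] = A_-\bullet A' + A'_-\bullet A$ realises $A_-A'_- - \langle A,A'\rangle_-$ as a local martingale (plus, when $\mathfrak{E}$ is involved, a compensating pure-jump term controlled by [N]). To exploit this inside $\bullet\,\langle B,B'\rangle$ I would invoke the strong predictability [A2] to replace $A_{s-}$ and $A'_{s-}$ by their $\mathcal{F}^{(0)}_{(s-b_n^{\xi-1/2})_+}$-measurable approximations, so that a conditional orthogonality argument combined with Burkholder-Davis-Gundy applies. The Hölder regularity of the density of $\langle B,B'\rangle$ needed to bound the resulting residuals is provided by [A3]; when $B$ or $B'$ involves $\mathfrak{E}^Y$, [N] plays the analogous role for $\Psi$; the components $\mathfrak{Z}^X, \mathfrak{Z}^Y$ inherit smoothness from $Z^X, Z^Y$ through [C1] and are treated exactly like the continuous martingale case. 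Condition [A4] ensures the per-term Hölder bounds are sharp enough to pass through the summation.

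For the combinatorial accounting, Proposition \ref{advantage}(b) forces $|i-j|,|i'-j'|\le k_n$ on $\{\bar{K}^{ij}_- = \bar{K}^{i'j'}_- = 1\}$, and non-triviality of $\langle A,A'\rangle$ or $\langle B,B'\rangle$ (supported on $\bar{I}^i\cap\bar{I}^{i'}$, $\bar{J}^j\cap\bar{J}^{j'}$) additionally forces $|i-i'|,|j-j'| = O(k_n)$; with [C3] the number of contributing quadruples up to time $t$ is thus $O_p(N^n_t k_n^3) = O_p(b_n^{-5/2})$, matching the $k_n^4 b_n^{1/2} \sim b_n^{-3/2}$ budget exactly when the per-term error is $o_p(b_n)$. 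I expect this tight bookkeeping to be the chief obstacle: one must balance the predictability gap $b_n^{\xi-1/2}$, the pre-averaging window $k_n \sim b_n^{-1/2}$, and the mesh $r_n = o_p(b_n^{\xi'})$ carefully enough that the BDG-type per-term estimate is not overwhelmed by the count, which is precisely where the slack $\xi \vee 9/10 < \xi'$ in [A4] comes in.
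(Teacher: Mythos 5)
Your high-level architecture — bilinearity of $\langle\cdot,\cdot\rangle$ into four pairings, the martingale nature of $\bar{M}^{ii'}_{s-}:=\bar{M}^n_\alpha(\widehat{\mathcal{I}})^i_{s-}\bar{M}^{\prime n}_{\alpha'}(\widehat{\mathcal{I}})^{i'}_{s-}-\langle\bar{M}^n_\alpha(\widehat{\mathcal{I}})^i,\bar{M}^{\prime n}_{\alpha'}(\widehat{\mathcal{I}})^{i'}\rangle_{s-}$, a BDG-type second-moment bound, H\"older control of the bracket densities via [A3]/[N], and the $|i-j|\lesssim k_n$ combinatorics from Proposition \ref{advantage}(b) — is the right one and mirrors the paper. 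But you have the strong-predictability mechanism pointed at the wrong object. You propose to replace $A_{s-},A'_{s-}$ by $\mathcal{F}^{(0)}_{(s-b_n^{\xi-1/2})_+}$-measurable approximants. That degrades the exact martingale property of $\bar{M}^{ii'}$ into an approximate one with a residual you then also need to control, and it does not by itself yield orthogonality across distinct quadruples $(i,j,i',j')$ vs.\ $(k,l,k',l')$. What the paper's proof (Lemma \ref{HYlem12.6and12.8}) does is the opposite: keep $\bar{M}^{ii'}$ as an honest martingale and split the \emph{density} $H$ of $\langle\bar{N}^n_\beta(\widehat{\mathcal{J}})^j,\bar{N}^{\prime n}_{\beta'}(\widehat{\mathcal{J}})^{j'}\rangle$ as $H=\widetilde{H}+\mathcal{R}$ with $\widetilde{H}_s=H_{(s-b_n^{\xi-1/2})_+}$. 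Under [A2] the remaining integrand factors $\Xi^{iji'j'}_{s-}$ and $\widetilde{H}_sD^n_s$ are $\mathbf{G}^{(n)}$-adapted, hence (Lemmas \ref{HYlem12.2}--\ref{HYlem12.3}) $\mathcal{F}_{R^\wedge(i,j)}$-measurable; optional sampling then kills the cross-terms $E[\bar{M}^{ii'}_{\sigma-}\bar{M}^{kk'}_{\tau-}\cdots]$ once all of $|i-k|,|i'-k|,|i-k'|,|i'-k'|\ge k_n-1$ (Lemma \ref{HYlem12.4}), while $\mathcal{R}$ is disposed of separately via the H\"older modulus.

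Your closing bookkeeping also does not close. You say the count of contributing quadruples is $O_p(b_n^{-5/2})$, so the budget $o_p(k_n^4b_n^{1/2})=o_p(b_n^{-3/2})$ is met ``when the per-term error is $o_p(b_n)$''. But the typical size of a single term $\int_0^t\Xi^{iji'j'}_{s-}\bar{M}^{ii'}_{s-}H_sD^n_s\,\mathrm{d}B^n_s$ is of order $(k_n\bar{r}_n)^2=O(b_n^{2\xi'-1})$, which is \emph{not} $o(b_n)$ for any admissible $\xi'<1$, so a term-by-term bound cannot deliver the result. The actual argument works at the level of $E[(\Delta_{1,t})^2]$: after Lemma \ref{HYlem12.4} only $O(k_n)$ of the possible partner quadruples survive per outer quadruple, cutting the count of contributing pairs from $O(b_n^{-5})$ to $O(b_n^{-3})$, and together with the bounds of Lemmas \ref{estXi}--\ref{HYlem12.5} this yields $b_n^{-1}E[(\Delta_{1,t})^2]\lesssim b_n^{-1}k_n^8(k_n\bar{r}_n)^3=o(k_n^8)$ precisely because $\xi'>9/10$. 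That martingale cancellation across the inner-vs.-outer quadruple sum is the central idea, and it is absent from your plan.
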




It still remains to check the asymptotic orthogonality condition [B1]. However, it will be shown that it is the same kind of task as solving [B2]. This phenomenon is also seen in \cite{HY2011}.
\begin{thm}\label{HYthm6.1}
Suppose that $X$, $Y$, $Z^X$ and $Z^Y$ are continuous semimartingales given by $(\ref{CSM})$.
\begin{enumerate}[\normalfont (a)]
\item If $[\mathrm{A}1]$-$[\mathrm{A}6]$, $[\mathrm{N}]$, $[\mathrm{C}3]$ and $[\mathrm{W}]$ are satisfied, then $[\mathrm{SC}]$ holds.
\item If  $Z^X=Z^Y=0$, $[\mathrm{A}1'](\mathrm{i})$-$(\mathrm{iii})$, $[\mathrm{A}2]$-$[\mathrm{A}6]$ and $[\mathrm{N}]$ are satisfied, then $[\mathrm{SC}]$ holds for $w$ given by $(\ref{avar})$.
\item If $[\mathrm{A}1']$, $[\mathrm{A}2]$-$[\mathrm{A}6]$ and $[\mathrm{N}]$ are satisfied, then $[\mathrm{SC}]$ holds for $w$ given by $(\ref{avarend})$.
\end{enumerate}
\end{thm}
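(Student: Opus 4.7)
The plan is to reduce to Proposition~\ref{jacod2}. Of its hypotheses, [A1], [A4], [N], [C3] and [W] are part of the statement of (a); [B2] is supplied by Proposition~\ref{HYprop5.1} using [A2]-[A4], [N] and [C3]. Since [SC] is preserved under localization, truncating at the stopping times $(\sigma_k)$ furnished by [A3], [A5] and [N] (together with stopping times bounding the total variation of the drifts) reduces the problem to the situation in which the local boundedness conditions [C1] and [C2] hold. What remains is the asymptotic orthogonality [B1]: $b_n^{-1/4}\langle\mathbf{M}^n,N\rangle_t\to^p 0$ for every $N\in\{X,Y,Z^X,Z^Y\}$ and $t$.

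To verify [B1] I would decompose $\mathbf{M}^n=\sum_{k=1}^{4}\mathbf{M}(k)^n$ and, using Lemma~\ref{HYlem3.1} and integration by parts, expand each cross-bracket $\langle\bar{L}_{\alpha,\beta}(V,W)^{ij},N\rangle$ into integrals of $\bar{V}_\alpha(\widehat{\mathcal{I}})^i_-$ and $\bar{W}_\beta(\widehat{\mathcal{J}})^j_-$ against $\langle\,\cdot\,,N\rangle$ plus their symmetric partners. All contributions containing $\mathfrak{E}^X$ or $\mathfrak{E}^Y$ vanish identically, because these martingales are constructed on $\Omega^{(1)}$ and are therefore orthogonal to any continuous $\mathbf{F}^{(0)}$-local martingale $N$. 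The surviving contributions are quadrilinear in $\{X,Y,Z^X,Z^Y\}$ and may be treated by the same Burkholder-Davis-Gundy and strong predictability arguments used in the proof of Proposition~\ref{HYprop5.1}: the margin $b_n^{\xi-1/2}$ from [A2] permits freezing of integrands at the left endpoints of $\bar{I}^i,\bar{J}^j$ up to errors governed by (\ref{eqA3})-(\ref{eqA5}), and the indicator support estimate (\ref{sumbarK}) combined with (\ref{A4}) and the margin $\xi\vee\tfrac{9}{10}<\xi'$ produces the required rate $o_p(b_n^{1/4})$.

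\textbf{Parts (b) and (c).} Given (a), I only need to verify [A1] and [W] with $w^2$ equal to (\ref{avar}) or (\ref{avarend}). By Proposition~\ref{HYprop3.2} it is enough to identify $\lim_n b_n^{-1/2}\bar{V}^n_t$ as $\int_0^t w_s^2\,\mathrm{d}s$. In case (b), where $Z^X=Z^Y=0$, the processes $\mathfrak{Z}^X,\mathfrak{Z}^Y$ vanish and $\bar{V}^n$ reduces to exactly those summands handled by Lemma~\ref{HYlem4.1}(a)-(g); collecting the limits with the factors $\theta\kappa$, $\theta^{-3}\widetilde{\kappa}$, $\theta^{-1}\overline{\kappa}$ yields (\ref{avar}). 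In case (c), parts (h)-(t) of Lemma~\ref{HYlem4.1} additionally supply the limits of every endogenous-noise summand appearing in $\bar{V}^{n,2}$, $\bar{V}^{n,3}$, $\bar{V}^{n,4}$, $\bar{V}^{n,12}$ and $\bar{V}^{n,34}$; regrouping them through the definitions of $\overline{\Psi}^{11}$, $\overline{\Psi}^{22}$, $\overline{\Psi}^{12}$ — in particular combining $\Psi^{11}_s+[Z^X]'_s F^1_s$, $\Psi^{22}_s+[Z^Y]'_s F^2_s$ and $\Psi^{12}_s\chi_s+[Z^X,Z^Y]'_s F^{1*2}_s$ — and verifying the appearance of the single correction term $-([Z^X,Y]'_s F^1_s-[X,Z^Y]'_s F^2_s)^2 G_s^{-1}$ from parts (o), (p), (s), (t) produces exactly (\ref{avarend}). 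The limit is of the form $\int_0^t w_s^2\,\mathrm{d}s$, so [A1] and [W] hold simultaneously.

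\textbf{Main obstacle.} Verifying [B1] is the real work. Although the text advertises it as "the same kind of task as solving [B2]", the rate $b_n^{1/4}$ — rather than $b_n^{1/2}$ — leaves only half the slack, so the strong predictability margin from [A2] must be exploited sharply against the moduli (\ref{eqA3})-(\ref{eqA5}). Moreover, the bookkeeping of which among $X,Y,Z^X,Z^Y$ contributes to which of the two integrands and to which of the two quadratic covariations, across a quadruple sum over $(i,j,i',j')$ that collapses via the indicators $\bar{K}^{ij}\bar{K}^{i'j'}$ and the $k_n$-neighborhood constraint (\ref{sumbarK}), is the most laborious step; the identification of the correction term $-([Z^X,Y]'_s F^1_s-[X,Z^Y]'_s F^2_s)^2 G_s^{-1}$ in (\ref{avarend}) is also delicate, as it arises only after cancellation between several positive contributions supplied by Lemma~\ref{HYlem4.1}.
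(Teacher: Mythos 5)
The overall strategy you outline — reduce to Proposition~\ref{jacod2}, obtain [B2] from Proposition~\ref{HYprop5.1}, and identify $w^2$ via Lemma~\ref{HYlem4.1} — agrees with the paper. Your treatment of parts (b) and (c) is essentially right (modulo that the relevant sub-items of Lemma~\ref{HYlem4.1} are (h)--(s), not (h)--(t), and the cross term of the correction $-([Z^X,Y]'F^1 - [X,Z^Y]'F^2)^2G^{-1}$ comes from item (q) rather than (s)/(t)). The genuine gap is in the verification of [B1] in part (a).

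First, you never reduce the general semimartingales $X,Y,Z^X,Z^Y$ to their martingale parts before invoking [B1]. In the paper this is accomplished by Lemma~\ref{final}(a), which replaces $\mathbf{M}^n$ with $\widetilde{\mathbf{M}}^n$ (built from $M^X,M^Y,\underline{M}^X,\underline{M}^Y$ alone). That replacement is exactly where conditions [A5] and, crucially, [A6] are consumed, through Lemma~\ref{HYlem13.1and13.2}: the ucp (not pointwise) negligibility of the drift-integrand contributions needs the bound $b_n^{-1}H_n(t)\leq C$. Your localization step bounds the drift derivatives but does not make this reduction, so the subsequent computation of $\langle\mathbf{M}^n,N\rangle$ is not well posed for the non-martingale $\mathbf{M}^n$, and [A6] goes unused.

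Second, the claim that ``all contributions containing $\mathfrak{E}^X$ or $\mathfrak{E}^Y$ vanish identically'' is incorrect. What vanishes are only the brackets $\langle\mathfrak{E}^X,N\rangle$ and $\langle\mathfrak{E}^Y,N\rangle$, since $\mathfrak{E}^X,\mathfrak{E}^Y$ are purely discontinuous while $N$ is continuous. But in the expansion of $\langle\widetilde{\mathbf{M}}^n,N\rangle$ one also meets terms of the form
\begin{equation*}
\bar{K}^{ij}_-\bullet\Bigl\{\bar{\underline{\mathfrak{U}}}^Y_{g'}(\widehat{\mathcal{J}})^j_-\bullet\overline{\langle M^X,N\rangle}_g(\widehat{\mathcal{I}})^i\Bigr\}_t,
\end{equation*}
in which $\mathfrak{E}^Y$ appears as an integrand against the finite-variation process $\overline{\langle M^X,N\rangle}_g(\widehat{\mathcal{I}})^i$. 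These do not vanish; they must be estimated, and the paper does so through Lemma~\ref{HYlem13.1and13.2}(b). Relatedly, your suggestion that the surviving terms ``may be treated by the same Burkholder--Davis--Gundy and strong predictability arguments used in the proof of Proposition~\ref{HYprop5.1}'' conflates two structurally different estimates: Proposition~\ref{HYprop5.1} rests on Lemma~\ref{HYlem12.6and12.8} (martingale-against-martingale brackets), whereas [B1] for $\widetilde{\mathbf{M}}^n$ is a drift-against-martingale estimate, settled by Lemma~\ref{HYlem13.1and13.2} via the optional sampling theorem and the modulus hypothesis (\ref{eqA3}) on $\langle M^V,N\rangle$. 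The paper's remark that checking [B1] is ``the same kind of task'' as [B2] is an informal analogy about the combinatorics, not an invitation to reuse the same lemma verbatim.
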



It is worthy of remark that neither [A5] nor [A6] is necessary for local martingales as seen in \cite{HY2011}.
\begin{thm}\label{HYthm6.2}
Suppose that $X$, $Y$, $Z^X$ and $Z^Y$ are continuous local martingales.
\begin{enumerate}[\normalfont (a)]
\item If $[\mathrm{A}1]$-$[\mathrm{A}4]$, $[\mathrm{N}]$, $[\mathrm{C}3]$ and $[\mathrm{W}]$ are satisfied, then $[\mathrm{SC}]$ holds.
\item If  $Z^X=Z^Y=0$, $[\mathrm{A}1'](\mathrm{i})$-$(\mathrm{iii})$, $[\mathrm{A}2]$-$[\mathrm{A}4]$ and $[\mathrm{N}]$ are satisfied, then $[\mathrm{SC}]$ holds for $w$ given by $(\ref{avar})$.
\item If $[\mathrm{A}1']$, $[\mathrm{A}2]$-$[\mathrm{A}4]$ and $[\mathrm{N}]$ are satisfied, then $[\mathrm{SC}]$ holds for $w$ given by $(\ref{avarend})$.
\end{enumerate}
\end{thm}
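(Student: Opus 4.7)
The plan is to reduce all three parts to Proposition \ref{jacod2} by verifying its hypotheses, using crucially that when $X,Y,Z^X,Z^Y$ are continuous local martingales the finite-variation components in their canonical decompositions $(\ref{CSM})$ vanish. The role of $[\mathrm{A}5]$ and $[\mathrm{A}6]$ in the proof of Theorem \ref{HYthm6.1} is exclusively to control the drift parts (as flagged in the text preceding their definitions); they can therefore be dispensed with here, and the drift clauses of $[\mathrm{C}1]$ become vacuous.

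First I would perform a standard localization: choose an increasing sequence $(\sigma_k)$ of $\mathbf{F}^{(0)}$-stopping times tending to infinity along which each density $[V,W]'$ for $V,W\in\{X,Y,Z^X,Z^Y\}$, the noise covariance $\Psi$, and the eighth moment $\int|z|^8Q_t(\mathrm{d}z)$ are bounded; such a sequence exists by $[\mathrm{A}3]$ and $[\mathrm{N}]$. Stopped at $\sigma_k$, condition $[\mathrm{C}1]$ (whose drift part is trivially satisfied for local martingales) and $[\mathrm{C}2]$ hold. Condition $[\mathrm{C}3]$ is hypothesized, $[\mathrm{B}2]$ follows from Proposition \ref{HYprop5.1} (which needs only $[\mathrm{A}2]$--$[\mathrm{A}4]$, $[\mathrm{N}]$, $[\mathrm{C}3]$), and $[\mathrm{A}1]$, $[\mathrm{A}4]$, $[\mathrm{N}]$, $[\mathrm{W}]$ are in the hypotheses of part~(a). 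The remaining condition is $[\mathrm{B}1]$: $b_n^{-1/4}\langle\mathbf{M}^n,N\rangle_t\to^p 0$ for every $N\in\{X,Y,Z^X,Z^Y\}$.

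To establish $[\mathrm{B}1]$ I would decompose $\mathbf{M}^n=\sum_{k=1}^{4}\mathbf{M}(k)^n$ and, via Lemma \ref{HYlem3.1}, rewrite each $\langle\mathbf{M}(k)^n,N\rangle_t$ as a bilinear sum of the form $(\psi_{HY}k_n)^{-2}\sum_{i,j}\bar{K}^{ij}_{-}\bullet\langle\bar{L}_{\alpha,\beta}^{ij}(M,N'),N\rangle_t$. Each inner bracket splits into two products of a pre-averaged integral against an ordinary bracket $\langle M,N\rangle$ or $\langle N',N\rangle$. The argument then proceeds in parallel with the proof of Proposition \ref{HYprop5.1}: the pre-averaging weights $\alpha_p^n,\beta_q^n$ are supported on $O(k_n)$ indices, $\bar{K}^{ij}$ restricts $(i,j)$ by $|i-j|\leq k_n$ (cf.~$(\ref{sumbarK})$), and the density regularity from $[\mathrm{A}3]$ together with the piecewise Lipschitz continuity of the weights yields cancellations that provide the $o_p(b_n^{1/4})$ bound, with a Cauchy--Schwarz step against $\langle N\rangle$ controlling the remaining terms.

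Parts~(b) and~(c) then follow from~(a) by extracting $[\mathrm{A}1]$ and $[\mathrm{W}]$ from Lemma \ref{HYlem4.1}. Grouping items (a)--(g) of that lemma according to the definitions of $\bar{V}^{n,1},\dots,\bar{V}^{n,4}$ and $\bar{V}^{n,12},\bar{V}^{n,34}$ gives $b_n^{-1/2}\bar{V}^n_t\to\int_0^t w_s^2\,\mathrm{d}s$ with $w_s$ as in~$(\ref{avar})$ when $Z^X=Z^Y=0$; adding items (h)--(t) in the general case produces $w_s$ as in~$(\ref{avarend})$. This simultaneously verifies $[\mathrm{A}1]$ for the absolutely continuous $V_s=\int_0^s w_u^2\,\mathrm{d}u$ and condition $[\mathrm{W}]$, so part~(a) delivers $[\mathrm{SC}]$. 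The main obstacle is the $[\mathrm{B}1]$ computation: although structurally a replica of Proposition \ref{HYprop5.1}, one must track carefully for each term whether $M,N',N$ carries endogenous noise $\mathfrak{Z}$, exogenous noise $\mathfrak{E}$, or neither, and control the resulting brackets using only the regularity of $[\mathrm{A}3]$ and $[\mathrm{N}]$, without invoking the stronger $[\mathrm{A}5]$--$[\mathrm{A}6]$ available in the semimartingale case.
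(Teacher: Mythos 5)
Your proposal takes essentially the same route as the paper: reduce to Proposition~\ref{jacod2}, observe that with $A^X=A^Y=\underline{A}^X=\underline{A}^Y=0$ conditions $[\mathrm{A}5]$--$[\mathrm{A}6]$ are only needed to control drifts and can be dropped, localize via $[\mathrm{A}3]$ and $[\mathrm{N}]$, verify $[\mathrm{C}1]$--$[\mathrm{C}3]$ and $[\mathrm{B}2]$ (Proposition~\ref{HYprop5.1}), and pull $[\mathrm{A}1]$ and $[\mathrm{W}]$ for parts (b)--(c) from Lemma~\ref{HYlem4.1}. The remaining point to flag is the $[\mathrm{B}1]$ verification. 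You describe it as ``structurally a replica of Proposition~\ref{HYprop5.1},'' but the paper instead routes $[\mathrm{B}1]$ through Lemma~\ref{final}(b), which invokes Lemma~\ref{HYlem13.1and13.2}(b) rather than Lemma~\ref{HYlem12.6and12.8}. The structural difference is real: $[\mathrm{B}2]$ pairs four martingales, and Lemma~\ref{HYlem12.6and12.8} compensates by subtracting a product of predictable brackets, whereas $[\mathrm{B}1]$ pairs a pre-averaged martingale integral against a pre-averaged ordinary bracket $\overline{\langle M,N\rangle}_g(\widehat{\mathcal{J}})^j$, which is a finite-variation (absolutely continuous) integrator. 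That needs a different decomposition --- freeze the bracket density at $\widehat{T}^j$, kill the leading term by optional sampling of the martingale leg, and bound the remainder via the $(\ref{SA5})$-type modulus supplied here by $[\mathrm{A}3]$ --- which is exactly what Lemma~\ref{HYlem13.1and13.2}(b) packages. Your intuition is supported by the paper's own remark in Section~\ref{secstable} that $[\mathrm{B}1]$ is ``the same kind of task'' as $[\mathrm{B}2]$, and the ingredients you list (weight support, the $|i-j|\le k_n$ restriction from $(\ref{sumbarK})$, Cauchy--Schwarz, $[\mathrm{A}3]$/$[\mathrm{N}]$ regularity) are indeed the ones used, so this is a misattribution of the key lemma rather than a gap in the argument.
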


Theorem \ref{HYthm6.1} and \ref{HYthm6.2} are proved in Section \ref{drift}.

\begin{proof}[\upshape{\bfseries{Proof of Theorem \ref{mainthm}}}]
The desired result follows from Lemma \ref{lembasic} and Theorem \ref{HYthm6.1}(b) and (c).
\end{proof}


\section{Some related topics for statistical application}\label{topics}

\subsection{Consistency}\label{consistency}

In order to obtain our main theorem, we need to impose a kind of predictability such as [A2] on the sampling scheme. In fact, it is still an active research area to seek asymptotic theories of estimators for volatility-type quantities when sampling scheme is random and depends on observed processes even if neither nonsynchronicity nor microstructure noise is present; See \cite{Fu2010b} and \cite{LMRZZ2012} for instance. Such a situation, however, dramatically changes when we restrict our attention to the consistency of the estimators. As is well known, the classical realized covariance is a consistent estimator for the integrated covariance whenever the sampling scheme consists of stopping times and the mesh size of sampling times tends to 0. Furthermore, \citet{HK2008} verified such a result for the Hayashi-Yoshida estimator in the presence of the nonsynchronicity of the sampling scheme. The following theorem, which is a by-product of Lemma \ref{lembasic}, tells us such a result is still valid for our estimator:
\begin{thm}\label{HKthm2.3}
Suppose $(\ref{A4})$ and $[\mathrm{C}1]$-$[\mathrm{C}3]$ are satisfied. Then $\widehat{PHY}(\mathsf{X},\mathsf{Y})^n\xrightarrow{ucp}[X,Y]$ as $n\to\infty$, provided that $\xi'>1/2$.
\end{thm}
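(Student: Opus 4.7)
The plan is to deduce the theorem from Lemma \ref{lembasic} combined with a direct second-moment estimate on the residual process $\mathbf{M}^n$. Since $\xi' > 1/2$, the constraint $\gamma < \xi' - 1/2$ in Lemma \ref{lembasic} is satisfied at $\gamma = 0$, whence
\[
\widehat{PHY}(\mathsf{X},\mathsf{Y})^n - [X,Y] - \mathbf{M}^n \xrightarrow{ucp} 0.
\]
Consistency therefore reduces to proving $\mathbf{M}^n \xrightarrow{ucp} 0$.

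For this, I would first localize using [C1]--[C3] so that the drift densities $(A^V)'$ and $(\underline{A}^V)'$, the quadratic-variation densities $[V,W]'$ for $V, W \in \{X, Y, Z^X, Z^Y\}$, the noise second-moment $\int |z|^2\, Q_t(\mathrm{d}z)$, and $b_n N^n_\cdot$ are all uniformly bounded on each fixed $[0,T]$. Decomposing $\mathbf{M}^n = \sum_{k=1}^{4} \mathbf{M}(k)^n$ and splitting $X, Y, Z^X, Z^Y$ into their drift and local-martingale components, the drift-drift and mixed drift-martingale contributions in each $\mathbf{M}(k)^n$ are controlled directly in $L^1$ using the localized bounds and are negligible. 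The remaining purely martingale contributions are local martingales by Lemma \ref{HYlem3.1}, and the bound comes from estimating their predictable quadratic variation.

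The core estimate is $E[\langle \mathbf{M}(k)^n \rangle_t] = O(b_n^{1/2})$ for every $k$. The derivation rests on the following orders of magnitude, with $k_n \sim b_n^{-1/2}$: each pre-averaged quantity $\bar{V}_\alpha(\widehat{\mathcal{I}}^i)_t$ has $L^2$-norm of order $b_n^{1/4}$, since its predictable quadratic variation sums bounded integrands over a window of total length $O_p(k_n b_n) = O_p(b_n^{1/2})$; the number of non-vanishing index pairs $(i,j)$ in the summation defining $\mathbf{M}(k)^n$ is $O_p(k_n N^n_t) = O_p(b_n^{-3/2})$ by $(\ref{sumbarK})$ together with [C3]; and the $O(k_n^2)$ non-trivial cross-correlations (non-zero only when $|i-i'|, |j-j'| \leq k_n$) combine with the squared normalizer $(\psi_{HY} k_n)^{-4} = O(b_n^2)$ to produce the claimed bound. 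Doob's maximal inequality then gives $\sup_{s\leq t}|\mathbf{M}(k)^n_s| = O_p(b_n^{1/4}) \to 0$, whence $\mathbf{M}^n \xrightarrow{ucp} 0$.

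The main obstacle is the variance bookkeeping in $\mathbf{M}(2)^n$, $\mathbf{M}(3)^n$ and $\mathbf{M}(4)^n$, which involve the composite noise processes $\mathfrak{U}^X = \mathfrak{E}^X + (k_n\sqrt{b_n})^{-1}\mathfrak{Z}^X$ and its $Y$-analogue: one must track the joint behaviour of the exogenous noise $\mathfrak{E}^V$ and the endogenous noise $\mathfrak{Z}^V = -\mathfrak{I}_- \bullet Z^V$ under only the weak conditions [C1]--[C3], without appealing to the finer [A1]--[A6] framework. However, these are simplified versions of the calculations already performed for Proposition \ref{HYprop3.2} and Lemma \ref{HYlem4.1} in the CLT proof, and no fundamentally new technique is required.
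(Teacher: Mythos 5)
Your proposal follows essentially the same route as the paper: invoke Lemma~\ref{lembasic} with $\gamma=0$ (valid since $\xi'>1/2$), localize to strengthened versions of [C1]--[C3], decompose $\mathbf{M}^n$ into martingale and drift parts, bound the predictable quadratic variations of the martingale parts and the $L^1$-norms of the drift parts, and conclude via Lenglart/Doob. This is precisely what the paper does in Section~\ref{proofHKthm2.3}, where the claim is reduced to showing that $\sup_{s\leq t}\big|\sum_{i,j}\bar K^{ij}_s\{\bar V_\alpha(\widehat{\mathcal I})^i_-\bullet\bar W_\beta(\widehat{\mathcal J})^j\}_s\big|=o_p(k_n^2)$ and its symmetric counterpart for the various martingale/drift choices of $V$ and $W$.

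There is, however, one quantitative claim in your sketch that cannot be justified under only [C1]--[C3] and $(\ref{A4})$: you assert that $\bar V_\alpha(\widehat{\mathcal I})^i_t$ has $L^2$-norm of order $b_n^{1/4}$ because ``its predictable quadratic variation sums bounded integrands over a window of total length $O_p(k_nb_n)$.'' The total length of $k_n$ consecutive intervals $\sum_{p=0}^{k_n-1}|\widehat I^{i+p}(t)|$ is controlled only by $k_n\cdot r_n(t)=O_p(k_n b_n^{\xi'})=O_p(b_n^{\xi'-1/2})$, not by $k_nb_n$: condition [C3] gives $b_nN^n_t=O_p(1)$ \emph{on average} over all intervals, but does not prevent a particular block of $k_n$ consecutive intervals from having total length much larger than $k_nb_n$. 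The stronger $O_p(k_nb_n)$ bound --- and with it the rates $O(b_n^{1/2})$ for $E[\langle\mathbf{M}(k)^n\rangle_t]$ and $O_p(b_n^{1/4})$ for $\sup_s|\mathbf{M}(k)^n_s|$ --- is therefore not available under the weak hypotheses of this theorem; it would require conditions along the lines of [A6]. What the argument actually yields, and what the paper obtains, is $E_0\!\left[[\mathbb{H}]_s\right]\lesssim k_n^4\cdot k_n\bar r_n|\log b_n|=k_n^4\cdot O(b_n^{\xi'-1/2}|\log b_n|)=o(k_n^4)$, which tends to zero precisely because $\xi'>1/2$ but without any $b_n^{1/2}$-rate. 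This weaker $o_p(1)$ bound is entirely sufficient for the $ucp$ conclusion, so the logic of your argument is sound even though the stated rate is not; you should simply replace $O(b_n^{1/2})$ by $o_p(1)$ (or $O_p(b_n^{\xi'-1/2}|\log b_n|)$) in the core estimate.
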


The proof is given in Section \ref{proofHKthm2.3}.

\if0
\subsection{An illustrative example of sampling scheme: Cox sampling}

As an illustrative example of sampling scheme satisfying the conditions [A1$'$], [A2], [A4] and [A6], we shall discuss a class of sampling schemes which we call the \textit{Cox sampling}.

First we construct the stochastic basis $\mathcal{B}^{(0)}$ which is appropriate for the present situation. Let $(\Omega',\mathcal{F}',(\mathcal{F}'_t)$, $P')$ be a stochastic basis, and suppose that the semimartingales $X$, $Y$, $Z^X$ and $Z^Y$ are defined on this basis. Suppose also that $\Psi$ is $(\mathcal{F}'_t)$-adapted. Furthermore, on an auxiliary probability space $(\Omega'',\mathcal{F}'',P'')$, there are two mutually independent standard Poisson processes $(N^1_t)$ and $(N^2_t)$. Then we construct $\mathcal{B}^{(0)}=(\Omega^{(0)},\mathcal{F}^{(0)},\mathbf{F}^{(0)}=(\mathcal{F}^{(0)}_t)_{t\in\mathbb{R}_+} ,P^{(0)})$ by
\begin{align*}
\Omega^{(0)}=\Omega'\times\Omega'',\qquad
\mathcal{F}^{(0)}=\mathcal{F}'\otimes\mathcal{F}'',\qquad
\mathcal{F}^{(0)}_t=\mathcal{F}'_t\otimes\mathcal{F}'',\qquad
P^{(0)}=P'\times P''.
\end{align*}

Next we construct the Cox sampling. Let $\eta\in(0,\frac{1}{2})$ and define the filtration $(\mathcal{G}^{\prime (n)}_t)$ by $\mathcal{G}^{\prime (n)}_t=\mathcal{F}'_{(t-n^{-\eta})_+}$. For each $n$ we have a positive c\`adl\`ag $(\mathcal{G}^{\prime (n)}_t)$-adapted process $\lambda^n=(\lambda^n_t)$ and suppose that there exists a positive c\`adl\`ag $(\mathcal{F}'_t)$-adapted process $\lambda=(\lambda_t)$ such that $\lambda$ and $\lambda_-$ do not vanish and $\lambda^n\xrightarrow{\text{Sk.p.}}\lambda$ as $n\to\infty$. Then, we define the processes $(\tilde{N}^{n,1}_t)$ and $(\tilde{N}^{n,2}_t)$ by $\tilde{N}^{n,1}_t=N^1_{np^1\Lambda_n(t)}$ and $\tilde{N}^{n,2}_t=N^2_{np^2\Lambda_n(t)}$, where $p^1,p^2\in(0,\infty)$ and $\Lambda_n(t)=\int_0^t\lambda^n_s\mathrm{d}s$. Finally, $(S^i)$ and $(T^j)$ each are given by the arrival times of the point processes $(\tilde{N}^{n,1}_t)$ and $(\tilde{N}^{n,2}_t)$ respectively. The name ``Cox sampling'' comes from the fact that $\tilde{N}^{n,1}$ and $\tilde{N}^{n,2}$ are Cox processes. 

Now we verify the conditions [A1$'$], [A2], [A4] and [A6]. First, since $\Lambda_n(t)\to^p\int_0^t\lambda_s\mathrm{d}s$ as $n\to\infty$ for any $t\in\mathbb{R}_+$, the sequence $(\Lambda_n)$ of random functions is C-tight. Therefore, [A6] is obviously satisfied. Next, let $\sigma^i=\Lambda_n(np^1S^i)$ and $\tau^i=\Lambda_n(np^2T^i)$ for every $i$. Then, $(\sigma^i)$ and $(\tau^i)$ each are the sequences of the arrival times of $N^1$ and $N^2$ respectively, so we have $\sup_{1\leq i\leq m_n}[(\sigma^i-\sigma^{i-1})\vee(\tau^i-\tau^{i-1})]=O_p(\log n)$ as $n\to\infty$ if $m_n=O(n)$ by Corollary 1 in \cite{RT1973}. Thus we have
\begin{equation}\label{poissonduration}
\sup_{1\leq i\leq m_n}(S^i-S^{i-1})\vee\sup_{1\leq j\leq m_n}(T^j-T^{j-1})=O_p(\log n/n)
\end{equation}
as $n\to\infty$ if $m_n=O(n)$ because for every $t$ $N^1_{nt}=O_p(n)$ and $N^{2}_{nt}=O_p(n)$ as $n\to\infty$ while $(\sup_{0\leq s\leq t}1/\lambda^n_s)$ is tight. In particular, $(\ref{A4})$ holds for any $\xi'\in(0,1)$ because $N^{n,1}_{t}=O_p(n)$ and $N^{n,2}_{t}=O_p(n)$ as $n\to\infty$ for every $t$. Therefore, we have [A2] and [A4] with $\xi=\eta+1/2$ and $\xi'=(\xi\vee\frac{9}{10}+1)/2$. Finally, let $\mathbf{H}^n$ be the filtration generated by the $\sigma$-field $\mathcal{F}'$ and the processes $\tilde{N}^{n,1}$ and $\tilde{N}^{n,2}$. Then [A1$'$] is verified by the following proposition. See Section \ref{proofpropCox} for a proof. 
\begin{prop}\label{propCox}
We have $[\mathrm{A}1']$ with $\chi\equiv0$ and
\begin{equation}\label{Coxeq}
G_s=\left(\frac{1}{p^1}+\frac{1}{p^2}-\frac{1}{p^1+p^2}\right)\frac{1}{\lambda_s},\qquad
F^1_s=\frac{1}{p^1\lambda_s},\qquad F^2_s=\frac{1}{p^2\lambda_s},\qquad
F^{1*2}_s=\frac{2}{(p^1+p^2)\lambda_s}.
\end{equation}
\end{prop}
\fi

\subsection{Poisson sampling with a random change point}

As an illustrative example of sampling scheme satisfying the conditions [A1$'$], [A2], [A4] and [A6], we shall discuss a Poisson sampling with a random change point, which was also discussed in \cite{HY2011}.

First we construct the stochastic basis $\mathcal{B}^{(0)}$ which is appropriate for the present situation. Let $(\Omega',\mathcal{F}',(\mathcal{F}'_t)$, $P')$ be a stochastic basis, and suppose that the semimartingales $X$, $Y$, $Z^X$ and $Z^Y$ are defined on this basis. Suppose also that $\Psi$ is $(\mathcal{F}'_t)$-adapted. Furthermore, on an auxiliary probability space $(\Omega'',\mathcal{F}'',P'')$, there are mutually independent standard Poisson processes $(\underline{N}^k_t)$, $(\overline{N}^k_t)$ $(k=1,2)$. Then we construct $\mathcal{B}^{(0)}=(\Omega^{(0)},\mathcal{F}^{(0)},\mathbf{F}^{(0)}=(\mathcal{F}^{(0)}_t)_{t\in\mathbb{R}_+} ,P^{(0)})$ by
\begin{align*}
\Omega^{(0)}=\Omega'\times\Omega'',\qquad
\mathcal{F}^{(0)}=\mathcal{F}'\otimes\mathcal{F}'',\qquad
\mathcal{F}^{(0)}_t=\mathcal{F}'_t\otimes\mathcal{F}'',\qquad
P^{(0)}=P'\times P''.
\end{align*}

Next we construct our sampling schemes. For each $k=1,2$, let $\underline{p}^k,\overline{p}^k\in(0,\infty)$ and let $\tau^k$ be an $(\mathcal{F}'_t)$-stopping time. Define $(\underline{S}^i)$ and $(\overline{S}^i)$ each as the arrival times of the point processes $\underline{N}^{n,1}=(\underline{N}^1_{n\underline{p}^1t})$ and $\overline{N}^{n,1}=(\overline{N}^1_{n\overline{p}^1t})$ respectively. Let $\eta\in(0,\frac{1}{2})$ and set $\tau^1_n=\tau^1+n^{-\eta}$. Then, we define $(S^i)$ sequentially by $S^0=0$ and
\begin{align*}
S^i=\inf_{l,m\in\mathbb{N}}\left\{\underline{S}^l_{\{S^{i-1}<\underline{S}^l<\tau^1_n\}},(\tau^1_n+\overline{S}^m)_{\{S^{i-1}<\tau_n^1+\overline{S}^m\}}\right\},\qquad i=1,2,\dots.
\end{align*}
Here, for a stopping time $T$ with respect to filtration $(\mathcal{F}_t)$ and a set $A\in\mathcal{F}_T$, we define $T_A$ by $T_A(\omega)=T(\omega)$ if $\omega\in A$; $T_A(\omega)=\infty$ otherwise. $(T^j)$ is defined in the same way using $\underline{N}^{n,2}=(\underline{N}^2_{n\underline{p}^2t})$, $\overline{N}^{n,2}=(\overline{N}^2_{n\overline{p}^2t})$ and $\tau^2_n=\tau^2+n^{-\eta}$ instead of $\underline{N}^{n,1}$, $\overline{N}^{n,1}$ and $\tau^1_n$ respectively.

Now we verify the conditions [A1$'$], [A2], [A4] and [A6]. First, [A6] is obviously satisfied. Next, [A2] can be verified with $\xi=\eta+1/2$ in a similar manner to the proof of Lemma 8.1 of \cite{HY2011}. Moreover, since $r_n(t)=O_p(\log n/n)$ as $n\to\infty$ for any $t>0$ by Corollary 1 in \cite{RT1973}, $(\ref{A4})$ holds for any $\xi'\in(0,1)$, hence [A4] holds true. Finally, let $\mathbf{H}^n$ be the filtration generated by the $\sigma$-field $\mathcal{F}'$ and the processes $N^{n,1}$ and $N^{n,2}$. Then [A1$'$] is verified by the following proposition. 
\begin{prop}
We have $[\mathrm{A}1']$ with $\chi\equiv0$ and
\begin{align}
&G_s=\left(\frac{1}{\underline{p}^1}+\frac{1}{\underline{p}^2}-\frac{1}{\underline{p}^1+\underline{p}^2}\right)1_{\{s<\tau^1\wedge\tau^2\}}
+\left(\frac{1}{\overline{p}^1}+\frac{1}{\underline{p}^2}-\frac{1}{\overline{p}^1+\underline{p}^2}\right)1_{\{\tau^1\leq s<\tau^2\}}+\nonumber\\
&\hphantom{G_s=(}\left(\frac{1}{\underline{p}^1}+\frac{1}{\overline{p}^2}-\frac{1}{\underline{p}^1+\overline{p}^2}\right)1_{\{\tau^2\leq s<\tau^1\}}
+\left(\frac{1}{\overline{p}^1}+\frac{1}{\overline{p}^2}-\frac{1}{\overline{p}^1+\overline{p}^2}\right)1_{\{\tau^1\vee\tau^2\leq s\}},\label{poissonG}\\
&\left. \begin{array}{l}\displaystyle
F^1_s=\frac{1}{\underline{p}^1}1_{\{s<\tau^1\}}+\frac{1}{\overline{p}^1}1_{\{\tau^1\leq s\}},\qquad\qquad
F^2_s=\frac{1}{\underline{p}^2}1_{\{s<\tau^2\}}+\frac{1}{\overline{p}^2}1_{\{\tau^2\leq s\}},\\ 
\displaystyle F^{1*2}_s=\frac{2}{\underline{p}^1+\underline{p}^2}1_{\{s<\tau^1\wedge\tau^2\}}
+\frac{2}{\overline{p}^1+\underline{p}^2}1_{\{\tau^1\leq s<\tau^2\}}
+\frac{2}{\underline{p}^1+\overline{p}^2}1_{\{\tau^2\leq s<\tau^1\}}
+\frac{2}{\overline{p}^1+\overline{p}^2}1_{\{\tau^1\vee\tau^2\leq s\}}.
\end{array}\right\}\label{poissonF}
\end{align}
\end{prop}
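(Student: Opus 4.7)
The plan is to verify each clause of $[\mathrm{A}1']$ in turn, exploiting the memoryless property of the underlying Poisson processes at the relevant stopping times, together with the change-point buffer $\tau^\ell_n=\tau^\ell+n^{-\eta}$ that insulates the Markov argument from the regime transitions. All scalings are built around $b_n=1/n$, so that the rates $n\underline{p}^\ell,n\overline{p}^\ell$ rescale to $\underline{p}^\ell,\overline{p}^\ell$.

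Clause $(\mathrm{iii})$ will be immediate: since the four underlying Poisson processes are mutually independent with continuous jump laws, $P(\widehat{S}^k=\widehat{T}^k)=0$ for every $k$, so I can take $\chi^{\prime n}\equiv0$ with $\mathcal{N}'_n=\emptyset$, yielding $\chi\equiv0$. For $(\mathrm{i})$ and \eqref{poissonG}, I will take $G^n$ to be the right-continuous piecewise constant process obtained from the right-hand side of \eqref{poissonG} by substituting $\tau^\ell_n$ for $\tau^\ell$. Conditionally on $\mathcal{H}^n_{R^{k-1}}$, the memoryless property gives independent exponential waiting times to the next $X$- and $Y$-sample, with rates determined by which regime $R^{k-1}$ lies in, and the identity $E[\max(\mathrm{Exp}(\lambda_1),\mathrm{Exp}(\lambda_2))]=\lambda_1^{-1}+\lambda_2^{-1}-(\lambda_1+\lambda_2)^{-1}$ reproduces each of the four cases. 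The equality $G^n_{R^{k-1}}=G(1)^n_{R^{k-1}}$ can fail only when the refresh interval $[R^{k-1},R^k]$ straddles some $\tau^\ell_n$; Poisson tail estimates confine those indices to an $O_p(1)$-set, so $\mathcal{N}^0_n$ is tight, and $G^n\xrightarrow{\text{Sk.p.}}G$ follows from $\tau^\ell_n\to\tau^\ell$.

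Clause $(\mathrm{iv})$ and the formulas \eqref{poissonF} I will treat analogously, now conditioning at $\mathcal{H}^n_{\widehat{S}^{k-1}}$, $\mathcal{H}^n_{\widehat{T}^{k-1}}$, and $\mathcal{H}^n_{R^{k-1}}$ respectively. The scaled lengths $b_n^{-1}|\check{I}^k|,b_n^{-1}|\check{J}^k|$ are, by memoryless, interarrivals of the relevant Poisson process, with means $1/\underline{p}^1,1/\overline{p}^1$ (and the analogues for $Y$); the triple overlap defining $F^{1*2}$ collapses, via the joint memoryless property applied to the $X$- and $Y$-Poisson increments past $R^{k-1}$, to twice the mean of $\mathrm{Exp}(n(\underline{p}^1+\underline{p}^2))$ in the pre-change regime, and analogously in the other three regimes. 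Conditions $(\mathrm{ii})$ and $(\mathrm{v})$ are then immediate since the exponentials have all moments and the rates are uniformly bounded away from zero on each regime.

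The hard part will be keeping the exceptional sets $\mathcal{N}^l_n$ tight: there are $\Theta_p(n)$ refresh times per unit interval, so I must argue that the discrepancy between the conditional expectation $F(1)^{n,l}_{R^{k-1}}$ and the regime-based process $F^{n,l}_{R^{k-1}}$ is confined to those $k$'s whose relevant interarrival actually crosses $\tau^\ell_n$. This reduces to the standard Poisson tail estimate that the number of arrival times inside a deterministic $O(1/n)$-window around $\tau^\ell_n$ is $O_p(1)$ uniformly in $n$; the choice of $\eta\in(0,\tfrac12)$ ensures that this window is vanishingly thin relative to the buffer $\tau^\ell_n-\tau^\ell$. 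Once this tightness is in place, Skorokhod convergence $F^{n,l}\xrightarrow{\text{Sk.p.}}F^l$ follows from $\tau^\ell_n\to\tau^\ell$ together with the piecewise constant structure of $F^{n,l}$ and $F^l$.
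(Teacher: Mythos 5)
Your high-level plan coincides with the paper's: reduce everything to the strong Markov property of the pair of independent Poisson processes at the refresh stopping times, evaluate exponential-maximum and exponential-minimum moments, and control the exceptional sets created by the change points. The paper makes the conditional-independence step precise by invoking the \c{C}inlar--Agnew thinning representation (Theorem 2 of \cite{CA1968}): a Poisson process of rate $\lambda^1+\lambda^2$ with i.i.d.\ Bernoulli marks decomposes into two independent Poissons of rates $\lambda^1,\lambda^2$, and this is what justifies the claim that, conditionally on $\mathcal H^n_{R^{k-1}}$, the forward waiting times of the $X$- and $Y$-samplers are independent exponentials. Your appeal to ``the memoryless property'' is the right idea but, as stated, only justifies marginal memorylessness of each process; you need the joint strong Markov property of the pair at $R^{k-1}$, and it is worth at least naming which fact supplies it.

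Two concrete points need fixing. First, your discussion of ``the hard part'' is off target: the exceptional set $\mathcal N^0_n$ in the paper is $\{k: R^{k-1}\le\tau^1_n<R^k\}\cup\{k:R^{k-1}\le\tau^2_n<R^k\}$, and each $\tau^\ell_n$ lies in exactly one refresh interval, so $\#\mathcal N^0_n\le 2$ \emph{deterministically}. No Poisson tail estimate is required (and, as you observe, there are $\Theta_p(n)$ refresh times per unit interval, so counting arrivals in an $O(1/n)$-window would not by itself give tightness in a clean way). Second, and more substantively, your treatment of $F^{1*2}$ has a genuine gap: saying the triple overlap ``collapses to twice the mean of $\mathrm{Exp}(n(\underline p^1+\underline p^2))$'' asserts the right answer without a mechanism. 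The paper's proof hinges on the geometric observation that on $\{R^k=\widehat S^k\}$ one has $\check S^{k+1}=R^k$ and $\widehat T^k\le \check T^{k+1}<R^k$, hence $\check I^{k+1}\cap\check J^k=\emptyset$, and symmetrically $\check I^k\cap\check J^{k+1}=\emptyset$ on $\{R^k=\widehat T^k\}$. Only after discarding the vanishing overlap does the remaining computation reduce to two exponential-minimum moments (one from $|\check I^k\cap\check J^k|=\min(U,V)$ and one from the backward recurrence time at $R^k$) whose sum gives $2/(\lambda^1+\lambda^2)$. Without this case split the reduction is not a proof; you should spell it out.
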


\begin{proof}
First, it is evident that [A1$'$](ii)-(iii) and (v) with $\chi\equiv0$ hold true.

Next, let $\lambda^1,\lambda^2\in(0,\infty)$, and consider a Poisson process $(\widetilde{N}_t)$ with intensity $\lambda^1+\lambda^2$. Moreover, let $(\eta_k)_{k\in\mathbb{N}}$ be an i.i.d. random variables independent of $\widetilde{N}$ with $P(\eta_1=1)=1-P(\eta_1=0)=\lambda^1/(\lambda^1+\lambda^2)$, and set $\widetilde{N}^1_t=\sum_{k=1}^{\widetilde{N}^n_t}\eta_k$ and $\widetilde{N}^2_t=\widetilde{N}_t-\widetilde{N}^1_t$. Then, a short calculation shows that for each $k=1,2$ $\widetilde{N}^k$ is a Poisson process with intensity $\lambda^k$. Furthermore, Theorem 2 in \cite{CA1968} implies that $\widetilde{N}^1$ and $\widetilde{N}^2$ are independent. Using this fact, we can show that $G(1)^n_{R^{k-1}}=G^n_{R^{k-1}}$ for any $k\in\mathbb{N}-\mathcal{N}^0_n$. Here, $G^n$ denotes the process defined by the right hand of $(\ref{poissonG})$ with $\tau^1$ and $\tau^2$ replaced by $\tau^1_n$ and $\tau^2_n$, and $\mathcal{N}^0_n=\{k|R^{k-1}\leq\tau^1_n<R^k\}\cup\{k|R^{k-1}\leq\tau^2_n<R^k\}$. Since $\#\mathcal{N}^0_n\leq2$ and $G^n\xrightarrow{\text{Sk.p.}}G$ as $n\to\infty$, we conclude that [A1$'$](i) with $(\ref{poissonG})$ holds true.

Finally, if $R^k=\widehat{S}^k$, we have $\check{S}^{k+1}=R^k$ and $\widehat{T}^k\leq\check{T}^{k+1}<R^k$, hence we have $\check{I}^{k+1}\cap\check{J}^k=\emptyset$ and $\check{S}^k\vee\check{T}^{k+1}<\widehat{S}^k\wedge\widehat{T}^{k+1}$. Similarly we have $\check{I}^{k}\cap\check{J}^{k+1}=\emptyset$ and $\check{S}^{k+1}\vee\check{T}^{k}<\widehat{S}^{k+1}\wedge\widehat{T}^{k}$ if $R^k=\widehat{T}^k$. Combining these facts with an argument similar to the above, we can also show that [A1$'$](iv) with $(\ref{poissonF})$ holds true, hence we complete the proof.
\end{proof}

\subsection{A round-off error model}

In this subsection we illustrate an example of the microstructure noise model involving rounding effects. It is a version of Example 2 in \cite{JLMPV2009}. The round off error is known as one of the sources of the Epps effect; see \cite{MSG2010}.

In the remainder of this subsection, we assume that $Z^X=Z^Y=0$. Suppose that the observation data is given as follows:
\begin{equation}\label{round}
\mathsf{X}_{S^i}=\gamma^X\lceil (X_{S^i}+u^X_i)/\gamma^X\rfloor,\qquad
\mathsf{Y}_{T^j}=\gamma^Y\lceil (Y_{T^j}+u^Y_j)/\gamma^Y\rfloor.
\end{equation}
Here, $\gamma^X,\gamma^Y>0$, $(u^X_i)$ and $(u^Y_j)$ are mutually independent i.i.d. sequences of random variables independent of $X$ and $Y$, and for a real number $x$ we denote by $\lceil x\rfloor$ the unique integer $a$ such that $a-1/2\leq x<a+1/2$. Suppose that $u^X_i$ and $u^Y_j$ each are uniform over $[-\gamma^X/2,\gamma^X/2]$ and $[-\gamma^Y/2,\gamma^Y/2]$ respectively. Then, this model can be accommodated to our framework in the following way: For a real number $x$, let $\mu^X_x$ be the Bernoulli distribution taking values $\gamma^X(\delta^X_x+\mathrm{sign}(-\delta^X_x))$ and $\gamma^X\delta^X_x$ with probabilities $|\delta^X_x|$ and $1-|\delta^X_x|$, where $\delta^X_x=\lceil x/\gamma^X\rfloor-x/\gamma^X$ and $\mathrm{sign}(a)$ is equal to 1 if $a\geq 0$ and $-1$ otherwise. Similarly we define $\mu^Y_x$ with replacing $X$ by $Y$. After that, we define $Q_t(\omega^{(0)},\mathrm{d}x\mathrm{d}y)=\mu^X_{X_t(\omega^{(0)})}(\mathrm{d}x)\mu^Y_{Y_t(\omega^{(0)})}(\mathrm{d}y)$. We can easily confirm $(\ref{centered})$ and $(\ref{round})$. Moreover, since the function $x\mapsto |\lceil x\rfloor-x|$ is Lipschitz, the condition [N] holds if we have [C1]. 

\if0
\subsection{A finite samples adjustment}

When we construct the pre-averaged HY estimator, we only use the ``complete'' pre-averaging data. However, the proof of Lemma \ref{lembasic} given in Section \ref{prooflembasic} (especially Lemma \ref{basicrep}) suggests that involving the ``incomplete'' pre-averaging data in the summation of the estimator may be useful to improve the performance of the estimator in finite sample sizes. That is, for each $t\in\mathbb{R}_+$ we introduce the following quantities:
\begin{align*}
&\overline{\mathsf{X}}(\mathcal{I})^i[t]=\sum_{p=1}^{k_n-1}g\left (\frac{p}{k_n}\right)\left(\mathsf{X}_{S^{i+p}}-\mathsf{X}_{S^{i+p-1}}\right)1_{\{\widehat{S}^{i+p}\leq t\}},\qquad i=0,1,\dots,\\
&\overline{\mathsf{Y}}(\mathcal{J})^j[t]=\sum_{q=1}^{k_n-1}g\left (\frac{q}{k_n}\right)\left(\mathsf{Y}_{T^{j+q}}-\mathsf{Y}_{T^{j+q-1}}\right)1_{\{\widehat{T}^{j+q}\leq t\}},\qquad j=0,1,\dots.
\end{align*}
Therefore, when $\widehat{S}^{i+k_n-1}>t$ and $\widehat{T}^{j+k_n-1}>t$, each of the summands in $\overline{\mathsf{X}}(\mathcal{I})^i[t]$ and $\overline{\mathsf{Y}}(\mathcal{J})^j[t]$ lacks. Then we construct the new estimator as follows:
\begin{equation*}
\widehat{PHY}^{\mathrm{adj}}(\mathsf{X},\mathsf{Y})^n_t
=\frac{1}{(\psi_{HY}k_n)^2}\sum_{i,j=0}^{\infty}\overline{\mathsf{X}}(\mathcal{I})^i[t]\overline{\mathsf{Y}}(\mathcal{J})^j[t]\bar{K}^{ij}_t.
\end{equation*}
The following proposition can be shown in a similar manner to the proof of Lemma \ref{basicrep} with an appropriate localization procedure, so that we omit the proof.
\begin{prop}
Suppose that $[\mathrm{C}1]$-$[\mathrm{C}3]$ and $(\ref{A4})$ are satisfied. Then we have $b_n^{-\gamma}\{\widehat{PHY}^{\mathrm{adj}}(\mathsf{X},\mathsf{Y})^n-\widehat{PHY}(\mathsf{X},\mathsf{Y})^n\}\xrightarrow{ucp}0$ as $n\to\infty$ for any $\gamma<\xi'-1/2$.
\end{prop}
\fi




\section{Proof of Lemma \ref{lembasic}}\label{prooflembasic}


Throughout this section, we fix a constant $\gamma$ such that $\gamma<\xi'-1/2$. First note that for the proof we can use a localization procedure, and which allows us to systematically replace the conditions [C1]-[C3] by the following strengthened versions:
\begin{enumerate}
\item[{[SC1]}] [C1] holds, and $(A^X)'$, $(A^Y)'$, $(\underline{A}^X)'$, $(\underline{A}^Y)'$ and $[V,W]'$ for each $V,W=X,Y,Z^X,Z^Y$ are bounded.
\item[{[SC2]}] $(\int |z|^2Q_t(\mathrm{d}z))_{t\in\mathbb{R}_+}$ is a bounded process.
\item[{[SC3]}] There is a positive constant $K$ such that $b_nN_n(t)\leq K$ for all $n$ and $t$.
\end{enumerate}

We write $\bar{r}_n=b_n^{\xi'}$. Next, let $\upsilon_n=\inf\{t|r_n(t)>\bar{r}_n\}$, and define a sequence $(\widetilde{S}^i)_{i\in\mathbb{Z}^+}$ sequentially by
\begin{equation*}
\widetilde{S}^i=
\left\{\begin{array}{ll}
S^i & \textrm{if $S^i<\upsilon_n$},\\
\widetilde{S}^{i-1}+\bar{r}_n & \textrm{otherwise}.
\end{array}\right.
\end{equation*}
Then, $(\widetilde{S}^i)$ is obviously a sequence of $\mathbf{F}^{(0)}$-stopping times satisfying $(\ref{increace})$ and $\sup_{i\in\mathbb{N}}(\widetilde{S}^i-\widetilde{S}^{i-1})\leq\bar{r}_n$. Furthermore, for any $t>0$ we have $P(\bigcap_i\{\widetilde{S}^i\wedge t\neq S^i\wedge t\})\leq P(\upsilon_n<t)\to0$ as $n\to\infty$ by $(\ref{A4})$. By replacing $(S^i)$ with $(T^j)$, we can construct a sequence $(\widetilde{T}^j)$ in a similar manner. This argument implies that we may also assume that
\begin{equation}\label{SA4}
\sup_{t\in\mathbb{R}_+}r_n(t)\leq\bar{r}_n
\end{equation}
by an appropriate localization procedure.

Set $\Delta(g)^n_p=g^n_{p+1}-g^n_p$ for every $n,p$. For a process $V=(V_t)_{t\in\mathbb{R}_+}$, let
\begin{align*}
\widetilde{V}_g(\widehat{\mathcal{I}})^i_t=\sum_{p=0}^{k_n-1}k_n\Delta(g)^n_p V(\widehat{I}^{i+p})_t,\qquad
\widetilde{V}_g(\widehat{\mathcal{J}})^j_t=\sum_{q=0}^{k_n-1}k_n\Delta(g)^n_q V(\widehat{J}^{j+q})_t
\end{align*}
for each $t\in\mathbb{R}_+$ and $i,j\in\mathbb{N}$.


\begin{lem}\label{modulus}
Suppose $A^X$, $A^Y$, $[X]$, $[Y]$, $\underline{A}^X$, $\underline{A}^Y$, $[Z^X]$ and $[Z^Y]$ are absolutely continuous with locally bounded derivatives. Suppose also $(\ref{SA4})$ holds. Then a.s. we have
\begin{align}
\limsup_{n\rightarrow\infty}\sup_{i\in\mathbb{N}}
\frac{|\bar{X}_g(\widehat{\mathcal{I}})^i_t|}{\sqrt{2 k_n\bar{r}_n\log\frac{1}{\bar{r}_n}}}\leq \| g\|_\infty\sup_{0\leq s\leq t}|[X]'_s|,\qquad
\limsup_{n\rightarrow\infty}\sup_{i\in\mathbb{N}}
\frac{|\bar{\mathfrak{Z}}^X_{g'}(\widehat{\mathcal{I}})^i_t|+|\widetilde{\mathfrak{Z}}^X_g(\widehat{\mathcal{I}})^i_t|}{\sqrt{2 k_n\bar{r}_n\log\frac{1}{\bar{r}_n}}}\leq L\sup_{0\leq s\leq t}|[Z^X]'_s|,\label{modulus1}\\
\limsup_{n\rightarrow\infty}\sup_{j\in\mathbb{N}}
\frac{|\bar{Y}_g(\widehat{\mathcal{J}})^j_t|}{\sqrt{2 k_n\bar{r}_n\log\frac{1}{\bar{r}_n}}}\leq \| g\|_\infty\sup_{0\leq s\leq t}|[Y]'_s|,\qquad
\limsup_{n\rightarrow\infty}\sup_{j\in\mathbb{N}}
\frac{|\bar{\mathfrak{Z}}^Y_{g'}(\widehat{\mathcal{J}})^j_t|+|\widetilde{\mathfrak{Z}}^Y_g(\widehat{\mathcal{J}})^j_t|}{\sqrt{2 k_n\bar{r}_n\log\frac{1}{\bar{r}_n}}}\leq L\sup_{0\leq s\leq t}|[Z^Y]'_s|\label{modulus2}
\end{align}
for any $t>0$, where $L$ is a positive constant which only depends on $g$.
\end{lem}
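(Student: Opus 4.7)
The plan is to handle all six estimates by a common template: decompose each semimartingale into drift plus local martingale, control the drift contribution crudely, and reduce the martingale part to Lévy's modulus of continuity via a Dambis--Dubins--Schwarz (DDS) time change together with summation by parts. I work under the localization set up at the start of Section~\ref{prooflembasic}, so [SC1]--[SC3] and $(\ref{SA4})$ are in force.

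Consider $\bar{X}_g(\widehat{\mathcal{I}})^i_t$ and decompose $X=A^X+M^X$. A short check using $(\ref{SA4})$, $\widehat{S}^k-R^{k-1}\leq r_n$, and $R^{k-1}-\widehat{S}^{k-1}\leq r_n$ gives the interval-length bound $\widehat{S}^{i+k_n-1}-\widehat{S}^{i-1}\leq 2k_n\bar{r}_n$, so the drift part is dominated by
\begin{equation*}
|\bar{A}^X_g(\widehat{\mathcal{I}})^i_t|\leq 2\|g\|_\infty\|(A^X)'\|_\infty k_n\bar{r}_n=o\bigl(\sqrt{k_n\bar{r}_n\log(1/\bar{r}_n)}\bigr)
\end{equation*}
uniformly in $i$. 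For the martingale part I apply DDS on an extension to write $M^X=B\circ[M^X]$ and use summation by parts on the weighted increment sum together with $g(0)=0$; this produces an identity of the form
\begin{equation*}
\bar{M}^X_g(\widehat{\mathcal{I}})^i_t=g^n_{k_n-1}\zeta^i_{k_n-1}-\sum_{p=0}^{k_n-2}\Delta(g)^n_p\,\zeta^i_p,\qquad \zeta^i_p:=M^X_{\widehat{S}^{i+p}\wedge t}-M^X_{\widehat{S}^{i-1}\wedge t}.
\end{equation*}
Setting $C=\sup_{s\leq t}|[X]'_s|$, the interval-length bound yields $|[M^X]_{\widehat{S}^{i+p}\wedge t}-[M^X]_{\widehat{S}^{i-1}\wedge t}|\leq 2Ck_n\bar{r}_n$, so $\sup_p|\zeta^i_p|\leq\Omega_n$ with $\Omega_n:=\sup\{|B_u-B_{u'}|:0\leq u,u'\leq[M^X]_t,\,|u-u'|\leq 2Ck_n\bar{r}_n\}$. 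Combining with $|g^n_{k_n-1}|\leq\|g\|_\infty$ and $\sum_p|\Delta(g)^n_p|\leq V(g)<\infty$ (the total variation of $g$ on $[0,1]$, finite by the piecewise Lipschitz assumption on $g'$) gives $\sup_i|\bar{M}^X_g(\widehat{\mathcal{I}})^i_t|\leq(\|g\|_\infty+V(g))\Omega_n$ uniformly in $i$.

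Lévy's modulus of continuity theorem applied to $B$ on $[0,[M^X]_t]$ gives $\Omega_n\leq(1+o(1))\sqrt{4Ck_n\bar{r}_n\log(1/(2Ck_n\bar{r}_n))}$ a.s.; since $k_n\asymp b_n^{-1/2}$ and $\bar{r}_n=b_n^{\xi'}$, the ratio $\log(1/(k_n\bar{r}_n))/\log(1/\bar{r}_n)\to(\xi'-1/2)/\xi'\in(0,1)$, so after absorbing $g$-dependent constants the bound in $(\ref{modulus1})$ follows (the stated right-hand side $\|g\|_\infty\sup|[X]'|$ is a loose but sufficient form). The analogous estimates for $Y$ in $(\ref{modulus2})$ are completely symmetric. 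For the $\mathfrak{Z}^X$ terms, the identity $\mathfrak{Z}^X=-\mathfrak{I}_-\bullet Z^X$ with $\mathfrak{I}_-\in\{0,1\}$ gives $[\mathfrak{Z}^X]\leq[Z^X]$, whose density is bounded under [SC1], and the drift of $\mathfrak{Z}^X$ likewise has bounded density; the same DDS plus summation by parts argument then delivers the bound with $\|g\|_\infty$ replaced by a $g$-dependent constant (namely $\|g'\|_\infty+V(g')$ for $\bar{\mathfrak{Z}}^X_{g'}$, and the Lipschitz constant of $g$ for the weights $k_n\Delta(g)^n_p$ appearing in $\widetilde{\mathfrak{Z}}^X_g$), both absorbed into $L$. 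The main bookkeeping point is tracking the $g$-dependent constants and keeping the logarithmic factors consistent between $\log(1/\bar{r}_n)$ and $\log(1/(k_n\bar{r}_n))$, which is ensured by $\xi'>1/2$; beyond that no new ideas are required.
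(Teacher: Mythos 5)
Your proof is correct and takes essentially the same approach as the paper: Abel summation by parts to reduce each pre-averaged quantity to a weighted maximum of increments over windows of length $O(k_n\bar{r}_n)$, plus the Dambis--Dubins--Schwarz representation together with L\'evy's modulus of continuity to bound those increments. The paper applies the modulus bound to $X$ directly (absorbing the drift contribution into the $o(1)$ term implicitly) and just cites the Lipschitz/piecewise-Lipschitz bounds on $\Delta(g)^n_p$ and $\Delta(g)^n_{p+1}-\Delta(g)^n_p$, whereas you make the $X=A^X+M^X$ split and the DDS time change explicit and record the ratio $\log(1/(k_n\bar r_n))/\log(1/\bar r_n)\to(\xi'-1/2)/\xi'$; these are presentational refinements rather than a different route.
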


\begin{proof}
Combining a representation of a continuous local martingale with Brownian motion and L\'{e}vy's theorem on the uniform modulus of continuity of Brownian motion, we obtain
\begin{align*}
\limsup_{\delta\rightarrow +0}\sup_{
\begin{subarray}{c}
s,u\in[0,t]\\
|s-u|\leq\delta
\end{subarray}}
\frac{|X_s-X_u|}{\sqrt{2\delta\log\frac{1}{\delta}}}\leq \sup_{0\leq s\leq t}|[X]'_s|,\qquad
\limsup_{\delta\rightarrow +0}\sup_{
\begin{subarray}{c}
s,u\in[0,t]\\
|s-u|\leq\delta
\end{subarray}}
\frac{|\mathfrak{Z}^X_s-\mathfrak{Z}^X_u|}{\sqrt{2\delta\log\frac{1}{\delta}}}\leq \sup_{0\leq s\leq t}|[Z^X]'_s|.
\end{align*}
Since $\bar{X}_g(\widehat{\mathcal{I}})^i_t=-\sum_{p=0}^{k_n-1}\Delta(g)^n_p(X_{\widehat{S}^{i+p}\wedge t}-X_{\widehat{S}^{i}\wedge t})$ and $|\Delta(g)^n_p|\leq\frac{1}{k_n}\| g\|_\infty$, we obtain the first inequality in $(\ref{modulus1})$. On the other hand, since Abel's partial summation formula yields $\bar{\mathfrak{Z}}^X_{g'}(\widehat{\mathcal{I}})^i_t=\sum_{p=0}^{k_n-1}k_n\{(g')^n_{p}-(g')^n_{p+1}\}(\mathfrak{Z}^X_{\widehat{S}^{i+p}\wedge t}-\mathfrak{Z}^X_{\widehat{S}^{i-1}\wedge t})$ and  $\widetilde{\mathfrak{Z}}^X_g(\widehat{\mathcal{I}})^i_t=\sum_{p=0}^{k_n-1}k_n\{\Delta(g)^n_{p}-\Delta(g)^n_{p+1}\}(\mathfrak{Z}^X_{\widehat{S}^{i+p}\wedge t}-\mathfrak{Z}^X_{\widehat{S}^{i-1}\wedge t})$, and $\Delta(g)^n_{p+1}-\Delta(g)^n_p=-\int_{p/k_n}^{(p+1)/k_n}\{g'(x+1/k_n)-g'(x)\}\mathrm{d}x$, the piecewise Lipschitz continuity of $g'$ implies the second inequality in $(\ref{modulus1})$.

By symmetry we also obtain $(\ref{modulus2})$.
\end{proof}

We can strengthen Lemma \ref{modulus} by a localization if we assume that $(\ref{SA4})$ and [SC1] hold, so that in the remainder of this section we always assume that we have a positive constant $K$ and a positive integer $n_0$ such that
\begin{align}
\sup_{i\in\mathbb{N}}
\frac{|\bar{X}_g(\widehat{\mathcal{I}})^i_t(\omega)|+|\bar{\mathfrak{Z}}^X_{g'}(\widehat{\mathcal{I}})^i_t(\omega)|+|\widetilde{\mathfrak{Z}}^X_g(\widehat{\mathcal{I}})^i_t(\omega)|}{\sqrt{2 k_n\bar{r}_n|\log b_n|}}
+\sup_{j\in\mathbb{N}}
\frac{|\bar{Y}_g(\widehat{\mathcal{J}})^j_t(\omega)|+|\bar{\mathfrak{Z}}^Y_{g'}(\widehat{\mathcal{J}})^j_t(\omega)|+|\widetilde{\mathfrak{Z}}^Y_g(\widehat{\mathcal{J}})^j_t(\omega)|}{\sqrt{2 k_n \bar{r}_n|\log b_n|}}
\leq K\label{absmod2}
\end{align}
for all $t>0$ and $\omega\in\Omega$ if $n\geq n_0$. Moreover, we only consider sufficiently large $n$ such that $n\geq n_0$.


Let
\begin{align*}
\mathbf{I}_t:=&\frac{1}{(\psi_{HY}k_n)^2}\sum_{i,j=1}^{\infty}\bar{X}_g(\widehat{\mathcal{I}})^i_t\bar{Y}_g(\widehat{\mathcal{J}})^j_t\bar{K}^{ij}_t,&
\mathbf{II}_t:=&\frac{1}{(\psi_{HY}k_n)^2}\sum_{i,j=1}^{\infty}\widetilde{\mathfrak{U}}^X_g(\widehat{\mathcal{I}})^i_t\widetilde{\mathfrak{U}}^Y_g(\widehat{\mathcal{J}})^j_t\bar{K}^{ij}_t,\\
\mathbf{III}_t:=&\frac{1}{(\psi_{HY}k_n)^2}\sum_{i,j=1}^{\infty}\bar{X}_g(\widehat{\mathcal{I}})^i_t\widetilde{\mathfrak{U}}^Y_g(\widehat{\mathcal{J}})^j_t\bar{K}^{ij}_t,&
\mathbf{IV}_t:=&\frac{1}{(\psi_{HY}k_n)^2}\sum_{i,j=1}^{\infty}\widetilde{\mathfrak{U}}^X_g(\widehat{\mathcal{I}})^i_t\bar{Y}_g(\widehat{\mathcal{J}})^j_t\bar{K}^{ij}_t.
\end{align*}

The following lemma tells us that the edge effects are negligible. Throughout the discussions, for (random) sequences $(x_n)$ and $(y_n)$, $x_n\lesssim y_n$ means that there exists a (non-random) constant $C\in[0,\infty)$ such that $x_n\leq Cy_n$ for large $n$. We denote by $E_0$ a conditional expectation given $\mathcal{F}^{(0)}$, i.e. $E_0[\cdot]:=E[\cdot|\mathcal{F}^{(0)}]$.
\begin{lem}\label{basicrep}
Suppose that $[\mathrm{SC}1]$-$[\mathrm{SC}3]$ and $(\ref{SA4})$ are satisfied. Then we have $b_n^{-\gamma}\{\widehat{PHY}(\mathsf{X},\mathsf{Y})^n-(\mathbf{I}+\mathbf{II}+\mathbf{III}+\mathbf{IV})\}\xrightarrow{ucp}0$ as $n\to\infty$.
\end{lem}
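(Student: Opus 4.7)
The plan is first to rewrite each pre-averaged observation $\overline{\mathsf{X}}(\widehat{\mathcal{I}})^i$ as a sum of a ``signal'' part and a ``noise'' part that match precisely with the integrands appearing in $\mathbf{I}^n+\mathbf{II}^n+\mathbf{III}^n+\mathbf{IV}^n$, and then to show that the only discrepancy between $\widehat{PHY}(\mathsf{X},\mathsf{Y})^n_t$ and that sum lives on a set of boundary indices near the endpoint $t$ whose contribution is of order $b_n^{\xi'-1/2}\log(1/b_n)$. First I would establish the identity
\begin{align*}
\overline{\mathsf{X}}(\widehat{\mathcal{I}})^i=\bar{X}_g(\widehat{\mathcal{I}})^i_\infty+\widetilde{\mathfrak{U}}^X_g(\widehat{\mathcal{I}})^i_\infty,
\end{align*}
and analogously for $\mathsf{Y}$. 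The first summand is immediate since $g^n_0=0$. For the second, Abel summation with $g^n_0=g^n_{k_n}=0$ gives $\sum_{p=1}^{k_n-1}g^n_p(U^X_{\widehat{S}^{i+p}}-U^X_{\widehat{S}^{i+p-1}})=-\sum_{p=0}^{k_n-1}\Delta(g)^n_p U^X_{\widehat{S}^{i+p}}$; a direct computation using the definitions of $\mathfrak{E}^X$ and $\mathfrak{Z}^X$ (together with the interpretation $Z^X_{\widehat{S}^{i+p}}-Z^X_{\check{S}^{i+p}}=-\mathfrak{Z}^X(\widehat{I}^{i+p})_\infty$) shows $\mathfrak{U}^X(\widehat{I}^{i+p})_\infty=-k_n^{-1}U^X_{\widehat{S}^{i+p}}$, which matches.

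Expanding the product $(\bar{X}_g^i+\widetilde{\mathfrak{U}}^X_g{}^i)(\bar{Y}_g^j+\widetilde{\mathfrak{U}}^Y_g{}^j)$ into four terms, one sees that when $\widehat{S}^{i+k_n}\vee\widehat{T}^{j+k_n}\leq t$ the infinite-time evaluations coincide with the $t$-truncated ones used in $\mathbf{I}^n_t,\dots,\mathbf{IV}^n_t$ and $\bar{K}^{ij}=\bar{K}^{ij}_t$, so the interior of the two sums cancels term-by-term. The residue consists of (i) indices with $i=0$ or $j=0$ that sit in the $\widehat{PHY}$-sum but not in $\mathbf{I}^n+\dots+\mathbf{IV}^n$, and (ii) indices with $i,j\geq 1$, $\bar{K}^{ij}_t=1$, but $\widehat{S}^{i+k_n}\vee\widehat{T}^{j+k_n}>t$ which appear only in $\mathbf{I}^n+\dots+\mathbf{IV}^n$. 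Because $\widehat{S}^i<t\leq\widehat{S}^{i+k_n}$ for at most $k_n$ values of $i$, and by $(\ref{sumbarK})$ each such $i$ admits at most $2k_n+1$ values of $j$, the total count of residual pairs is $O(k_n^2)$ (the type (i) contribution is $O(k_n)$ and is absorbed).

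Each residual product is then estimated using $(\ref{absmod2})$, which gives $|\bar{X}_g(\widehat{\mathcal{I}})^i_t|+(k_n\sqrt{b_n})^{-1}|\widetilde{\mathfrak{Z}}^X_g(\widehat{\mathcal{I}})^i_t|\lesssim\sqrt{k_n\bar{r}_n|\log b_n|}$ and its $Y$-counterpart. Multiplied out, each boundary product is $O_p(k_n\bar{r}_n|\log b_n|)$, so the total error is
\begin{align*}
\frac{1}{(\psi_{HY}k_n)^2}\cdot O(k_n^2)\cdot O_p(k_n\bar{r}_n|\log b_n|)=O_p(b_n^{\xi'-1/2}|\log b_n|)=o_p(b_n^\gamma),
\end{align*}
since $\gamma<\xi'-1/2$. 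Uniformity in $t$ over any compact interval follows because all bounds above are stated uniformly once [SC1]--[SC3] and $(\ref{SA4})$ have been achieved via localization. The main obstacle is that $(\ref{absmod2})$ does not cover the purely-discontinuous noise $\widetilde{\mathfrak{E}}^X_g$ coming from the $\mathfrak{E}^X$ part of $\widetilde{\mathfrak{U}}^X_g$; however, a conditional $L^2$-estimate using $|\Delta(g)^n_p|\lesssim k_n^{-1}$, [SC2] and a union bound over the $O_p(b_n^{-1})$ active indices (by [SC3]) yields $\sup_i|\widetilde{\mathfrak{E}}^X_g(\widehat{\mathcal{I}})^i_t|=O_p(k_n^{-1/2}\sqrt{|\log b_n|})$, which under $\xi'>1/2$ is dominated by $\sqrt{k_n\bar{r}_n|\log b_n|}$ and therefore does not affect the order of the bound above.
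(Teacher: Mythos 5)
Your overall strategy coincides with the paper's: rewrite the pre-averaged data as $\bar X_g(\widehat{\mathcal I})^i_\infty+\widetilde{\mathfrak U}^X_g(\widehat{\mathcal I})^i_\infty$ via Abel summation (and the identity $\mathfrak U^X(\widehat I^{i+p})_\infty=-k_n^{-1}U^X_{\widehat S^{i+p}}$ is indeed correct, since $\check S^{i+p}$ is precisely the previous observation time for $\widehat S^{i+p}$), identify the discrepancy between $\widehat{PHY}$ and $\mathbf I+\cdots+\mathbf{IV}$ as a set of boundary pairs, count $O(k_n^2)$ of them per fixed $s$, and bound the continuous parts with $(\ref{absmod2})$. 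This matches the paper's decomposition into $\mathbf A_{1,t},\dots,\mathbf A_{4,t}$ and the count via $(\ref{sumbarK})$ and the restriction $\widehat S^i\leq s<\widehat S^{i+k_n}$.

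The gap is in your handling of the exogenous noise $\widetilde{\mathfrak E}^X_g(\widehat{\mathcal I})^i$. Under [SC2] you only control conditional second moments, $E_0[|\widetilde{\mathfrak E}^X_g(\widehat{\mathcal I})^i_t|^2]\lesssim k_n^{-1}$; a union bound with Markov's inequality over $O_p(b_n^{-1})$ indices then gives $P(\sup_i|\widetilde{\mathfrak E}^X_g(\widehat{\mathcal I})^i_t|>\lambda)\lesssim b_n^{-1}k_n^{-1}/\lambda^2\sim b_n^{-1/2}/\lambda^2$. Plugging in your claimed rate $\lambda\sim k_n^{-1/2}\sqrt{|\log b_n|}\sim b_n^{1/4}\sqrt{|\log b_n|}$ makes the tail $\sim b_n^{-1}/|\log b_n|\to\infty$; even the weaker rate $\lambda\sim\sqrt{k_n\bar r_n|\log b_n|}\sim b_n^{\xi'/2-1/4}\sqrt{|\log b_n|}$ that you actually need gives tail $\sim b_n^{-\xi'}\to\infty$. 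So the union bound offers no control at these scales; a $\sqrt{\log}$-type maximal inequality requires exponential (at least high polynomial) moments that [SC2] does not provide. The paper avoids seeking a uniform-in-$i$ almost-sure bound on the noise. It bounds $E_0[\sup_{0\leq s\leq t}|\mathbf A_{1,s}|]$ directly: Schwarz per term inside the $(i,j)$-sum, the per-index Doob estimate $(\ref{noiseest2})$ $E_0[\sup_u|\widetilde{\mathfrak E}^X_g(\widehat{\mathcal I})^i_u|^2]\lesssim k_n^{-1}$, and the deterministic $O(k_n^2)$ count at each fixed $s$. Since $k_n^{-1}=o(k_n\bar r_n)$ (because $\xi'<1$), the noise contribution is dominated by the $(\ref{absmod2})$ bound for the continuous parts and one gets $E_0[\sup_s|\mathbf A_{1,s}|]\lesssim k_n\bar r_n|\log b_n|=o(b_n^\gamma)$. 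Your framework is right; the correct closing of the noise gap is a conditional moment bound on the residual, not a uniform $O_p$ bound on $\sup_i|\widetilde{\mathfrak E}^X_g(\widehat{\mathcal I})^i_t|$.
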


\begin{proof}
We can rewrite $\widehat{PHY}(\mathsf{X},\mathsf{Y})^n_t$ and $\mathbf{I}_t+\mathbf{II}_t+\mathbf{III}_t+\mathbf{IV}_t$ as
\begin{align*}
&\widehat{PHY}(\mathsf{X},\mathsf{Y})^n_t\\
=&\frac{1}{(\psi_{HY}k_n)^2}\Biggl[\sum_{i,j=1}^{\infty}\bar{\mathsf{X}}_g(\widehat{\mathcal{I}})^i_t\bar{\mathsf{Y}}_g(\widehat{\mathcal{J}})^j_t\bar{K}^{ij}_t 1_{\{\widehat{S}^{i+k_n}\vee\widehat{T}^{j+k_n}\leq t\}}
+\sum_{i,j:i=0\textrm{ or }j=0}\bar{\mathsf{X}}(\widehat{\mathcal{I}})^i\bar{\mathsf{Y}}(\widehat{\mathcal{J}})^j\bar{K}^{ij} 1_{\{\widehat{S}^{i+k_n}\vee\widehat{T}^{j+k_n}\leq t\}}\Biggr]
\end{align*}
and
\begin{align*}
\mathbf{I}_t+\mathbf{II}_t+\mathbf{III}_t+\mathbf{IV}_t
=\frac{1}{(\psi_{HY}k_n)^2}\sum_{i,j=1}^{\infty}\bar{\mathsf{X}}_g(\widehat{\mathcal{I}})^i_t\bar{\mathsf{Y}}_g(\widehat{\mathcal{J}})^j_t\bar{K}^{ij}_t 1_{\{\widehat{S}^{i}\vee\widehat{T}^{j}\leq t\}},
\end{align*}
where $\bar{\mathsf{X}}_g(\widehat{\mathcal{I}})^i_t=\bar{X}_g(\widehat{\mathcal{I}})^i_t+\widetilde{\mathfrak{U}}^X_g(\widehat{\mathcal{I}})^i_t$ and $\bar{\mathsf{Y}}_g(\widehat{\mathcal{J}})^j_t=\bar{Y}_g(\widehat{\mathcal{J}})^j_t+\widetilde{\mathfrak{U}}^Y_g(\widehat{\mathcal{J}})^j_t$. hence we can decompose the target quantity as
\begin{align*}
&(\mathbf{I}_t+\mathbf{II}_t+\mathbf{III}_t+\mathbf{IV}_t)-\widehat{PHY}(\mathsf{X},\mathsf{Y})^n_t\\
=&\frac{1}{(\psi_{HY}k_n)^2}\Biggl[\sum_{i,j=1}^{\infty}\bar{\mathsf{X}}_g(\widehat{\mathcal{I}})^i_t\bar{\mathsf{Y}}_g(\widehat{\mathcal{J}})^j_t\bar{K}^{ij}_t 1_{\{\widehat{S}^{i}\vee\widehat{T}^{j}\leq t<\widehat{S}^{i+k_n}\vee\widehat{T}^{j+k_n}\}}-\sum_{i=1}^{\infty}\bar{\mathsf{X}}_g(\widehat{\mathcal{I}})^i_t\bar{\mathsf{Y}}(\widehat{\mathcal{J}})^0\bar{K}^{i0}_t 1_{\{\widehat{S}^{i+k_n}\vee\widehat{T}^{k_n}\leq t\}}\\
&\hphantom{\frac{1}{(\psi_{HY}k_n)^2}\Biggl[}-\sum_{j=1}^{\infty}\bar{\mathsf{X}}(\widehat{\mathcal{I}})^0\bar{\mathsf{Y}}_g(\widehat{\mathcal{J}})^j_t\bar{K}^{0j}_t 1_{\{\widehat{S}^{k_n}\vee\widehat{T}^{j+k_n}\leq t\}}-\bar{\mathsf{X}}(\widehat{\mathcal{I}})^0\bar{\mathsf{Y}}(\widehat{\mathcal{J}})^0 1_{\{\widehat{S}^{k_n}\vee\widehat{T}^{k_n}\leq t\}}\Biggr]\\
=:&\mathbf{A}_{1,t}+\mathbf{A}_{2,t}+\mathbf{A}_{3,t}++\mathbf{A}_{4,t}.
\end{align*}

Consider $\mathbf{A}_{1,t}$. The Schwarz inequality yields
\begin{align*}
&E_0\left[\sup_{0\leq s\leq t}|\mathbf{A}_{1,s}|\right]\\
\leq&\frac{1}{(\psi_{HY}k_n)^2}\sup_{0\leq s\leq t}\sum_{i,j=1}^{\infty}\left\{E_0\left[\sup_{0\leq u\leq t}\left|\bar{\mathsf{X}}_g(\widehat{\mathcal{I}})^i_u\right|^2\right] E_0\left[\sup_{0\leq u\leq t}\left|\bar{\mathsf{Y}}_g(\widehat{\mathcal{J}})^j_u\right|^2\right]\right\}^{1/2}\bar{K}^{ij}_s 1_{\{\widehat{S}^{i}\vee\widehat{T}^{j}\leq s<\widehat{S}^{i+k_n}\vee\widehat{T}^{j+k_n}\}}.
\end{align*}
Since both $\mathfrak{E}^X$ and $\mathfrak{E}^Y$ are martingales on $(\Omega^{(1)},\mathcal{F}^{(1)},\mathbf{F}^{(1)},Q(\omega^{(0)},\mathrm{d}z))$ for each $\omega^{(0)}$, we have 
\begin{equation}\label{noiseest2}
E_0\left[\sup_{0\leq u\leq t}\left|\widetilde{\mathfrak{E}}^X_g(\widehat{\mathcal{I}})^i_u\right|^2\right]\lesssim k_n^{-1},\qquad
E_0\left[\sup_{0\leq u\leq t}\left|\widetilde{\mathfrak{E}}^Y_g(\widehat{\mathcal{J}})^j_u\right|^2\right]\lesssim k_n^{-1}
\end{equation}
by the Doob inequality and [SC2]. Combining this with $(\ref{absmod2})$, we obtain
\begin{align*}
E_0\left[\sup_{0\leq s\leq t}|\mathbf{A}_{1,s}|\right]
\lesssim\frac{\bar{r}_n|\log b_n|}{k_n}\sup_{0\leq s\leq t}\sum_{i,j=1}^{\infty}\bar{K}^{ij}_s 1_{\{\widehat{S}^{i}\vee\widehat{T}^{j}\leq s<\widehat{S}^{i+k_n}\vee\widehat{T}^{j+k_n}\}}
\lesssim k_n\bar{r}_n|\log b_n|,
\end{align*}
and thus we conclude that $b_n^{-\gamma}\sup_{0\leq s\leq t}|\mathbf{A}_{1,s}|\to^p0$ as $n\to\infty$. 

Similarly we can show that $b_n^{-\gamma}\sup_{0\leq s\leq t}|\mathbf{A}_{l,s}|\to^p0$ as $n\to\infty$ for $l=2,3,4$. After all, we complete the proof of lemma.
\end{proof}


We define the processes $\mathfrak{M}^X$, $\mathfrak{M}^Y$, $\mathfrak{A}^X$ and $\mathfrak{A}^Y$ as follows:
\begin{align*}
\mathfrak{M}^X=-\mathfrak{I}_-\bullet\underline{M}^X,\qquad
\mathfrak{M}^Y=-\mathfrak{J}_-\bullet\underline{M}^Y,\qquad
\mathfrak{A}^X=-\mathfrak{I}_-\bullet\underline{A}^X,\qquad
\mathfrak{A}^Y=-\mathfrak{J}_-\bullet\underline{A}^Y.
\end{align*}

Let $\widehat{K}^{pq}_t=1_{\{\widehat{I}^p(t)\cap\widehat{J}^q(t)\neq\emptyset\}}$ for each $p,q,t$. Then Proposition \ref{advantage} yields
\begin{equation}\label{sumhatK}
\sum_{q=1}^\infty\widehat{K}^{pq}_t\leq 3,\qquad\sum_{p=1}^\infty\widehat{K}^{pq}_t\leq 3
\end{equation}
for every $p,q,t$. Furthermore, we have the following result:
\begin{lem}\label{lemoverlap}
Suppose that $[\mathrm{SC}1]$-$[\mathrm{SC}3]$ and $(\ref{SA4})$ are satisfied. Then we have
\begin{equation}\label{aimoverlap}
\sup_{0\leq s\leq t}\left|\sum_{i,j=1}^{\infty}\bar{K}^{i j}_s\sum_{p,q=0}^{k_n-1}\alpha^n_p\beta^n_q L(V,W)^{i+p,j+q}_s\widehat{K}^{i+p,j+q}_s\right|=o_p(k_n^2\cdot b_n^\gamma)
\end{equation}
as $n\to\infty$ for any $\alpha,\beta\in\Phi$, $V\in\{M^X,\mathfrak{E}^X,\mathfrak{M}^X,A^X,\mathfrak{A}^X\}$, $W\in\{M^Y,\mathfrak{E}^Y,\mathfrak{M}^Y,A^Y,\mathfrak{A}^Y\}$ and $t>0$, where $L(V,W)^{pq}_t=V(\widehat{I}^p)_-\bullet W(\widehat{J}^q)_t+W(\widehat{J}^q)_-\bullet V(\widehat{I}^p)_t$ for each $p,q,t$.
\end{lem}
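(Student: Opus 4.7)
The approach combines the very restrictive overlap structure of Proposition~\ref{advantage}(b) with a BDG-type estimate on the \emph{signed} sum; pathwise absolute-value bounds are too crude here, since naive estimates on $\sum_{p,q\colon \widehat{K}^{pq}=1}|L(V,W)^{pq}_s|$ only give $O_p(1)$ (via Cauchy--Schwarz against realized variances), which combined with an $O(k_n^2)$ weight factor yields just $O_p(k_n^2)$ and is therefore not $o_p(k_n^2 b_n^\gamma)$ for any $\gamma>0$.

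First I would interchange the summation via $p'=i+p$, $q'=j+q$ to rewrite the expression inside the supremum as
\[
\mathcal{L}^n_s=\sum_{p',q'\geq1} L(V,W)^{p',q'}_s\,\widehat{K}^{p',q'}_s\,C(p',q')_s,\qquad
C(p',q')_s:=\!\!\!\sum_{\substack{p'-k_n+1\leq i\leq p'\\ q'-k_n+1\leq j\leq q'}}\!\!\!\bar{K}^{ij}_s\,\alpha^n_{p'-i}\beta^n_{q'-j}.
\]
Proposition~\ref{advantage}(b) applied to $\bar{I}^i,\bar{J}^j$ forces $\bar{K}^{ij}=1\Rightarrow|i-j|\leq k_n$, giving the uniform bound $|C(p',q')_s|\leq C_{\alpha,\beta}k_n^2$; applied to $\widehat{I}^{p'},\widehat{J}^{q'}$, the same proposition forces $\widehat{K}^{p',q'}=1\Rightarrow|p'-q'|\leq 1$, so at most $O_p(N^n(t))=O_p(b_n^{-1})$ pairs contribute.

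For the principal case in which both $V$ and $W$ are continuous local martingales, $\mathcal{L}^n$ is essentially a local martingale in $s$. Since $\bar{K}^{ij}_s$ jumps from $0$ to $1$ at the refresh time $R^\vee(i,j)$ and $L(V,W)^{p',q'}$ is continuous, integration by parts yields the decomposition
$\bar{K}^{ij}_s L(V,W)^{p',q'}_s=\bar{K}^{ij}_-\bullet L(V,W)^{p',q'}_s + L(V,W)^{p',q'}_{R^\vee(i,j)}\Lambda^{ij}\mathbf{1}_{\{s>R^\vee(i,j)\}}$
with $\Lambda^{ij}=\mathbf{1}_{\{\bar{I}^i\cap\bar{J}^j\neq\emptyset\}}$, isolating a genuine martingale piece from a boundary correction. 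To the martingale piece I apply Doob's $L^2$ maximal inequality. Using (i) the orthogonality $\langle V(\widehat{I}^{p'}),V(\widehat{I}^{p''})\rangle=0$ for $p'\neq p''$ (disjoint intervals of the partition), and likewise for $W$, which cuts the cross-covariance sum down to $O_p(N^n(t))$ nearly-diagonal pairs, together with (ii) the estimate $\langle L(V,W)^{p'q'}\rangle_t\lesssim \bar{r}_n^2|\log b_n|$ derived from~(\ref{absmod2}) and~[SC1], I obtain
\[
E\bigl\langle\text{martingale part}\bigr\rangle_t \lesssim (k_n^2)^2\cdot N^n(t)\cdot \bar{r}_n^2|\log b_n| = O_p\!\bigl(b_n^{2\xi'-3}|\log b_n|\bigr),
\]
whence $\sup_{s\leq t}|\text{martingale part}|=O_p(b_n^{\xi'-3/2}\sqrt{|\log b_n|})$. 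Since $b_n^{\xi'-3/2}/(k_n^2 b_n^\gamma)=b_n^{\xi'-1/2-\gamma}\to 0$ iff $\gamma<\xi'-1/2$, this is $o_p(k_n^2 b_n^\gamma)$ in the stated range. The boundary correction is dispatched by an analogous $L^2$ bound on the optionally-stopped values, using $E|L(V,W)^{p'q'}_{R^\vee(i,j)}|^2\leq E\langle L(V,W)^{p'q'}\rangle_\infty\lesssim\bar{r}_n^2|\log b_n|$.

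The remaining cases are easier and admit analogous BDG arguments with tighter QV scalings. When one factor is of bounded variation (e.g.\ $V\in\{A^X,\mathfrak{A}^X\}$), the bound $|V(\widehat{I}^p)_s|\leq C\bar{r}_n$ from~[SC1] replaces the modulus estimate and yields $E\langle\cdot\rangle_t=O(b_n^{2\xi'-2})$, hence $\sup=O_p(b_n^{\xi'-1})$, well inside the required range; the drift--drift case is a purely pathwise computation giving $O(b_n^{2\xi'-2})$; the noise factors $\mathfrak{E}^X(\widehat{I}^p)_t=-k_n^{-1}\epsilon^X_{\widehat{S}^p}\mathbf{1}_{\{\widehat{S}^p\leq t\}}$ are handled similarly using~[SC2]. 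The main obstacle throughout is the continuous-martingale case and the need to exploit cancellation via the signed-sum BDG estimate; the key technical subtlety is the non-predictable time-dependence of $\bar{K}^{ij}_s$, which must be isolated by integration by parts before the martingale machinery applies.
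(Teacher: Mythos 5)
Your high-level strategy matches the paper's: peel off a genuine martingale by isolating the non-predictable indicator, bound it via a quadratic-variation estimate (the paper uses Lenglart, you use Doob; both work), and treat a residual boundary piece separately. That is precisely the paper's split into $\mathbb{A}_{1,s}$ (the terms with $p,q\geq 1$, which form a martingale) and $\mathbb{A}_{2,s}$ (the terms with $p=0$ or $q=0$, estimated directly). But two of your steps, as written, do not close.

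\textbf{The boundary correction is off by a factor of $k_n$ unless you observe that it mostly vanishes.} The boundary term you isolate, $L(V,W)^{p',q'}_{R^\vee(i,j)}\Lambda^{ij}\mathbf{1}_{\{s>R^\vee(i,j)\}}$, is identically zero whenever $i<p'$ and $j<q'$ (equivalently $p,q\geq 1$), because $L(V,W)^{p',q'}_s=0$ for $s\leq R^\vee(p'-1,q'-1)\geq R^\vee(i,j)$. You do not record this, and instead apply the uniform estimate $E|L^{p'q'}_{R^\vee(i,j)}|^2\lesssim\bar{r}_n^2|\log b_n|$ over all $(i,j)$. With $O(k_n^2)$ pairs $(i,j)$ per $(p',q')$ and $O(N^n(t))=O(b_n^{-1})$ pairs $(p',q')$ contributing (the $\widehat{K}$-diagonal), even an $L^1$ tally gives order $k_n^2\cdot b_n^{-1}\cdot\bar{r}_n\sqrt{|\log b_n|}\sim b_n^{\xi'-2}\sqrt{|\log b_n|}$, which divided by $k_n^2 b_n^\gamma\sim b_n^{\gamma-1}$ requires $\gamma<\xi'-1$ — impossible since $\xi'<1$. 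After observing the vanishing, only the $O(k_n)$ pairs with $i=p'$ or $j=q'$ survive, and the estimate becomes $k_n\cdot b_n^{-1}\cdot\bar{r}_n|\log b_n|\sim k_n^2 b_n^{\xi'-1/2}|\log b_n|$, which is $o_p(k_n^2 b_n^\gamma)$ exactly for $\gamma<\xi'-1/2$. This cancellation is the content of the paper's decomposition into $\mathbb{A}_1$ and $\mathbb{A}_2$; without it the proof fails.

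\textbf{The "genuine martingale piece" is not obviously one.} After you integrate by parts against $\bar{K}^{ij}_s$, the expression inside $\mathcal{L}^n_s$ still carries the factor $\widehat{K}^{p',q'}_s$, which is a raw (non-left-continuous) indicator in $s$; multiplying a local martingale by it does not produce a local martingale, so Doob's inequality cannot be applied directly. You flag $\bar{K}^{ij}_s$ as "the key technical subtlety" but leave $\widehat{K}^{p',q'}_s$ untreated. The resolution is the same observation as above applied to $\widehat{K}$: $L^{p',q'}_s\neq 0$ forces $s>R^\vee(p'-1,q'-1)$, at which point $\widehat{K}^{p',q'}_s=1$, so $\widehat{K}^{p',q'}_s L^{p',q'}_s=L^{p',q'}_s$ and the factor is absorbed. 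Once this is noted, the surviving $p,q\geq 1$ block is genuinely $\sum\tilde{c}_{p',q'}L^{p',q'}_s$ (a local martingale), and your Doob/orthogonality computation goes through. Finally, for $V=\mathfrak{E}^X$ or $W=\mathfrak{E}^Y$ the integrand $L^{p',q'}$ is a jump process, so the integration-by-parts picks up nonzero brackets $[\bar{K}^{ij},L^{p',q'}]$; the paper handles this by passing through the optional quadratic covariation $[\cdot]$ (and Proposition I-4.50 of Jacod--Shiryaev) rather than $\langle\cdot\rangle$, a detail your sketch glosses over with "handled similarly".
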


\begin{proof}

First, by a localization procedure based on a representation of a continuous local martingale with Brownian motion and L\'{e}vy's theorem on the uniform modulus of continuity of Brownian motion, we may assume that there exist positive constants $K$ and $\delta_0$ such that
\begin{equation}\label{absmod1}
\sup_{
\begin{subarray}{c}
s,u\in[0,t]\\
|s-u|\leq\delta
\end{subarray}}
\frac{|M^X_s-M^X_u|+|M^Y_s-M^Y_u|+|L^X_s-L^X_u|+|L^Y_s-L^Y_u|}{\sqrt{2\delta|\log\delta|}}\leq K
\end{equation}
whenever $0<\delta<\delta_0$. Moreover, we only consider sufficiently large $n$ such that $\bar{r}_n<\delta_0$.

First we consider the case that $V\in\{M^X,\mathfrak{E}^X,\mathfrak{M}^X\}$ and  $W\in\{M^Y,\mathfrak{E}^Y,\mathfrak{M}^Y\}$. when $1\leq p\leq k_n-1$ and $1\leq q\leq k_n-1$, we have $\widehat{S}^i\vee\widehat{T}^j\leq\widehat{S}^{i+p-1}\vee\widehat{T}^{j+q-1}<\widehat{S}^{i+p}\wedge\widehat{T}^{j+q}\leq\widehat{S}^{i+k_n}\wedge\widehat{T}^{j+k_n}$ on $\{\widehat{I}^p\cap\widehat{J}^q\neq\emptyset\}$, hence we can decompose the target quantity as 
\begin{align*}
&\sum_{i,j=1}^{\infty}\bar{K}^{i j}_s\sum_{p,q=0}^{k_n-1}\alpha^n_p\beta^n_q L(V,W)^{i+p,j+q}_s\widehat{K}^{i+p,j+q}_s\\
=&\sum_{i,j=1}^{\infty}\left\{\sum_{p,q>0}+\sum_{p=0\textrm{ or }q=0}\bar{K}^{i j}_s\right\}\alpha^n_p\beta^n_q L(V,W)^{i+p,j+q}_s\widehat{K}^{i+p,j+q}_s
=:\mathbb{A}_{1,s}+\mathbb{A}_{2,s}.
\end{align*}

Consider $\mathbb{A}_{1,s}$ first. By an argument similar to the proof of Lemma \ref{HYlem3.1}, we can rewrite it as
\begin{align*}
\mathbb{A}_{1,s}=\sum_{i,j=1}^{\infty}\sum_{p,q=1}^{k_n-1}\alpha^n_p\beta^n_q \widehat{K}^{i+p,j+q}_-\bullet L(V,W)^{i+p,j+q}_s,
\end{align*}  
and thus $\mathbb{A}_{1,\cdot}$ is a locally square-integrable martingale because both $V$ and $W$ are locally square-integrable martingales. Therefore, it is sufficient to prove $\langle\mathbb{A}_{1,\cdot}\rangle_s=o_p(b_n^{2\gamma}\cdot k_n^4)$ as $n\to\infty$ for any $s>0$ due to the Lenglart inequality. Since
\begin{align*}
\mathbb{A}_{1,s}=\sum_{p,q=1}^{\infty}\left(\sum_{i=(p-k_n+1)\vee1}^{p-1}\sum_{j=(q-k_n+1)\vee1}^{q-1}\alpha^n_{p-i}\beta^n_{q-j}\right) \widehat{K}^{p,q}_-\bullet L(V,W)^{p,q}_s,
\end{align*}
we have
\begin{align*}
\langle\mathbb{A}_{1,\cdot}\rangle_s
&\lesssim k_n^4\Biggl[\sum_{p,p',q=1}^{\infty}\widehat{K}^{pq}_-\widehat{K}^{p'q}_-\bullet\left\{ V(\widehat{I}^{p})_-V(\widehat{I}^{p'})_-\bullet\langle W\rangle(\widehat{J}^{q})\right\}_s
+\sum_{p,q,q'=1}^{\infty}\widehat{K}^{pq}_-\widehat{K}^{pq'}_-\bullet\left\{ W(\widehat{J}^{q})_-W(\widehat{J}^{q'})_-\bullet\langle V\rangle(\widehat{I}^{p})\right\}_s\Biggr]\\
&=:\mathbb{I}+\mathbb{II}.
\end{align*}
If $V\neq \mathfrak{E}^X$ or $W\neq\mathfrak{E}^Y$, $(\ref{absmod1})$, [SC2], the Schwarz inequality and $(\ref{sumhatK})$ yield 
\begin{align*}
E_0\left[\mathbb{I}\right]
\lesssim k_n^4\bar{r}_n|\log b_n|E_0\left[\langle W\rangle_s\right].
\end{align*}
On the other hand, since $\mathfrak{E}^X(\widehat{I}^{p})_t\mathfrak{E}^X(\widehat{I}^{p'})_t=k_n^{-2}\epsilon^X_{\widehat{S}^p}\epsilon^X_{\widehat{S}^{p'}}1_{\{\widehat{S}^{p\vee p'}\leq t\}}$ and $[\mathfrak{E}^Y](\widehat{J}^{q})_t=k_n^{-2}(\epsilon^Y_{\widehat{T}^q})^21_{\{\widehat{T}^q\leq t\}}$, we have
\begin{equation}\label{noiseint} 
\mathfrak{E}^X(\widehat{I}^{p})_-\mathfrak{E}^X(\widehat{I}^{p'})_-\bullet[\mathfrak{E}^Y](\widehat{J}^{q})_s
=k_n^{-4}\epsilon^X_{\widehat{S}^p}\epsilon^X_{\widehat{S}^{p'}}(\epsilon^Y_{\widehat{T}^q})^21_{\{\widehat{S}^{p\vee p'}<\widehat{T}^q\leq t\}}.
\end{equation}
Therefore, [SC2]-[SC3], the Schwarz inequality and $(\ref{sumhatK})$ yield
\begin{align*}
E_0\left[\sum_{p,p',q=1}^{\infty}\widehat{K}^{pq}_-\widehat{K}^{p'q}_-\bullet\left\{ \mathfrak{E}^X(\widehat{I}^{p})_-\mathfrak{E}^X(\widehat{I}^{p'})_-\bullet[\mathfrak{E}^X](\widehat{J}^{q})\right\}_s\right]
\lesssim k_n^{-2}.
\end{align*}
Note that Proposition 4.50 in \cite{JS}, [SC1]-[SC3] and the above estimates imply that $\mathbb{I}=o_p(k_n^4\bar{r}_n)$. By symmetry we also obtain $\mathbb{II}=o_p(k_n^4\bar{r}_n)$. Consequently, we conclude that $\langle\mathbb{A}_{1,\cdot}\rangle_s=o_p(b_n^{2\gamma}\cdot k_n^4)$ because $\xi'/2-(\xi'-1/2)=(1-\xi')/2>0$.

Next consider $\mathbb{A}_{2,s}$. Since integration by parts yields $L(V,W)^{i+p,j+q}_s=V(\widehat{I}^{i+p})_sW(\widehat{J}^{j+q})_s-\widehat{I}^{i+p}_-\widehat{J}^{j+q}_-\bullet[V,W]_s,$ $(\ref{absmod1})$, [SC1]-[SC3], the Schwarz inequality and $(\ref{sumhatK})$ imply that
\begin{align*}
E_0\left[\sup_{0\leq s\leq t}|\mathbb{A}_{2,s}|\right]
\lesssim\bar{r}_n|\log b_n|\sum_{i,j=1}^{\infty}\sum_{\begin{subarray}{c}
0\leq p,q\leq k_n-1\\
p=0\textrm{ or }q=0
\end{subarray}}\widehat{K}^{i+p,j+q}_t
\lesssim\bar{r}_n|\log b_n|\cdot k_n b_n^{-1}=k_n^2\cdot b_n^{\xi'-1/2}|\log b_n|. 
\end{align*}
Hence we obtain $\sup_{0\leq s\leq t}|\mathbb{A}_{2,s}|=o_p(k_n^2\cdot b_n^{\gamma})$. Consequently, we conclude that $(\ref{aimoverlap})$.

Next we consider the case that $V\in\{A^X,\mathfrak{A}^X\}$. Since integration by parts yields $$L(V,W)^{i+p,j+q}_s=V(\widehat{I}^{i+p})_sW(\widehat{J}^{j+q})_s,$$ $(\ref{absmod1})$, [SC1]-[SC3], the Schwarz inequality and $(\ref{sumhatK})$ imply that
\begin{align*}
E_0\left[\sup_{0\leq s\leq t}\left|\sum_{i,j=1}^{\infty}\bar{K}^{i j}_s\sum_{p,q=0}^{k_n-1}\alpha^n_p\beta^n_q L(V,W)^{i+p,j+q}_s\widehat{K}^{i+p,j+q}_s\right|\right]
\lesssim &\sqrt{\bar{r}_n|\log b_n|}\sum_{i,j=1}^{\infty}\sum_{p,q=0}^{k_n-1}\widehat{K}^{i+p,j+q}_t|\widehat{I}^{i+p}(t)|\\
\lesssim &b_n^{\xi'/2}\sqrt{|\log b_n|}\cdot k_n^2.
\end{align*}
Since $\xi'/2>\xi'-1/2$, we conclude that $(\ref{aimoverlap})$. By symmetry we also obtain $(\ref{aimoverlap})$ in the case that $W\in\{A^Y,\mathfrak{A}^Y\}$. Consequently, we complete the proof of the lemma.
\end{proof}


\begin{lem}\label{basicess}
Suppose that $(\ref{SA4})$ and $[\mathrm{SC}1]$-$[\mathrm{SC}3]$ are satisfied. Then
\begin{align*}
&\mathrm{(a)}\ b_n^{-\gamma}\{\mathbf{I}-[X,Y]-\mathbf{M}(1)^n\}\xrightarrow{ucp}0,&&
\mathrm{(b)}\ b_n^{-\gamma}\{\mathbf{II}-\mathbf{M}(2)^n\}\xrightarrow{ucp}0,\\
&\mathrm{(c)}\ b_n^{-\gamma}\{\mathbf{III}-\mathbf{M}(3)^n\}\xrightarrow{ucp}0,&&
\mathrm{(d)}\ b_n^{-\gamma}\{\mathbf{IV}-\mathbf{M}(4)^n\}\xrightarrow{ucp}0
\end{align*}
as $n\to\infty$.
\end{lem}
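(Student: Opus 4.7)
The four statements share a common architecture: each of $\mathbf{I},\mathbf{II},\mathbf{III},\mathbf{IV}$ is a bilinear form in pre-averaged quantities, while each $\mathbf{M}(k)^n$ is built from the integration-by-parts bracket $\bar{L}$. The plan is therefore to apply It\^o's product formula to every summand and control the resulting quadratic covariation residuals.

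\textbf{Part (a).} I would apply integration by parts summand-wise,
\begin{equation*}
\bar{X}_g(\widehat{\mathcal{I}})^i_t\,\bar{Y}_g(\widehat{\mathcal{J}})^j_t
=\bar{L}_{g,g}(X,Y)^{ij}_t+\sum_{p,q=0}^{k_n-1}g^n_pg^n_q\int_0^t 1_{\widehat{I}^{i+p}}(s)1_{\widehat{J}^{j+q}}(s)\,d[X,Y]_s,
\end{equation*}
since $X$ and $Y$ are continuous semimartingales with $[X,Y]=[M^X,M^Y]$. Multiplying by $\bar{K}^{ij}_t/(\psi_{HY}k_n)^2$, summing over $(i,j)$, and swapping sum and integral reduces $\mathbf{I}_t-\mathbf{M}(1)^n_t$ to $\int_0^t Z_n(s)\,d[X,Y]_s$, where at each $s$ the integrand collects only those $(i,j,p,q)$ with $i+p=p^\ast(s)$, $j+q=q^\ast(s)$ (the unique indices whose $\widehat{\mathcal{I}}$- and $\widehat{\mathcal{J}}$-intervals contain $s$). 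Using Proposition~\ref{advantage}(b) together with $g^n_0=g^n_{k_n}=0$, I would verify that $\bar{K}^{p^\ast(s)-p,q^\ast(s)-q}_t=1$ for every admissible $(p,q)$ once $s$ lies outside the initial region $[0,R^{k_n}]$, so that
\begin{equation*}
Z_n(s)=\frac{1}{(\psi_{HY}k_n)^2}\left(\sum_{p=1}^{k_n-1}g^n_p\right)^2=1+O(k_n^{-1})=1+O(b_n^{1/2})
\end{equation*}
by the Riemann sum convergence for piecewise Lipschitz $g$. The initial region has length $O_p(k_n\bar{r}_n)=O_p(b_n^{\xi'-1/2})=o_p(b_n^\gamma)$, and $[\mathrm{SC}1]$ provides the needed boundedness of $[X,Y]'$; combining these gives (a).

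\textbf{Parts (b)--(d).} I would first replace $\widetilde{V}_g$ by $\bar{V}_{g'}$. The identity $k_n\Delta(g)^n_p=k_n\int_{p/k_n}^{(p+1)/k_n}g'(x)\,dx$ and the piecewise Lipschitz hypothesis on $g'$ yield $|k_n\Delta(g)^n_p-(g')^n_p|=O(k_n^{-1})$ uniformly, except at a bounded number of indices where $g'$ jumps and where the error is $O(1)$. Combining this with the modulus estimate $(\ref{absmod2})$ and the noise $L^2$-bound $(\ref{noiseest2})$, the replacement error in each bilinear form, summed and divided by $(\psi_{HY}k_n)^2$, is $o_p(b_n^\gamma)$ ucp, following the same Cauchy--Schwarz scheme as in Lemma~\ref{basicrep}. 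After the replacement, integration by parts produces cross-variations such as $[\bar{\mathfrak{U}}^X_{g'}(\widehat{\mathcal{I}})^i,\bar{\mathfrak{U}}^Y_{g'}(\widehat{\mathcal{J}})^j]_t$, $[\bar{X}_g(\widehat{\mathcal{I}})^i,\bar{\mathfrak{U}}^Y_{g'}(\widehat{\mathcal{J}})^j]_t$, and their mirror. Splitting $\mathfrak{U}^Z=\mathfrak{E}^Z+(k_n\sqrt{b_n})^{-1}\mathfrak{Z}^Z$, using $[X,\mathfrak{E}^Y]=0$ (orthogonality of continuous and purely discontinuous martingales) and the explicit forms $[\mathfrak{E}^X,\mathfrak{E}^Y]_t=k_n^{-2}\sum_k\epsilon^X_{\widehat{S}^k}\epsilon^Y_{\widehat{T}^k}1_{\{\widehat{S}^k=\widehat{T}^k\leq t\}}$ and $[\mathfrak{Z}^X,\mathfrak{Z}^Y]_t=\mathfrak{I}_-\mathfrak{J}_-\bullet[Z^X,Z^Y]_t$, each of these covariations carries an extra scaling factor of $k_n^{-2}$ (from $\mathfrak{E}$) or $(k_n\sqrt{b_n})^{-2}=1/(k_n^2 b_n)$ (from $\mathfrak{Z}$) beyond $\mathbf{M}(k)^n$'s normalization. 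After the further division by $(\psi_{HY}k_n)^2$ one obtains contributions of order $O_p(b_n)$, which is $o_p(b_n^\gamma)$ since $\gamma<\xi'-1/2<1$.

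\textbf{Main obstacle.} The principal technical difficulty is the replacement step in (b)--(d): a naive bound on $\widetilde{V}_g-\bar{V}_{g'}$ loses a factor $k_n$ once it is plugged into the bilinear form, because the number of active summands per unit time is $\Theta(k_n^2)$. The plan is to recover the loss by exploiting martingale cancellation for $\mathfrak{E}^{X,Y}$ (via conditional second-moment estimates and $(\ref{noiseest2})$), by using Lemma~\ref{lemoverlap} for the bilinear sums structured like $\sum_{i,j}\bar{K}^{ij}_t\sum_{p,q}\alpha^n_p\beta^n_q L(V,W)^{i+p,j+q}_t\widehat{K}^{i+p,j+q}_t$ that arise from the $\mathfrak{Z}$-parts, and by the same boundary-term bookkeeping as in Lemma~\ref{basicrep} for the edge $(p=0\text{ or }q=0)$ contributions, to bring the total back down to the desired $o_p(b_n^\gamma)$ order.
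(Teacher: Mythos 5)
Your part (a) follows the same route as the paper: integrate by parts summand-wise so that only the $[X,Y]$-integral remains, then recognize the coefficient as a Riemann-type factor converging to $1$ away from the initial/boundary region, using $g(0)=0$ so that the full double sum over $(i,j)$ reduces to $\left(\sum_{i=1}^{k_n-1}g^n_i\right)^2$.

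For parts (b)--(d) you take a genuinely different decomposition from the paper's. The paper splits $\mathbf{II}$ according to whether $\widehat{I}^{i+p}\cap\widehat{J}^{j+q}=\emptyset$: in the overlapping region it keeps the \emph{exact} weight $k_n\Delta(g)^n_p$ and exploits the telescoping identity $\sum_{w=0}^{k_n-1}\Delta(g)^n_w = g(1)-g(0)=0$, which collapses the inner $k_n^2$-term double sum to a boundary contribution (this is $\mathbf{II}_1$); the replacement $k_n\Delta(g)\to g'$ is carried out only in the non-overlapping region ($\mathbf{II}_3$), where each summand is identically an $L(V,W)$-term so that the residual $\mathbf{II}_3^{(1)}-\mathbf{M}(2)^n$ is an $L(V,W)\widehat{K}$-type sum and Lemma~\ref{lemoverlap} applies. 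You instead replace the weights globally and push everything into a single quadratic covariation bracket $\frac{1}{(\psi_{HY}k_n)^2}\sum_{i,j}\bar K^{ij}_t\big[\bar{\mathfrak{U}}^X_{g'}(\widehat{\mathcal I})^i,\bar{\mathfrak{U}}^Y_{g'}(\widehat{\mathcal J})^j\big]_t$. Both routes ultimately rest on $g(0)=g(1)=0$: yours buys a one-step reduction to a bracket, at the price of needing the Riemann-sum cancellation $\sum_{u=0}^{k_n-1}(g')^n_u = k_n\int_0^1 g'+O(1)=O(1)$ in the coefficient $\sum_{i,j}(g')^n_{p'-i}(g')^n_{q'-j}\bar K^{ij}_t$ that appears after changing variables, whereas the paper gets an \emph{exact} cancellation and never has to estimate that coefficient.

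Two things in your sketch need repair. First, the claimed order $O_p(b_n)$ for the residual brackets is not what a careful count gives: the $\mathfrak{E}$--$\mathfrak{E}$ bracket, once summed over the $O(k_n b_n^{-1})$ active $(i,j)$ with the Riemann-cancelled $O(1)$ coefficient and over the $O(b_n^{-1})$ synchronous refresh times $\widehat S^{p'}=\widehat T^{p'}$, together with the boundary region $p'<k_n$ where the one-sided Riemann sum does \emph{not} cancel and the coefficient is $O(k_n^2)$, produces $O_p(b_n^{1/2})$, not $O_p(b_n)$. This is still $o_p(b_n^\gamma)$ since $\gamma<\xi'-1/2<1/2$, so the conclusion survives, but the counting must be done to see this. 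Second, the recovery strategy in your Main obstacle paragraph -- invoking Lemma~\ref{lemoverlap} for the $\mathfrak{Z}$-parts -- is misdirected: after your global replacement the only residual is a genuine bracket, not an $L(V,W)^{i+p,j+q}\widehat K^{i+p,j+q}$-type sum, which is the object Lemma~\ref{lemoverlap} controls. The $\mathfrak{Z}$--$\mathfrak{Z}$ bracket is the Lebesgue integral $(\widehat I^{i+p}_-\widehat J^{j+q}_-)\bullet[Z^X,Z^Y]$ and should simply be bounded directly using $[\mathrm{SC}1]$, the $O(1)$ (resp.\ boundary $O(k_n^2)$) coefficient, and $\sum_{p',q'}|\widehat I^{p'}\cap\widehat J^{q'}\cap[0,t)|\le t$, rather than routed through Lemma~\ref{lemoverlap}.
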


\begin{proof}
(a) By integration by parts we have
\begin{align*}
\mathbf{I}_t-\mathbf{M}(1)^n_t=\frac{1}{(\psi_{HY}k_n)^2}\sum_{i,j=1}^{\infty}\bar{K}^{ij}_t[\bar{X}_g(\widehat{\mathcal{I}})^i,\bar{Y}_g(\widehat{\mathcal{J}})^j]_t.
\end{align*}
Since
\begin{align*}
[\bar{X}_g(\widehat{\mathcal{I}})^i,\bar{Y}_g(\widehat{\mathcal{J}})^j]_t
=\sum_{p,q=0}^{k_n-1}g^n_pg^n_q(\widehat{I}^{i+p}_-\widehat{J}^{j+q}_-)\bullet[X,Y]_t
=\sum_{p=i}^{i+k_n-1}\sum_{q=i}^{j+k_n-1}g^n_{p-i}g^n_{q-j}(\widehat{I}^{p}_-\widehat{J}^{q}_-)\bullet[X,Y]_t,
\end{align*}
we obtain
\begin{align*}
\mathbf{I}_t-\mathbf{M}(1)^n_t=\frac{1}{(\psi_{HY}k_n)^2}
\sum_{p,q=1}^{\infty}\left(\sum_{i=(p-k_n+1)\vee1}^p\sum_{j=(q-k_n+1)\vee1}^q g^n_{p-i}g^n_{q-j}\bar{K}^{ij}_t\right)(\widehat{I}^{p}_-\widehat{J}^{q}_-)\bullet[X,Y]_t.
\end{align*}
On $\{\widehat{I}^p\cap\widehat{J}^q\neq\emptyset\}$ we have $\widehat{S}^{p-1}<\widehat{T}^q$ and $\widehat{T}^{q-1}<\widehat{S}^p$, hence for $i\in\{(p-k_n+1)\vee1,\dots,p-1\}$ and $j\in\{(q-k_n+1)\vee1,\dots,q-1\}$ we have $\widehat{S}^i<\widehat{T}^{j+k_n-1}$ and $\widehat{T}^j<\widehat{S}^{i+k_n-1}$, so that $\bar{K}^{ij}=1$. Therefore, for $p,q\geq k_n$ we have
\begin{align*}
\sum_{i=(p-k_n+1)\vee1}^p\sum_{j=(q-k_n+1)\vee1}^q g^n_{p-i}g^n_{q-j}\bar{K}^{ij}_t
=\left(\sum_{i=1}^{k_n-1}g_i^n\right)^2
\end{align*}
on $\{\widehat{I}^p\cap\widehat{J}^q\neq\emptyset\}$ because $g(0)=0$. Since $(\widehat{I}^{p}_-\widehat{J}^{q}_-)\bullet[X,Y]_t=1_{\{\widehat{I}^p\cap\widehat{J}^q\neq\emptyset\}}(\widehat{I}^{p}_-\widehat{J}^{q}_-)\bullet[X,Y]_t$, we obtain
\begin{align*}
&\mathbf{I}_t-\mathbf{M}(1)^n_t\\
=&\frac{1}{(\psi_{HY}k_n)^2}\Biggl[
\left(\sum_{i=1}^{k_n-1}g_i^n\right)^2\sum_{p,q=k_n}^{\infty}1_{\{\widehat{I}^p\cap\widehat{J}^q\neq\emptyset\}}(\widehat{I}^{p}_-\widehat{J}^{q}_-)\bullet[X,Y]_t\\
&\hphantom{\frac{1}{(\psi_{HY}k_n)^2}\Biggl[}+\sum_{p,q:p\wedge q<k_n}\left(\sum_{i=(p-k_n+1)\vee1}^p\sum_{j=(q-k_n+1)\vee1}^q g^n_{p-i}g^n_{q-j}\bar{K}^{ij}_t\right)1_{\{\widehat{I}^p\cap\widehat{J}^q\neq\emptyset\}}(\widehat{I}^{p}_-\widehat{J}^{q}_-)\bullet[X,Y]_t\Biggr]\\
=:&\mathbf{B}_{1,t}+\mathbf{B}_{2,t}.
\end{align*}
Since $\frac{1}{(\psi_{HY}k_n)^2}\left(\sum_{i=1}^{k_n-1}g_i^n\right)^2=1+O(k_n^{-1})$ by the Lipschitz continuity of $g$ and 
\begin{align*}
\sup_{0\leq s\leq t}\left|\sum_{p,q=k_n}^{\infty}1_{\{\widehat{I}^p\cap\widehat{J}^q\neq\emptyset\}}(\widehat{I}^{p}_-\widehat{J}^{q}_-)\bullet[X,Y]_s-[X,Y]_s\right|=O_p\left(k_n\bar{r}_n\right)=o_p\left(b_n^{1/4}\right)
\end{align*}
by [SC1] and $(\ref{SA4})$, we have $\sup_{0\leq s\leq t}|\mathbf{B}_{1,s}-[X,Y]_s|=o_p(b_n^{\gamma})$. Moreover, [SC1] and $(\ref{SA4})$ also yield $\sup_{0\leq s\leq t}|\mathbf{B}_{2,s}|=o_p(b_n^{\gamma})$. Consequently, we complete the proof of (a).


(b) Since
\begin{align*}
\mathbf{II}_t
=&\frac{1}{(\psi_{HY}k_n)^2}\sum_{i,j=1}^{\infty}\sum_{p,q=0}^{k_n-1}k_n\Delta(g)_p^n k_n\Delta(g)_q^n \mathfrak{U}^{X}(\widehat{I}^p)_t\mathfrak{U}^{Y}(\widehat{J}^q)_t\bar{K}^{i j}_t\\
=&\frac{1}{(\psi_{HY}k_n)^2}\sum_{p,q=1}^{\infty}\sum_{i=(p-k_n+1)\vee1}^{p}\sum_{j=(q-k_n+1)\vee 1}^{q}k_n\Delta(g)_{p-i}^n k_n\Delta(g)_{q-j}^n \mathfrak{U}^{X}(\widehat{I}^p)_t\mathfrak{U}^{Y}(\widehat{J}^q)_t\bar{K}^{i j}_t,
\end{align*}
we can decompose it as
\begin{align*}
&\mathbf{II}_t\\
=&\frac{1}{(\psi_{HY}k_n)^2}\Biggl\{\left(\sum_{p,q:p\wedge q\geq k_n}+\sum_{p\wedge q<k_n}\right)\widetilde{c}(p,q)_t\widehat{K}^{pq}_t \mathfrak{U}^{X}(\widehat{I}^p)_t\mathfrak{U}^{Y}(\widehat{J}^q)_t
+\sum_{p,q=1}^{\infty}\widetilde{c}(p,q)_t(1-\widehat{K}^{pq}_t)\mathfrak{U}^{X}(\widehat{I}^p)_t\mathfrak{U}^{Y}(\widehat{J}^q)_t\Biggr\}\\
=:&\mathbf{II}_{1,t}+\mathbf{II}_{2,t}+\mathbf{II}_{3,t},
\end{align*}
where
\begin{align*}
\widetilde{c}(p,q)_t=
\sum_{i=(p-k_n+1)\vee1}^{p}\sum_{j=(q-k_n+1)\vee1}^{q}k_n\Delta(g)_{p-i}^n k_n\Delta(g)_{q-j}^n \bar{K}^{i j}_t.
\end{align*}


When $(p-k_n+1)\vee1\leq i\leq p-1$ and $(q-k_n+1)\vee1\leq j\leq q-1$, we have $\widehat{S}^i\vee\widehat{T}^j\leq\widehat{S}^{p-1}\vee\widehat{T}^{q-1}<\widehat{S}^p\vee\widehat{T}^q\leq\widehat{S}^{i+k_n}\vee\widehat{T}^{j+k_n}$ on $\{\widehat{I}^p\cap\widehat{J}^q\neq\emptyset\}$, hence we obtain
\begin{align*}
&\mathbf{II}_{1,t}\\
=&\frac{1}{(\psi_{HY}k_n)^2}\sum_{p,q:p\wedge q\geq k_n}\Biggl(\sum_{i=(p-k_n+1)\vee1}^{p-1}\sum_{j=(q-k_n+1)\vee1}^{q-1}k_n\Delta(g)_{p-i}^n k_n\Delta(g)_{q-j}^n+ (k_n\Delta(g)_{0}^n)^2\bar{K}^{pq}_t\\
&+\sum_{j=(q-k_n+1)\vee1}^{q-1}k_n\Delta(g)_{0}^n k_n\Delta(g)_{q-j}^n\bar{K}^{pj}_t+\sum_{i=(p-k_n+1)\vee1}^{p-1}k_n\Delta(g)_{p-i}^n k_n\Delta(g)_{0}^n\bar{K}^{iq}_t\Biggr)\widehat{K}^{pq}_t \mathfrak{U}^{X}(\widehat{I}^p)_t\mathfrak{U}^{Y}(\widehat{J}^q)_t.
\end{align*} 
Note that $\sum_{w=(v-k_n+1)\vee1}^{v}\Delta(g)_{v-w}^n=\sum_{w=0}^{k_n-1}\Delta(g)_{w}^n=g(1)-g(0)=0$ when $v\geq k_n$ and $g$ is Lipschitz continuous, we have
\begin{align*}
|\mathbf{II}_{1,t}|\lesssim\frac{1}{k_n}\sum_{p,q:p\wedge q\geq k_n}\widehat{K}^{pq}_t |\mathfrak{U}^{X}(\widehat{I}^p)_t\mathfrak{U}^{Y}(\widehat{J}^q)_t|
\lesssim\frac{1}{k_n}\left[\sum_{p}|\mathfrak{U}^{X}(\widehat{I}^p)_t|^2+\sum_q|\mathfrak{U}^{Y}(\widehat{J}^q)_t|^2\right],
\end{align*}
hence we obtain $E[\sup_{0\leq s\leq t}|\mathbf{II}_{1,s}|]\lesssim k_n^{-1}$ by the Doob inequality and [SC1]-[SC3]. Therefore, we conclude that
\begin{equation}\label{phyII1}
\sup_{0\leq s\leq t}|\mathbf{II}_{1,s}|=O_p(b_n^{1/2}).
\end{equation}
On the other hand, since $g$ is Lipschitz continuous and $(\widehat{I}^p\cap\widehat{J}^q\neq\emptyset)\Rightarrow(|p-q|\leq 1)$ by Lemma \ref{advantage}(b), we have $|\mathbf{II}_{2,t}|\lesssim\sum_{p=1}^{k_n}|\mathfrak{U}^{X}(\widehat{I}^p)_t|^2+\sum_{q=1}^{k_n}|\mathfrak{U}^{Y}(\widehat{J}^q)_t|^2,$
hence the Doob inequality, [SC1]-[SC2] and $(\ref{SA4})$ yield $E\left[\sup_{0\leq s\leq t}|\mathbf{II}_{2,s}|\right]\lesssim b_n^{\xi'-1/2}$. Therefore, we obtain
\begin{equation}\label{phyII2}
\sup_{0\leq s\leq t}|\mathbf{II}_{2,s}|=o_p(b_n^{\gamma}).
\end{equation}


Now we estimate $\mathbf{II}_{3,t}$. Since
\begin{align*}
\mathbf{II}_{3,t}
=\frac{1}{(\psi_{HY}k_n)^2}\sum_{i,j=1}^{\infty}\bar{K}^{i j}_t\sum_{p,q=0}^{k_n-1}k_n\Delta(g)_{p}^n k_n\Delta(g)_{q}^n\mathfrak{U}^{X}(\widehat{I}^{i+p})\mathfrak{U}^{Y}(\widehat{J}^{j+q})_t 1_{\{\widehat{I}^{i+p}\cap\widehat{J}^{j+q}=\emptyset\}},
\end{align*}
we can decompose it as
\begin{align*}
\mathbf{II}_{3,t}
=&\frac{1}{(\psi_{HY}k_n)^2}\sum_{i,j=1}^{\infty}\bar{K}^{i j}_t\sum_{p,q=0}^{k_n-1}\left\{(g')^n_p(g')^n_q+(g')^n_p\Delta^2(g)^n_q+\Delta^2(g)^n_p(g')^n_q+ \Delta^2(g)_{p}^n\Delta^2(g)_{q}^n\right\}\\
&\hphantom{\frac{1}{(\psi_{HY}k_n)^2}\sum_{i,j=1}^{\infty}\bar{K}^{i j}_t\sum_{p,q=0}^{k_n-1}}\times\mathfrak{U}^{X}(\widehat{I}^{i+p})_t\mathfrak{U}^{Y}(\widehat{J}^{j+q})_t 1_{\{\widehat{I}^{i+p}\cap\widehat{J}^{j+q}=\emptyset\}}\\
=:&\mathbf{II}_{3,t}^{(1)}+\mathbf{II}_{3,t}^{(2)}+\mathbf{II}_{3,t}^{(3)}+\mathbf{II}_{3,t}^{(4)},
\end{align*}
where $\Delta^2(g)_{p}^{n}=k_n\Delta(g)_{p}^{n}-(g')^n_p$. 

Consider $\mathbf{II}_{3,t}^{(1)}$ first. Integration by parts yields
$$\mathfrak{U}^{X}(\widehat{I}^{i+p})_t\mathfrak{U}^{Y}(\widehat{J}^{j+q})_t1_{\{\widehat{I}^{i+p}\cap\widehat{J}^{j+q}=\emptyset\}}=L(\mathfrak{U}^X,\mathfrak{U}^Y)^{i+p,j+q}_t1_{\{\widehat{I}^{i+p}\cap\widehat{J}^{j+q}=\emptyset\}},$$ 
hence we obtain $\sup_{0\leq s\leq t}|\mathbf{II}_{3,s}^{(1)}-\mathbf{M}(2)^n_s|=o_p(b_n^{\gamma})$ by Lemma \ref{lemoverlap} and linearity of integration.

Next we consider $\mathbf{II}_{3,t}^{(2)}$. Since $\bar{K}^{ij}=0$ if $|i-j|>k_n$ due to Lemma \ref{advantage}(b), we have
\begin{align*}
\sup_{0\leq s\leq t}|\mathbf{II}_{3,s}^{(2)}|\leq\frac{1}{(\psi_{HY}k_n)^2}\sum_{i,j:|i-j|\leq k_n}\sup_{0\leq s\leq t}\left|\sum_{p=0}^{k_n-1}(g')^n_p\mathfrak{U}^{X}(\widehat{I}^{i+p})_s\right|\sup_{0\leq s\leq t}\left|\sum_{q=0}^{k_n-1}\Delta^2(g)^n_q\mathfrak{U}^{Y}(\widehat{J}^{j+q})_s\right|,
\end{align*}
hence the Schwarz inequality and the Doob inequality yield
\begin{align*}
&E\left[\sup_{0\leq s\leq t}|\mathbf{II}_{3,s}^{(2)}|\right]\\
\leq&\frac{4}{(\psi_{HY}k_n)^2}\sum_{i,j:|i-j|\leq k_n}\left\{\sum_{p=0}^{k_n-1}|(g')^n_p|^2 E[|\mathfrak{U}^{X}(\widehat{I}^{i+p})_t|^2]\right\}^{1/2}\left\{\sum_{q=0}^{k_n-1}|\Delta^2(g)^n_q|^2 E[|\mathfrak{U}^{Y}(\widehat{J}^{j+q})_t|^2]\right\}^{1/2}.
\end{align*}
Since $|\Delta^2(g)^n_q|\lesssim k_n^{-1}$ because of the piecewise Lipschitz continuity of $g'$, we obtain
\begin{align*}
E\left[\sup_{0\leq s\leq t}|\mathbf{II}_{3,s}^{(2)}|\right]\lesssim\frac{1}{k_n^2}\left\{\sum_i\sum_{p=0}^{k_n-1}E[|\mathfrak{U}^{X}(\widehat{I}^{i+p})_t|^2]+\sum_j\sum_{q=0}^{k_n-1}E[|\mathfrak{U}^{Y}(\widehat{J}^{j+q})_t|^2]\right\},
\end{align*}
hence [SC1]-[SC3] imply that $E\left[\sup_{0\leq s\leq t}|\mathbf{II}_{3,s}^{(2)}|\right]\lesssim k_n^{-1}$. Consequently, we conclude that $\sup_{0\leq s\leq t}|\mathbf{II}_{3,s}^{(2)}|=o_p(b_n^{1/4})$. Similarly we can show $\sup_{0\leq s\leq t}|\mathbf{II}_{3,s}^{(3)}|=o_p(b_n^{1/4})$ and $\sup_{0\leq s\leq t}|\mathbf{II}_{3,s}^{(4)}|=o_p(b_n^{1/4})$, and thus we conclude that
\begin{equation}\label{phyII3}
b_n^{-\gamma}\{\mathbf{II}_{3,\cdot}-\mathbf{M}(2)^n\}\xrightarrow{ucp}0.
\end{equation}

Consequently, $(\ref{phyII1})$, $(\ref{phyII2})$ and $(\ref{phyII3})$ yield
\begin{equation}\label{phyII}
b_n^{-\gamma}\{\mathbf{II}-\mathbf{M}(2)^n\}\xrightarrow{ucp}0
\end{equation}
as $n\to\infty$.


(c) Note that $\sum_{i=(p-k_n+1)\vee1}^{p-1}\sum_{j=(q-k_n+1)\vee1}^{q-1}g_{p-i}^n k_n\Delta(g)_{q-j}^n=0$ when $p\wedge q\geq k_n$, we can adopt an argument similar to the proof of (b). 

(d) Similar to the proof of (c).
\end{proof}

\begin{proof}[\upshape{\bfseries{Proof of Lemma \ref{lembasic}}}]
The claim of Lemma \ref{lembasic} follows immediately from Lemma \ref{basicrep} and \ref{basicess}.
\end{proof}


\section{Proof of Lemma \ref{orthogonal}}\label{prooforthogonal}

By a localization procedure, we may assume that [SC1]-[SC3] instead of [C1]-[C3] respectively.

We will follow the strategy used in \cite{JLMPV2009} and \cite{JPV2010}. Fix a $t\in\mathbb{R}_+$ and let $\mathcal{N}$ be the set of all square-integrable martingales orthogonal to $(X,Y,Z^X,Z^Y)$ and satisfying $b_n^{-1/4}\langle \mathbf{M}^n,N\rangle_t\to^p0$ as $n\to\infty$.  Then $\mathcal{N}$ is a closed subset of the Hilbert space $\mathcal{M}_2^\perp$ of all square-integrable martingales orthogonal to $(X,Y,Z^X,Z^Y)$ by [A$1^*$] and the Kunita-Watanabe inequality.

Let $N$ be in the set $\mathcal{N}^0$ of all square-integrable martingales on $\mathcal{B}^{(0)}$ orthogonal to $(X,Y,Z^X,Z^Y)$. Then it is easy to check that $\langle \mathfrak{E}^X,N\rangle=\langle \mathfrak{E}^Y,N\rangle=0$. Hence we have $\langle\mathbf{M}^n,N\rangle=0$ because $N$ is orthogonal to $(X,Y,Z^X,Z^Y)$, so that $N\in\mathcal{N}$. Consequently, we conclude that $\mathcal{N}^0\subset\mathcal{N}$.

Let $N$ be in the set $\mathcal{N}^1$ of all square-integrable martingales having
\begin{equation}\label{ninfty}
N_{\infty}=f(\epsilon_{t_1},\dots,\epsilon_{t_q}),
\end{equation}
where $f$ is any bounded Borel function on $\mathbb{R}^{2q}$, $t_1<\cdots<t_q$ and $q\geq 1$. Then it is easy to check that $N$ takes the following form (by convention $t_0=0$ and $t_{q+1}=\infty$):
\begin{align*}
t_l\leq t<t_{l+1}\Rightarrow N_t=M(l;\epsilon_{t_1},\dots,\epsilon_{t_l})_t
\end{align*}
for $l=0,\dots,q$, and where $M(l;z_1,\dots,z_l)$ is a version of the martingale
\begin{align*}
M(l;z_1,\dots,z_l)_t=E^{(0)}\left[\int f(z_1,\dots,z_l,z_{l+1},\dots,z_q)Q_{t_{l+1}}(\mathrm{d}z_{l+1})\cdots Q_{t_{q}}(\mathrm{d}z_{q})|\mathcal{F}^{(0)}_t\right]
\end{align*}
(with obvious conventions when $l=0$ and $l=q$), which is measurable in $(z_1,\dots,z_l,\omega^{(0)})$. In particular $N$ has a locally finite variation, hence $N$ is a purely discontinuous local martingale because of Lemma I-4.14 of \cite{JS} and
\begin{align*}
[\bar{K}^{ij}_-\bullet\{\bar{V}_\alpha(\hat{\mathcal{I}}^i)_-\bullet \bar{W}_\beta(\hat{\mathcal{J}}^j)\},N]_t
=&\sum_{s:0\leq s\leq t}\bar{K}^{ij}_s\bar{V}_\alpha(\hat{\mathcal{I}}^i)_s\Delta\bar{W}_\beta(\hat{\mathcal{J}}^j)_s\Delta N_s\\
=&\sum_{l:t_l\leq t}\bar{K}^{ij}_{t_l}\bar{V}_\alpha(\hat{\mathcal{I}}^i)_{t_l}\Delta\bar{W}_\beta(\hat{\mathcal{J}}^j)_{t_l}\Delta N_{t_l}
\end{align*}
for any semimartingales $V,W$, $\alpha,\beta\in\Phi$ and $t\in\mathbb{R}_+$. Therefore, for any $V\in\{X,\mathfrak{E}^X,\mathfrak{Z}^X\}$, we have
\begin{align*}
[\bar{K}^{ij}_-\bullet\{\bar{V}_\alpha(\hat{\mathcal{I}}^i)_-\bullet \bar{Y}_\beta(\hat{\mathcal{J}}^j)\},N]_t=[\bar{K}^{ij}_-\bullet\{\bar{V}_\alpha(\hat{\mathcal{I}}^i)_-\bullet \bar{\mathfrak{Z}}^Y_\beta(\hat{\mathcal{J}}^j)\},N]_t=0,
\end{align*}
and note that the boundedness of $N$ we also have
\begin{align*}
|[\bar{K}^{ij}_-\bullet\{\bar{V}_g(\hat{\mathcal{I}}^i)_-\bullet \bar{\mathfrak{E}}^Y_{g'}(\hat{\mathcal{J}}^j)\},N]_t|
\lesssim&\sum_{l:t_l\leq t}\bar{K}^{ij}_{t_l}\left|\bar{V}_\alpha(\hat{\mathcal{I}}^i)_{t_l}\right|\left|\frac{1}{k_n}\sum_{k=0}^{k_n-1}(g')^n_k\epsilon^Y_{t_l}1_{\{\hat{T}^{j+k}=t_l\}}\right|.
\end{align*}
Hence, the Schwarz inequality and $(\ref{sumbarK})$ yield
\begin{align*}
&E\left[\left|\sum_{i,j=1}^{\infty}[\bar{K}^{ij}_-\bullet\{\bar{V}_\alpha(\hat{\mathcal{I}}^i)_-\bullet \bar{\mathfrak{E}}^Y_{g'}(\hat{\mathcal{J}}^j)\},N]_t\right|\right]\\
\leq&\sum_{l:t_l\leq t}\left\{\sum_{i,j=1}^{\infty}E\left[\bar{K}^{ij}_{t_l}|\bar{V}_{\alpha}(\hat{\mathcal{I}}^i)_{t_l}|^2\right]\right\}^{1/2}\left\{\sum_{i,j=1}^{\infty}E\left[\bar{K}^{ij}_{t_l}\left|\frac{1}{k_n}\sum_{k=0}^{k_n-1}(g')^n_k\epsilon^Y_{t_l}1_{\{\hat{T}^{j+k}=t_l\}}\right|^2\right]\right\}^{1/2}\\
\lesssim&\sum_{l:t_l\leq t}\left\{\sum_{i=1}^{\infty}E\left[|\bar{V}_{\alpha}(\hat{\mathcal{I}}^i)_{t_l}|^2\right]\right\}^{1/2}\left\{\sum_{j=1}^{\infty}E\left[\left|\sum_{k=0}^{k_n-1}(g')^n_k\epsilon^Y_{t_l}1_{\{\hat{T}^{j+k}=t_l\}}\right|^2\right]\right\}^{1/2}.
\end{align*}
[SC1]-[SC3] imply that $\sum_{i=1}^{\infty}E\left[|\bar{V}_{\alpha}(\hat{\mathcal{I}}^i)_{t_l}|^2\right]\lesssim k_n$. Moreover, since
\begin{align*}
\sum_j\left|\sum_{k=0}^{k_n-1}(g')^n_k\epsilon^Y_{t_l}1_{\{\hat{T}^{j+k}=t_l\}}\right|^2=\sum_j\sum_{k=0}^{k_n-1}|(g')^n_k\epsilon^Y_{t_l}|^21_{\{\hat{T}^{j+k}=t_l\}}\leq k_n\| g'\|_\infty|\epsilon^Y_{t_l}|^2,
\end{align*}
we conclude that
\begin{align*}
E\left[\left|\sum_{i,j=1}^{\infty}[\bar{K}^{ij}_-\bullet\{\bar{V}_\alpha(\hat{\mathcal{I}}^i)_-\bullet \bar{\mathfrak{E}}^Y_{g'}(\hat{\mathcal{J}}^j)\},N]_t\right|\right]\lesssim l k_n.
\end{align*}
Consequently, we obtain
\begin{align*}
\sum_{i,j=1}^{\infty}[\bar{K}^{ij}_-\bullet\{\bar{L}_{\alpha}(\hat{\mathcal{I}}^i)_-\bullet \bar{M}_{\beta}(\hat{\mathcal{J}}^j)\},N]_t=O_p(k_n)
\end{align*}
for any $(L,\alpha)\in\{(X,g),(\mathfrak{E}^X,g'),(\mathfrak{Z}^X,g')\}$ and $(M,\beta)\in\{(Y,g),(\mathfrak{E}^Y,g'),(\mathfrak{Z}^Y,g')\}$. By symmetry we also obtain
\begin{align*}
\sum_{i,j=1}^{\infty}[\bar{K}^{ij}_-\bullet\{\bar{M}_{\beta}(\hat{\mathcal{J}}^j)_-\bullet \bar{L}_{\alpha}(\hat{\mathcal{I}}^i)\},N]_t=O_p(k_n)
\end{align*}
for any $(L,\alpha)\in\{(X,g),(\mathfrak{E}^X,g'),(\mathfrak{Z}^X,g')\}$ and $(M,\beta)\in\{(Y,g),(\mathfrak{E}^Y,g'),(\mathfrak{Z}^Y,g')\}$. 
Consequently, we obtain $[\mathbf{M}^n,N]=O_p(k_n^{-1})=o_p(b_n^{1/4})$. Since $\langle \mathbf{M}^n,N\rangle$ is the predictable compensator of $[\mathbf{M}^n,N]$ by Proposition I-4.50 of \cite{JS}, we conclude that $N\in\mathcal{N}$.

Since $\mathcal{N}^0\cup\mathcal{N}^1$ is a total subset of $\mathcal{M}_2^\perp$, we conclude that $\mathcal{N}=\mathcal{M}_2^\perp$. This completes the proof of the lemma.

\hfill $\Box$


\section{Proof of Lemma \ref{Lindeberg}}\label{proofLindeberg}

Exactly as in Section \ref{prooflembasic}, we can use a localization procedure for the proof, and which allows us to replace the conditions [A4] and [N] by the following strengthened versions:
\begin{enumerate}
\item[{[SA4]}] $\xi\vee\frac{9}{10}<\xi'$ and $(\ref{SA4})$ holds.
\item[{[SN]}] We have [N], and the process $(\int |z|^8Q_t(\mathrm{d}z))_{t\in\mathbb{R}_+}$ is bounded. Furthermore, for any $\lambda>0$ there exists a positive constant $C_\lambda$ such that
\begin{align*}
E\left[|\Psi^{ij}_{t}-\Psi^{ij}_{(t-h)_+}|\big|\mathcal{F}_{(t-h)_+}\right]\leq C_{\lambda} h^{1/2-\lambda}
\end{align*}
for every $i,j\in\{1,2\}$ and every $t,h>0$.
\end{enumerate}

\begin{proof}[\upshape{\bfseries{Proof of Lemma \ref{Lindeberg}}}]
By a localization procedure, we may assume that [SC1]-[SC3], [SA4], [SN] and $(\ref{absmod2})$ hold.

Since Lemma \ref{HYlem3.1} and Eq.~I-4.36 in \cite{JS} yield
\begin{align*}
\Delta\mathbf{M}^n_s=\frac{1}{(\psi_{HY}k_n)^2}\sum_{i,j}\left\{\bar{K}^{ij}_s\bar{\mathsf{X}}(\widehat{\mathcal{I}})^i_s\Delta\bar{\mathfrak{E}}^Y_{g'}(\widehat{\mathcal{J}})^j_s+\bar{K}^{ij}_s\bar{\mathsf{Y}}(\widehat{\mathcal{I}})^i_s\Delta\bar{\mathfrak{E}}^X_{g'}(\widehat{\mathcal{J}})^j_s\right\},
\end{align*}
where $\bar{\mathsf{X}}(\widehat{\mathcal{I}})^i_s=\bar{X}_g(\widehat{\mathcal{I}})^i_s+\bar{\mathfrak{U}}^X_{g'}(\widehat{\mathcal{I}})^i_s$ and $\bar{\mathsf{Y}}(\widehat{\mathcal{J}})^j_s=\bar{Y}_g(\widehat{\mathcal{J}})^j_s+\bar{\mathfrak{U}}^Y_{g'}(\widehat{\mathcal{J}})^j_s$, it is sufficient to prove that
\begin{equation}\label{aimLindeberg}
\frac{b_n^{-1}}{k_n^8}\sum_{0\leq s\leq t}\left|\sum_{i,j}\bar{K}^{ij}_s\bar{\mathsf{X}}(\widehat{\mathcal{I}})^i_s\Delta\bar{\mathfrak{E}}^Y_{g'}(\widehat{\mathcal{J}})^j_s\right|^4\to^p 0,\qquad
\frac{b_n^{-1}}{k_n^8}\sum_{0\leq s\leq t}\left|\sum_{i,j}\bar{K}^{ij}_s\bar{\mathsf{Y}}(\widehat{\mathcal{J}})^j_s\Delta\bar{\mathfrak{E}}^X_{g'}(\widehat{\mathcal{I}})^i_s\right|^4\to^p 0
\end{equation}
as $n\to\infty$ for any $t>0$. Since
\begin{align*}
\sum_{i,j}\bar{K}^{ij}_s\bar{\mathsf{X}}(\widehat{\mathcal{I}})^i_s\Delta\bar{\mathfrak{E}}^Y_{g'}(\widehat{\mathcal{J}})^j_s
=&-\frac{1}{k_n}\sum_{i,j}\sum_{q=0}^{k_n-1}(g')^n_q\bar{K}^{ij}_s\bar{\mathsf{X}}(\widehat{\mathcal{I}})^i_s\epsilon^Y_{\widehat{T}^{j+q}}1_{\{\widehat{T}^{j+q}=s\}}\\
=&-\frac{1}{k_n}\sum_{q=1}^{\infty}\epsilon^Y_{\widehat{T}^{q}}1_{\{\widehat{T}^{q}=s\}}\sum_{i=1}^{\infty}\sum_{j=(q-k_n+1)\vee1}^{q}(g')^n_{q-j}\bar{K}^{ij}_{\widehat{T}^q}\bar{\mathsf{X}}(\widehat{\mathcal{I}})^i_{\widehat{T}^q},
\end{align*}
we have
\begin{align*}
\left|\sum_{i,j}\bar{K}^{ij}_s\bar{\mathsf{X}}(\widehat{\mathcal{I}})^i_s\Delta\bar{\mathfrak{E}}^Y_{g'}(\widehat{\mathcal{J}})^j_s\right|^4
=\frac{1}{k_n^4}\sum_{q=1}^{\infty}\left(\epsilon^Y_{\widehat{T}^{q}}\right)^41_{\{\widehat{T}^{q}=s\}}\left|\sum_{i=1}^{\infty}\sum_{j=(q-k_n+1)\vee1}^{q}(g')^n_{q-j}\bar{K}^{ij}_{\widehat{T}^q}\bar{\mathsf{X}}(\widehat{\mathcal{I}})^i_{\widehat{T}^q}\right|^4
\end{align*}
because $\widehat{T}^q\neq\widehat{T}^{q'}$ if $q\neq q'$. Moreover, since $\bar{K}^{ij}\equiv 0$ when $|i-j|>k_n$ due to Lemma \ref{advantage}(b), we have
\begin{align*}
\left|\sum_{i,j}\bar{K}^{ij}_s\bar{\mathsf{X}}(\widehat{\mathcal{I}})^i_s\Delta\bar{\mathfrak{E}}^Y_{g'}(\widehat{\mathcal{J}})^j_s\right|^4
\leq&\| g'\|_\infty k_n^2\sum_{q=1}^{\infty}\left(\epsilon^Y_{\widehat{T}^{q}}\right)^41_{\{\widehat{T}^{q}=s\}}\sum_{j=(q-k_n+1)\vee1}^{q}\sum_{i:|i-j|\leq k_n}\left|\bar{\mathsf{X}}(\widehat{\mathcal{I}})^i_{\widehat{T}^q}\right|^4\\
=&\| g'\|_\infty k_n^2\sum_{i,j:|i-j|\leq k_n}\sum_{q=0}^{k_n-1}\left(\epsilon^Y_{\widehat{T}^{j+q}}\right)^4\left|\bar{\mathsf{X}}(\widehat{\mathcal{I}})^i_{\widehat{T}^{j+q}}\right|^41_{\{\widehat{T}^{j+q}=s\}},
\end{align*}
hence we obtain
\begin{align*}
\sum_{0\leq s\leq t}\left|\sum_{i,j}\bar{K}^{ij}_s\bar{\mathsf{X}}(\widehat{\mathcal{I}})^i_s\Delta\bar{\mathfrak{E}}^Y_{g'}(\widehat{\mathcal{J}})^j_s\right|^4
\leq\| g'\|_\infty k_n^2\sum_{i,j:|i-j|\leq k_n}\sum_{q=0}^{k_n-1}\left(\epsilon^Y_{\widehat{T}^{j+q}}\right)^4\left|\bar{\mathsf{X}}(\widehat{\mathcal{I}})^i_{\widehat{T}^{j+q}\wedge t}\right|^4 1_{\{\widehat{T}^{j+q}\leq t\}}.
\end{align*}
Therefore, the Schwarz inequality and [SN] yield
\begin{align*}
E_0\left[\sum_{0\leq s\leq t}\left|\sum_{i,j}\bar{K}^{ij}_s\bar{\mathsf{X}}(\widehat{\mathcal{I}})^i_s\Delta\bar{\mathfrak{E}}^Y_{g'}(\widehat{\mathcal{J}})^j_s\right|^4\right]
\lesssim k_n^2\sum_{i,j:|i-j|\leq k_n}\sum_{q=0}^{k_n-1}\left\{E_0\left[\left|\bar{\mathsf{X}}(\widehat{\mathcal{I}})^i_{\widehat{T}^{j+q}\wedge t}\right|^8\right]\right\}^{1/2}1_{\{\widehat{T}^{j+q}\leq t\}}.
\end{align*}
Now, the Burkholder-Davis-Gundy inequality and [SN] imply
\begin{align*}
E_0\left[\left|\bar{\mathfrak{E}}^X_{g'}(\widehat{\mathcal{I}})^i_{\widehat{T}^{j+q}\wedge t}\right|^8\right]
\lesssim k_n^{-8}E_0\left[\left\{\sum_{p=0}^{k_n-1}\left|(g')^n_p\epsilon^X_{\widehat{S}^{i+q}}\right|^21_{\{\widehat{S}^{i+p}\leq t\}}\right\}^4\right]
\lesssim k_n^{-4}.
\end{align*} 
Combining this with $(\ref{absmod2})$ and [SC3], we conclude that
\begin{align*}
E_0\left[\sum_{0\leq s\leq t}\left|\sum_{i,j}\bar{K}^{ij}_s\bar{\mathsf{X}}(\widehat{\mathcal{I}})^i_s\Delta\bar{\mathfrak{E}}^Y_{g'}(\widehat{\mathcal{J}})^j_s\right|^4\right]
\lesssim k_n^4 b_n^{-1}(k_n\bar{r}_n|\log b_n|)^2=k_n^6 b_n^{2\xi'-1}|\log b_n|^2,
\end{align*}
and thus we obtain
\begin{align*}
\frac{b_n^{-1}}{k_n^8}E_0\left[\sum_{0\leq s\leq t}\left|\sum_{i,j}\bar{K}^{ij}_s\bar{\mathsf{X}}(\widehat{\mathcal{I}})^i_s\Delta\bar{\mathfrak{E}}^Y_{g'}(\widehat{\mathcal{J}})^j_s\right|^4\right]
\lesssim b_n^{2\xi'-1}|\log b_n|^2=o(1)
\end{align*}
because $\xi'>9/10$. Consequently, we have proved the first equation of $(\ref{aimLindeberg})$. By symmetry we also obtain the second equation of $(\ref{aimLindeberg})$, hence we complete the proof.
\end{proof}


\section{Proof of Proposition \ref{HYprop3.2}}\label{proofHYprop3.2}

\begin{lem}\label{psi}
Let $\alpha,\beta\in\Phi$.
\begin{enumerate}[\normalfont (a)]
\item $\psi_{\beta,\alpha}(x)=\psi_{\alpha,\beta}(-x)$ for all $x\in\mathbb{R}$.
\item $\psi_{\alpha,\beta}(x)=0$ for $x\notin[-2,2]$.
\item $\psi_{\alpha,\beta}$ is differentiable, and $\psi'_{\alpha,\beta}=\psi_{\alpha,\beta'}$ if $\beta$ is piecewise $C^1$.
\item $\psi_{\alpha,\alpha}$ is an even functions. Furthermore, $\psi_{\alpha,\alpha'}$ and $\psi_{\alpha',\alpha}$ are odd functions if $\alpha$ is piecewise $C^1$. 
\end{enumerate}
\end{lem}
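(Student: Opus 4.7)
The plan is to first rewrite $\psi_{\alpha,\beta}$ in a form that makes all four parts transparent. Setting $w = v - u$ in the inner integral of the definition, I would obtain
\begin{equation*}
\psi_{\alpha,\beta}(x) = \int_{x-1}^{x+1} \Phi_{\alpha,\beta}(w)\,\mathrm{d}w, \qquad \Phi_{\alpha,\beta}(w) := \int_{\mathbb{R}} \alpha(u)\beta(u+w)\,\mathrm{d}u,
\end{equation*}
where the extension to $\mathbb{R}$ in the definition of $\Phi_{\alpha,\beta}$ is harmless since $\alpha$ vanishes outside $[0,1]$. All the structural properties of $\psi_{\alpha,\beta}$ will follow from the corresponding properties of the ``correlation'' $\Phi_{\alpha,\beta}$.

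For (a), the substitution $u \mapsto u + w$ in the integral defining $\Phi_{\beta,\alpha}(w)$ yields $\Phi_{\beta,\alpha}(w) = \Phi_{\alpha,\beta}(-w)$; a further change of variables $w \mapsto -w$ in the outer integral then gives $\psi_{\beta,\alpha}(x) = \psi_{\alpha,\beta}(-x)$. For (b), note that $\Phi_{\alpha,\beta}(w) \ne 0$ requires $u \in [0,1]$ and $u+w \in [0,1]$ for some $u$, forcing $w \in [-1,1]$; when $|x| > 2$ the interval $[x-1,x+1]$ misses $[-1,1]$, and $\psi_{\alpha,\beta}(x)=0$ follows.

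For (c), I would apply the Leibniz rule to the integral representation to get $\psi'_{\alpha,\beta}(x) = \Phi_{\alpha,\beta}(x+1) - \Phi_{\alpha,\beta}(x-1)$, which equals $\int \alpha(u)[\beta(u+x+1) - \beta(u+x-1)]\,\mathrm{d}u$. Under the piecewise $C^1$ assumption on $\beta$, the fundamental theorem of calculus gives $\beta(u+x+1)-\beta(u+x-1) = \int_{u+x-1}^{u+x+1} \beta'(v)\,\mathrm{d}v$, and after re-inserting this inside the $u$-integral we recover exactly $\psi_{\alpha,\beta'}(x)$. The only subtle point here is continuity of $\beta$ so that the fundamental theorem applies across the finitely many breakpoints of the piecewise-$C^1$ partition; this is implicit in ``piecewise $C^1$'' with no jumps, in line with how $g$ is used elsewhere in the paper.

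For (d), taking $\beta = \alpha$ in (a) immediately yields $\psi_{\alpha,\alpha}(-x) = \psi_{\alpha,\alpha}(x)$, so $\psi_{\alpha,\alpha}$ is even. If in addition $\alpha$ is piecewise $C^1$, (c) gives $\psi_{\alpha,\alpha'} = \psi'_{\alpha,\alpha}$, and the derivative of an even function is odd, so $\psi_{\alpha,\alpha'}$ is odd. Finally, applying (a) once more produces $\psi_{\alpha',\alpha}(x) = \psi_{\alpha,\alpha'}(-x) = -\psi_{\alpha,\alpha'}(x)$, so $\psi_{\alpha',\alpha}$ is odd as well. None of the steps is really an obstacle; the only technical care needed is in (c), to justify differentiation under the integral and the use of the fundamental theorem given the piecewise regularity.
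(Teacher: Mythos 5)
Your proof is correct and follows essentially the same approach as the paper's (which is very terse, giving only the key change-of-variables equivalence for (a), calling (b)--(c) obvious, and asserting (d) follows); your introduction of the auxiliary correlation function $\Phi_{\alpha,\beta}$ and the representation $\psi_{\alpha,\beta}(x)=\int_{x-1}^{x+1}\Phi_{\alpha,\beta}(w)\,\mathrm{d}w$ is a clean packaging of the same Fubini/change-of-variables and Leibniz arguments, not a different method. One small remark: the paper says ``(d) immediately follows (a) and (b),'' but as you correctly work out, the oddness in (d) actually uses (a) together with (c), so that line in the paper is most plausibly a typo.
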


\begin{proof}
Since $(x+u-1\leq v\leq x+u+1)\Leftrightarrow(-x+v-1\leq u\leq -x+v+1)$ and $\alpha(w)=\beta(w)=0$ if $w\notin[0,1]$, Fubini's theorem yields (a). (b) and (c) are obvious. (d) immediately follows (a) and (b).
\end{proof}

\begin{lem}\label{lemHYprop3.2}
Let  $\alpha,\beta,\alpha',\beta'\in\Phi$ and let $(M^n)$, $(N^n)$, $(M^{\prime n})$ and $(N^{\prime n})$ be four sequences of locally square-integrable martingales such that
\begin{equation}\label{tight}
\langle M^n\rangle_t=O_p(1),\qquad\langle N^n\rangle_t=O_p(1),\qquad
\langle M^{\prime n}\rangle_t=O_p(1),\qquad\langle N^{\prime n}\rangle_t=O_p(1)
\end{equation}
and
\begin{equation}\label{nagasa}
\left.\begin{array}{c}
\sup_{p\in\mathbb{N}}\langle M^n\rangle(\widehat{I}^p)_t=o_p(b_n^{\xi'}),\qquad
\sup_{q\in\mathbb{N}}\langle N^n\rangle(\widehat{J}^q)_t=o_p(b_n^{\xi'}),\\
\sup_{p\in\mathbb{N}}\langle M^{\prime n}\rangle(\widehat{I}^p)_t=o_p(b_n^{\xi'}),\qquad
\sup_{q\in\mathbb{N}}\langle N^{\prime n}\rangle(\widehat{J}^q)_t=o_p(b_n^{\xi'})
\end{array}\right\}
\end{equation}
as $n\to\infty$ for any $t\in\mathbb{R}_+$. Then, we have
\begin{align*}
&\frac{1}{k_n^4}\sum_{i,j,i',j'}(\bar{K}^{ij}_-\bar{K}^{i'j'}_-)\bullet V_{\alpha,\beta;\alpha',\beta'}(M^n,N^n;M^{\prime n},N^{\prime n})^{iji'j'}_t\\
=&\sum_{p,q=1}^{\infty}\psi_{\alpha,\beta}\left(\frac{q-p}{k_n}\right)\psi_{\alpha',\beta'}\left(\frac{q-p}{k_n}\right)\langle M^n,M^{\prime n}\rangle(\widehat{I}^{p})_t\langle N^n,N^{\prime n}\rangle(\widehat{J}^{q})_t+\mathbf{II}_t
\end{align*}
and
\begin{equation}\label{HYprop3.2II}
\left. \begin{aligned}
\mathbf{II}_t
=&\sum_{p,q=1}^{\infty}\psi_{\alpha,\beta}\left(\frac{q-p}{k_n}\right)\psi_{\alpha',\beta'}\left(\frac{p-q}{k_n}\right)\langle M^n,N^{\prime n}\rangle(\widehat{I}^{p})_t\langle M^{\prime n},N^{n}\rangle(\widehat{J}^{q})_t+o_p(b_n^{1/2})\\
=&\sum_{p,q=1}^{\infty}\psi_{\alpha,\beta}\left(\frac{q-p}{k_n}\right)\psi_{\alpha',\beta'}\left(\frac{p-q}{k_n}\right)\langle M^n,N^{\prime n}\rangle(\widehat{I}^{p})_t\langle M^{\prime n},N^{n}\rangle(\widehat{I}^{q})_t+o_p(b_n^{1/2})\\
=&\sum_{p,q=1}^{\infty}\psi_{\alpha,\beta}\left(\frac{q-p}{k_n}\right)\psi_{\alpha',\beta'}\left(\frac{p-q}{k_n}\right)\langle M^n,N^{\prime n}\rangle(\widehat{J}^{p})_t\langle M^{\prime n},N^{n}\rangle(\widehat{J}^{q})_t+o_p(b_n^{1/2})
\end{aligned}\right\}
\end{equation}
as $n\to\infty$ for any $t\in\mathbb{R}_+$.
\end{lem}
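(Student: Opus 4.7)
My plan is to separately treat the two summands of $V^{iji'j'}_{\alpha,\beta;\alpha',\beta'}$: the ``diagonal'' summand $\langle \bar{M}_\alpha(\widehat{\mathcal{I}})^i, \bar{M}'_{\alpha'}(\widehat{\mathcal{I}})^{i'}\rangle\langle \bar{N}_\beta(\widehat{\mathcal{J}})^j, \bar{N}'_{\beta'}(\widehat{\mathcal{J}})^{j'}\rangle$, which should produce the main right-hand side term, and the ``cross'' summand $\langle \bar{M}_\alpha(\widehat{\mathcal{I}})^i, \bar{N}'_{\beta'}(\widehat{\mathcal{J}})^{j'}\rangle\langle \bar{M}'_{\alpha'}(\widehat{\mathcal{I}})^{i'}, \bar{N}_\beta(\widehat{\mathcal{J}})^j\rangle$, which should constitute $\mathbf{II}_t$. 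The engine in both cases will be Lemma~\ref{key}, which identifies $\bar{K}$-weighted double sums over $(i, j)$ with $\psi_{\alpha, \beta}((q - p)/k_n)$-factors.

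A common first step will use the disjointness of $\{\widehat{I}^r\}_r$ and of $\{\widehat{J}^{r'}\}_{r'}$ to collapse each pre-averaged covariation: $\langle \bar{M}^n_\alpha(\widehat{\mathcal{I}})^i, \bar{M}'^n_{\alpha'}(\widehat{\mathcal{I}})^{i'}\rangle_t = \sum_r \alpha^n_{r-i}\alpha'^n_{r-i'}\langle M^n, M'^n\rangle(\widehat{I}^r)_t$ in the diagonal case, and $\langle \bar{M}^n_\alpha(\widehat{\mathcal{I}})^i, \bar{N}'^n_{\beta'}(\widehat{\mathcal{J}})^{j'}\rangle_t = \sum_{r, r'} \alpha^n_{r-i}\beta'^n_{r'-j'}\langle M^n, N'^n\rangle(\widehat{I}^r \cap \widehat{J}^{r'})_t$ in the cross case, the latter restricted to $|r-r'| \leq 1$ by Proposition~\ref{advantage}(b). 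Substituting and exchanging the sum over $(i, j, i', j')$ with the integration, the diagonal contribution rewrites as $\sum_{r, r'} C^n(r, r')_t$, where $C^n(r, r')_t$ is a $(\bar{K}^{ij}_{s-}\bar{K}^{i'j'}_{s-})$-integral, weighted by $k_n^{-4}\alpha^n_{r-i}\alpha'^n_{r-i'}\beta^n_{r'-j}\beta'^n_{r'-j'}$, against the measure $d[\langle M^n, M'^n\rangle(\widehat{I}^r)\langle N^n, N'^n\rangle(\widehat{J}^{r'})]_s$. Because each $\bar{K}^{ij}_{s-}$ is a $\{0, 1\}$-valued step function jumping once upward at $\widehat{S}^i \vee \widehat{T}^j$ and the product measure is supported on $\widehat{I}^r \cup \widehat{J}^{r'}$ (of diameter $O(r_n)$), I will replace $\bar{K}^{ij}_{s-}\bar{K}^{i'j'}_{s-}$ by its terminal value $\bar{K}^{ij}\bar{K}^{i'j'}$ on the bulk of that support; the boundary correction is confined to a time-set of length $O(r_n)$ and controlled via Kunita--Watanabe together with \eqref{nagasa}. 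The sum then factorizes, and two applications of Lemma~\ref{key} yield $\psi_{\alpha, \beta}((r'-r)/k_n)\psi_{\alpha', \beta'}((r'-r)/k_n)\cdot \langle M^n, M'^n\rangle(\widehat{I}^r)_t\langle N^n, N'^n\rangle(\widehat{J}^{r'})_t$ per pair, with $O_p(b_n^{1/2})$ error. Totalling against weights that are summable ($O_p(1)$ by \eqref{tight} and Cauchy--Schwarz) yet individually small ($o_p(b_n^{\xi'})$ by \eqref{nagasa}) will upgrade the global error to $o_p(b_n^{1/2})$, recovering the main right-hand side term.

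Applying the same procedure to the cross summand will produce, modulo $o_p(b_n^{1/2})$, the quadruple sum $\sum_{r, r', s, s'} \psi_{\alpha, \beta}((s'-r)/k_n)\psi_{\alpha', \beta'}((r'-s)/k_n)\langle M^n, N'^n\rangle(\widehat{I}^r \cap \widehat{J}^{r'})_t \langle M'^n, N^n\rangle(\widehat{I}^s \cap \widehat{J}^{s'})_t$, subject to $|r - r'| \leq 1$ and $|s - s'| \leq 1$. Setting $p = r$, $q = s'$, Lipschitz continuity of $\psi_{\alpha', \beta'}$ replaces $(r'-s)/k_n$ by $(p-q)/k_n$ with $O(1/k_n)$ error; summing $\langle M^n, N'^n\rangle(\widehat{I}^p \cap \widehat{J}^{r'})$ over $r'$ reconstructs $\langle M^n, N'^n\rangle(\widehat{I}^p)_t$ by the partition property of $\{\widehat{J}^{r'}\}$, and similarly summing over $s$ reconstructs $\langle M'^n, N^n\rangle(\widehat{J}^q)_t$, delivering the first representation in~\eqref{HYprop3.2II}. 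Collapsing instead in $s'$ (or in both $r'$ and $s'$) will yield the second (or third) representation. The mutual differences among the three reduce to $\psi\psi$-weighted sums featuring $\langle\cdot, \cdot\rangle(\widehat{I}^p \triangle \widehat{J}^p)_t$ as one factor, and $|\widehat{I}^p \triangle \widehat{J}^p| \leq 2 r_n(t) \leq 2 b_n^{\xi'}$ combined with Kunita--Watanabe, \eqref{nagasa}, and $\xi' > 1/2$ will bound them by $o_p(b_n^{1/2})$.

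The principal obstacle will be converting pointwise $O_p(b_n^{1/2})$ errors (from each application of Lemma~\ref{key} and from the Lipschitz approximation of $\psi$ across the windows $|r-r'|, |s-s'| \leq 1$) into summed errors of order $o_p(b_n^{1/2})$ rather than merely $O_p(b_n^{1/2})$. The fix will be to pair each pointwise error with the appropriate random weight and to exploit not just summability from \eqref{tight} but also the stronger per-interval smallness from \eqref{nagasa} --- a form of random dominated convergence that produces the needed $b_n^{\xi' - 1/2}$ gain.
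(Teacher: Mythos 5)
The overall plan --- expand $V^{iji'j'}$ into per-interval pieces, push the sum through the stochastic integral, and invoke Lemma~\ref{key} to identify the $\bar K$-weighted combinatorial sums with $\psi_{\alpha,\beta}((q-p)/k_n)$ factors, then control errors by Kunita--Watanabe plus \eqref{tight} and \eqref{nagasa} --- is the right one and matches the paper's. Your treatment of the cross summand $\mathbf{II}_t$ (collapse in $r'$ or $s'$ using the partition property, Lipschitz approximation of $\psi$, and symmetric-difference bounds $|\widehat{I}^p\triangle\widehat{J}^p|\leq 2 r_n(t)$) also tracks the paper's argument closely.

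Where the two diverge is the opening move. The paper does \emph{not} approximate $\bar{K}^{ij}_{s-}\bar{K}^{i'j'}_{s-}$ by its terminal value and then estimate the error; it first observes, exactly as in Lemma~\ref{HYlem3.1}, that $\bar K^{ij}\bar K^{i'j'}$ is a step function jumping once at $R^\vee(i\vee i',j\vee j')$ and that $V^{iji'j'}_t=0$ for $t\leq R^\vee(i\vee i',j\vee j')$, so that integration by parts gives the \emph{exact identity} $(\bar K^{ij}_-\bar K^{i'j'}_-)\bullet V^{iji'j'}_t=\bar K^{ij}\bar K^{i'j'}V^{iji'j'}_t$, with no remainder. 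Only after this reduction is Lemma~\ref{key} applied.

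Your substitute --- replacing $\bar K^{ij}_{s-}\bar K^{i'j'}_{s-}$ by its terminal value ``on the bulk of the support'' and bounding the boundary correction --- can be made to work, but not for the reason you give. The claim that the boundary correction is small because it is ``confined to a time-set of length $O(r_n)$'' is not the right lever: the entire support of $d\bigl[\langle M^n,M'^n\rangle(\widehat I^r)\langle N^n,N'^n\rangle(\widehat J^{r'})\bigr]$ already has length $O(r_n)$, and the measure can concentrate all its mass on that set, so this localization buys nothing per quadruple. What actually saves the argument is a combinatorial fact you do not state: for fixed $(r,r')$, the integrand jump time $R^\vee(i\vee i',j\vee j')$ precedes the start $\widehat S^{r-1}\vee\widehat T^{r'-1}$ of the measure's support whenever $i\vee i'<r$ and $j\vee j'<r'$, so a genuine boundary term appears only when $i\vee i'=r$ or $j\vee j'=r'$ --- that is, for $O(k_n^3)$ of the $O(k_n^4)$ contributing quadruples. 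It is this $1/k_n$ deficit, combined with Kunita--Watanabe, \eqref{tight}, \eqref{nagasa} and the range restriction $|r-r'|\lesssim k_n$, that delivers the needed extra factor $b_n^{1/2}\cdot o_p(b_n^{\xi'-1/2})=o_p(b_n^{\xi'})=o_p(b_n^{1/2})$. Without this observation your boundary estimate does not close. The paper's exact integration-by-parts identity bypasses the issue entirely and is the cleaner route; if you prefer your approximation route, you need to make the combinatorial reduction explicit.
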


\begin{proof}
First note that $\bar{K}^{ij}_t\bar{K}^{i'j'}_t$ is a step function starting from 0 at $t=0$ and jumps to $+1$ at $t=R^\vee(i\vee i',j\vee j')$ when $\bar{I}^i\cap\bar{J}^j\neq\emptyset,\bar{I}^{i'}\cap\bar{J}^{j'}\neq\emptyset$ and that $V_{\alpha,\beta;\alpha',\beta'}(M^n,N^n;M^{\prime n},N^{\prime n})^{iji'j'}_t=0$ if $t\leq R^\vee(i\vee i',j\vee j')$, so we have $$(\bar{K}^{ij}_-\bar{K}^{i'j'}_-)\bullet V_{\alpha,\beta;\alpha',\beta'}(M^n,N^n;M^{\prime n},N^{\prime n})^{iji'j'}_t=\bar{K}^{ij}\bar{K}^{i'j'} V_{\alpha,\beta;\alpha',\beta'}(M^n,N^n;M^{\prime n},N^{\prime n})^{iji'j'}_t$$ by integration by parts. Therefore, we have
\begin{align*}
&\frac{1}{k_n^4}\sum_{i,j,i',j'}(\bar{K}^{ij}_-\bar{K}^{i'j'}_-)\bullet V_{\alpha,\beta;\alpha',\beta'}(M^n,N^n;M^{\prime n},N^{\prime n})^{iji'j'}_t\\
=&\frac{1}{k_n^4}\sum_{i,j,i',j'}\sum_{p,q,p',q'=1}^{k_n-1}\bar{K}^{ij}\bar{K}^{i'j'}\alpha^n_p\beta^n_q(\alpha')^n_{p'}(\beta')^n_{q'}\Bigl\{(\widehat{I}^{i+p}_-\widehat{I}^{i'+p'}_-\bullet\langle M^n,M^{\prime n}\rangle_t)(\widehat{J}^{j+q}_-\widehat{J}^{j'+q'}_-\bullet\langle N^n,N^{\prime n}\rangle_t)\\
&\hphantom{\frac{1}{k_n^4}\sum_{i,j,i',j'}\sum_{p,q,p',q'=1}^{k_n-1}\bar{K}^{ij}\bar{K}^{i'j'}\alpha^n_p\beta^n_q(\alpha')^n_{p'}(\beta')^n_{q'}}+(\widehat{I}^{i+p}_-\widehat{J}^{j'+q'}_-\bullet\langle M^n,N^{\prime n}\rangle_t)(\widehat{I}^{i'+p'}_-\widehat{J}^{j+q}_-\bullet\langle M^{\prime n},N^n\rangle_t)\Bigr\}\\
=&\sum_{p,q,p',q'=1}^{\infty}c_{\alpha,\beta}(p,q)c_{\alpha',\beta'}(p',q')\Bigl\{(\widehat{I}^{p}_-\widehat{I}^{p'}_-\bullet\langle M^n,M^{\prime n}\rangle_t)(\widehat{J}^{q}_-\widehat{J}^{q'}_-\bullet\langle N^n,N^{\prime n}\rangle_t)\\
&\hphantom{\sum_{p,q,p',q'=1}^{\infty}c_{\alpha,\beta}(p,q)c_{\alpha',\beta'}(p',q')}+(\widehat{I}^{p}_-\widehat{J}^{q'}_-\bullet\langle M^n,N^{\prime n}\rangle_t)(\widehat{I}^{p'}_-\widehat{J}^{q}_-\bullet\langle M^{\prime n},N^n\rangle_t)\Bigr\}\\
=:&\mathbf{I}_t+\mathbf{II}_t.
\end{align*}
Since $\widehat{I}^{p}\cap\widehat{I}^{p'}=\widehat{J}^{q}\cap\widehat{J}^{q'}=\emptyset$ if $p\neq p',q\neq q'$, we have
\begin{align*}
\mathbf{I}_t=\sum_{p,q=1}^{\infty}c_{\alpha,\beta}(p,q)c_{\alpha',\beta'}(p,q)\langle M^n,M^{\prime n}\rangle(\widehat{I}^{p})_t\langle N^n,N^{\prime n}\rangle(\widehat{J}^{q})_t.
\end{align*}
Moreover, since $|c_{\alpha,\beta}(p,q)|,|c_{\alpha',\beta'}(p,q)|\lesssim 1$ and $c_{\alpha,\beta}(p,q)=c_{\alpha',\beta'}(p,q)=0$ if $|p-q|>2k_n$ due to Lemma \ref{advantage}(b), $(\ref{nagasa})$ and the Kunita-Watanabe inequality imply that
\begin{align*}
\mathbf{I}_t=\sum_{\begin{subarray}{c}
p,q:p,q\geq k_n\\
|p-q|\leq 2k_n
\end{subarray}}c_{\alpha,\beta}(p,q)c_{\alpha',\beta'}(p,q)\langle M^n,M^{\prime n}\rangle(\widehat{I}^{p})_t\langle N^n,N^{\prime n}\rangle(\widehat{J}^{q})_t+o_p(b_n^{1/2}),
\end{align*}
hence Lemma \ref{key}, $(\ref{tight})$, $(\ref{nagasa})$ and the Kunita-Watanabe inequality yield
\begin{align*}
\mathbf{I}_t=\sum_{p,q=1}^{\infty}\psi_{\alpha,\beta}\left(\frac{q-p}{k_n}\right)\psi_{\alpha',\beta'}\left(\frac{q-p}{k_n}\right)\langle M^n,M^{\prime n}\rangle(\widehat{I}^{p})_t\langle N^n,N^{\prime n}\rangle(\widehat{J}^{q})_t+o_p(b_n^{1/2}).
\end{align*}
On the other hand, an argument similar to the above yields
\begin{align*}
\mathbf{II}_t=\sum_{p,q,p',q'=1}^{\infty}\psi_{\alpha,\beta}\left(\frac{q-p}{k_n}\right)\psi_{\alpha',\beta'}\left(\frac{q'-p'}{k_n}\right)\langle M^n,N^{\prime n}\rangle(\widehat{I}^{p}\cap\widehat{J}^{q'})_t\langle M^{\prime n},N^{n}\rangle(\widehat{I}^{p'}\cap\widehat{J}^{q})_t+o_p(b_n^{1/2}).
\end{align*}
Since $\widehat{I}^{p}\cap\widehat{J}^{q}=\emptyset$ if $|p-q|>1$ by Lemma \ref{advantage}(b), we obtain
\begin{align*}
\mathbf{II}_t
=&\sum_{p,q':|q'-p|\leq 1}\sum_{p',q:|p'-q|\leq 1}\psi_{\alpha,\beta}\left(\frac{q-p}{k_n}\right)\psi_{\alpha',\beta'}\left(\frac{q'-p'}{k_n}\right)\\
&\hphantom{\sum_{p,q':|q'-p|\leq 1}\sum_{p',q:|p'-q|\leq 1}}\times\langle M^n,N^{\prime n}\rangle(\widehat{I}^{p}\cap\widehat{J}^{q'})_t\langle M^{\prime n},N^{n}\rangle(\widehat{I}^{p'}\cap\widehat{J}^{q})_t+o_p(b_n^{1/2}),
\end{align*}
however, since $\psi_{\alpha,\beta}$ and $\psi_{\alpha',\beta'}$ are Lipschitz continuous, an argument similar to the above yield
\begin{align*}
\mathbf{II}_t
=&\sum_{p,q=1}^{\infty}\psi_{\alpha,\beta}\left(\frac{q-p}{k_n}\right)\sum_{p':|p'-q|\leq 1}\sum_{q':|q'-p|\leq 1}\psi_{\alpha',\beta'}\left(\frac{p-q}{k_n}\right)\\
&\hphantom{\sum_{p,q=1}^{\infty}\psi_{\alpha,\beta}\left(\frac{q-p}{k_n}\right)\sum_{p':|p'-q|\leq 1}\sum_{q':|q'-p|\leq 1}}\times\langle M^n,N^{\prime n}\rangle(\widehat{I}^{p}\cap\widehat{J}^{q'})_t\langle M^{\prime n},N^{n}\rangle(\widehat{I}^{p'}\cap\widehat{J}^{q})_t+o_p(b_n^{1/2}).
\end{align*}
Therefore, we conclude that
\begin{align*}
\mathbf{II}_t=\sum_{p,q=1}^{\infty}\psi_{\alpha,\beta}\left(\frac{q-p}{k_n}\right)\psi_{\alpha',\beta'}\left(\frac{p-q}{k_n}\right)\langle M^n,N^{\prime n}\rangle(\widehat{I}^{p})_t\langle M^{\prime n},N^{n}\rangle(\widehat{J}^{q})_t+o_p(b_n^{1/2}).
\end{align*}
In a similar manner we can show $(\ref{HYprop3.2II})$, and thus we complete the proof of the lemma.
\end{proof}

\begin{lem}\label{lemvanish}
Let $\alpha\in\Phi$ and let $A^n$ and $B^n$ be two processes with locally bounded variations such that
\begin{equation}\label{tightnagasa}
A^n_t=O_p(1),\qquad B^n_t=O_p(1),\qquad\sup_{p\in\mathbb{N}}|A^n(\widehat{I}^p)_t|=o_p(\bar{r}_n),\qquad\sup_{q\in\mathbb{N}}|B^n(\widehat{J}^q)_t|=o_p(\bar{r}_n)
\end{equation}
as $n\to\infty$ for any $t\in\mathbb{R}_+$. Then
\begin{align*}
b_n^{-1/2}\sum_{p,q=1}^{\infty}\psi_{g,g'}\left(\frac{q-p}{k_n}\right)\psi_{\alpha,\alpha}\left(\frac{q-p}{k_n}\right)A^n(\widehat{I}^{p})_t B^n(\widehat{J}^{q})_t\to^p 0
\end{align*}
as $n\to\infty$ for any $t\in\mathbb{R}_+$.
\end{lem}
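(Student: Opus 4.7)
The crux of the argument is that by Lemma \ref{psi}(d) the product $\phi := \psi_{g,g'}\psi_{\alpha,\alpha}$ is an odd Lipschitz function vanishing outside $[-2,2]$: $\psi_{g,g'}$ is odd (since $g$ is piecewise $C^1$) while $\psi_{\alpha,\alpha}$ is even. My plan is to exploit this oddness to extract cancellations that the two small-increment hypotheses alone would not provide.

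First I will group the sum by $k := q-p$ and pair the terms at $\pm k$, using $\phi(0)=0$ and $\phi(-k/k_n) = -\phi(k/k_n)$. Writing $S^n_t$ for the sum in the lemma,
\[
S^n_t = \sum_{k\ge 1}\phi(k/k_n)\bigl[\Sigma^+_k(t) - \Sigma^-_k(t)\bigr],\qquad \Sigma^\pm_k(t) := \sum_p A^n(\widehat I^p)_t\, B^n(\widehat J^{p\pm k})_t.
\]
Reindexing $p\mapsto p+k$ in $\Sigma^-_k$ yields the antisymmetrized form
\[
\Sigma^+_k(t) - \Sigma^-_k(t) = \sum_p\bigl[A^n(\widehat I^p)_t B^n(\widehat J^{p+k})_t - A^n(\widehat I^{p+k})_t B^n(\widehat J^p)_t\bigr],
\]
which, by a Fubini rearrangement, can be recast as the Riemann--Stieltjes integral $\int_0^t \bigl[B^n(\widehat J^{p(s-)+k})_t - B^n(\widehat J^{p(s-)-k})_t\bigr] dA^n_s$, where $p(s)$ is the index with $s\in\widehat I^{p(s)}$.

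The integrand is bounded uniformly by $2\sup_r|B^n(\widehat J^r)_t| = o_p(\bar r_n)$, but a naive estimate against $|dA^n_s|$ is insufficient because the total variation of $A^n$ is not assumed to be bounded. The plan is therefore to perform a second integration by parts, transferring the (discrete) variation of the step-function integrand onto $A^n$ itself; the resulting expression is a sum over $p$ of $A^n_{\widehat S^p\wedge t}$ (which is $O_p(1)$ by the hypothesis $A^n_t = O_p(1)$ combined with the BV structure) weighted by small differences of $B^n$-increments, and the Lipschitz continuity of $\phi$ injects an extra factor $O(1/k_n)$ per step. Assembling these contributions and summing against the Lipschitz weights $\phi(k/k_n)$ for $k\in\{1,\ldots,2k_n+1\}$, one expects $|S^n_t|=o_p(\bar r_n)$, so that $b_n^{-1/2}|S^n_t| = o_p(b_n^{\xi'-1/2}) \to 0$ since $\xi'>1/2$.

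The hard part will be making this second integration by parts precise. The delicate point is that neither $A^n$ nor $B^n$ is assumed to have uniformly bounded total variation, so the required gain of order $1/k_n$ must come from the telescoping structure implicit in the oddness of $\phi$ rather than from crude $L^1$-type bounds; the combinatorial bookkeeping of the discretely indexed integration-by-parts over both the $p$- and $k$-variables is where the technical difficulty lies.
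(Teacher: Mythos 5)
The paper's proof is a one-step estimate: it bounds the sum directly by $|A^n_t|\,\sup_q|B^n(\widehat J^q)_t|\,\bigl|\sum_{r=-2k_n}^{2k_n}\psi_{g,g'}(r/k_n)\psi_{\alpha,\alpha}(r/k_n)\bigr|$ (the restriction to $|r|\le 2k_n$ is Lemma \ref{psi}(b)), and then observes that the last factor is $O(1)$: dividing by $k_n$ gives a Riemann sum for $\int_{-2}^{2}\psi_{g,g'}(x)\psi_{\alpha,\alpha}(x)\,\mathrm{d}x$, which vanishes because the integrand is odd-times-even by Lemma \ref{psi}(d), and the Lipschitz discretization error of $O(1/k_n)$ per step accumulates only to $O(1)$ over the $O(k_n)$ terms. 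Combined with $(\ref{tightnagasa})$ this yields $o_p(b_n^{\xi'-1/2})=o_p(1)$ since $\xi'>1/2$. There is no reindexing in $k=q-p$, no pairing of $\pm k$, no Riemann--Stieltjes reformulation, and no second integration by parts.

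Your proposal takes a genuinely different and much more elaborate route, and it has a real gap at the decisive step. The crude bound on your antisymmetrized sum, with each commutator term of size $o_p(\bar r_n^2)$ over roughly $k_n b_n^{-1}$ pairs, gives $o_p(k_n b_n^{-1}\bar r_n^2)=o_p(b_n^{2\xi'-3/2})$, which after multiplying by $b_n^{-1/2}$ is $o_p(b_n^{2\xi'-2})$ and does not tend to zero for any admissible $\xi'<1$. Closing this gap requires a further cancellation of order $1/k_n$, which you attribute to the "second integration by parts" and "the telescoping structure implicit in the oddness of $\phi$" --- and you then state outright that "making this precise" is "the hard part" and "where the technical difficulty lies." That is not a peripheral detail; the omitted step is exactly where the proof lives. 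The final claim $|S^n_t|=o_p(\bar r_n)$ is asserted, not derived, so the proposal is a plan rather than a proof. (Your concern that no total-variation bound on $A^n$ or $B^n$ is assumed is a reasonable one to flag: in the places this lemma is actually invoked, inside Proposition \ref{HYprop3.2}, the $A^n,B^n$ are predictable brackets whose total variation is controlled via Kunita--Watanabe by the quantities in $(\ref{tight})$, which is what makes the paper's factoring-out of $|A^n_t|$ legitimate there.)
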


\begin{proof}
Lemma \ref{psi}(b) yields
\begin{align*}
\left|\sum_{p,q=1}^{\infty}\psi_{g,g'}\left(\frac{q-p}{k_n}\right)\psi_{\alpha,\alpha}\left(\frac{q-p}{k_n}\right)A^n(\widehat{I}^{p})_t B^n(\widehat{J}^{q})_t\right|
\leq |A^n_t|\sup_{q\in\mathbb{N}}|B^n(\widehat{J}^q)_t|\left|\sum_{r=-2 k_n}^{2 k_n}\psi_{g,g'}\left(\frac{r}{k_n}\right)\psi_{\alpha,\alpha}\left(\frac{r}{k_n}\right)\right|.
\end{align*}
Since $\psi_{g,g'}(x)\psi_{\alpha,\alpha}(x)$ is Lipschitz continuous, we have
\begin{align*}
\frac{1}{k_n}\sum_{r=-2 k_n}^{2 k_n}\psi_{g,g'}\left(\frac{r}{k_n}\right)\psi_{\alpha,\alpha}\left(\frac{r}{k_n}\right)
=\int_{-2}^2\psi_{g,g'}(x)\psi_{\alpha,\alpha}(x)\mathrm{d}x+O\left(\frac{1}{k_n}\right),
\end{align*}
however, $\int_{-2}^2\psi_{g,g'}(x)\psi_{\alpha,\alpha}(x)\mathrm{d}x=0$ because $\psi_{g,g'}$ is an odd function and $\psi_{\alpha,\alpha}$ is an even function by Lemma \ref{psi}. Combining this with $(\ref{tightnagasa})$, we obtain
\begin{align*}
b_n^{-1/2}\left|\sum_{p,q=1}^{\infty}\psi_{g,g'}\left(\frac{q-p}{k_n}\right)\psi_{\alpha,\alpha}\left(\frac{q-p}{k_n}\right)A^n(\widehat{I}^{p})_t B^n(\widehat{J}^{q})_t\right|
\leq o_p\left(b_n^{\xi'-1/2}\right)=o_p(1)
\end{align*}
and thus we complete the proof of the lemma.
\end{proof}

\begin{proof}[\upshape{\bfseries{Proof of Proposition \ref{HYprop3.2}}}]
[B2], Lemma \ref{lemHYprop3.2} and the fact that both $\psi_{g,g}$ and $\psi_{g',g'}$ are even functions and $\psi_{g,g'}$ is an odd function yield $\langle\mathbf{M}(l)^n\rangle_t=\bar{V}^{n,l}_t+o_p(b_n^{1/2})$ for $l=1,2,3,4$ and $\langle\mathbf{M}(1)^n,\mathbf{M}(2)^n\rangle_t=\bar{V}^{n,12}_t+o_p(b_n^{1/2})$, $\langle\mathbf{M}(3)^n,\mathbf{M}(4)^n\rangle_t=\bar{V}^{n,34}_t+o_p(b_n^{1/2})$ as $n\to\infty$ for any $t\in\mathbb{R}_+$. Moreover, by Lemma \ref{lemHYprop3.2}-\ref{lemvanish} we have
\begin{align*}
\langle\mathbf{M}(1)^n,\mathbf{M}(3)^n\rangle_t=o_p(b_n^{1/2}),\qquad
\langle\mathbf{M}(1)^n,\mathbf{M}(4)^n\rangle_t=o_p(b_n^{1/2}),\\
\langle\mathbf{M}(2)^n,\mathbf{M}(3)^n\rangle_t=o_p(b_n^{1/2}),\qquad
\langle\mathbf{M}(2)^n,\mathbf{M}(4)^n\rangle_t=o_p(b_n^{1/2})
\end{align*}
as $n\to\infty$ for any $t\in\mathbb{R}_+$. Consequently, we obtain the desired result.
\end{proof}  



\section{Proof of Lemma \ref{HYlem4.1}}\label{proofHYlem4.1}


Before starting the proof, we strengthen the condition [A3] as follows:
\begin{enumerate}
\item[{[SA3]}] For each $V,W=X,Y,Z^X,Z^Y$, $[V,W]$ is absolutely continuous with a c\`adl\`ag bounded derivative adapted to $\mathbf{H}^n$, and for any $\lambda>0$ there exists a positive constant $C_\lambda$ such that
\begin{align*}
E\left[|f_{\tau_1}-f_{\tau_2}|^2\big|\mathcal{F}_{\tau_1\wedge\tau_2}\right]\leq C_{\lambda}E\left[|\tau_1-\tau_2|^{1-\lambda}\big|\mathcal{F}_{\tau_1\wedge\tau_2}\right]
\end{align*}
for any bounded $\mathbf{F}^{(0)}$-stopping times $\tau_1$ and $\tau_2$, for the density process $f=[V,W]'$.
\end{enumerate}

First we prove some lemmas.
\begin{lem}\label{checktime}
$\check{S}^k$ is $\mathcal{H}^{n}_{\widehat{S}^k}$-measurable and $\check{T}^k$ is $\mathcal{H}^{n}_{\widehat{T}^k}$-measurable for every $k$.
\end{lem}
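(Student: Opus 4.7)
The plan is to realize $\check{S}^k$ as a countable supremum of quantities that are manifestly $\mathcal{H}^n_{\widehat{S}^k}$-measurable by standard stopping-time arguments. Since $N^{n,1}$ is $\mathbf{H}^n$-adapted and $\widehat{S}^k$ is its $k$-th jump time, $\widehat{S}^k$ is an $\mathbf{H}^n$-stopping time. Under the convention of the paper---implicit in the conditional expectations appearing in [A1$'$] and in the way $\mathbf{H}^n$ is constructed in the examples of Section \ref{topics}---the raw sampling times $(S^i)$ and $(T^j)$ are themselves $\mathbf{H}^n$-stopping times, so that the machinery of the $\sigma$-field of a stopping time applies to them as well.

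With $S^{-1}:=0$, the defining relation $\check{S}^k=\sup\{S^i:S^i<\widehat{S}^k\}$ can be rewritten as the pointwise identity
\begin{equation*}
\check{S}^k=\sup_{i\geq -1}\bigl(S^i\wedge\widehat{S}^k\bigr)\,1_{\{S^i<\widehat{S}^k\}}.
\end{equation*}
For each fixed $i$, the minimum $S^i\wedge\widehat{S}^k$ is a stopping time dominated by $\widehat{S}^k$ and hence $\mathcal{H}^n_{\widehat{S}^k}$-measurable, while the event $\{S^i<\widehat{S}^k\}$ belongs to $\mathcal{H}^n_{\widehat{S}^k}$ by the standard comparison property of two stopping times (both $\{S^i<\widehat{S}^k\}$ and $\{S^i\geq\widehat{S}^k\}$ lie in $\mathcal{H}^n_{S^i\wedge\widehat{S}^k}\subseteq\mathcal{H}^n_{\widehat{S}^k}$). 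Each summand is therefore $\mathcal{H}^n_{\widehat{S}^k}$-measurable, and so is the countable supremum. The claim for $\check{T}^k$ is obtained by the same argument with $(T^j)$ and $\widehat{T}^k$ in place of $(S^i)$ and $\widehat{S}^k$.

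The main subtlety---rather than an obstacle---is simply ensuring that the $(S^i)$ are adapted to $\mathbf{H}^n$; once that is granted, the proof reduces to elementary properties of stopping times and no further probabilistic input is required.
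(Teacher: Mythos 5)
Your argument is correct and, once unwound, is essentially the same one the paper uses. The paper proves the result by writing the sublevel set directly as a countable intersection, $\{\check{S}^k\leq t\}=\bigcap_i\bigl[\{S^i\leq t,\,S^i<\widehat{S}^k\}\cup\{\widehat{S}^k\leq S^i\}\bigr]$, and observing that each piece lies in $\mathcal{H}^n_{\widehat{S}^k}$; you instead express the random variable itself as $\check{S}^k=\sup_{i\geq -1}(S^i\wedge\widehat{S}^k)1_{\{S^i<\widehat{S}^k\}}$ and use that a countable supremum of $\mathcal{H}^n_{\widehat{S}^k}$-measurable variables is measurable. The two are interchangeable: the measurability of $S^i\wedge\widehat{S}^k$ and of $\{S^i<\widehat{S}^k\}$ is exactly what makes the paper's set identity land in $\mathcal{H}^n_{\widehat{S}^k}$, so the ingredients coincide. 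Crucially, both arguments hinge on the tacit assumption that the raw times $(S^i)$ and $(T^j)$ are $\mathbf{H}^n$-stopping times — not just the interpolated times $(\widehat{S}^k)$, $(\widehat{T}^k)$ whose counting processes $N^{n,1}$, $N^{n,2}$ are explicitly required to be $\mathbf{H}^n$-adapted in the definition preceding [A1$'$]. You are right to single this out: the paper's proof uses it silently when asserting $\{S^i\leq t,\,S^i<\widehat{S}^k\}\in\mathcal{H}^n_{\widehat{S}^k}$, and it is consistent with the examples in Section 5, where $\mathbf{H}^n$ is generated by the raw arrival processes. Flagging that dependence is a genuine improvement in transparency over the paper's terse statement.
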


\begin{proof}
For any $t\geq0$ we have $\{\check{S}^k\leq t\}=\bigcap_i[\{S^i\leq t, S^i<\widehat{S}^k\}\cup\{\widehat{S}^k\leq S^i\}]$. Since $\{S^i\leq t, S^i<\widehat{S}^k\}, \{\widehat{S}^k\leq S^i\}\in\mathcal{H}^{n}_{\widehat{S}^k}$ for every $k$, we obtain the desired result. Similarly we can show that $\check{T}^k$ is $\mathcal{H}^{n}_{\widehat{T}^k}$-measurable.
\end{proof}

The following lemma is a version of Lemma 2.3 of \citet{Fu2010b}:
\begin{lem}\label{useful}
Suppose that for each $n\in\mathbb{N}$ we have a sequence $(\tau^n_k)$ of $\mathbf{H}^n$-stopping times and a sequence $(\zeta^n_k)$ of random variables such that $\zeta^n_k$ is adapted to $\mathcal{H}^n_{\tau^n_k}$ for every $k$. Let $\rho>1$ and $t\geq0$, and set $N(\tau)^n_t=\sum_{k=1}^\infty1_{\{\tau^n_k\leq t\}}$.
\begin{enumerate}[\normalfont (a)]

\item If $\sum_{k=1}^{N(\tau)^n_t+1} E\left[|\zeta^n_k|^{2\wedge\rho}\big|\mathcal{H}^n_{\tau^n_{k-1}}\right]\to^p0$ as $n\to\infty$, then $\sum_{k=1}^{N(\tau)^n_t+1}\left\{\zeta^n_k-E\left[\zeta^n_k\big|\mathcal{H}^n_{\tau^n_{k-1}}\right]\right\}\to^p0$ as $n\to\infty$.

\item If $b_nN(\tau)^n_t=O_p(1)$ as $n\to\infty$ and 
$b_n\sum_{k=1}^{N(\tau)^n_t+1} E\left[|\zeta^n_k|^\rho\big|\mathcal{H}^n_{\tau^n_{k-1}}\right]=O_p(1)$ as $n\to\infty$, then
$$b_n\sum_{k=1}^{N(\tau)^n_t+1}\left\{\zeta^n_k-E\left[\zeta^n_k\big|\mathcal{H}^n_{\tau^n_{k-1}}\right]\right\}\to^p0$$
as $n\to\infty$.

\end{enumerate} 
\end{lem}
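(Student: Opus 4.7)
My plan is to recognize the two sums as stopped discrete-time martingales adapted to the filtration $\mathcal{G}^n_k := \mathcal{H}^n_{\tau^n_k}$ and then invoke Lenglart-type and Burkholder--Davis--Gundy inequalities. Set $\xi^n_k := \zeta^n_k - E[\zeta^n_k | \mathcal{G}^n_{k-1}]$; these are $(\mathcal{G}^n_k)$-martingale differences and the conditional triangle inequality gives $E[|\xi^n_k|^p | \mathcal{G}^n_{k-1}] \leq 2^p E[|\zeta^n_k|^p | \mathcal{G}^n_{k-1}]$ for every $p \geq 1$. Moreover, $\nu_n := N(\tau)^n_t + 1$ is a $(\mathcal{G}^n_k)$-stopping time since $\{\nu_n \leq k\} = \{\tau^n_k > t\} \in \mathcal{G}^n_k$. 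Writing $S^n_j := \sum_{k=1}^{j \wedge \nu_n} \xi^n_k$ and $A^n_k := \sum_{j=1}^{k \wedge \nu_n} E[|\zeta^n_j|^{2 \wedge \rho} | \mathcal{G}^n_{j-1}]$ (the latter $(\mathcal{G}^n_k)$-predictable), the claims of (a) and (b) become $S^n_\infty \to^p 0$ and $b_n S^n_\infty \to^p 0$, respectively.

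For part (a), I split into two cases. When $\rho \geq 2$ (so $2 \wedge \rho = 2$), the predictable quadratic variation $\langle S^n \rangle_\infty = \sum_{k=1}^{\nu_n} E[|\xi^n_k|^2 | \mathcal{G}^n_{k-1}]$ is dominated by $4 A^n_{\nu_n}$, which vanishes in probability by hypothesis, so Lenglart's inequality (Lemma I.3.30 of \cite{JS}) applied to the nonnegative submartingale $(S^n_j)^2$ with dominating compensator $\langle S^n \rangle_j$ gives $S^n_\infty \to^p 0$. When $1 < \rho < 2$ (so $2 \wedge \rho = \rho$), I introduce for each $\epsilon > 0$ the predictable stopping time $\sigma_\epsilon := \inf\{k \geq 1 : A^n_k > \epsilon\} \wedge \nu_n$ and use the discrete BDG inequality together with the subadditivity $(\sum \xi_k^2)^{\rho/2} \leq \sum |\xi_k|^\rho$ (valid since $\rho/2 < 1$) to obtain $E[|S^n_{\sigma_\epsilon}|^\rho] \leq C_\rho E[A^n_{\sigma_\epsilon}] \leq C_\rho' \epsilon$; combining this with $P(|S^n_\infty| > \eta) \leq P(|S^n_{\sigma_\epsilon}| > \eta) + P(A^n_{\nu_n} > \epsilon)$, Markov's inequality and $\epsilon \to 0$ then concludes.

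For part (b), the same martingale setup is used and we exploit the factor $b_n^{\rho-1} \to 0$ coming from $\rho > 1$. If $\rho \leq 2$, the BDG estimate above yields $E[|S^n_\infty|^\rho | \mathcal{G}^n_0] \leq C_\rho \sum_{k=1}^{\nu_n} E[|\xi^n_k|^\rho | \mathcal{G}^n_0]$, whence
\begin{equation*}
b_n^\rho E[|S^n_\infty|^\rho | \mathcal{G}^n_0] \leq C_\rho' b_n^{\rho - 1} \cdot \Bigl(b_n \sum_{k=1}^{\nu_n} E[|\zeta^n_k|^\rho | \mathcal{G}^n_0]\Bigr) = O_p(b_n^{\rho - 1}) = o_p(1),
\end{equation*}
and conditional Markov finishes. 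If $\rho > 2$, I interpolate via Hölder:
\begin{equation*}
\sum_{k=1}^{\nu_n} E[|\xi^n_k|^2 | \mathcal{G}^n_{k-1}] \leq \nu_n^{1 - 2/\rho} \Bigl(\sum_{k=1}^{\nu_n} E[|\xi^n_k|^\rho | \mathcal{G}^n_{k-1}]\Bigr)^{2/\rho},
\end{equation*}
so that $b_n^2 \langle S^n \rangle_\infty \leq 4 b_n \cdot (b_n \nu_n)^{1 - 2/\rho} \cdot (b_n \sum E[|\zeta^n_k|^\rho | \mathcal{G}^n_{k-1}])^{2/\rho} = O_p(b_n)$, and Lenglart applied to the rescaled submartingale $(b_n S^n_j)^2$ gives $b_n S^n_\infty \to^p 0$. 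The main obstacle is that the hypotheses provide only in-probability (not in-expectation) control of the conditional moments, which rules out a direct $L^p$ calculation; the twin devices of Lenglart's inequality and stopping at $\sigma_\epsilon$ are precisely what let us pass from in-probability bounds on the predictable compensator to in-probability convergence of the martingale itself.
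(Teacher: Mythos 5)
Your overall strategy---treat the sum as a stopped discrete-time martingale for the filtration $(\mathcal{H}^n_{\tau^n_k})_k$, bound its $\varpi$-th moment ($\varpi=2\wedge\rho$) via a Burkholder--Davis--Gundy estimate by the predictable increasing process $A^n_j=C\sum_{k\leq j}E[|\zeta^n_k|^\varpi\,|\,\mathcal{H}^n_{\tau^n_{k-1}}]$, and convert in-probability control of that compensator into in-probability convergence of the martingale---is the same as the paper's. The paper does this in one stroke by applying Lenglart's inequality to the $\varpi$-th power process; you instead split on $\rho\lessgtr 2$ and, in the sub-case $1<\rho<2$ of part (a), re-derive Lenglart by hand. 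That is fine in spirit, but note that your stopping time $\sigma_\epsilon:=\inf\{k\geq 1: A^n_k>\epsilon\}\wedge\nu_n$ does not give $E[A^n_{\sigma_\epsilon}]\lesssim\epsilon$, because $A^n$ can overshoot level $\epsilon$ by an arbitrarily large jump at $\sigma_\epsilon$. One should stop at $\inf\{k: A^n_{k+1}>\epsilon\}$ (still a stopping time, since $A^n_{k+1}$ is $\mathcal{H}^n_{\tau^n_k}$-measurable), which guarantees $A^n_{\sigma_\epsilon}\leq\epsilon$; this is precisely the construction inside the proof of Lenglart's inequality.

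The more serious issue is part (b) for $1<\rho\leq 2$. You bound $b_n^\rho E[|S^n_\infty|^\rho\,|\,\mathcal{H}^n_{\tau^n_0}]$ via BDG by $Cb_n^\rho E[\sum_{k\leq\nu_n}|\zeta^n_k|^\rho\,|\,\mathcal{H}^n_{\tau^n_0}]$ and treat the latter as $O_p(b_n^{\rho-1})$, but the hypothesis only gives $b_n\sum_{k\leq\nu_n}E[|\zeta^n_k|^\rho\,|\,\mathcal{H}^n_{\tau^n_{k-1}}]=O_p(1)$; an in-probability bound on the predictable compensated sum does \emph{not} transfer to its $\mathcal{H}^n_{\tau^n_0}$-conditional expectation, which need not even be finite. (This is exactly the obstruction you correctly name in your closing sentence, yet the $\rho\leq 2$ branch of part (b) does not use Lenglart to avoid it.) The paper's route handles both cases at once: BDG gives, for every bounded stopping time $T$ of $(\mathcal{H}^n_{\tau^n_k})$, that $E[|S^n_T|^\varpi]\leq E[A^n_T]$, so $|S^n|^\varpi$ is Lenglart-dominated by the predictable $A^n$; H\"older gives $A^n_{\nu_n}\leq C(\nu_n)^{1-\varpi/\rho}\bigl(\sum_{k\leq\nu_n}E[|\zeta^n_k|^\rho\,|\,\mathcal{H}^n_{\tau^n_{k-1}}]\bigr)^{\varpi/\rho}$, whence the two hypotheses of (b) together with $\varpi>1$ yield $b_n^\varpi A^n_{\nu_n}=O_p(b_n^{\varpi-1})=o_p(1)$; and Lenglart's inequality applied to $b_n^\varpi|S^n_{\cdot\wedge\nu_n}|^\varpi$ concludes without ever needing an unconditional or $\mathcal{H}^n_{\tau^n_0}$-conditional moment bound.
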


\begin{proof}
Let $\varpi=2\wedge\rho$ and let $T$ be a bounded stopping time with respect to the filtration $(\mathcal{H}^n_{\tau^n_k})_{k\in\mathbb{Z}_+}$. Then, the Burkholder-Davis-Gundy inequality and the $C_p$-inequality yield
\begin{align*}
E\left[\left|\sum_{k=1}^{T}\left\{\zeta^n_k-E\left[\zeta^n_k\big|\mathcal{H}^n_{\tau^n_{k-1}}\right]\right\}\right|^\varpi\right]
\leq C E\left[\sum_{k=1}^{T}\left\{|\zeta^n_k|^\varpi+\left|E\left[\zeta^n_k\big|\mathcal{H}^n_{\tau^n_{k-1}}\right]\right|^\varpi\right\}\right]
\end{align*}
for some positive constant $C$ independent of $n$. Since $E\left[\sum_{k=1}^{T}|\zeta^n_k|^\varpi\right]=E\left[\sum_{k=1}^{T}E\left[|\zeta^n_k|^\varpi\big|\mathcal{H}^n_{\tau^n_{k-1}}\right]\right]$ by the optional stopping theorem and $\left|E\left[\zeta^n_k\big|\mathcal{H}^n_{\tau^n_{k-1}}\right]\right|^\varpi\leq E\left[|\zeta^n_k|^\varpi\big|\mathcal{H}^n_{\tau^n_{k-1}}\right]$ by the H\"older inequality, we obtain
\begin{align*}
E\left[\left|\sum_{k=1}^{T}\left\{\zeta^n_k-E\left[\zeta^n_k\big|\mathcal{H}^n_{\tau^n_{k-1}}\right]\right\}\right|^\varpi\right]
\leq 2 C E\left[\sum_{k=1}^{T}E\left[|\zeta^n_k|^\varpi\big|\mathcal{H}^n_{\tau^n_{k-1}}\right]\right].
\end{align*}
Therefore, note that $N(\tau)^n_t+1$ is a stopping time with respect to the filtration $(\mathcal{H}^n_{\tau^n_k})$, (a) holds due to the Lenglart inequality. On the other hand, since $$\sum_{k=1}^{N(\tau)^n_t+1}E\left[|\zeta^n_k|^\varpi\big|\mathcal{H}^n_{\tau^n_{k-1}}\right]\leq(N(\tau)^n_t+1)^{1-\varpi/\rho}\left\{\sum_{k=1}^{N(\tau)^n_t+1} E\left[|\zeta^n_k|^\rho\big|\mathcal{H}^n_{\tau^n_{k-1}}\right]\right\}^{\varpi/\rho}$$ by the H\"older inequality, (b) holds due to the Lenglart inequality and the fact that $\varpi>1$.
\end{proof}


For a c\`adl\`ag function $x$ on $\mathbb{R}_+$ and an interval $I\subset\mathbb{R}_+$, set $w(x;I)=\sup_{s\in I}|x(s)|$. Moreover, define 
\begin{align*}
w'(x;\delta,T)=\inf\left\{\max_{i\leq r}w(x;[t_{i-1},t_i))\big|0=t_0<\cdots <t_r=T,\inf_{i<r}(t_i-t_{i-1})\geq\delta\right\}
\end{align*}
for each $\delta,T>0$.
\begin{lem}\label{riemann}
Let $(x_n)_{n\in\mathbb{R}_+}$ be a sequence of c\`adl\`ag functions on $\mathbb{R}_+$ which converges a c\`adl\`ag function $x$ on $\mathbb{R}_+$ for the Skorokhod topology. Let $t$ be a positive number. Suppose that for each $n\in\mathbb{N}$ there are points $s^n_i$ such that $0=s^n_0<s^n_1<\cdots<s^n_{K_n}=t$ and $\sup_i(s^n_i-s^n_{i-1})\to0$ as $n\to\infty$. Then we have
\begin{equation*}
\sum_{i=1}^{K_n}x_n(s^n_{i-1})(s^n_i-s^n_{i-1})\to\int_0^t x(s)\mathrm{d}s
\end{equation*}
as $n\to\infty$.
\end{lem}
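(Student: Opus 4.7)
The plan is to rewrite the Riemann sum as a Lebesgue integral of a simple function and apply the bounded convergence theorem. Specifically, I would define $y_n : [0,t] \to \mathbb{R}$ by $y_n(s) = x_n(s^n_{i-1})$ for $s \in [s^n_{i-1}, s^n_i)$, so that $\sum_{i=1}^{K_n} x_n(s^n_{i-1})(s^n_i - s^n_{i-1}) = \int_0^t y_n(s)\,ds$. The task then reduces to showing $\int_0^t y_n(s)\,ds \to \int_0^t x(s)\,ds$, which by bounded convergence requires two ingredients: a uniform bound for $(y_n)$ on $[0,t]$ and pointwise convergence $y_n \to x$ almost everywhere on $[0,t]$.

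First I would choose $T > t$ that is a continuity point of $x$ and use the Skorokhod convergence on $[0,T]$ to obtain strictly increasing continuous bijections $\lambda_n : [0,T] \to [0,T]$ satisfying $\sup_{s \in [0,T]} |\lambda_n(s) - s| \to 0$ and $\sup_{s \in [0,T]} |x_n(\lambda_n(s)) - x(s)| \to 0$. Boundedness of $x$ on $[0,T]$ then gives $\sup_n \sup_{u \in [0,T]} |x_n(u)| < \infty$, so $(y_n)$ is uniformly bounded on $[0,t]$.

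Next I would verify pointwise convergence at continuity points. Let $s \in (0,t)$ be a continuity point of $x$, and let $u^n_s$ denote the unique point in $\{s^n_0,\dots,s^n_{K_n-1}\}$ with $s \in [u^n_s, u^n_s + (s^n_{i}-s^n_{i-1}))$. Then $|u^n_s - s| \leq \sup_i (s^n_i - s^n_{i-1}) \to 0$. Setting $v^n = \lambda_n^{-1}(u^n_s)$, the uniform convergence $\lambda_n^{-1} \to \operatorname{id}$ gives $v^n \to s$, hence by continuity of $x$ at $s$, $x(v^n) \to x(s)$. Combined with $x_n(u^n_s) - x(v^n) = x_n(\lambda_n(v^n)) - x(v^n) \to 0$, this yields $y_n(s) = x_n(u^n_s) \to x(s)$. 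Since the discontinuity set of the c\`adl\`ag function $x$ is at most countable, $y_n \to x$ almost everywhere on $[0,t]$.

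The conclusion then follows from the bounded convergence theorem. There is no serious obstacle here; the main technical point is the careful extraction of pointwise convergence at continuity points along the specific sequence $u^n_s$ from the Skorokhod convergence, but this is a routine application of the characterization via time changes $\lambda_n$.
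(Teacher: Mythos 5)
Your proof is correct, and it takes a genuinely different route from the paper's. The paper first asserts $\int_0^t x_n \to \int_0^t x$ by bounded convergence, and then estimates the difference between the Riemann sum and $\int_0^t x_n$ using the modulus $w'(x_n;\delta,t)$ together with the compactness criterion $\lim_{\delta\downarrow0}\sup_n w'(x_n;\delta,t)=0$ (Theorem VI-1.5 of Jacod--Shiryaev): one partitions $[0,t]$ into finitely many blocks on each of which $x_n$ has oscillation $<\eta$, controls the contribution from the $\bar m_n$ ``bad'' intervals straddling block endpoints, and lets $\eta\downarrow0$. You instead package the Riemann sum as $\int_0^t y_n$ for a step function $y_n(s)=x_n(u^n_s)$, prove pointwise a.e.\ convergence $y_n\to x$ directly from the time-change characterization of Skorokhod convergence, and invoke bounded convergence once. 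The key observation making your version go through is that Skorokhod convergence is strong enough to propagate not only $x_n(s)\to x(s)$ at continuity points of $x$ but also $x_n(u^n_s)\to x(s)$ whenever $u^n_s\to s$, by transporting $u^n_s$ back through $\lambda_n^{-1}$. This is a cleaner and more self-contained argument: it avoids the $w'$-oscillation machinery, uses a single application of bounded convergence, and makes the role of the Skorokhod time changes explicit. The uniform bound $\sup_n\sup_{u\le T}|x_n(u)|<\infty$ (from restricting to $[0,T]$ with $T>t$ a continuity point of $x$) and the countability of the discontinuity set of $x$ are both used, and both are correctly justified.
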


\begin{proof}
Since $\int_0^tx_n(s)\mathrm{d}s\to\int_0^tx(s)\mathrm{d}s$ as $n\to\infty$ by the bounded convergence theorem, it is sufficient to show that
\begin{equation}\label{eqriemann}
\sum_{i=1}^{K_n}x_n(s^n_{i-1})(s^n_i-s^n_{i-1})-\int_0^tx_n(s)\mathrm{d}s\to0
\end{equation}
as $n\to\infty$. Take $\eta>0$ arbitrarily. Since $\lim_{\delta\downarrow0}\sup_{n\in\mathbb{N}}w'(x_n;\delta,t)=0$ by Theorem VI-1.5 of \cite{JS}, we can take a positive number $\delta>0$ such that $\sup_{n\in\mathbb{N}}w'(x_n;\delta,t)<\eta$. Then there exist points $\xi_i^n$ such that $0=\xi_0^n<\xi_1^n<\cdots<\xi_{\bar{m}_n}^n=t$, $\inf_{i<\bar{m}_n}(\xi_i^n-\xi_{i-1}^n)\geq\delta$ and that $\max_{m\in\{0,\dots,\bar{m}_n-1\}}\sup_{s\in[\xi_m^n,\xi_{m+1}^n)}|x_n(s)-x_n(\xi_m^n)|<\eta$, and we have
\begin{align*}
\left|\sum_{i=1}^{K_n}x_n(s^n_{i-1})(s^n_i-s^n_{i-1})-\int_0^tx_n(s)\mathrm{d}s\right|
\leq\sum_{i=1}^{K_n}\int_{s^n_{i-1}}^{s^n_i}|x_n(s^n_{i-1})-x_n(s)|\mathrm{d}s
\leq \eta t+\bar{m}_n\sup_i(s_i^n-s_{i-1}^n).
\end{align*}
Since $\bar{m}_n<t/\delta+1$ by $\inf_{i<\bar{m}_n}(\xi_i^n-\xi_{i-1}^n)\geq\delta$, we obtain
\begin{align*}
\limsup_{n\to\infty}\left|\sum_{i=1}^{K_n}x_n(s^n_{i-1})(s^n_i-s^n_{i-1})-\int_0^tx_n(s)\mathrm{d}s\right|
\leq \eta t.
\end{align*}
Since $\eta$ is arbitrary, we conclude that $(\ref{eqriemann})$ holds, and thus we complete the proof of the lemma.
\end{proof}

The following lemma is a version of Lemma 2.2 of \citet{HJY2011}. 


\begin{lem}\label{HJYlem2.2}
Suppose that $[\mathrm{A}1'](\mathrm{i})$-$(\mathrm{ii})$ holds. Suppose also that there are a sequence $(H^n)_{n\in\mathbb{R}_+}$ of c\`adl\`ag $\mathbf{H}^n$-adapted processes and a c\`adl\`ag process $H$ such that $H^n\xrightarrow{\mathrm{Sk.p.}}H$ as $n\to\infty$. Then we have 
\begin{align*}
b_n\sum_{k=1}^{N^n_t+1}H^n_{R^{k-1}}\to^p\int_0^t\frac{H_s}{G_s}\mathrm{d}s
\end{align*}
as $n\to\infty$ for any $t\in\mathbb{R}_+$. In particular it holds that $b_nN^n_t\to^p\int_0^t\frac{1}{G_s}\mathrm{d}s$ as $n\to\infty$ for any $t\in\mathbb{R}_+$.
\end{lem}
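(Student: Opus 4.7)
My plan is to decompose $b_n \sum_{k=1}^{N^n_t+1} H^n_{R^{k-1}}$ into a Riemann-sum-type approximation of the target integral plus a (near-)martingale remainder, and handle the two pieces separately. The algebraic key is the identity $b_n G^n_{R^{k-1}} = E[|\Gamma^k|\mid \mathcal{H}^n_{R^{k-1}}]$, valid for every $k \notin \mathcal{N}^0_n$ by [A$1'$](i). Writing $|\Gamma^k(t)| := |\Gamma^k \cap [0,t)|$, split
\[
 b_n \sum_{k=1}^{N^n_t+1} H^n_{R^{k-1}}
 = \sum_{k=1}^{N^n_t+1}\frac{H^n_{R^{k-1}}}{G^n_{R^{k-1}}}|\Gamma^k(t)|
 + \sum_{k=1}^{N^n_t+1}\frac{H^n_{R^{k-1}}}{G^n_{R^{k-1}}}\bigl(b_n G^n_{R^{k-1}} - |\Gamma^k(t)|\bigr)
 =: I_n + I\!I_n.
\]

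For $I_n$, the sum equals $\int_0^t \phi^n(s)\,ds$, where $\phi^n$ is the step process equal to $H^n_{R^{k-1}}/G^n_{R^{k-1}}$ on $[R^{k-1},R^k)$. Since $G$ and $G_-$ do not vanish by [A$1'$](i), $H^n/G^n \xrightarrow{\text{Sk.p.}} H/G$; combined with $\sup_k|\Gamma^k| \leq r_n(t) = o_p(b_n^{\xi'})$ from [A4], $\phi^n$ inherits Skorokhod-in-probability convergence to $H/G$. After a routine localization ensuring $|H|$ and $1/|G|$ are bounded, Lemma \ref{riemann} applied along the random partition $(R^k \wedge t)$ (or a direct bounded-convergence argument) yields $I_n \to^p \int_0^t H_s/G_s\,ds$.

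For $I\!I_n$, I would apply Lemma \ref{useful}(a) with $\tau^n_k = R^k$. Replacing $|\Gamma^k(t)|$ by $|\Gamma^k|$ modifies only the last term by at most $r_n(t)\cdot|H^n_{R^{N^n_t}}/G^n_{R^{N^n_t}}| = o_p(1)$, and the contribution of $k \in \mathcal{N}^0_n$ is $o_p(1)$ because $\#\mathcal{N}^0_n$ is tight and both $b_n\sup G^n$ and $b_n\sup G(1)^n$ tend to $0$ in probability (the tightness of $\sup G(1)^n$ follows from [A$1'$](ii) by Jensen since $\rho \geq 1/\xi' > 1$). On the remaining index set, $\zeta^n_k := (H^n_{R^{k-1}}/G^n_{R^{k-1}})(b_n G^n_{R^{k-1}} - |\Gamma^k|)$ has conditional mean zero given $\mathcal{H}^n_{R^{k-1}}$. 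Taking $\rho' = 2 \wedge \rho > 1$, using local boundedness of $|H^n/G^n|$, the hypothesis of Lemma \ref{useful}(a) reduces to
\[
 b_n^{\rho'}\sum_{k=1}^{N^n_t+1} G(\rho')^n_{R^{k-1}} + \sum_{k=1}^{N^n_t+1} (b_n G^n_{R^{k-1}})^{\rho'} \to^p 0.
\]
The first sum is the predictable compensator of $\sum_k |\Gamma^k|^{\rho'} \leq r_n(t)^{\rho'-1}R^{N^n_t+1} = o_p(b_n^{\xi'(\rho'-1)})$ (by [A4]); Lenglart's inequality in discrete time, applied at the bounded $(\mathcal{H}^n_{R^k})_k$-stopping time $N^n_t+1$, transfers the bound. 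For the second, factor out $(b_n\sup G^n)^{\rho'-1} = o_p(1)$; the leftover $\sum_k b_n G^n_{R^{k-1}}$ is the predictable compensator of $\sum_k |\Gamma^k| = R^{N^n_t+1} = O_p(1)$, hence $O_p(1)$ by Lenglart again.

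The delicate step will be the Riemann-sum limit for $I_n$: one needs $\phi^n \xrightarrow{\text{Sk.p.}} H/G$ along a \emph{random} partition with random vanishing mesh, and then a passage to the integral that accommodates the (merely) càdlàg limit $H/G$. A preliminary localization around the jump times of $H$ and $G$ (relying on the non-vanishing of $G_-$) is what makes this work. The estimates for $I\!I_n$ are more mechanical but require the care noted above to apply Lenglart's inequality at the $(\mathcal{H}^n_{R^k})$-stopping time $N^n_t+1$ in order to pass from pathwise bounds on $\sum |\Gamma^k|^{\rho'}$ to bounds on its predictable compensator.
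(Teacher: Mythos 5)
Your decomposition into a Riemann sum $I_n$ and a martingale-type remainder $I\!I_n$, and your use of Lemma~\ref{riemann} for $I_n$, mirror the paper's proof closely. The paper differs in two respects: it first establishes [C3] as a separate step (via a truncation argument at levels $L$ for $G(1)^n$, $1/G^n$ and $\#\mathcal{N}^0_n$), and it then applies Lemma~\ref{useful}(b), whose $O_p(1)$-type hypothesis $b_n\sum_k E[|\zeta^n_k|^{\rho}\mid\mathcal{H}^n_{R^{k-1}}]=O_p(1)$ falls out directly from [C3] and the tightness of $\sup_s G(\rho)^n_s$. Your choice of Lemma~\ref{useful}(a) is a reasonable alternative, but it forces you to prove an $o_p(1)$ statement and is what leads to the problem below.

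The genuine gap is the claim that ``Lenglart's inequality transfers the bound'' from the raw increasing process to its predictable compensator. That implication is false in general: take $\zeta_1\geq 0$ with $P(\zeta_1=n^2)=1/n$ and $\zeta_1=0$ otherwise; then $\zeta_1\to^p 0$ but $E[\zeta_1\mid\mathcal{F}_0]=n\to\infty$. What rescues the argument is a \emph{pathwise} bound on the jumps $|\Gamma^k|^{\rho'}$, not just the $o_p$ bound $r_n(t)=o_p(b_n^{\xi'})$ from [A4]. The paper obtains such a bound by the systematic localization that replaces [A4] by $(\ref{SA4})$ (see the discussion around $(\ref{SA4})$ in Section~\ref{prooflembasic}), so that $\sup_t r_n(t)\leq\bar r_n=b_n^{\xi'}$ almost surely. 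Once that is in force, one has $\sum_{k\le N^n_t+1}|\Gamma^k|^{\rho'}\leq\bar r_n^{\rho'-1}(t+\bar r_n)$ almost surely, and the optional stopping identity $E[\sum_{k\le N^n_t+1} b_n^{\rho'}G(\rho')^n_{R^{k-1}}]=E[\sum_{k\le N^n_t+1}|\Gamma^k|^{\rho'}]\leq\bar r_n^{\rho'-1}(t+\bar r_n)\to 0$ gives the needed $o_p(1)$ by Markov — no Lenglart needed. The same localization handles your second term; note also that the predictable compensator there involves $G(1)^n$, not $G^n$, and the discrepancy at indices in $\mathcal{N}^0_n$ must be absorbed using the tightness of $\#\mathcal{N}^0_n$ and $b_n\sup_s G^n_s\to^p 0$. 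So the strategy is sound, but you must explicitly invoke the $(\ref{SA4})$-localization before the compensator estimates; your ``routine localization'' remark only covers $|H|$ and $1/|G|$, which is not enough.
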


\begin{proof}
First we show that [C3] holds. Take a positive number $L$ arbitrarily. Then we have
\begin{align*}
&E\left[\sum_{k=1}^\infty 1_{\{R^k\leq t,G(1)^n_{R^k}\leq L, G^n_{R^k}\geq L^{-1},\#\mathcal{N}^n_0\leq L\}}\right]
=E\left[\sum_{k=1}^\infty\frac{G^n_{R^k}}{G^n_{R^k}}1_{\{R^k\leq t,G(1)^n_{R^k}\leq L, G^n_{R^k}\geq L^{-1},\#\mathcal{N}^n_0\leq L\}}\right]\\
\leq &E\left[\sum_{k=1}^\infty\frac{G(1)^n_{R^k}}{G^n_{R^k}}1_{\{R^k\leq t,G(1)^n_{R^k}\leq L, G^n_{R^k}\geq L^{-1}\}}\right]+L
=b_n^{-1}E\left[\sum_{k=1}^\infty\frac{|\Gamma^{k+1}|}{G^n_{R^k}}1_{\{R^k\leq t,G(1)^n_{R^k}\leq L, G^n_{R^k}\geq L^{-1}\}}\right]+L\\
\leq &L b_n^{-1}E\left[\sum_{k=1}^\infty|\Gamma^{k+1}|1_{\{R^k\leq t,G(1)^n_{R^k}\leq L\}}\right]+L
\leq L b_n^{-1}\left\{E\left[\sum_{k=1}^{N^n_t}|\Gamma^{k}|\right]+E\left[|\Gamma^{N^n_t+1}|1_{\{G(1)^n_{R^{N^n_t}}\leq L\}}\right]\right\}+L\\
\leq &L b_n^{-1}t+L^2+L.
\end{align*}
On the other hand, since $$\bigcup_{k=1}^\infty\left[\{R^k\leq t,G(1)^n_{R^k}\leq L, G^n_{R^k}\geq L^{-1},\#\mathcal{N}^n_0\leq L\}^c\right]\subset\left\{\sup_{0\leq s\leq t}G(1)^n_s>L\right\}\cup\left\{\inf_{0\leq s\leq t}G^n_s<L^{-1}\right\}\cup\{\#\mathcal{N}^0_n>L\},$$ note that 
\begin{equation}\label{holder}
G(1)^n_t\leq\{G(\rho)^n_t\}^{1/\rho}
\end{equation}
by the H\"older inequality, [A1$'$](i)-(ii) yield $$\limsup_{L\to\infty}\limsup_{n\to\infty}P\left(\bigcup_{k=1}^\infty\left[\{R^k\leq t,G(1)^n_{R^k}\leq L, G^n_{R^k}\geq L^{-1},\#\mathcal{N}^n_0\leq L\}^c\right]\right)=0.$$ Consequently, we obtain [C3].

Next, since [A1$'$](i)-(ii) and [C3] imply that
$b_n\sum_{k=1}^{N^n_t+1}\left|\frac{H^n_{R^{k-1}}}{G^n_{R^{k-1}}}\right|^\rho E\left[\left(b_n^{-1}|\Gamma^k|\right)^\rho\big|\mathcal{H}^n_{R^{k-1}}\right]=O_p(1),$
Lemma \ref{useful}(b) yields $b_n\sum_{k=1}^{N^n_t+1}\frac{H^n_{R^{k-1}}}{G^n_{R^{k-1}}}\left\{b_n^{-1}|\Gamma^k|-G(1)^n_{R^{k-1}}\right\}=o_p(1).$ Therefore, note that the fact that $(\#\mathcal{N}^0_n)_{n\in\mathbb{N}}$ is tight, we conclude that
\begin{align*}
b_n\sum_{k=1}^{N^n_t+1}H^n_{R^{k-1}}-\sum_{k=1}^{N^n_t+1}\frac{H^n_{R^{k-1}}}{G(1)^n_{R^{k-1}}}|\Gamma^k|\to^p 0
\end{align*}
as $n\to\infty$. Since Lemma \ref{riemann} implies that
\begin{align*}
\sum_{k=1}^{N^n_t+1}\frac{H^n_{R^{k-1}}}{G(1)^n_{R^{k-1}}}|\Gamma^k|\to^p\int_0^t\frac{H_s}{G_s}\mathrm{d}s
\end{align*}
as $n\to\infty$, we complete the proof.
\end{proof}


\begin{proof}[\upshape{\bfseries{Proof of Lemma \ref{HYlem4.1}}}]
By a localization procedure, we may assume that [SA3] and [SC2] hold instead of [A3] and [C2] respectively. Recall that for a function $\alpha$ on $\mathbb{R}$ we write $\alpha^n_p=\alpha(p/k_n)$ for each $n\in\mathbb{N}$ and $p\in\mathbb{Z}$.

(a) Without loss of generality, we may assume that $G(1)^n\to G$ a.s. as $n\to\infty$ in $\mathbb{D}(\mathbb{R}_+)$.
First we show that
\begin{equation}\label{upsilon}
\sum_{p,q=1}^{\infty}|(\psi_{g,g})^n_{q-p}|^2[ X](\widehat{I}^p)_t[ Y](\widehat{J}^q)_t
=\sum_{p,q=1}^{\infty}|(\psi_{g,g})^n_{q-p}|^2[ X](\widehat{I}^p)_t[ Y](\Gamma^q)_t+o_p(b_n^{1/2}).
\end{equation}
We have
\begin{align*}
&\sum_{p,q=1}^{\infty}|(\psi_{g,g})^n_{q-p}|^2[ X](\widehat{I}^p)_t\{[ Y](\widehat{J}^q)_t-[ Y](\Gamma^q)_t\}\\
=&\sum_{p,q=1}^{\infty}|(\psi_{g,g})^n_{q-p}|^2[ X](\widehat{I}^p)_t\{([ Y]_{\widehat{T}^q\wedge t}-[ Y]_{R^q\wedge t})-([ Y]_{\widehat{T}^{q-1}\wedge t}-[ Y]_{R^{q-1}\wedge t})\}\\
=&\sum_{p,q=1}^{\infty}\left\{|(\psi_{g,g})^n_{q-p}|^2-|(\psi_{g,g})^n_{q+1-p}|^2\right\}[ X](\widehat{I}^p)_t([ Y]_{\widehat{T}^q\wedge t}-[ Y]_{R^q\wedge t}).
\end{align*}
Therefore, [SA3], [A4] and the fact that $\psi_{g,g}$ is Lipschitz continuous and equal to 0 outside $[-2,2]$ by Lemma \ref{psi}(b) imply the desired claim. By symmetry, we also obtain
\begin{equation}\label{upsilon2}
\sum_{p,q=1}^{\infty}|(\psi_{g,g})^n_{q-p}|^2[ X](\widehat{I}^p)_t[ Y](\widehat{J}^q)_t
=\sum_{p,q=1}^{\infty}|(\psi_{g,g})^n_{q-p}|^2[ X](\Gamma^p)_t[ Y](\Gamma^q)_t+o_p(b_n^{1/2}),
\end{equation}
hence we have
\begin{align*}
&\sum_{p,q=1}^{\infty}|(\psi_{g,g})^n_{q-p}|^2[ X](\widehat{I}^p)_t[ Y](\widehat{J}^q)_t\\
=&\sum_{p,q:p<q}|(\psi_{g,g})^n_{q-p}|^2[ X](\Gamma^p)_t[ Y](\Gamma^q)_t
+\sum_{p,q:p>q}|(\psi_{g,g})^n_{q-p}|^2[ X](\Gamma^p)_t[ Y](\Gamma^q)_t+o_p(b_n^{1/2}).
\end{align*}

Consider the first term of the right hand of the above equation. Let $\upsilon_n=(t+1)\wedge\inf\{s|r_n(s)>\bar{r}_n\}$, $\widetilde{R}^k=R^k\wedge\upsilon_n$ and $\widetilde{\Gamma}^k=[\widetilde{R}^{k-1},\widetilde{R}^k)$. Then obviously $\upsilon_n$ is $\mathbf{F}^{(0)}$-stopping time and $\sup_k|\widetilde{\Gamma}^k(t)|\leq 2\bar{r}_n$. Therefore, we have
\begin{align*}
E\left[\big|[Y](\widetilde{\Gamma}^q)_t-[Y]'_{\widetilde{R}^{q-1}\wedge t}|\widetilde{\Gamma}^q(t)|\big|\Big|\mathcal{F}_{\widetilde{R}^{q-1}\wedge t}\right]
\leq \int_{\widetilde{R}^{q-1}\wedge t}^{(\widetilde{R}^{q-1}+2\bar{r}_n)\wedge t}E\left[\big|[Y]'_u-[Y]'_{\widetilde{R}^{q-1}\wedge t}|\big|\Big|\mathcal{F}_{\widetilde{R}^{q-1}\wedge t}\right]\mathrm{d}u
\lesssim b_n^{\frac{3}{2}\xi'-\lambda}
\end{align*}
for any $\lambda>0$ by the Schwarz inequality and [SA3]. Hence we obtain
\begin{align*}
&E\left[\left|\sum_{p=1}^{\infty}[ X](\Gamma^p)_t\sum_{q:p<q}|(\psi_{g,g})^n_{q-p}|^2\left\{[ Y](\Gamma^q)_t-[ Y]'_{R^{q-1}}|\Gamma^q(t)| \right\}\right|;\upsilon_n>t\right]\\
\leq&E\left[\sum_{p=1}^{\infty}[ X](\widetilde{\Gamma}^p)_t\sum_{q:p<q}|(\psi_{g,g})^n_{q-p}|^2E\left[\big|[Y](\widetilde{\Gamma}^q)_t-[Y]'_{\widetilde{R}^{q-1}\wedge t}|\widetilde{\Gamma}^q(t)|\big|\Big|\mathcal{F}_{\widetilde{R}^{q-1}\wedge t}\right]\right]\\
\lesssim&b_n^{\frac{3}{2}\xi'-\lambda}E\left[\sum_{p=1}^{\infty}[ X](\widetilde{\Gamma}^p)_t\sum_{q:p<q}|(\psi_{g,g})^n_{q-p}|^2\right]
=o\left(b_n^{\frac{3}{2}\xi'-\lambda-\frac{1}{2}}\right)
\end{align*}
by Lemma \ref{psi}(b). Since $P(\upsilon_n\leq t)\to 0$ as $n\to\infty$ by [A4], we conclude that
\begin{equation}\label{localcons}
\sum_{p,q:p<q}|(\psi_{g,g})^n_{q-p}|^2[ X](\Gamma^p)_t[ Y](\Gamma^q)_t
=\sum_{p,q:p<q}|(\psi_{g,g})^n_{q-p}|^2[ X](\Gamma^p)_t[ Y]'_{R^{q-1}}|\Gamma^q(t)|+o_p(b_n^{1/2}).
\end{equation}
Next, [SC1] yields 
\begin{align*}
\left|\sum_{p,q:p<q}|(\psi_{g,g})^n_{q-p}|^2[ X](\Gamma^p)_t[ Y]'_{R^{q-1}}\left\{|\Gamma^q(t)|-|\Gamma^q|1_{\{R^{q-1}\leq t\}}\right\}\right|
\lesssim \sum_{p=1}^{N^n_t}|(\psi_{g,g})^n_{N^n_t+1-p}|^2|\Gamma^p(t)||\Gamma^{N^n_t+1}|.
\end{align*}
Take $L>0$ arbitrarily. Then, we have
\begin{align*}
&b_n^{-1/2}E\left[\sum_{p=1}^{N^n_t}|(\psi_{g,g})^n_{N^n_t+1-p}|^2|\Gamma^p(t)||\Gamma^{N^n_t+1}|;G(1)^n_t\leq L\right]\\
=&b_n^{1/2} E\left[\sum_{p=1}^{N^n_t}|(\psi_{g,g})^n_{N^n_t+1-p}|^2|\Gamma^p(t)|G(1)^n_t;G(1)^n_t\leq L\right]
\leq 2k_n b_n^{1/2}\|\psi_{g,g}\|_\infty LE[r_n(t)]
\end{align*}
by Lemma \ref{psi}(b). Since $E[r_n(t)]\to0$ as $n\to\infty$ by the bounded convergence theorem, for any $\eta>0$ we obtain
$\limsup_{n}P(b_n^{-1/2}\sum_{p=1}^{N^n_t}|(\psi_{g,g})^n_{N^n_t+1-p}|^2|\Gamma^p(t)||\Gamma^{N^n_t+1}|>\eta)\leq\limsup_{n}P(G(1)^n_t>L)$. Since $(\ref{holder})$ and [A1$'$](ii) imply that $\limsup_{n}P(G(1)^n_t>L)\to0$ as $L\to\infty$, we conclude that $$\sum_{p=1}^{N^n_t}|(\psi_{g,g})^n_{N^n_t+1-p}|^2|\Gamma^p(t)||\Gamma^{N^n_t+1}|=o_p(b_n^{1/2}),$$ and thus we obtain
\begin{equation}\label{one}
\sum_{p,q:p<q}|(\psi_{g,g})^n_{q-p}|^2[ X](\Gamma^p)_t[ Y]'_{R^{q-1}}|\Gamma^q(t)|
=\sum_{p,q:p<q}|(\psi_{g,g})^n_{q-p}|^2[ X](\Gamma^p)_t[ Y]'_{R^{q-1}}|\Gamma^q|1_{\{R^{q-1}\leq t\}}+o_p(b_n^{1/2}).
\end{equation}
On the other hand, [SC1], [SC3] and Lemma \ref{psi}(b) yield
\begin{align*}
\sum_{q=2}^\infty\left| b_n^{1/2}\sum_{p:p<q}|(\psi_{g,g})^n_{q-p}|^2[X](\Gamma^p)_t\right|^\varpi E\left[\left(b_n^{-1}|\Gamma^q|\right)^\varpi\big|\mathcal{H}^n_{R^{q-1}}\right]1_{\{R^{q-1}\leq t\}}
\lesssim b_n^{-1}r_n(t)^\varpi\sup_{0\leq s\leq t}G(\varpi)^n_{s},
\end{align*}
where $\varpi=2\wedge\rho$. Hence [A1$'$](ii), the H\"older inequality, $(\ref{A4})$ and the fact that $\varpi\xi'\geq 1$ we obtain
\begin{align*}
\sum_{q=2}^\infty\left| b_n^{1/2}\sum_{p:p<q}|(\psi_{g,g})^n_{q-p}|^2[X](\Gamma^p)_t\right|^\varpi E\left[\left(b_n^{-1}|\Gamma^q|\right)^\varpi\big|\mathcal{H}^n_{R^{q-1}}\right]1_{\{R^{q-1}\leq t\}}
\to^p 0
\end{align*}
as $n\to\infty$. Therefore, Lemma \ref{useful}(a) yields
\begin{align*}
\sum_{p,q:p<q}|(\psi_{g,g})^n_{q-p}|^2[ X](\Gamma^p)_t[ Y]'_{R^{q-1}}|\Gamma^q|1_{\{R^{q-1}\leq t\}}
=b_n\sum_{p,q:p<q}|(\psi_{g,g})^n_{q-p}|^2[ X](\Gamma^p)_t[ Y]'_{R^{q-1}}G(1)^n_{R^{q-1}}1_{\{R^{q-1}\leq t\}}\\+o_p(b_n^{1/2}),
\end{align*}
and thus [A1$'$](i)-(ii), $(\ref{holder})$, [SC1], $(\ref{A4})$ and Lemma \ref{psi}(b) imply that
\begin{align}
&\sum_{p,q:p<q}|(\psi_{g,g})^n_{q-p}|^2[ X](\Gamma^p)_t[ Y]'_{R^{q-1}}|\Gamma^q|1_{\{R^{q-1}\leq t\}}\nonumber\\
=&b_n\sum_{p,q:p<q}|(\psi_{g,g})^n_{q-p}|^2[ X](\Gamma^p)_t[ Y]'_{R^{q-1}}G^n_{R^{q-1}}1_{\{R^{q-1}\leq t\}}+o_p(b_n^{1/2}).\label{HJYarg}
\end{align}

Now we show that
\begin{align}
&b_n\sum_{p,q:p<q}|(\psi_{g,g})^n_{q-p}|^2[ X](\Gamma^p)_t[ Y]'_{R^{q-1}}G^n_{R^{q-1}}1_{\{R^{q-1}\leq t\}}\nonumber\\
=&b_n\sum_{p=1}^{\infty}[ X](\Gamma^p)_t[ Y]'_{R^{p-1}}G^n_{R^{p-1}}\sum_{q:p<q}|(\psi_{g,g})^n_{q-p}|^21_{\{R^{q-1}\leq t\}}+o_p(b_n^{1/2}).\label{cadlagG}
\end{align}
We have $\lim_{\delta\downarrow0}\sup_{n\in\mathbb{N}}w'(F^n;\delta,T)=0$ a.s. for any $T>0$ by Theorem VI-1.5 of \cite{JS}, where $F^n=[Y]'G^n$. Therefore, for any $\eta>0$ we can take a positive (random) number $\delta$ such that a.s. $\sup_{n\in\mathbb{N}}w'(F^n;\delta,t)<\eta$. Then we can take (random) points $\xi_i^n$ such that $0=\xi_0^n<\xi_1^n<\cdots<\xi_{\bar{m}_n}^n=t$, $\inf_{i<\bar{m}_n}(\xi_i^n-\xi_{i-1}^n)\geq\delta$ and that $\max_{m\leq\bar{m}_n}w(F^n;[\xi^n_{m-1},\xi^n_m))<\eta$. Let $\Xi^n=\{\xi_m^n|m=1,\dots,\bar{m}_n\}$. Then
\begin{align*}
&b_n^{1/2}\left|\sum_{p=1}^{\infty}[ X](\Gamma^p)_t\sum_{q:p<q}|(\psi_{g,g})^n_{q-p}|^2\left( F^n_{R^{q-1}}-F^n_{R^{p-1}}\right)1_{\{R^{q-1}\leq t\}}\right|\\
\leq& b_n^{1/2}\sum_{p=1}^{\infty}[ X](\Gamma^p)_t\sum_{q:p<q}|(\psi_{g,g})^n_{q-p}|^2\cdot 2\eta
+b_n^{1/2}\sum_{p\in\mathbb{I}^n}[ X](\Gamma^p)_t\sum_{q:p<q}|(\psi_{g,g})^n_{q-p}|^2\cdot 2 (F^n)^*_t,
\end{align*}
where $\mathbb{I}^n=\{p\in\mathbb{N}|[R^p,R^{p+2k_n})\cap\Xi^n\neq\emptyset\}$ and $(F^n)^*_t=\sup_{n\in\mathbb{N}}\sup_{s\in[0,t]}|F^n_s|$, due to Lemma \ref{psi}(b). Hence there exits a positive constant $C$ such that
\begin{align*}
b_n^{1/2}\left|\sum_{p=1}^{\infty}[ X](\Gamma^p)_t\sum_{q:p<q}|(\psi_{g,g})^n_{q-p}|^2\left( F^n_{R^{q-1}}-F^n_{R^{p-1}}\right)1_{\{R^{q-1}\leq t\}}\right|
\leq C\left( t\cdot 2\eta+2 (F^n)^*_t\sum_{p\in\mathbb{I}^n}|\Gamma^p(t)|\right)
\end{align*}
by [SC1] and Lemma \ref{psi}(b). Now [A1$'$](i) implies that $\sup_{n\in\mathbb{N}}(F^n)^*_t<\infty$, while $\bar{m}_n<t/\delta+1$ because $\inf_{i<\bar{m}_n}(\xi_i^n-\xi_{i-1}^n)\geq\delta$. Moreover, for sufficiently large $n$ we have $\#\mathbb{I}^n\leq 4k_n\bar{m}_n$, and thus we obtain
\begin{align*}
\limsup_{n\to\infty}b_n^{1/2}\left|\sum_{p=1}^{\infty}[ X](\Gamma^p)_t\sum_{q:p<q}|(\psi_{g,g})^n_{q-p}|^2\left( F^n_{R^{q-1}}-F^n_{R^{p-1}}\right)1_{\{R^{q-1}\leq t\}}\right|
\leq 2Ct\eta
\end{align*}
by [A4]. Since $\eta$ is arbitrary, we conclude that Eq.~$(\ref{cadlagG})$ holds. On the other hand, since
\begin{align*}
&b_n\left|\sum_{p=1}^{\infty}[ X](\Gamma^p)_t[ Y]'_{R^{p-1}}G^n_{R^{p-1}}\sum_{q:p<q}|(\psi_{g,g})^n_{q-p}|^21_{\{R^{p-1}\leq t<R^{q-1}\}}\right|\\
\lesssim &b_n\sup_{0\leq s\leq t}G^n_s\sum_{p:R^{p-1}\leq t<R^{p+2k_n}}^{\infty}|\Gamma^p(t)|\sum_{q:p<q}|(\psi_{g,g})^n_{q-p}|^2
=o_p(\bar{r}_n)=o_p(b_n^{1/2}),
\end{align*}
we have
\begin{align*}
&b_n\sum_{p=1}^{\infty}[ X](\Gamma^p)_t[ Y]'_{R^{p-1}}G^n_{R^{p-1}}\sum_{q:p<q}|(\psi_{g,g})^n_{q-p}|^21_{\{R^{q-1}\leq t\}}\\
=&b_n\sum_{p=1}^{\infty}[ X](\Gamma^p)_t[ Y]'_{R^{p-1}}G^n_{R^{p-1}}\sum_{q:p<q}|(\psi_{g,g})^n_{q-p}|^21_{\{R^{p-1}\leq t\}}+o_p(b_n^{1/2}).
\end{align*}
Therefore, by an argument similar to the above we obtain
\begin{align}
&b_n\sum_{p=1}^{\infty}[ X](\Gamma^p)_t[ Y]'_{R^{p-1}}G^n_{R^{p-1}}\sum_{q:p<q}|(\psi_{g,g})^n_{q-p}|^21_{\{R^{q-1}\leq t\}}\nonumber\\
=&b_n^2\sum_{p=1}^{\infty}[ X]'_{R^{p-1}}[ Y]'_{R^{p-1}}\left|G^n_{R^{p-1}}\right|^21_{\{R^{p-1}\leq t\}}\sum_{q:p<q}|(\psi_{g,g})^n_{q-p}|^2+o_p(b_n^{1/2}).\label{shift}
\end{align}

Consequently, we obtain
\begin{align*}
&\sum_{p,q:p<q}|(\psi_{g,g})^n_{q-p}|^2[ X](\Gamma^p)_t[ Y](\Gamma^q)_t\\
=&b_n^2\sum_{p=1}^{\infty}[ X]'_{R^{p-1}}[ Y]'_{R^{p-1}}|G^n_{R^{p-1}}|^21_{\{R^{p-1}\leq t\}}\sum_{q:p<q}|(\psi_{g,g})^n_{q-p}|^2+o_p(b_n^{1/2})\\
=&b_n^{3/2}\theta\int_0^2\psi_{g,g}(x)^2\mathrm{d}x\sum_{p=1}^{\infty}[ X]'_{R^{p-1}}[ Y]'_{R^{p-1}}|G^n_{R^{p-1}}|^21_{\{R^{p-1}\leq t\}}+o_p(b_n^{1/2})\\
=&b_n^{1/2}\theta\int_0^2\psi_{g,g}(x)^2\mathrm{d}x\int_0^t[ X]'_{s}[ Y]'_{s}G_s\mathrm{d}s+o_p(b_n^{1/2})
\end{align*}
due to Lemma \ref{HJYlem2.2}. By symmetry we also obtain
\begin{align*}
\sum_{p,q:p>q}|(\psi_{g,g})^n_{q-p}|^2[ X](\Gamma^p)_t[ Y](\Gamma^q)_t
=b_n^{1/2}\theta\int_{-2}^0\psi_{g,g}(x)^2\mathrm{d}x\int_0^t[ X]'_{s}[ Y]'_{s}G_s\mathrm{d}s+o_p(b_n^{1/2}).
\end{align*}
After all, we complete the proof of (a).

(b) Similar to the proof of (a).

(c) Since $\widehat{S}^k<\widehat{T}^l$ and $\widehat{T}^k<\widehat{S}^l$ if $k<l$ and $\widehat{S}^k\vee\widehat{T}^k=R^k$ by Lemma \ref{advantage}, we can rewrite the target quantity as
\begin{align*}
&\frac{1}{k_n^4}\sum_{p,q=1}^{\infty}|(\psi_{g',g'})^n_{q-p}|^2\Psi^{11}_{\widehat{S}^p}\Psi^{22}_{\widehat{T}^q}1_{\{\widehat{S}^p\vee\widehat{T}^q\leq t\}}\\
=&\frac{1}{k_n^4}\sum_{q=2}^{\infty}\Psi^{22}_{\widehat{T}^q}1_{\{\widehat{T}^q\leq t\}}\sum_{p:p<q}|(\psi_{g',g'})^n_{q-p}|^2\Psi^{11}_{\widehat{S}^p}
+\frac{1}{k_n^4}\sum_{p=2}^{\infty}\Psi^{11}_{\widehat{S}^p}1_{\{\widehat{S}^p\leq t\}}\sum_{q:q<p}|(\psi_{g',g'})^n_{q-p}|^2\Psi^{22}_{\widehat{T}^q}\\
&+\frac{\psi_{g',g'}(0)^2}{k_n^4}\sum_{k=1}^{\infty}\Psi^{11}_{\widehat{S}^k}\Psi^{22}_{\widehat{T}^k}1_{\{R^k\leq t\}}.
\end{align*}
Note that $\Psi$ is c\`adl\`ag, by an argument similar to the above we obtain
\begin{align*}
\frac{1}{k_n^4}\sum_{q=2}^{\infty}\Psi^{22}_{\widehat{T}^q}1_{\{\widehat{T}^q\leq t\}}\sum_{p:p<q}|(\psi_{g',g'})^n_{q-p}|^2\Psi^{11}_{\widehat{S}^p}
=&\frac{1}{k_n^4}\sum_{q=2k_n+1}^{\infty}\Psi^{11}_{\widehat{T}^q}\Psi^{22}_{\widehat{T}^q}1_{\{\widehat{T}^q\leq t\}}\sum_{p=q-2k_n}^{q-1}|(\psi_{g',g'})^n_{q-p}|^2+o_p(b_n^{1/2})\\
=&\frac{1}{k_n^3}\sum_{q=2k_n+1}^{\infty}\Psi^{11}_{\widehat{T}^q}\Psi^{22}_{\widehat{T}^q}1_{\{\widehat{T}^q\leq t\}}\int_0^2\psi_{g',g'}(x)^2\mathrm{d}x+o_p(b_n^{1/2}).
\end{align*}
Note that $b_n\sum_{q=1}^{\infty}1_{\{\widehat{T}^q\leq t\}}=b_n\sum_{k=1}^{\infty}1_{\{R^k\leq t\}}+O_p(b_n)$, Lemma \ref{HJYlem2.2} yields
\begin{align*}
\frac{1}{k_n^4}\sum_{q=2}^{\infty}\Psi^{22}_{\widehat{T}^q}1_{\{\widehat{T}^q\leq t\}}\sum_{p:p<q}|(\psi_{g',g'})^n_{q-p}|^2\Psi^{11}_{\widehat{S}^p}
=b_n^{1/2}\theta^{-3}\int_0^2\psi_{g',g'}(x)^2\mathrm{d}x\int_0^t\Psi^{11}_s\Psi^{22}_s G_s^{-1}\mathrm{d}s+o_p(b_n^{1/2}).
\end{align*}
By symmetry we also obtain
\begin{align*}
\frac{1}{k_n^4}\sum_{p=2}^{\infty}\Psi^{11}_{\widehat{S}^p}1_{\{\widehat{S}^p\leq t\}}\sum_{q:q<p}|(\psi_{g',g'})^n_{q-p}|^2\Psi^{22}_{\widehat{T}^q}
=b_n^{1/2}\theta^{-3}\int_{-2}^0\psi_{g',g'}(x)^2\mathrm{d}x\int_0^t\Psi^{11}_s\Psi^{22}_s G_s^{-1}\mathrm{d}s+o_p(b_n^{1/2}).
\end{align*}
Since $\sum_{k=1}^{\infty}\Psi^{11}_{\widehat{S}^k}\Psi^{22}_{\widehat{T}^k}1_{\{R^k\leq t\}}=O_p(b_n^{-1})$, we conclude that
\begin{align*}
\frac{1}{k_n^4}\sum_{p,q=1}^{\infty}|(\psi_{g',g'})^n_{q-p}|^2\Psi^{11}_{\widehat{S}^p}\Psi^{22}_{\widehat{T}^q}1_{\{\widehat{S}^p\vee\widehat{T}^q\leq t\}}
=b_n^{1/2}\theta^{-3}\widetilde{\kappa}\int_0^t\Psi^{11}_s\Psi^{22}_s G_s^{-1}\mathrm{d}s+o_p(b_n^{1/2}).
\end{align*}

(d) First, Proposition \ref{advantage}(a), [SC2]-[SC3], Lemma \ref{psi}(b) and an argument similar to the proof of $(\ref{cadlagG})$ imply that
\begin{align*}
&\frac{1}{k_n^4}\sum_{p,q=1}^{\infty}|(\psi_{g',g'})^n_{q-p}|^2\Psi^{12}_{\widehat{S}^p}1_{\{\widehat{S}^p=\widehat{T}^p\leq t\}}\Psi^{21}_{\widehat{T}^q}1_{\{\widehat{S}^q=\widehat{T}^q\leq t\}}\\
=&\frac{1}{k_n^4}\sum_{p,q:p\neq q}|(\psi_{g',g'})^n_{q-p}|^2\Psi^{12}_{R^{p-1}}1_{\{\widehat{S}^p=\widehat{T}^p\}}\Psi^{21}_{R^{q-1}}1_{\{\widehat{S}^q=\widehat{T}^q\}}1_{\{R^{p\wedge q-1}\leq t\}}+o_p(b_n^{1/2}).
\end{align*}
On the other hand, note that $0\leq1_{\{\widehat{S}^k=\widehat{T}^k\}}\leq1$, by an argument similar to that in the proof of (a) we can show that
\begin{align*}
&\frac{1}{k_n^4}\sum_{q=2}^\infty\sum_{p:p<q}|(\psi_{g',g'})^n_{q-p}|^2\Psi^{12}_{R^{p-1}}1_{\{\widehat{S}^p=\widehat{T}^p\}}\Psi^{21}_{R^{q-1}}1_{\{\widehat{S}^q=\widehat{T}^q\}}1_{\{R^{p-1}\leq t\}}\\
=&\frac{1}{k_n^4}\sum_{q=2}^\infty\sum_{p:p<q}|(\psi_{g',g'})^n_{q-p}|^2\left(\Psi^{12}_{R^{p-1}}\chi^{\prime n}_{R^{p-1}}\right)^21_{\{R^{p-1}\leq t\}}
+o_p(b_n^{1/2})
\end{align*}
using [SC2] and [A1$'$](iii) instead of [SC1] and [A1$'$](i) respectively. By symmetry we also obtain
\begin{align*}
&\frac{1}{k_n^4}\sum_{p=2}^\infty\sum_{q:q<p}|(\psi_{g',g'})^n_{q-p}|^2\Psi^{12}_{R^{p-1}}1_{\{\widehat{S}^p=\widehat{T}^p\}}\Psi^{21}_{R^{q-1}}1_{\{\widehat{S}^q=\widehat{T}^q\}}1_{\{R^{q-1}\leq t\}}\\
=&\frac{1}{k_n^4}\sum_{p=2}^\infty\sum_{q:q<p}|(\psi_{g',g'})^n_{q-p}|^2\left(\Psi^{12}_{R^{q-1}}\chi^{\prime n}_{R^{q-1}}\right)^21_{\{R^{q-1}\leq t\}}
+o_p(b_n^{1/2}).
\end{align*}
Therefore, Lemma \ref{HJYlem2.2} yields desired result.

(e) By an argument similar to the proof of $(\ref{upsilon})$, we can show
\begin{align*}
\frac{1}{k_n^2}\sum_{p,q=1}^{\infty}|(\psi_{g,g'})^n_{q-p}|^2[ X](\widehat{I}^p)_t\Psi^{22}_{\widehat{T}^q}1_{\{\widehat{T}^q\leq t\}}
=\frac{1}{k_n^2}\sum_{p,q=1}^{\infty}|(\psi_{g,g'})^n_{q-p}|^2[ X](\Gamma^p)_t\Psi^{22}_{\widehat{T}^q}1_{\{\widehat{T}^q\leq t\}}+o_p(b_n^{1/2}).
\end{align*}
On the other hand, we have
\begin{align*}
\frac{1}{k_n^2}\left|\sum_{p,q=1}^{\infty}|(\psi_{g,g'})^n_{q-p}|^2[ X](\Gamma^p)_t\Psi^{22}_{\widehat{T}^q}1_{\{\widehat{T}^q> t\}}\right|
\leq \frac{1}{k_n^2}\sum_{p:R^{p-1}<t<R^{p+2k_n}}[ X](\Gamma^p)_t\sum_{q:|q-p|\leq 2k_n}|(\psi_{g,g'})^n_{q-p}|^2\Psi^{22}_{\widehat{T}^q}1_{\{\widehat{T}^q> t\}},
\end{align*} 
hence by [A4] and [SC1] we obtain
\begin{align*}
\frac{1}{k_n^2}\left|\sum_{p,q=1}^{\infty}|(\psi_{g,g'})^n_{q-p}|^2[ X](\Gamma^p)_t\Psi^{22}_{\widehat{T}^q}1_{\{\widehat{T}^q> t\}}\right|=O_p(\bar{r}_n).
\end{align*}
Therefore, we conclude that
\begin{align*}
\frac{1}{k_n^2}\sum_{p,q=1}^{\infty}|(\psi_{g,g'})^n_{q-p}|^2[ X](\widehat{I}^p)_t\Psi^{22}_{\widehat{T}^q}1_{\{\widehat{T}^q\leq t\}}
=\frac{1}{k_n^2}\sum_{p=1}^{\infty}[ X](\Gamma^p)_t\sum_{q=1}^{\infty}|(\psi_{g,g'})^n_{q-p}|^2\Psi^{22}_{\widehat{T}^q}+o_p(b_n^{1/2}),
\end{align*}
hence, note that $\Psi$ is c\`adl\`ag, an argument similar to the proof of $(\ref{cadlagG})$ yields
\begin{align*}
\frac{1}{k_n^2}\sum_{p,q=1}^{\infty}|(\psi_{g,g'})^n_{q-p}|^2[ X](\widehat{I}^p)_t\Psi^{22}_{\widehat{T}^q}1_{\{\widehat{T}^q\leq t\}}
=&\frac{1}{k_n^2}\sum_{p=2k_n+1}^{\infty}[X](\Gamma^p)_t\Psi^{22}_{R^{p-1}}\sum_{q=p-2k_n}^{p+2k_n}|(\psi_{g,g'})^n_{q-p}|^2+o_p(b_n^{1/2})\\
=&b_n^{1/2}\theta^{-1}\overline{\kappa}\int_0^t[ X]'_s\Psi^{22}_s\mathrm{d}s+o_p(b_n^{1/2}).
\end{align*}
This completes the proof of (e).

(f) Similar to the proof of (e).   

(g) An argument similar to that in the proof of (e) and the fact that $\Psi$ is c\`adl\`ag yield 
\begin{align*}
\frac{1}{k_n^2}\sum_{p,q=1}^{\infty}|(\psi_{g,g'})^n_{q-p}|^2[ X,Y](\widehat{I}^p)_t\Psi^{12}_{R^q}1_{\{\widehat{S}^q=\widehat{T}^q\leq t\}}
=&\frac{1}{k_n^2}\sum_{p,q=1}^{\infty}|(\psi_{g,g'})^n_{q-p}|^2[ X,Y](\widehat{I}^p)_t\Psi^{12}_{R^{q-1}}1_{\{\widehat{S}^q=\widehat{T}^q\}}+o_p(b_n^{1/2}).
\end{align*}
Therefore, we can show the desired result by an argument similar to the proof of (a).

(h) We decompose the target quantity as
\begin{align*}
\sum_{p,q=1}^{\infty}|(\psi_{g',g'})^n_{q-p}|^2[Z^X](\check{I}^p)_t [Z^Y](\check{J}^q)_t
=&\left\{\sum_{p,q:|p-q|\leq 1}+\sum_{p,q:p<q-1}+\sum_{p,q:q<p-1}\right\}|(\psi_{g',g'})^n_{q-p}|^2[Z^X](\check{I}^p)_t [Z^Y](\check{J}^q)_t\\
=:&\mathbb{B}_1+\mathbb{B}_2+\mathbb{B}_3.
\end{align*}
Evidently $\mathbb{B}_1=o_p(b_n^{1/2})$. On the other hand, an argument similar to the proof of $(\ref{localcons})$ yields
\begin{align*}
\mathbb{B}_2=\sum_{p,q:p<q-1}|(\psi_{g',g'})^n_{q-p}|^2[Z^X](\check{I}^p)_t [Z^Y]'_{R^{q-2}\wedge t}|\check{J}^q(t)|+o_p(b_n^{1/2}).
\end{align*}
Hence, note that $|\check{J}^k|$ is $\mathcal{H}^n_{\widehat{T}^k}$-measurable for every $k$ due to Lemma \ref{checktime}, by an argument similar to the proof of $(\ref{HJYarg})$ we obtain
\begin{align*}
\mathbb{B}_2=b_n\sum_{p,q:p<q-1}|(\psi_{g',g'})^n_{q-p}|^2[Z^X](\check{I}^p)_t [Z^Y]'_{R^{q-2}}F^2_{R^{q-2}}1_{\{R^{q-2}\leq t\}}+o_p(b_n^{1/2})
\end{align*}
and, by arguments similar to those in the proofs of $(\ref{cadlagG})$ and $(\ref{shift})$, we conclude that
\begin{align*}
\mathbb{B}_2=b_n^2\sum_{p=2}^\infty [Z^X]'_{R^{p-2}}[Z^Y]'_{R^{p-2}}F^1_{R^{p-2}}F^2_{R^{p-2}}1_{\{R^{p-2}\leq t\}}\sum_{q:p<q-1}|(\psi_{g',g'})^n_{q-p}|^2+o_p(b_n^{1/2}).
\end{align*}
Therefore, Lemma \ref{HJYlem2.2} yields
\begin{align*}
\mathbb{B}_2=b_n^{1/2}\theta\int_0^2\psi_{g',g'}(x)^2\mathrm{d}x\int_0^t[Z^X]'_s[Z^Y]'_s\frac{F^1_s F^2_s}{G_s}\mathrm{d}s+o_p(b_n^{1/2}).
\end{align*}
By symmetry we also obtain
\begin{align*}
\mathbb{B}_3=b_n^{1/2}\theta\int_{-2}^0\psi_{g',g'}(x)^2\mathrm{d}x\int_0^t[Z^X]'_s[Z^Y]'_s\frac{F^1_s F^2_s}{G_s}\mathrm{d}s+o_p(b_n^{1/2}).
\end{align*}
Note that $b_n^{-1}/k_n^2\to\theta^{-2}$ as $n\to\infty$, we complete the proof of (h).

(i) An argument similar to the proof of $(\ref{upsilon2})$ yields
\begin{align*}
\sum_{p,q=1}^{\infty}|(\psi_{g',g'})^n_{q-p}|^2[\mathfrak{Z}^X,\mathfrak{Z}^Y](\widehat{I}^p)_t [\mathfrak{Z}^X,\mathfrak{Z}^Y](\widehat{J}^q)_t
=\sum_{p,q=1}^{\infty}|(\psi_{g',g'})^n_{q-p}|^2[\mathfrak{Z}^X,\mathfrak{Z}^Y](\Gamma^{p})_t [\mathfrak{Z}^X,\mathfrak{Z}^Y](\Gamma^{q})_t+o_p(b_n^{1/2}).
\end{align*}
Since $[\mathfrak{Z}^X,\mathfrak{Z}^Y](\Gamma^{k})=(\check{I}^k_-\check{J}^k_-+\check{I}^{k+1}_-\check{J}^k_-+\check{I}^k_-\check{J}^{k+1}_-)\bullet[Z^X,Z^Y]$, note that $|\check{I}^k\cap\check{J}^k|+|\check{I}^{k+1}\cap\check{J}^k|+|\check{I}^k\cap\check{J}^{k+1}|$ is $\mathcal{H}^n_{R^k}$-measurable for every $k$ due to Lemma \ref{checktime}, by an argument similar to the proof of (a) we complete the proof of (i).

(j) An argument similar to the proof of (e) yields
\begin{align*}
\frac{1}{k_n^2}\sum_{p,q=1}^{\infty}|(\psi_{g',g'})^n_{q-p}|^2\Psi^{11}_{\widehat{S}^p}1_{\{\widehat{S}^p\leq t\}}[Z^Y](\check{J}^q)_t
=&\frac{1}{k_n^2}\sum_{q=2k_n+1}^{\infty}\Psi^{11}_{R^{q-2}}[Z^Y](\check{J}^q)_t\sum_{q=p-2k_n}^{p+2k_n}|(\psi_{g',g'})^n_{q-p}|^2+o_p(b_n^{1/2})\\
=&b_n^{1/2}\theta^{-1}\widetilde{\kappa}\sum_{q=1}^{\infty}\Psi^{11}_{R^{q-2}}[Z^Y](\check{J}^q)_t+o_p(b_n^{1/2}).
\end{align*}
Therefore, note that $\Psi^{11}$ is $\mathbf{H}^n$-adapted, combining Lemma \ref{HJYlem2.2} with arguments similar to those in the proof of $(\ref{localcons})$ and $(\ref{HJYarg})$, we conclude that
\begin{align*}
\frac{1}{k_n^2}\sum_{p,q=1}^{\infty}|(\psi_{g',g'})^n_{q-p}|^2\Psi^{11}_{\widehat{S}^p}1_{\{\widehat{S}^p\leq t\}}[Z^Y](\check{J}^q)_t
=b_n^{1/2}\theta^{-1}\widetilde{\kappa}\int_0^t\Psi^{11}_{s}[Z^Y]'_s\frac{F^2_s}{G_s}\mathrm{d}s+o_p(b_n^{1/2}),
\end{align*}
and thus we complete the proof of (j).

(k) Similar to the proof of (j).

(l) An argument similar to the proof of $(\ref{upsilon})$ yields
\begin{align*}
\sum_{p,q=1}^{\infty}|(\psi_{g,g'})^n_{q-p}|^2[X](\widehat{I}^p)_t[Z^Y](\check{J}^q)_t
=\sum_{p,q=1}^{\infty}|(\psi_{g,g'})^n_{q-p}|^2[X](\Gamma^p)_t[Z^Y](\check{J}^q)_t+o_p(b_n^{1/2}).
\end{align*}
Hence, the desired result can be shown in a similar manner to the proof of (h).

(m) Similar to the proof of (l).

(n)-(r) Similar to the proof of (h) (note that $\int_{-2}^2\psi_{g',g'}(x)\psi_{g,g}(x)\mathrm{d}x=\overline{\kappa}$ due to integration by parts and Lemma \ref{psi}).

(s) Similar to the proof of (i).
\end{proof}


\section{Proof of Proposition \ref{HYprop5.1}}\label{proofHYprop5.1}



First note that an argument similar to the one in the first part of Section 12 of \cite{HY2011} allows us to assume that $(\ref{SA4})$, $\frac{9}{10}<\xi<\xi'<1$ and $\check{S}^k$, $\check{T}^k$ are $\mathbf{G}^{(n)}$-stopping times for every $k$ under [A2] and [A4] (note that $\hat{S}^k$, $\hat{T}^k$ and $R^k$ automatically become $\mathbf{G}^{(n)}$-stopping times under [A2]). Furthermore, in the following we only consider sufficiently large $n$ such that
\begin{equation}\label{spp}
k_n\bar{r}_n<b_n^{\xi-1/2}.
\end{equation}


\begin{lem}\label{HYlem12.2}
Suppose that $[\mathrm{A}2]$ and $[\mathrm{SA}4]$ are satisfied. Let $i,j\in\mathbb{Z}_+$ and let $\tau$ be a $\mathbf{G}^{(n)}$-stopping time. Then for any $A\in\mathcal{G}^{(n)}_\tau$ we have
\begin{align*}
A\cap \left\{\tau\leq R^{\vee}(i+k_n,j+k_n)\right\}\cap\left\{\bar{I}^i(\tau)\cap\bar{J}^j(\tau)\neq\emptyset\right\}
\in\mathcal{F}_{R^\wedge(i,j)}.
\end{align*}
\end{lem}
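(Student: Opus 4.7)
The plan is to use the criterion $B \in \mathcal{F}_{R^\wedge(i,j)}$ iff $B \cap \{R^\wedge(i,j) \leq s\} \in \mathcal{F}_s$ for every $s \geq 0$, where $B$ denotes the set in the lemma. The decisive deterministic step is an a priori bound on the temporal span subtended by the indices involved. By Proposition \ref{advantage}(b), $\bar{I}^i \cap \bar{J}^j \neq \emptyset$ forces $|i-j| \leq k_n$, so $R^\vee(i+k_n,j+k_n) = \widehat{S}^{i+k_n} \vee \widehat{T}^{j+k_n} \leq R^{(i\wedge j)+2k_n}$, while $R^\wedge(i,j) = \widehat{S}^i \wedge \widehat{T}^j > R^{(i\wedge j)-1}$. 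Combining this with [SA4] (which gives $R^k - R^{k-1} \leq \bar{r}_n$ for every $k$) yields
\begin{equation*}
R^\vee(i+k_n,j+k_n) - R^\wedge(i,j) \leq (2k_n+1)\bar{r}_n.
\end{equation*}
Since $b_n^{\xi-1/2}/(k_n\bar{r}_n) = b_n^{\xi-\xi'} \to \infty$, for $n$ large enough this span is strictly less than $b_n^{\xi-1/2}$; i.e., the whole pre-averaging block fits inside the look-ahead window of $\mathbf{G}^{(n)}$.

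Set $u = s + b_n^{\xi-1/2}$. On $B \cap \{R^\wedge(i,j) \leq s\}$ the span bound forces $\tau \leq R^\vee(i+k_n,j+k_n) \leq u$ and $\widehat{S}^{i+k_n}, \widehat{T}^{j+k_n} \leq u$. After expanding $\{\bar{I}^i(\tau) \cap \bar{J}^j(\tau) \neq \emptyset\} = \{\widehat{S}^i < \widehat{T}^{j+k_n}\} \cap \{\widehat{T}^j < \widehat{S}^{i+k_n}\} \cap \{\widehat{S}^i \vee \widehat{T}^j < \tau\}$, I would rewrite
\begin{equation*}
B \cap \{R^\wedge(i,j) \leq s\} = \bigl[A \cap \{\tau \leq u\}\bigr] \cap \{R^\wedge(i,j) \leq s\} \cap \{\widehat{S}^{i+k_n} \leq u\} \cap \{\widehat{T}^{j+k_n} \leq u\} \cap E,
\end{equation*}
where $E$ collects the four comparison events $\{\tau \leq \widehat{S}^{i+k_n} \vee \widehat{T}^{j+k_n}\}$, $\{\widehat{S}^i < \widehat{T}^{j+k_n}\}$, $\{\widehat{T}^j < \widehat{S}^{i+k_n}\}$, $\{\widehat{S}^i \vee \widehat{T}^j < \tau\}$. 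The forward inclusion $\subseteq$ is the only subtlety and uses exactly the span estimate above; the reverse inclusion is immediate.

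To finish, each factor on the right is checked to lie in $\mathcal{G}^{(n)}_u = \mathcal{F}^{(0)}_s$. The hypothesis $A \in \mathcal{G}^{(n)}_\tau$ says $A \cap \{\tau \leq u\} \in \mathcal{G}^{(n)}_u$ by definition. Since [A2] propagates from $(S^i), (T^j)$ to $\widehat{S}^k, \widehat{T}^k, R^k$, each of $\tau, \widehat{S}^{i+k_n}, \widehat{T}^{j+k_n}$ is a $\mathbf{G}^{(n)}$-stopping time, so the upper bounds $\{\widehat{S}^{i+k_n} \leq u\}$, $\{\widehat{T}^{j+k_n} \leq u\}$ and the comparisons in $E$ (intersected with these upper bounds) all belong to $\mathcal{G}^{(n)}_u$ by standard stopping-time manipulations; moreover $\{R^\wedge(i,j) \leq s\} \in \mathcal{G}^{(n)}_s \subset \mathcal{G}^{(n)}_u$. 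Assembling everything yields $B \cap \{R^\wedge(i,j) \leq s\} \in \mathcal{F}^{(0)}_s \subset \mathcal{F}_s$, completing the proof. The main obstacle is genuinely the span estimate of the first paragraph: once it is in place, the rest is routine $\sigma$-field bookkeeping. It is precisely the inequality $\xi < \xi'$, packaged into the balance between $\bar{r}_n$, $k_n$, and $b_n^{\xi-1/2}$, that lets [A2] and [SA4] combine to force the measurability.
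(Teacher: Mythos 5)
Your proof is correct, and it rests on the same pivotal observation as the paper's: on the event that the blocks overlap, the whole random window $[R^\wedge(i,j),\,R^\vee(i+k_n,j+k_n)]$ has deterministic length $O(k_n\bar r_n)$, which under $[\mathrm{A}4]$ is $o(b_n^{\xi-1/2})$ and hence fits inside the $b_n^{\xi-1/2}$ look-back that $\mathbf G^{(n)}$ builds into $\mathbf F^{(0)}$. The execution differs in one respect worth noting. The paper keeps $C=\{\bar I^i(\tau)\cap\bar J^j(\tau)\neq\emptyset\}$ abstract, observes $A\cap C\in\mathcal G^{(n)}_\tau$, uses the standard fact that a $\mathcal G^{(n)}_\tau$-set intersected with $\{\tau\le\sigma\}$ lies in $\mathcal G^{(n)}_\sigma$ to land in the stopping-time $\sigma$-field $\mathcal G^{(n)}_{R^\vee(i+k_n,j+k_n)}$, and only then uses the span bound to push that into the deterministic $\mathcal F_u$. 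You instead unpack $C$ into the four comparison events, cap every stopping time involved by the deterministic level $u=s+b_n^{\xi-1/2}$, and verify $\mathcal G^{(n)}_u$-measurability factor by factor. Your route is more elementary — it never passes through a stopping-time $\sigma$-field at a genuinely random time — at the cost of somewhat more set-theoretic bookkeeping (and of having to argue separately that each capped comparison event is $\mathcal G^{(n)}_u$-measurable, which you correctly flag as ``standard stopping-time manipulations''). Your looser constant $(2k_n+1)\bar r_n$ in place of the paper's $k_n\bar r_n$ is immaterial, since all that matters is $k_n\bar r_n\asymp b_n^{\xi'-1/2}=o(b_n^{\xi-1/2})$.
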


\begin{proof}
Let
\begin{align*}
B=\left\{\tau\leq R^{\vee}(i+k_n,j+k_n)\right\},\qquad
C=\left\{\bar{I}^i(\tau)\cap\bar{J}^j(\tau)\neq\emptyset\right\}.
\end{align*}
It is sufficient to show that $A\cap B\cap C\cap D\in\mathcal{F}_u$ for any $u\in\mathbb{R}_+$, where $D=\left\{ R_\wedge(i,j)\leq u\right\}$. On $C$ we have $$R^{\vee}(i+k_n,j+k_n)-R^{\wedge}(i,j)\leq|\bar{I}^i|\vee|\bar{J}^j|\vee(\hat{S}^{i+k_n}-\hat{T}^j)\vee(\hat{T}^{j+k_n}-\hat{S}^i)\leq k_n\bar{r}_n,$$ hence
\begin{align*}
R^{\vee}(i+k_n,j+k_n)
=\{ R^{\vee}(i+k_n,j+k_n)-R^{\wedge}(i ,j)\}
+R^{\wedge}(i,j)
\leq R^{\wedge}(i,j)+k_n\bar{r}_n,
\end{align*}
and thus we have
\begin{align*}
C\cap D=C\cap D\cap\{ R^{\vee}(i+k_n,j+k_n)\leq u+k_n\bar{r}_n\}.
\end{align*}
Since $A\in\mathcal{G}^{(n)}_{\tau}$ and $C\in\mathcal{G}^{(n)}_{\tau}$, we have $(A\cap C)\cap B\in\mathcal{G}^{(n)}_{R^\vee(i+k_n,j+k_n)}$. Therefore, we obtain
\begin{align*}
A\cap B\cap C\cap\{ R^{\vee}(i+k_n,j+k_n)\leq u+k_n\bar{r}_n\}\in\mathcal{G}_{u+k_n\bar{r}_n},
\end{align*}
however, $\mathcal{G}^{(n)}_{u+k_n\bar{r}_n}=\mathcal{F}_{(u+k_n\bar{r}_n-b_n^{\xi-\frac{1}{2}})_+}\subset\mathcal{F}_u$ by $(\ref{spp})$. This together with the fact that $\{R^\wedge(i,j)\leq u\}\in\mathcal{F}_u$ implies $A\cap B\cap C\cap D\in\mathcal{F}_u$.
\end{proof}


\begin{lem}\label{HYlem12.3}
Suppose that $[\mathrm{A}2]$ and $[\mathrm{SA}4]$ are satisfied. Let $Z=(Z_t)_{t\in\mathbb{R}_+}$ be a $\mathbf{G}^{(n)}$-adapted process. Let $i,j\in\mathbb{Z}_+$, $p,q\in\{0,1,\dots,k_n-1\}$ and let $\tau$ be a $\mathbf{G}^{(n)}$-stopping time. Then both  $\bar{K}^{ij}_{\tau-}\widehat{I}^{i+p}_{\tau-} Z_\tau$ and $\bar{K}^{ij}_{\tau-}\widehat{J}^{j+q}_{\tau-} Z_\tau$ are $\mathcal{F}_{R^\wedge(i,j)}$-measurable.
\end{lem}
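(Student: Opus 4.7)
The plan is to reduce the statement to a direct application of Lemma~\ref{HYlem12.2}. Both $\bar{K}^{ij}_{\tau-}$ and $\widehat{I}^{i+p}_{\tau-}$ are $\{0,1\}$-valued, so the product can be written as $1_A Z_\tau$ where $A=\{\bar{K}^{ij}_{\tau-}=1\}\cap\{\widehat{I}^{i+p}_{\tau-}=1\}$. Under [A2] the sampling times $\widehat{S}^k$ and $\widehat{T}^k$ are $\mathbf{G}^{(n)}$-stopping times, so the indicator processes $t\mapsto\bar{K}^{ij}_t$ and $t\mapsto 1_{\widehat{I}^{i+p}}(t)$ are $\mathbf{G}^{(n)}$-adapted and right-continuous; their left-limits at $\tau$ are therefore $\mathcal{G}^{(n)}_{\tau-}$-measurable, hence $A\in\mathcal{G}^{(n)}_\tau$. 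Similarly, since $Z$ is $\mathbf{G}^{(n)}$-adapted and $\tau$ is a $\mathbf{G}^{(n)}$-stopping time, $Z_\tau$ is $\mathcal{G}^{(n)}_\tau$-measurable.

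Next I would verify that $A$ is contained in the event $\{\tau\leq R^\vee(i+k_n,j+k_n)\}\cap\{\bar{I}^i(\tau)\cap\bar{J}^j(\tau)\neq\emptyset\}$ appearing in Lemma~\ref{HYlem12.2}. On $A$, the condition $\widehat{I}^{i+p}_{\tau-}=1$ means $\tau-\in[\widehat{S}^{i+p-1},\widehat{S}^{i+p})$, so $\tau\leq\widehat{S}^{i+p}\leq\widehat{S}^{i+k_n}\leq R^\vee(i+k_n,j+k_n)$ because $p\leq k_n-1$; and $\bar{K}^{ij}_{\tau-}=1$ gives $\bar{I}^i(\tau-)\cap\bar{J}^j(\tau-)\neq\emptyset$, which is included in $\bar{I}^i(\tau)\cap\bar{J}^j(\tau)$.

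The measurability claim then follows by a standard indicator argument. For any Borel set $D\subset\mathbb{R}$ with $0\notin D$, we have $\{1_A Z_\tau\in D\}=A\cap\{Z_\tau\in D\}\in\mathcal{G}^{(n)}_\tau$; because this set is contained in the two events just identified, Lemma~\ref{HYlem12.2} places it in $\mathcal{F}_{R^\wedge(i,j)}$. The case $0\in D$ is handled by taking complements. The corresponding assertion for $\bar{K}^{ij}_{\tau-}\widehat{J}^{j+q}_{\tau-}Z_\tau$ is completely symmetric, now using $\tau\leq\widehat{T}^{j+q}\leq\widehat{T}^{j+k_n}\leq R^\vee(i+k_n,j+k_n)$ on the analogous set.

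The only place where any care is needed is the justification $A\in\mathcal{G}^{(n)}_\tau$, which depends essentially on [A2]: without it, the indicator $1_{\widehat{I}^{i+p}}$ would not be $\mathbf{G}^{(n)}$-adapted and the whole reduction would break down. Everything else is routine bookkeeping.
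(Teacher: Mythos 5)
Your proposal is correct and follows essentially the same route as the paper: both reduce the measurability claim to an application of Lemma~\ref{HYlem12.2} after observing that the product $\bar{K}^{ij}_{\tau-}\widehat{I}^{i+p}_{\tau-}Z_\tau$ is supported on the event $\{\tau\leq R^\vee(i+k_n,j+k_n)\}\cap\{\bar I^i(\tau)\cap\bar J^j(\tau)\neq\emptyset\}$. The paper phrases this as ``the product vanishes on the complementary event'' and then splits $\{\text{product}\in B\}$ accordingly, while you phrase it as ``the support event $A$ is contained in the hypothesis of Lemma~\ref{HYlem12.2}'' and handle $0\in D$ by noting $A$ itself lies in $\mathcal{F}_{R^\wedge(i,j)}$; these are the same argument up to relabelling.
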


\begin{proof}
On $\{\bar{I}^{i}(\tau)\cap\bar{J}^{j}(\tau)=\emptyset\}\cup\left\{\tau> R^{\vee}(i+k_n,j+k_n)\right\}$ we have $\bar{K}^{ij}_{\tau-}\widehat{I}^{i+p}_{\tau-}=0$. Therefore, for any Borel measurable set $B$ we have
\begin{align*}
\{\bar{K}^{ij}_{\tau-}\widehat{I}^{i+p}_{\tau-} Z_\tau\in B\}
=&\left[\{0\in B\}\cap\left(\{\bar{I}^{i}(\tau)\cap\bar{J}^{j}(\tau)=\emptyset\}\cup\left\{\tau> R^{\vee}(i+k_n,j+k_n)\right\}\right)\right]\\
&\cap\left[\{\bar{K}^{ij}_{\tau-}\widehat{I}^{i+p}_{\tau-} Z_\tau\in B\}\cap\{\bar{I}^{i}(\tau)\cap\bar{J}^{j}(\tau)\neq\emptyset\}\cap\left\{\tau\leq R^{\vee}(i+k_n,j+k_n)\right\}\right],
\end{align*}
so we obtain $\{\bar{K}^{ij}_{\tau-}\widehat{I}^{i+p}_{\tau-} Z_\tau\in B\}\in\mathcal{F}_{R^\wedge(i,j)}$ by Lemma \ref{HYlem12.2} because $\bar{K}^{ij}_{\tau-}\widehat{I}^{i+p}_{\tau-} Z_\tau$ is $\mathcal{G}_\tau$-measurable by construction. By symmetry we can also show that $\bar{K}^{ij}_{\tau-}\widehat{J}^{j+q}_{\tau-} Z_\tau$ is $\mathcal{G}_\tau$-measurable.
\end{proof}


In the remainder of this section, we fix $\alpha,\beta,\alpha',\beta'\in\Phi$. Let $\Xi^{iji'j'}_t:=\sum_{q,q'=0}^{k_n-1}\beta^n_{q}\beta^{\prime n}_{q'}\bar{K}^{ij}_t\bar{K}^{i'j'}_t\hat{J}^{j+q}_t\hat{J}^{j'+q'}_t$ and $\Lambda^{iji'j'}_t=\sum_{p',q=0}^{k_n-1}\alpha^{\prime n}_p \beta^n_{q}\bar{K}^{ij}_t\bar{K}^{i'j'}_t\hat{I}^{i'+p'}_t\hat{J}^{j+q}_t$ for each $t\in\mathbb{R}_+$. 

We introduce an auxiliary condition.
\begin{enumerate}
\item[{[H]}] For each $n\in\mathbb{N}$ we have four square-integrable martingales $M^n$, $N^n$, $M^{\prime n}$ and $N^{\prime n}$ satisfying the following conditions:
\begin{enumerate}[(i)]
\item There exists a positive constant $C$ such that
\begin{equation*}
E\left[[M^n](\hat{I}^i)_t^4|\mathcal{F}_{\hat{S}^{i-1}}\right]+E\left[[M^{\prime n}](\hat{I}^i)_t^4|\mathcal{F}_{\hat{S}^{i-1}}\right]+E\left[[N^n](\hat{J}^j)_t^4|\mathcal{F}_{\hat{T}^{j-1}}\right]+E\left[[N^{\prime n}](\hat{J}^j)_t^4|\mathcal{F}_{\hat{T}^{j-1}}\right]\leq C\bar{r}_n^4
\end{equation*}
\if0
\begin{equation}
\left.\begin{array}{l}
E_0[(\bar{M}^n_\alpha(\mathcal{I})^i_t\bar{M}^{\prime n}_{\alpha'}(\mathcal{I})^{i'}_t-\langle\bar{M}^n_\alpha(\mathcal{I})^i, \bar{M}^{\prime n}_{\alpha'}(\mathcal{I})^{i'}\rangle_t)^2]
\leq C_\lambda(k_n\bar{r}_n)^{2-\lambda},\\
E_0[(\bar{N}^n_\beta(\mathcal{J})^j_t\bar{N}^{\prime n}_{\beta'}(\mathcal{J})^{j'}_t-\langle\bar{N}^n_{\beta}(\mathcal{J})^j, \bar{N}^{\prime n}_{\beta'}(\mathcal{J})^{j'}\rangle_t)^2]
\leq C_\lambda(k_n\bar{r}_n)^{2-\lambda},\\
E_0[(\bar{M}^n_\alpha(\mathcal{I})^i_t\bar{N}^{\prime n}_{\beta'}(\mathcal{J})^{j'}_t-\langle\bar{M}^n_\alpha(\mathcal{I})^i, \bar{N}^{\prime n}_{\beta'}(\mathcal{J})^{j'}\rangle_t)^2]
\leq C_\lambda(k_n\bar{r}_n)^{2-\lambda},\\
E_0[(\bar{M}^{\prime n}_{\alpha'}(\mathcal{I})^{i'}_t\bar{N}^{n}_\beta(\mathcal{J})^{j}_t-\langle\bar{M}^{\prime n}_{\alpha'}(\mathcal{I})^{i'}, \bar{N}^{n}_{\beta}(\mathcal{J})^{j}\rangle_t)^2]
\leq C_\lambda(k_n\bar{r}_n)^{2-\lambda}
\end{array}\right\}
\end{equation}
\fi
for any $t\in\mathbb{R}_+$, $n\in\mathbb{N}$ and $i,j\in\mathbb{N}$.
\item For each $n\in\mathbb{N}$ we have
\begin{gather*}
\langle M^n,M^{\prime n}\rangle=H(1)D(1)^n\bullet B(1)^n,\qquad
\langle N^n,N^{\prime n}\rangle=H(2)D(2)^n\bullet B(2)^n,\\
\langle M^n,N^{\prime n}\rangle=H(3)D(3)^n\bullet B(3)^n,\qquad
\langle M^{\prime n},N^{n}\rangle=H(4)D(4)^n\bullet B(4)^n,
\end{gather*}
where $H(k)$ is a c\'{a}dl\'{a}g bounded $\mathbf{F}^{(0)}$-adapted process, $D(k)^n$ is a bounded $\mathbf{G}^{(n)}$-adapted process and $B(k)^n$ is a deterministic nondecreasing process or a $\mathbf{G}^{(n)}$-adapted point process for each $k\in\{1,2,3,4\}$.
\item For any $\lambda>0$, we have a positive constant $K_\lambda$ satisfying $$\max_{k\in\{1,2,3,4\}}\sup_{0\leq s\leq t}E\left[|H(k)_{s}-H(k)_{(s-h)_+}|^2\big|\mathcal{F}_{(s-h)_+}\right]\leq K_\lambda h^{1-\lambda}$$ for any $t,h>0$.
\item For each $T\in\mathbb{R}_+$ there is a positive constant $C_T$ such that $\max_{k\in\{1,2,3,4\}}B(k)^n_T\leq C_T$ for all $n$. Moreover, there exits a positive constant $C$ such that
\begin{equation*}
B(1)^n(\hat{I}^p)_t\vee B(2)^n(\hat{J}^q)_t\vee B(3)^n(\hat{I}^p)_t\vee B(3)^n(\hat{J}^q)_t\vee B(4)^n(\hat{I}^p)_t\vee B(4)^n(\hat{J}^q)_t\leq C\bar{r}_n
\end{equation*} 
for any $p,q,n\in\mathbb{N}$ and any $t>0$.
\end{enumerate}
\end{enumerate}

For simplicity of notation, set
\begin{align*}
\bar{M}^{ii'}:=\bar{M}^n_\alpha(\hat{\mathcal{I}})^i_t\bar{M}^{\prime n}_{\alpha'}(\hat{\mathcal{I}})^{i'}_t-\langle\bar{M}^n_\alpha(\hat{\mathcal{I}})^i, \bar{M}^{\prime n}_{\alpha'}(\hat{\mathcal{I}})^{i'}\rangle_t,\qquad
\bar{L}^{ij}:=\bar{M}^n_\alpha(\hat{\mathcal{I}})^i_t\bar{N}^{\prime n}_{\beta'}(\hat{\mathcal{J}})^{j'}_t-\langle\bar{M}^n_{\alpha}(\hat{\mathcal{I}})^i, \bar{N}^{\prime n}_{\beta'}(\hat{\mathcal{J}})^{j'}\rangle_t
\end{align*}
for each $i,i',j\in\mathbb{N}$ when we assume the condition [H]. In addition, for an $\mathbf{F}$-adapted process $Z$, we write $\widetilde{Z}_t:=Z_{(t-b_n^{\xi-1/2})_+}$. Then $\widetilde{Z}_t$ is clearly $\mathbf{G}^{(n)}$-adapted.

\begin{lem}\label{HYlem12.4}
Suppose that $[\mathrm{A}2]$, $[\mathrm{SA}4]$ and $[\mathrm{H}]$ hold. Let $r,s\in\mathbb{R}_+$ and $i,j,i',j',k,l,k',l'\in\mathbb{Z}_+$. 
\begin{enumerate}[\normalfont (a)]
\item For $|i-k|\geq k_n-1$, $|i'-k|\geq k_n-1$, $|i-k'|\geq k_n-1$ and $|i'-k'|\geq k_n-1$,
\begin{align*}
E\left[\int_0^t\int_0^t\Xi^{iji'j'}_{s-}\Xi^{klk'l'}_{r-}\bar{M}^{ii'}_{s-} \bar{M}^{kk'}_{r-}\widetilde{H(2)}_sD(2)^n_s\widetilde{H(2)}_rD(2)^n_r \mathrm{d}B(2)^n_s\mathrm{d}B(2)^n_r\right]=0.
\end{align*}
\item For $|i-k|\geq k_n-1$, $|i-l'|\geq k_n-1$, $|j'-k|\geq k_n-1$ and $|j'-l'|\geq k_n-1$,
\begin{align*}
E\left[\int_0^t\int_0^t\Lambda^{iji'j'}_{s-}\Lambda^{klk'l'}_{r-}\bar{L}^{ij'}_{s-} \bar{L}^{kl'}_{r-}\widetilde{H(4)}_sD(4)^n_s\widetilde{H(4)}_rD(4)^n_r \mathrm{d}B(4)^n_s\mathrm{d}B(4)^n_r\right]=0.
\end{align*} 
\end{enumerate}
\end{lem}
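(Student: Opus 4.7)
The strategy is to exploit the fact that $\bar{M}^{ii'}$ and $\bar{M}^{kk'}$ (respectively $\bar{L}^{ij'}$ and $\bar{L}^{kl'}$) are $\mathbf{F}^{(0)}$-local martingales vanishing at zero, and to extract this martingale structure via integration by parts:
\[
\bar{M}^{kk'}_r = \int_0^r \bar{M}^n_\alpha(\widehat{\mathcal{I}})^k_{u-}\, d\bar{M}^{\prime n}_{\alpha'}(\widehat{\mathcal{I}})^{k'}_u + \int_0^r \bar{M}^{\prime n}_{\alpha'}(\widehat{\mathcal{I}})^{k'}_{u-}\, d\bar{M}^n_\alpha(\widehat{\mathcal{I}})^k_u,
\]
and analogously for $\bar{M}^{ii'}_s$. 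For part~(a), the first step is to substitute this representation into the target expectation, producing a quadruple integral involving two Stieltjes integrations against $dB(2)^n_s\, dB(2)^n_r$ and two stochastic integrations against $dM^{(\cdot)}_u\, dM^{(\cdot)}_v$ (where $M^{(\cdot)}$ is one of $M^n$ or $M^{\prime n}$).

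The plan is then to invoke the stochastic Fubini theorem to move the stochastic integrations to the outside. Justification uses the $L^4$-bounds in $[\mathrm{H}](\mathrm{i})$ via Burkholder--Davis--Gundy, combined with the structural assumption $[\mathrm{H}](\mathrm{iv})$ on $B(2)^n$. After rearrangement, the expectation becomes a finite sum of terms of the form $E[\int_0^t \Phi_u\, dM^{(\cdot)}_u]$, where each integrand $\Phi_u$ is built from $\bar{M}^{ii'}_{u-}$, the $\Xi$-factors, $\widetilde{H(2)}$, $D(2)^n$, and iterated integrals against $dB(2)^n$. The crucial step is then to verify $\mathbf{F}^{(0)}$-predictability of $\Phi_u$ on the support of the integrator. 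Here Lemma~\ref{HYlem12.3} is the key tool: it upgrades the $\mathbf{G}^{(n)}$-adaptedness of the auxiliary factors into genuine $\mathcal{F}_{R^\wedge(\cdot,\cdot)}$-measurability once multiplied by $\bar{K}^{\cdot\cdot}_{s-}\widehat{I}^{\cdot}_{s-}$ (or the $\widehat{J}$-analog). Combined with the $\mathbf{G}^{(n)}$-adaptedness of $\widetilde{H(2)}$ (from the shifted-time construction) and of $D(2)^n$ (from $[\mathrm{H}](\mathrm{ii})$), and the index-separation $|i-k|, |i'-k|, |i-k'|, |i'-k'|\geq k_n-1$ together with $(\ref{spp})$, this places $\Phi_u$ in a $\sigma$-algebra generated strictly before the support of $dM^{(\cdot)}_u$, so the expectation vanishes by the martingale property of the integrator.

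Part~(b) follows the same blueprint, with $\bar{M}^{kk'}$ replaced by $\bar{L}^{kl'}$: the integration-by-parts representation now produces stochastic integrals against both $\bar{M}^n_\alpha(\widehat{\mathcal{I}})^k$ and $\bar{N}^{\prime n}_{\beta'}(\widehat{\mathcal{J}})^{l'}$, and the analogous measurability argument uses Lemma~\ref{HYlem12.3} applied simultaneously in the $\widehat{\mathcal{I}}$- and $\widehat{\mathcal{J}}$-directions under $|i-k|, |i-l'|, |j'-k|, |j'-l'|\geq k_n-1$. The main anticipated obstacle in both parts is the bookkeeping required to track which $\sigma$-algebra freezes which factor through the repeated applications of (stochastic) Fubini, since each of the several cross-brackets produced by the integrations by parts requires a slightly different conditioning argument; the combination of $[\mathrm{A}2]$, the shifted-time gap $b_n^{\xi-1/2}$, and the estimate $(\ref{spp})$ is what makes all the required measurability statements compatible.
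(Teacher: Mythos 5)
Your proposal takes a genuinely different route from the paper's, but as sketched it has a gap that the paper's argument sidesteps. The paper's proof is a direct conditioning argument: first observe that, since $B(2)^n$ is either deterministic or a $\mathbf{G}^{(n)}$-adapted point process, it suffices to show that the integrand has zero expectation when evaluated at bounded $\mathbf{G}^{(n)}$-stopping times $\sigma,\tau$ in place of $s,r$. Then, assuming WLOG that $i\geq i'\vee k\vee k'$ (which by the separation hypothesis forces $i\geq (k\vee k')+k_n-1$), Lemma~\ref{HYlem12.3} shows that all the $\Xi$-, $\widetilde{H(2)}$- and $D(2)^n$-factors are $\mathcal{F}$-measurable at a time $\leq\widehat{S}^i$, while $\bar{M}^{kk'}$ is a process that is already \emph{constant} past $\widehat{S}^{(k\vee k')+k_n-1}\leq\widehat{S}^i$ and hence $\mathcal{F}_{\widehat{S}^i}$-measurable. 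The only non-$\mathcal{F}_{\widehat{S}^i}$-measurable factor left is $\bar{M}^{ii'}_{\sigma-}$, and since $\bar{M}^{ii'}$ is a martingale vanishing up to $\widehat{S}^{i\vee i'}=\widehat{S}^i$, the optional sampling theorem gives $E[\bar{M}^{ii'}_{\sigma-}\mid\mathcal{F}_{\widehat{S}^i}]=0$, finishing the proof. No integration by parts and no Fubini is needed.

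Your integration-by-parts-and-stochastic-Fubini route has a concrete difficulty you do not address. If you only decompose $\bar{M}^{kk'}$ and try to move the resulting $dM^{(\cdot)}_u$-integral to the outside, the integrand still contains $\bar{M}^{ii'}_{s-}$ with $s$ ranging over all of $[0,t]$ (including $s>u$), which destroys $\mathcal{F}_u$-measurability of $\Phi_u$ and makes the martingale argument break down. If you instead decompose both $\bar{M}^{ii'}$ and $\bar{M}^{kk'}$, the product of two stochastic integrals produces a double stochastic integral whose expectation is governed by the \emph{diagonal} term $E\!\left[\int F(u,u)\,\mathrm{d}\langle M^{(a)},M^{(b)}\rangle_u\right]$, and this does not vanish by predictability alone; you would still need to argue that the diagonal is empty, and the only way to do that is via the WLOG max-index observation that $\bar{M}^{kk'}$ is frozen before $\bar{M}^{ii'}$ starts to move --- which is exactly the key idea of the paper's proof, but your sketch does not isolate it. So your proposal, while pointing at the correct ingredients (Lemma~\ref{HYlem12.3}, $\mathbf{G}^{(n)}$-adaptedness, index separation, optional sampling), does not assemble them into a sound argument; the missing step is the WLOG reduction to the largest index followed by a single direct conditioning on $\mathcal{F}_{\widehat{S}^i}$, which makes the IBP/Fubini machinery unnecessary.
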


\begin{proof}
(a) Without loss of generality, we may assume that $i\geq i'\vee k\vee k'$, so we have $i\geq k\vee k'+k_n-1$. Since $B(2)^n$ is a deterministic nondecreasing process or a $\mathbf{G}^{(n)}$-adapted point process, it is sufficient to show that
\begin{equation}
E\left[\Xi^{iji'j'}_{\sigma-}\Xi^{klk'l'}_{\tau-}\bar{M}^{ii'}_{\sigma-} \bar{M}^{kk'}_{\tau-}\widetilde{H(2)}_\sigma D(2)^n_\sigma\widetilde{H(2)}_\tau D(2)^n_\tau 1_{\{\sigma\vee\tau\leq t\}}\right]=0
\end{equation}
for any bounded $\mathbf{G}^{(n)}$-stopping times $\sigma,\tau$. Lemma \ref{HYlem12.3} implies that $\Xi^{iji'j'}_{\sigma-}\widetilde{H(2)}_\sigma D(2)^n_\sigma1_{\{\sigma\leq t\}}$ is $\mathcal{F}_{R^\wedge(i',j\wedge j')}$-measurable and $\Xi^{klk'l'}_{\tau-}\widetilde{H(2)}_\tau D(2)^n_\tau1_{\{\tau\leq t\}}$ is $\mathcal{F}_{R^\wedge(k\wedge k',l\wedge l')}$-measurable. Moreover, $\bar{M}^{kk'}_r$ is $\mathcal{F}_{\hat{S}^{k\vee k'+k_n-1}}$-measurable by definition. Since $R^\wedge(i,j\wedge j')\leq\hat{S}^i$ and $R^\wedge(k\wedge k',l\wedge l')\leq\hat{S}^k\leq\hat{S}^i$, we obtain
\begin{align*}
&E\left[\Xi^{iji'j'}_{\sigma-}\Xi^{klk'l'}_{\tau-}\bar{M}^{ii'}_{\sigma-} \bar{M}^{kk'}_{\tau-}\widetilde{H(2)}_\sigma D(2)^n_\sigma\widetilde{H(2)}_\tau D(2)^n_\tau1_{\{\sigma\vee\tau\leq t\}}\right]\\
=&E\left[\Xi^{iji'j'}_{\sigma-}\Xi^{klk'l'}_{\tau-}\bar{M}^{kk'}_{\tau-}\widetilde{H(2)}_\sigma D(2)^n_\sigma\widetilde{H(2)}_\tau D(2)^n_\tau1_{\{\sigma\vee\tau\leq t\}} E\left[\bar{M}^{ii'}_{\sigma-} |\mathcal{F}_{\hat{S}^i}\right]\right].
\end{align*}
Since $\bar{M}^{ii'}$ is a martingale by the definition, the optional sampling theorem provides
\begin{align*}
E\left[\bar{M}^{ii'}_{\sigma-} |\mathcal{F}_{\hat{S}^i}\right]=\lim_{\delta\downarrow 0}E\left[\bar{M}^{ii'}_{(\sigma-\delta)_+} |\mathcal{F}_{\hat{S}^i}\right]
=\lim_{\delta\downarrow 0}\bar{M}^{ii'}_{\hat{S}^i\wedge(\sigma-\delta)_+}=0,
\end{align*}
which concludes the proof of (a).

(b) Similar to the proof of (a). 
\end{proof}

\begin{lem}\label{estXi}
Suppose that $[\mathrm{A}2]$, $[\mathrm{SA}4]$ and $[\mathrm{H}]$ are satisfied. Let $t\in\mathbb{R}_+$. Then:
\begin{enumerate}[\normalfont (a)]
\item There is a positive constant $C$ such that
\begin{align*}
\sum_{i,i',j,j'}\int_0^t\Xi^{iji'j'}_{s-}\mathrm{d}B(2)^n_s\leq Ck_n^4,\qquad
\sum_{i',j,j'}\int_0^t\Xi^{iji'j'}_s\mathrm{d}B(2)^n_s\leq C\bar{r}_nk_n^4.
\end{align*}
\item There is a positive constant $C$ such that
\begin{align*}
\sum_{i,i',j,j'}\int_0^t\Lambda^{iji'j'}_{s-}\mathrm{d}B(4)^n_s\leq Ck_n^4,\qquad
\sum_{i',j,j'}\int_0^t\Lambda^{iji'j'}_{s-}\mathrm{d}B(4)^n_s\leq C\bar{r}_nk_n^4.
\end{align*}
\end{enumerate} 
\end{lem}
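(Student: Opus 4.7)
The plan is to bound each sum by repeatedly exploiting three ingredients: the boundedness $\|\alpha'\|_\infty,\|\beta\|_\infty,\|\beta'\|_\infty<\infty$; the overlap bookkeeping from Proposition \ref{advantage}(b) and $(\ref{sumbarK})$, namely $\bar{K}^{ij}=0$ whenever $|i-j|>k_n$ and $\sum_i\bar{K}^{ij},\sum_j\bar{K}^{ij}\leq 2k_n+1$; and the two-scale control in [H](iv), which supplies both the aggregate bound $B(k)^n_t\leq C_t$ and the per-interval bound $B(2)^n(\hat{J}^q)_t\vee B(4)^n(\hat{I}^p)_t\vee B(4)^n(\hat{J}^q)_t\leq C\bar{r}_n$.

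For (a), I would first factor out $\|\beta\|_\infty\|\beta'\|_\infty$ and sum over $i$ using $\sum_i\bar{K}^{ij}\leq 2k_n+1$, and over $j'$ using $\sum_{j'}\bar{K}^{i'j'}\leq 2k_n+1$, producing a prefactor $(2k_n+1)^2$. The first inequality then reduces to estimating $\sum_{j,j',q,q'}\int_0^t\hat{J}^{j+q}_{s-}\hat{J}^{j'+q'}_{s-}\,dB(2)^n_s$. Since this product of indicators vanishes unless $j+q=j'+q'$, and for each $m$ there are at most $k_n$ pairs $(j',q')\in\mathbb{N}\times\{0,\ldots,k_n-1\}$ with $j'+q'=m$, the expression collapses to at most $k_n\sum_{j,q}B(2)^n(\hat{J}^{j+q})_t\leq k_n^2 B(2)^n_t\leq Ck_n^2$, yielding the overall bound $Ck_n^4$. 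For the second inequality (with $i$ fixed), the sum over $i'$ still contributes $2k_n+1$, but the sum over $j$ is restricted to $|i-j|\leq k_n$ by $\bar{K}^{ij}=0$ outside this range, giving only $O(k_n)$ live values of $j$; crucially, for each such $(j,q)$ we now replace the aggregate estimate $B(2)^n_t\leq C_t$ with the per-interval bound $B(2)^n(\hat{J}^{j+q})_t\leq C\bar{r}_n$, which produces the extra factor $\bar{r}_n$ and gives $C\bar{r}_nk_n^4$.

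Part (b) proceeds along the same lines with $\Lambda^{iji'j'}$ in place of $\Xi^{iji'j'}$, the only new ingredient being that the relevant pairing constraint now comes from Proposition \ref{advantage}(b) applied to $\hat{I}^{i'+p'}\cap\hat{J}^{j+q}\neq\emptyset$, forcing $|i'+p'-(j+q)|\leq 1$. After absorbing $\|\alpha'\|_\infty\|\beta\|_\infty$ and the two row/column $\bar{K}$ sums, one pairs each $(i',p')$ with $i'+p'=n$ (at most $k_n$ choices per $n$) with the handful of $(j,q)$ satisfying $j+q\in\{n-1,n,n+1\}$, and bounds $\int_0^t\hat{I}^{i'+p'}_{s-}\hat{J}^{j+q}_{s-}\,dB(4)^n_s$ by $B(4)^n(\hat{I}^{i'+p'})_t$. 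Aggregating through $\sum_n B(4)^n(\hat{I}^n)_t\leq B(4)^n_t\leq C_t$ produces the first inequality, while using the per-interval bound $B(4)^n(\hat{I}^{i'+p'})_t\leq C\bar{r}_n$ together with the restriction $|i-j|\leq k_n$ yields the second.

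The main obstacle is not technical depth but careful bookkeeping: every factor $k_n$ in the final count must be attributable to a single identifiable source (either the range of an averaging index $p',q,q'$, a pairing constraint such as $j+q=j'+q'$ or $|i'+p'-(j+q)|\leq 1$, or the restriction $|i-j|\leq k_n$ coming from $\bar{K}^{ij}$), and one must avoid double counting when several constraints act simultaneously on the same pair of indices. A careless ordering of the sums can easily leak an extra $k_n$, which would spoil the estimate.
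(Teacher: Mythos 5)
Your proposal is correct and essentially reproduces the paper's argument: pull out the $\bar K$ sums for a $k_n^2$ prefactor, use the disjointness of the $\hat J$ (resp.\ $\hat I$) intervals to collapse the remaining sums, and trade the aggregate bound $B(k)^n_t\le C_t$ in [H](iv) for the per-interval bound $\le C\bar r_n$ once $i$ is frozen, producing the extra $\bar r_n$. The only issue is a notational slip: in part (a) you say you sum over $i$ and $j'$, but since $j'$ is still live in $\hat J^{j'+q'}$ the second absorbed index must be $i'$ — as your subsequent expression $\sum_{j,j',q,q'}\int_0^t\hat J^{j+q}_{s-}\hat J^{j'+q'}_{s-}\,\mathrm{d}B(2)^n_s$ implicitly assumes.
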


\begin{proof}
Since $\sum_i\bar{K}^{ij}_{s-}\lesssim k_n$, we have $\sum_{i,i',j,j'}\Xi^{iji'j'}_{s-}\lesssim k_n^2\sum_{j,j'}\sum_{q,q'=0}^{k_n-1}\hat{J}^{j+q}_{s-}\hat{J}^{j'+q'}_{s-}$. Therefore, we obtain
\begin{align*}
\sum_{i,i',j,j'}\int_0^t\Xi^{iji'j'}_{s-}\mathrm{d}B(2)^n_s
\lesssim k_n^2\sum_{q,q'=0}^{k_n-1}\int_0^t\sum_{j,j'}\hat{J}^{j+q}_{s-}\hat{J}^{j'+q'}_{s-}\mathrm{d}B(2)^n_s
\leq k_n^2\sum_{q,q'=0}^{k_n-1}B(2)^n_t\lesssim k_n^4
\end{align*}
by [H](iv).

On the other hand, since $$\sum_{i',j,j'}\Xi^{iji'j'}_{s-}\lesssim\sum_{q,q'=0}^{k_n-1}\sum_{j,j'}\bar{K}^{ij}_{s-}\hat{J}^{j+q}_{s-}\hat{J}^{j'+q'}_{s-}\sum_{i'}\bar{K}^{i'j'}_{s-}\lesssim k_n\sum_{q,q'=0}^{k_n-1}\sum_{j,j'}\bar{K}^{ij}_{s-}\hat{J}^{j+q}_{s-}\hat{J}^{j'+q'}_{s-},$$
we have
\begin{align*}
\sum_{i',j,j'}\int_0^t\Xi^{iji'j'}_s\mathrm{d}B(2)^n_s
\lesssim k_n\sum_{q,q'=0}^{k_n-1}\sum_{j,j'}\int_0^t\bar{K}^{ij}_{s-}\hat{J}^{j+q}_{s-}\hat{J}^{j'+q'}_{s-}\mathrm{d}B(2)^n_s
\leq k_n^2\sum_{q=0}^{k_n-1}\sum_{j}\bar{K}^{ij}_{t}B(2)^n(\hat{J}^{j+q})_t
\end{align*}
because $\sum_{j'}\hat{J}^{j+q}_{s-}\hat{J}^{j'+q'}_{s-}\leq\hat{J}^{j+q}_{s-}$ and $\bar{K}^{ij}_{s-}\leq\bar{K}^{ij}_t$ for $s\in[0,t]$. Therefore, we obtain $\sum_{i',j,j'}\int_0^t\Xi^{iji'j'}_s\mathrm{d}B(2)^n_s\lesssim\bar{r}_nk_n^4$ due to [H](iv). Consequently, we complete the proof of (a). Similarly we can also prove (b).
\end{proof}


\begin{lem}\label{HYlem12.5}
Suppose that $[\mathrm{A}2]$, $[\mathrm{SA}4]$ and $[\mathrm{H}]$ are satisfied. Let $r\in[2,4]$.
\begin{enumerate}[\normalfont (a)]
\item There exists a positive constant $C_r$ such that $E\left[|\bar{M}^{ii'}|^r_t\big|\mathcal{F}_{\widehat{S}^{i\wedge i'}}\right]\leq C_r(k_n\bar{r}_n)^r$ for any $t\in\mathbb{R}_+$ and any $i,i'\in\mathbb{N}$.
\item There exists a positive constant $C'_r$ such that $E\left[|\bar{L}^{ij}|^r_t\big|\mathcal{F}_{R^{\wedge}(i,j)}\right]\leq C'_r(k_n\bar{r}_n)^r$ for any $t\in\mathbb{R}_+$ and any $i,j\in\mathbb{N}$. 
\end{enumerate}
\end{lem}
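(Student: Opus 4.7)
My plan is to split $\bar M^{ii'}$ into its ``naive product'' and ``angle-bracket'' components. Write $X:=\bar M^n_\alpha(\widehat{\mathcal I})^i$ and $Y:=\bar M^{\prime n}_{\alpha'}(\widehat{\mathcal I})^{i'}$, so that $\bar M^{ii'}=XY-\langle X,Y\rangle$, and use
\[
|\bar M^{ii'}|^r_t \;\leq\; 2^{r-1}\bigl(|XY|^r_t+|\langle X,Y\rangle|^r_t\bigr)
\]
to treat the two pieces separately.

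For the predictable bracket, the disjointness $\widehat I^{i+p}\cap\widehat I^{i'+p'}=\emptyset$ for $i+p\neq i'+p'$ collapses $\langle X,Y\rangle_t$ to a sum of at most $k_n$ nonzero terms of the form $\alpha^n_p\alpha^{\prime n}_{p'}\langle M^n,M^{\prime n}\rangle(\widehat I^{i+p})_t$. By the representation in [H](ii) and boundedness of $H(1),D(1)^n$, each such term is bounded in absolute value by a constant times $B(1)^n(\widehat I^{i+p})_t$, which by [H](iv) is at most $C\bar r_n$. Summing $k_n$ such terms gives $|\langle X,Y\rangle|^r_t\leq C_r(k_n\bar r_n)^r$ a.s. For the product, I use $|XY|^r_t\leq (|X|^{*}_t)^r(|Y|^{*}_t)^r$ (with $|\cdot|^{*}_t$ the running maximum), conditional Cauchy--Schwarz, and conditional BDG applied to the shifted martingales $X-X_{\widehat S^{i\wedge i'}\wedge\cdot}$ and $Y-Y_{\widehat S^{i\wedge i'}\wedge\cdot}$, together with an elementary single-interval BDG bound handling the contribution on $[0,\widehat S^{i\wedge i'}]$ (where only the $p=0$ term of $X$ or $Y$ is active). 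This reduces matters to estimating $E[[X]^r_t\mid\mathcal F_{\widehat S^{i\wedge i'}}]$, and analogously for $Y$. Since the $\widehat I^{i+p}$ are pairwise disjoint one has $[X]_t=\sum_{p=0}^{k_n-1}(\alpha^n_p)^2[M^n](\widehat I^{i+p})_t$, and Jensen's inequality (in the form $(\sum a_p)^r\leq k_n^{r-1}\sum a_p^r$) together with [H](i) (which, via Jensen on the fourth-moment bound and $r/4\leq 1$, yields $E[[M^n](\widehat I^{i+p})_t^r\mid\mathcal F_{\widehat S^{i+p-1}}]\leq C\bar r_n^r$) yields, by tower and summing the $k_n$ terms, $E[[X]^r_t\mid\mathcal F_{\widehat S^{i\wedge i'}}]\leq C(k_n\bar r_n)^r$. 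Putting the two pieces together gives (a).

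For (b) the argument is structurally identical with $Y$ replaced by $Y':=\bar N^{\prime n}_{\beta'}(\widehat{\mathcal J})^{j'}$. The single new point is the angle-bracket bound: now
\[
\langle X,Y'\rangle_t=\sum_{p,q'}\alpha^n_p\beta^{\prime n}_{q'}\langle M^n,N^{\prime n}\rangle(\widehat I^{i+p}\cap\widehat J^{j'+q'})_t,
\]
but by Proposition \ref{advantage}(b) the intersection is nonempty only when $|(i+p)-(j'+q')|\leq 1$, so at most $O(k_n)$ pairs contribute, and for each such pair [H](ii),(iv) again give $|\langle M^n,N^{\prime n}\rangle(\widehat I^{i+p}\cap\widehat J^{j'+q'})_t|\leq C\bar r_n$. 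The same bound $C(k_n\bar r_n)^r$ on the conditional $r$-th moment of $[Y']$ (now via $B(2)^n$) follows as before.

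The main technical annoyance throughout is the conditioning: $\mathcal F_{\widehat S^{i\wedge i'}}$ (resp.\ $\mathcal F_{R^\wedge(i,j)}$) is in general neither contained in nor contains the natural starting $\sigma$-algebras $\mathcal F_{\widehat S^{i-1}},\mathcal F_{\widehat S^{i'-1}}$ at which $X$ and $Y$ begin to accumulate. This is what forces the two-stage treatment of $|X|^{*}_t$ described above (one stage a single-interval BDG bound on $[0,\widehat S^{i\wedge i'}]$ from [H](i), the other a conditional BDG on the shifted martingale); once this bookkeeping is in place, the summation over the $k_n$ summands in $[X]_t$ is routine and nothing beyond [H](i),(ii),(iv) is required.
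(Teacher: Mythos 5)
Your proposal takes essentially the same route as the paper. The angle bracket is bounded pointwise by $Ck_n\bar{r}_n$ (you via the representation in [H](ii),(iv), the paper via $\langle X\rangle_t\lesssim k_n\bar{r}_n$, $\langle Y\rangle_t\lesssim k_n\bar{r}_n$ and Kunita--Watanabe, which is the same estimate), and the naive-product piece is handled by conditional Cauchy--Schwarz, conditional BDG, the discrete Jensen step $(\sum_{p} a_p)^r\le k_n^{r-1}\sum_{p} a_p^r$, and [H](i). Part (b) is treated identically. The one place you go beyond the paper is the discussion of the conditioning level, and there I urge caution: the two-stage BDG argument you outline does not actually close the gap you identify. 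Once one conditions on $\mathcal{F}_{\widehat{S}^{i\wedge i'}}$, the piece $X_{\widehat{S}^{i\wedge i'}\wedge\cdot}$ (supported on $[\widehat{S}^{i-1},\widehat{S}^{i\wedge i'}]$) is $\mathcal{F}_{\widehat{S}^{i\wedge i'}}$-measurable, hence fixed given that $\sigma$-field, and it is not pointwise dominated by $C(k_n\bar{r}_n)^{r/2}$; an unconditional BDG bound on that interval is of no help for an almost-sure conditional estimate, and the case $i=i'$ makes the difficulty concrete. In fairness, the paper's own proof applies conditional BDG at $\mathcal{F}_{\widehat{S}^i}$, where $X_{\widehat{S}^i}$ may already be nonzero, and so carries the same imprecision. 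The natural repair is to condition at $\mathcal{F}_{\widehat{S}^{(i\wedge i')-1}}$ in (a) (and the corresponding $\sigma$-field in (b)), where both $X$ and $Y$ vanish, so that conditional BDG applies outright and [H](i) plugs in via the tower property.
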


\begin{proof}

(a) First, since $\langle \bar{M}^n_\alpha(\mathcal{I})^i\rangle_t=\sum_{p=0}^{k_n-1}(\alpha^n_p)^2\langle M^n\rangle(\widehat{I}^{i+p})_t$, we have $\langle \bar{M}^n_\alpha(\mathcal{I})^i\rangle_t\lesssim k_n\bar{r}_n$. Similarly we can show $\langle \bar{M}^{\prime n}_{\alpha'}(\mathcal{I})^{i'}\rangle_t\lesssim k_n\bar{r}_n$. Therefore, by the Kunita-Watanabe inequality we obtain $$E\left[\left|\langle \bar{M}^n_\alpha(\mathcal{I})^i,\bar{M}^{\prime n}_{\alpha'}(\mathcal{I})^{i'}\rangle_t\right|^r\big|\mathcal{F}_{\widehat{S}^{i\wedge i'}}\right]\lesssim(k_n\bar{r}_n)^r.$$

Next, the Burkholder-Davis-Gundy inequality and [H](i) yield
\begin{align*}
E\left[\left|\bar{M}^n_\alpha(\mathcal{I})^i\right|^{2r}\big|\mathcal{F}_{\widehat{S}^{i}}\right]
\lesssim &E\left[\left\{\sum_{p=0}^{k_n-1}(\alpha^n_p)^2[M^n](\widehat{I}^{i+p})_t\right\}^{r}\big|\mathcal{F}_{\widehat{S}^{i}}\right]
\leq k_n^{r-1}\sum_{p=0}^{k_n-1}(\alpha^n_p)^2 E\left[[M^n](\widehat{I}^{i+p})_t^r\big|\mathcal{F}_{\widehat{S}^{i}}\right]\\
\lesssim &(k_n\bar{r}_n)^r.
\end{align*}
Similarly we can also show $E\left[\left|\bar{M}^{\prime n}_{\alpha'}(\mathcal{I})^{i'}\right|^{2r}\big|\mathcal{F}_{\widehat{S}^{i'}}\right]\lesssim(k_n\bar{r}_n)^r$. Therefore, by the Schwarz inequality we obtain
\begin{align*}
E\left[\left|\bar{M}^n_\alpha(\mathcal{I})^i\bar{M}^{\prime n}_{\alpha'}(\mathcal{I})^{i'}\right|^{r}\big|\mathcal{F}_{\widehat{S}^{i'}}\right]\lesssim(k_n\bar{r}_n)^r.
\end{align*}
Consequently, we complete the proof of (a). 

(b) Similar to the proof of (a).
\end{proof}

\if0
\begin{lem}\label{HYlem12.5}
Suppose that $[\mathrm{A}2]$, $[\mathrm{SA}4]$ and $[\mathrm{H}]$ are satisfied. Let $t\in\mathbb{R}_+$. Then:
\begin{enumerate}
\item[$(\mathrm{a})$] There is a positive constant $C$ such that
\begin{align*}
E\left[\int_0^t(\Xi^{iji'j'}_{s-})^2|\bar{M}^{ii'}_{s-}|^2\mathrm{d}B(2)^n_s\right]\leq C(k_n\bar{r}_n)^2 E\left[\int_0^t(\Xi^{iji'j'}_{s-})^2\mathrm{d}B(2)^n_s\right]
\end{align*}
for any $i,j,i',j'\in\mathbb{N}$.
\item[$(\mathrm{b})$] There is a positive constant $C$ such that
\begin{align*}
E\left[\int_0^t(\Lambda^{iji'j'}_{s-})^2|\bar{L}^{ij'}_{s-}|^2\mathrm{d}B(4)^n_s\right]\leq C(k_n\bar{r}_n)^2 E\left[\int_0^t(\Lambda^{iji'j'}_{s-})^2\mathrm{d}B(4)^n_s\right]
\end{align*}
for any $i,j,i',j'\in\mathbb{N}$.
\end{enumerate} 
\end{lem}

\begin{proof}
(a) Since $B(2)^n$ is a deterministic nondecreasing process or a $\mathbf{G}^{(n)}$-adapted point process, it is sufficient to show that there is a positive constant $C$ such that
\begin{align*}
E\left[(\Xi^{iji'j'}_{\tau-})^2|\bar{M}^{ii'}_{\tau-}|^2 1_{\{\tau\leq t\}}\right]\leq C(k_n\bar{r}_n)^2 E\left[(\Xi^{iji'j'}_{\tau-})^2 1_{\{\tau\leq t\}}\right]
\end{align*}
for any $i,j,i',j'\in\mathbb{N}$ and any $\mathbf{G}^{(n)}$-stopping time $\tau$. Lemma \ref{HYlem12.3} implies that $[(\Xi^{iji'j'}_{\tau-})^21_{\{\tau\leq t\}}$ is $\mathcal{F}_{\hat{S}^{i\wedge i'}}$-measurable, hence we have
\begin{align*}
E\left[(\Xi^{iji'j'}_{\tau-})^2|\bar{M}^{ii'}_{\tau-}|^2 1_{\{\tau\leq t\}}\right]=E\left[(\Xi^{iji'j'}_{\tau-})^21_{\{\tau\leq t\}}E\left[|\bar{M}^{ii'}_{\tau-}|^2 |\mathcal{F}_{\hat{S}^{i\wedge i'}}\right]\right].
\end{align*}
Combining this with [H](i), we obtain the desired result.

(b) Similar to the proof of (a).
\end{proof}
\fi


\begin{lem}\label{HYlem12.6and12.8}
Suppose $[\mathrm{A}2]$, $[\mathrm{SA}4]$ and $[\mathrm{H}]$. Then we have
\begin{align*}
&b_n^{-1/2}\sum_{i,j,i',j'}(\bar{K}^{ij}_-\bar{K}^{i'j'}_-)\bullet\langle\bar{L}_{\alpha,\beta}^{ij}(M^n,N^n),\bar{L}_{\alpha',\beta'}^{i'j'}(M^{\prime n},N^{\prime n})\rangle_t\\
=&b_n^{-1/2}\sum_{i,j,i',j'}(\bar{K}^{ij}_-\bar{K}^{i'j'}_-)\bullet V^{iji'j'}_{\alpha,\beta;\alpha',\beta'}(M^n,N^n;M^{\prime n},N^{\prime n})_t+o_p\left( k_n^4\right)
\end{align*}
as $n\to\infty$ for every $t\in\mathbb{R}_+$.
\end{lem}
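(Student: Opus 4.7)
The plan is to expand the covariation algebraically so that the difference becomes a sum of stochastic integrals of martingale integrands against continuous predictable covariations, and then to control those by a second-moment estimate that exploits the orthogonality built into Lemma~\ref{HYlem12.4}. Using $\bar{L}^{ij}_{\alpha,\beta}(M^n,N^n)=\bar{M}^n_\alpha(\hat{\mathcal{I}})^i_-\bullet\bar{N}^n_\beta(\hat{\mathcal{J}})^j+\bar{N}^n_\beta(\hat{\mathcal{J}})^j_-\bullet\bar{M}^n_\alpha(\hat{\mathcal{I}})^i$ (and its analog for $\bar{L}^{i'j'}_{\alpha',\beta'}(M^{\prime n},N^{\prime n})$), the bilinearity of $\langle\cdot,\cdot\rangle$, and the identity $\langle A,B\rangle\langle C,D\rangle=\langle A,B\rangle_-\bullet\langle C,D\rangle+\langle C,D\rangle_-\bullet\langle A,B\rangle$ applied to the two products defining $V^{iji'j'}$, I obtain
\begin{align*}
\langle\bar{L}^{ij},\bar{L}^{i'j'}\rangle-V^{iji'j'}
&=\bar{M}^{ii'}_-\bullet\langle\bar{N}^n_\beta(\hat{\mathcal{J}})^j,\bar{N}^{\prime n}_{\beta'}(\hat{\mathcal{J}})^{j'}\rangle
+\bar{N}^{jj'}_-\bullet\langle\bar{M}^n_\alpha(\hat{\mathcal{I}})^i,\bar{M}^{\prime n}_{\alpha'}(\hat{\mathcal{I}})^{i'}\rangle\\
&\quad+\bar{L}^{ij'}_-\bullet\langle\bar{N}^n_\beta(\hat{\mathcal{J}})^j,\bar{M}^{\prime n}_{\alpha'}(\hat{\mathcal{I}})^{i'}\rangle
+\bar{L}^{\prime i'j}_-\bullet\langle\bar{M}^n_\alpha(\hat{\mathcal{I}})^i,\bar{N}^{\prime n}_{\beta'}(\hat{\mathcal{J}})^{j'}\rangle,
\end{align*}
where $\bar{N}^{jj'}$ and $\bar{L}^{\prime i'j}$ are defined analogously to $\bar{M}^{ii'}$ and $\bar{L}^{ij'}$ and are martingales. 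It therefore suffices to show that each of these four terms, summed over $(i,j,i',j')$ with weights $\bar{K}^{ij}_-\bar{K}^{i'j'}_-$, contributes an error of order $o_p(b_n^{1/2}k_n^4)$; the four cases are treated analogously using the respective parts of Lemmas~\ref{HYlem12.4},~\ref{HYlem12.5} and~\ref{estXi}, and I only detail the first.

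For the first term, [H](ii) gives
\begin{equation*}
\mathfrak{R}^n_t:=\sum_{i,j,i',j'}\int_0^t\Xi^{iji'j'}_{s-}\bar{M}^{ii'}_{s-}H(2)_sD(2)^n_s\,\mathrm{d}B(2)^n_s.
\end{equation*}
To exploit [A2], I would replace $H(2)_s$ by its left-shifted version $\widetilde{H(2)}_s=H(2)_{(s-b_n^{\xi-1/2})_+}$, so that the whole integrand becomes $\mathbf{G}^{(n)}$-adapted at $s-$. The replacement error is estimated via Cauchy--Schwarz together with [H](iii) (yielding $E[|H(2)_s-\widetilde{H(2)}_s|^2]\lesssim b_n^{(\xi-1/2)(1-\lambda)}$ for arbitrarily small $\lambda>0$), Lemma~\ref{HYlem12.5}(a) (which gives $E[|\bar{M}^{ii'}_{s-}|^2]\lesssim(k_n\bar{r}_n)^2$), and Lemma~\ref{estXi}(a) (which bounds the combinatorial sum of $\Xi^{iji'j'}$ against $\mathrm{d}B(2)^n$); since $\xi>9/10$, these combine to give an error of order $o_p(b_n^{1/2}k_n^4)$.

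For the main term $\widetilde{\mathfrak{R}}^n_t$ after the replacement, I would compute the second moment directly:
\begin{equation*}
E\bigl[|\widetilde{\mathfrak{R}}^n_t|^2\bigr]=\sum_{\substack{i,j,i',j'\\ k,l,k',l'}}E\!\left[\int_0^t\!\!\int_0^t\Xi^{iji'j'}_{s-}\Xi^{klk'l'}_{r-}\bar{M}^{ii'}_{s-}\bar{M}^{kk'}_{r-}\widetilde{H(2)}_s\widetilde{H(2)}_rD(2)^n_sD(2)^n_r\,\mathrm{d}B(2)^n_s\mathrm{d}B(2)^n_r\right].
\end{equation*}
By Lemma~\ref{HYlem12.4}(a), every octuple with $\min(|i-k|,|i-k'|,|i'-k|,|i'-k'|)\geq k_n-1$ contributes zero. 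For the remaining near-diagonal octuples, Cauchy--Schwarz, Lemma~\ref{HYlem12.5}(a), and the two parts of Lemma~\ref{estXi}(a) (the former for the full index sum, the latter for a single row so as to extract an extra factor $\bar{r}_n$ from the restriction of one index-pair to a window of length $O(k_n)$) together give $E[|\widetilde{\mathfrak{R}}^n_t|^2]=o(b_nk_n^8)$, equivalently $\widetilde{\mathfrak{R}}^n_t=o_p(b_n^{1/2}k_n^4)$. The main obstacle is this last combinatorial bookkeeping: one must verify that the orthogonality savings from Lemma~\ref{HYlem12.4}, the $(k_n\bar{r}_n)^2$-bound from Lemma~\ref{HYlem12.5}, and the condition $\xi'>9/10$ together just suffice to beat the target $b_nk_n^8$.
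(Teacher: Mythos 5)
Your proposal is correct and essentially reproduces the paper's argument: the same four-term decomposition of $\langle\bar{L}^{ij},\bar{L}^{i'j'}\rangle-V^{iji'j'}$ into integrals of $\bar{M}^{ii'}$, $\bar{N}^{jj'}$, $\bar{L}^{ij'}$, $\bar{L}^{\prime i'j}$ against predictable brackets; the same use of [H](ii) to write $\Delta_1$ as $\sum\int\Xi^{iji'j'}\bar{M}^{ii'}H(2)D(2)^n\,\mathrm{d}B(2)^n$; the same $\widetilde{H(2)}$-shift driven by [A2]/[H](iii); and the same combination of Lemma~\ref{HYlem12.4}(a), Lemma~\ref{HYlem12.5}(a), and the two bounds of Lemma~\ref{estXi}(a) to get $E[\widetilde{\mathfrak{R}}^{n,2}_t]\lesssim k_n^8(k_n\bar{r}_n)^3$ and hence $o(b_nk_n^8)$ since $\xi'>9/10$. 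The only departure is organizational: you split $\Delta_{1,t}=\widetilde{\mathfrak{R}}^n_t+(\mathfrak{R}^n_t-\widetilde{\mathfrak{R}}^n_t)$ and bound each piece separately, whereas the paper squares $\Delta_{1,t}$ first and then decomposes into $\mathbf{I}+\mathbf{II}+\mathbf{III}+\mathbf{IV}$; your route avoids the two cross terms $\mathbf{II},\mathbf{III}$ and is marginally cleaner, but the estimates and index bookkeeping are identical.
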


\begin{proof}
We decompose the target quantity as
\begin{align*}
&\sum_{i,j,i',j'}(\bar{K}^{ij}_-\bar{K}^{i'j'}_-)\bullet\langle\bar{L}_{\alpha,\beta}^{ij}(M^n,N^n),\bar{L}_{\alpha',\beta'}^{i'j'}(M^{\prime n},N^{\prime n})\rangle_t-\sum_{i,j,i',j'}(\bar{K}^{ij}_-\bar{K}^{i'j'}_-)\bullet V^{iji'j'}_{\alpha,\beta;\alpha',\beta'}(M^n,N^n;M^{\prime n},N^{\prime n})_t\\
=&\Delta_{1,t}+\Delta_{2,t}+\Delta_{3,t}+\Delta_{4,t},
\end{align*}
where
\begin{align*}
\Delta_{1,t}=&\sum_{i,j,i',j'}(\bar{K}^{ij}_-\bar{K}^{i'j'}_-)\bullet(\{\bar{M}^n_\alpha(\hat{\mathcal{I}})^i_-\bar{M}^{\prime n}_{\alpha'}(\hat{\mathcal{I}})^{i'}_-\}\bullet\langle\bar{N}^n_\beta(\hat{\mathcal{J}})^j,\bar{N}^{\prime n}_{\beta'}(\hat{\mathcal{J}})^{j'}\rangle)_t\\
&-\sum_{i,j,i',j'}(\bar{K}^{ij}_-\bar{K}^{i'j'}_-)\bullet(\langle\bar{M}^n_\alpha(\hat{\mathcal{I}})^i,\bar{M}^{\prime n}_{\alpha'}(\hat{\mathcal{I}})^{i'}\rangle_-\bullet\langle\bar{N}^n_\beta(\hat{\mathcal{J}})^j,\bar{N}^{\prime n}_{\beta'}(\hat{\mathcal{J}})^{j'}\rangle)_t,\\
\Delta_{2,t}=&\sum_{i,j,i',j'}(\bar{K}^{ij}_-\bar{K}^{i'j'}_-)\bullet(\{\bar{N}^n_\beta(\hat{\mathcal{J}})^j_-\bar{N}^{\prime n}_{\beta'}(\hat{\mathcal{J}})^{j'}_-\}\bullet\langle\bar{M}^n_\alpha(\hat{\mathcal{I}})^i,\bar{M}^{\prime n}_{\alpha'}(\hat{\mathcal{I}})^{i'}\rangle)_t\\
&-\sum_{i,j,i',j'}(\bar{K}^{ij}_-\bar{K}^{i'j'}_-)\bullet(\langle\bar{N}^n_\beta(\hat{\mathcal{J}})^j,\bar{N}^{\prime n}_{\beta'}(\hat{\mathcal{J}})^{j'}\rangle_-\bullet\langle\bar{M}^n_\alpha(\hat{\mathcal{I}})^i,\bar{M}^{\prime n}_{\alpha'}(\hat{\mathcal{I}})^{i'}\rangle)_t
\end{align*}
and
\begin{align*}
\Delta_{3,t}=&\sum_{i,j,i',j'}(\bar{K}^{ij}_-\bar{K}^{i'j'}_-)\bullet(\{\bar{M}^n_\alpha(\hat{\mathcal{I}})^i_-\bar{N}^{\prime n}_{\beta'}(\hat{\mathcal{J}})^{j'}_-\}\bullet\langle\bar{N}^n_\beta(\hat{\mathcal{J}})^j,\bar{M}^{\prime n}_{\alpha'}(\hat{\mathcal{I}})^{i'}\rangle)_t\\
&-\sum_{i,j,i',j'}(\bar{K}^{ij}_-\bar{K}^{i'j'}_-)\bullet(\langle\bar{M}^n_\alpha(\hat{\mathcal{I}})^i,\bar{N}^{\prime n}_{\beta'}(\hat{\mathcal{J}})^{j'}\rangle_-\bullet\langle\bar{N}^n_\beta(\hat{\mathcal{J}})^j,\bar{M}^{\prime n}_{\alpha'}(\hat{\mathcal{I}})^{i'}\rangle)_t,\\
\Delta_{4,t}=&\sum_{i,j,i',j'}(\bar{K}^{ij}_-\bar{K}^{i'j'}_-)\bullet(\{\bar{N}^n_\beta(\hat{\mathcal{J}})^j_-\bar{M}^{\prime n}_{\alpha'}(\hat{\mathcal{I}})^{i'}_-\}\bullet\langle\bar{M}^n_\alpha(\hat{\mathcal{I}})^i,\bar{N}^{\prime n}_{\beta'}(\hat{\mathcal{J}})^{j'}\rangle)_t\\
&-\sum_{i,j,i',j'}(\bar{K}^{ij}_-\bar{K}^{i'j'}_-)\bullet(\langle\bar{N}^n_\beta(\hat{\mathcal{J}})^j,\bar{M}^{\prime n}_{\alpha'}(\hat{\mathcal{I}})^{i'}\rangle_-\bullet\langle\bar{M}^n_\alpha(\hat{\mathcal{I}})^i,\bar{N}^{\prime n}_{\beta'}(\hat{\mathcal{J}})^{j'}\rangle)_t.
\end{align*}

Consider $\Delta_{1,t}$ first. By the use of associativity and linearity of integration, we can rewrite $\Delta_{1,t}$ as
\begin{align*}
\Delta_{1,t}=\sum_{i,j,i',j'}\left\{(\bar{K}^{ij}_-\bar{K}^{i'j'}_-)\bar{M}^{ii'}_-\right\}\bullet\langle\bar{N}^n_\beta(\hat{\mathcal{J}})^j,\bar{N}^{\prime n}_{\beta'}(\hat{\mathcal{J}})^{j'}\rangle_t.
\end{align*}
Moreover, we have
\begin{align*}
\langle\bar{N}^n_\beta(\hat{\mathcal{J}})^j,\bar{N}^{\prime n}_{\beta'}(\hat{\mathcal{J}})^{j'}\rangle_t
=&\sum_{q,q'=0}^{k_n-1}\beta^n_q \beta^{\prime n}_{q'}(\hat{J}^{j+q}_-\hat{J}^{j'+q'}_-)\bullet\langle N^n,N^{\prime n}\rangle_t\\
=&\sum_{q,q'=0}^{k_n-1}\beta^n_q \beta^{\prime n}_{q'}(\hat{J}^{j+q}_-\hat{J}^{j'+q'}_-H(2)D(2)^n)\bullet B(2)^n_t,
\end{align*}
hence we obtain
\begin{align*}
\Delta_{1,t}=\sum_{i,j,i',j'}\int_0^t\Xi^{iji'j'}_{s-}\bar{M}^{ii'}_{s-} H(2)_{s} D(2)^n_{s}\mathrm{d}B(2)^n_s.
\end{align*}

Let $\mathcal{R}_t:=H(2)_t-\widetilde{H(2)}_t$. Then we have $(\Delta_{1,t})^2=\mathbf{I}+\mathbf{II}+\mathbf{III}+\mathbf{IV}$, where
\begin{align*}
&\mathbf{I}=\sum_{i,i',j,j'}\sum_{k,k',l,l'}\int_0^t\int_0^t\Xi^{iji'j'}_{s-}\Xi^{klk'l'}_{r-}\bar{M}^{ii'}_{s-} \bar{M}^{kk'}_{r-}\widetilde{H(2)}_sD(2)^n_s\widetilde{H(2)}_rD(2)^n_r \mathrm{d}B(2)^n_s\mathrm{d}B(2)^n_r,\\
&\mathbf{II}=\sum_{i,i',j,j'}\sum_{k,k',l,l'}\int_0^t\int_0^t\Xi^{iji'j'}_{s-}\Xi^{klk'l'}_{r-}\bar{M}^{ii'}_{s-} \bar{M}^{kk'}_{r-}\widetilde{H(2)}_sD(2)^n_s\mathcal{R}_rD(2)^n_r \mathrm{d}B(2)^n_s\mathrm{d}B(2)^n_r,\\
&\mathbf{III}=\sum_{i,i',j,j'}\sum_{k,k',l,l'}\int_0^t\int_0^t\Xi^{iji'j'}_{s-}\Xi^{klk'l'}_{r-}\bar{M}^{ii'}_{s-} \bar{M}^{kk'}_{r-}\mathcal{R}_sD(2)^n_s\widetilde{H(2)}_rD(2)^n_r \mathrm{d}B(2)^n_s\mathrm{d}B(2)^n_r,\\
&\mathbf{IV}=\sum_{i,i',j,j'}\sum_{k,k',l,l'}\int_0^t\int_0^t\Xi^{iji'j'}_{s-}\Xi^{klk'l'}_{r-}\bar{M}^{ii'}_{s-} \bar{M}^{kk'}_{r-}\mathcal{R}_sD(2)^n_s\mathcal{R}_rD(2)^n_r \mathrm{d}B(2)^n_s\mathrm{d}B(2)^n_r.
\end{align*}
The condition [H](ii) and the Schwarz inequality yield
\begin{align*}
&|\mathbf{II}|\\
\lesssim&\left(\sum_{i,i',j,j'}\int_0^t|\Xi^{iji'j'}_{s-}\bar{M}^{ii'}_{s-}|\mathrm{d}B(2)^n_s\right)\left(\sum_{k,k',l,l'}\int_0^t|\Xi^{klk'l'}_{r-}\bar{M}^{ii'}_{r-}\mathcal{R}_r|\mathrm{d}B(2)^n_r\right)\\
\leq &\left(\sum_{i,i',j,j'}\int_0^t|\Xi^{iji'j'}_{s-}|\mathrm{d}B(2)^n_s\right)^{1/2}\left(\sum_{i,i',j,j'}\int_0^t|\Xi^{iji'j'}_{s-}||\bar{M}^{ii'}_{s-}|^2\mathrm{d}B(2)^n_s\right)\left(\sum_{k,k',l,l'}\int_0^t|\Xi^{klk'l'}_{r-}||\mathcal{R}_r|^2\mathrm{d}B(2)^n_r\right)^{1/2},
\end{align*}
hence by Lemma \ref{estXi}(a) and the Schwarz inequality we obtain
\begin{align*}
E[|\mathbf{II}|]\lesssim
k_n^2\left\{E\left[\left(\sum_{i,i',j,j'}\int_0^t|\Xi^{iji'j'}_{s-}||\bar{M}^{ii'}_{s-}|^2\mathrm{d}B(2)^n_s\right)^{2}\right]E\left[\sum_{k,k',l,l'}\int_0^t|\Xi^{klk'l'}_{r-}||\mathcal{R}_r|^2\mathrm{d}B(2)^n_r\right]\right\}^{1/2}.
\end{align*}
By the Schwarz inequality and Lemma \ref{estXi}(a) we have
\begin{align*}
&E\left[\left(\sum_{i,i',j,j'}\int_0^t|\Xi^{iji'j'}_{s-}||\bar{M}^{ii'}_{s-}|^2\mathrm{d}B(2)^n_s\right)^{2}\right]\\
\leq &E\left[\left(\sum_{i,i',j,j'}\int_0^t|\Xi^{iji'j'}_{s-}|\mathrm{d}B(2)^n_s\right)\left(\sum_{i,i',j,j'}\int_0^t|\Xi^{iji'j'}_{s-}||\bar{M}^{ii'}_{s-}|^4\mathrm{d}B(2)^n_s\right)\right]
\lesssim k_n^4 E\left[\sum_{i,i',j,j'}\int_0^t|\Xi^{iji'j'}_{s-}||\bar{M}^{ii'}_{s-}|^4\mathrm{d}B(2)^n_s\right].
\end{align*}
Moreover, since $B(2)^n$ is a deterministic nondecreasing process or a $\mathbf{G}^{(n)}$-adapted point process, by Lemma \ref{HYlem12.3} we have
\begin{align*}
E\left[\sum_{i,i',j,j'}\int_0^t|\Xi^{iji'j'}_{s-}||\bar{M}^{ii'}_{s-}|^4\mathrm{d}B(2)^n_s\right]
=E\left[\sum_{i,i',j,j'}\int_0^t|\Xi^{iji'j'}_{s-}|E\left[|\bar{M}^{ii'}_{s-}|^4\big|\mathcal{F}_{S^{i\wedge i'}}\right]\mathrm{d}B(2)^n_s\right],
\end{align*}
hence by Lemma \ref{HYlem12.5}(a) and Lemma \ref{estXi}(a) we obtain
\begin{align*}
E\left[\left(\sum_{i,i',j,j'}\int_0^t|\Xi^{iji'j'}_{s-}||\bar{M}^{ii'}_{s-}|^2\mathrm{d}B(2)^n_s\right)^{2}\right]
\lesssim k_n^{8}(k_n\bar{r}_n)^4.
\end{align*}
On the other hand, since $B(2)^n$ is a deterministic nondecreasing process or a $\mathbf{G}^{(n)}$-adapted point process, by Lemma \ref{HYlem12.3} we obtain
\begin{align*}
E\left[\sum_{k,k',l,l'}\int_0^t|\Xi^{klk'l'}_{r-}||\mathcal{R}_r|^2\mathrm{d}B(2)^n_r\right]=E\left[\sum_{k,k',l,l'}\int_0^t|\Xi^{klk'l'}_{r-}|E\left[|\mathcal{R}_r|^2\big|\mathcal{G}^{(n)}_r\right]\mathrm{d}B(2)^n_r\right],
\end{align*}
and thus [H](iii) and Lemma \ref{estXi}(a) yield
\begin{align*}
E\left[\sum_{k,k',l,l'}\int_0^t|\Xi^{klk'l'}_{r-}||\mathcal{R}_r|^2\mathrm{d}B(2)^n_r\right]\lesssim b_n^{\left(\xi-\frac{1}{2}\right)\left(1-\lambda\right)}k_n^4
\end{align*}
for any $\lambda>0$. Consequently, we conclude that
$E[|\mathbf{II}|]\lesssim (k_n\bar{r}_n)^{2}b_n^{\left(\xi-\frac{1}{2}\right)\frac{1-\lambda}{2}}k_n^8,$
and thus we obtain
\begin{align*}
b_n^{-1}|\mathbf{II}|=O_p\left(b_n^{-1+2\left(\xi'-\frac{1}{2}\right)+\left(\xi-\frac{1}{2}\right)\frac{1-\lambda}{2}}k_n^8\right)
=O_p\left(b_n^{2(\xi'-\xi)+\frac{5}{2}\left(\xi-\frac{9}{10}\right)-\frac{\lambda}{2}\left(\xi-\frac{1}{2}\right)}k_n^8\right).
\end{align*}
Since $\xi'>\xi>9/10$ and $\lambda>0$ can be taken arbitrarily small, we conclude that $b_n^{-1}\mathbf{II}=o_p(k_n^8)$. In a similar manner, we can show that $b_n^{-1}\mathbf{III}=o_p(k_n^8)$ and $b_n^{-1}\mathbf{IV}=o_p(k_n^8)$.

Next, we evaluate $E[\mathbf{I}]$. In light of Lemma \ref{HYlem12.4}(a), the terms contribute to the sum only when $|i-k|\wedge|i'-k|<k_n$ or $|i-k'|\wedge|i'-k'|<k_n$. Therefore, [H](ii) and the Schwarz inequality yield
\begin{align*}
&|E[\mathbf{I}]|\\
\lesssim &\sum_{j,j',l,l'}\left\{\sum_{\begin{subarray}{c}i,i',k,k'\\|i-k|<k_n\end{subarray}}+\sum_{\begin{subarray}{c}i,i',k,k'\\|i'-k|<k_n\end{subarray}}+\sum_{\begin{subarray}{c}i,i',k,k'\\|i-k'|<k_n\end{subarray}}+\sum_{\begin{subarray}{c}i,i',k,k'\\|i'-k'|<k_n\end{subarray}}\right\}E\left[\int_0^t\int_0^t|\Xi^{iji'j'}_s\Xi^{klk'l'}_r\bar{M}^{ii'}_{s-}\bar{M}^{ii'}_{r-}|\mathrm{d}B(2)^n_s\mathrm{d}B(2)^n_r\right]\\
=:&\mathbf{A}_{1}+\mathbf{A}_{2}+\mathbf{A}_{3}+\mathbf{A}_{4}.
\end{align*}
Consider $\mathbf{A}_1$. We rewrite it as
\begin{align*}
\mathbf{A}_1=\sum_{i,k:|i-k|<k_n}E\left[\left(\sum_{i',j,j'}\int_0^t|\Xi^{iji'j'}_s\bar{M}^{ii'}_{s-}|\mathrm{d}B(2)^n_s\right)\left(\sum_{k',l,l'}\int_0^t|\Xi^{klk'l'}_s\bar{M}^{ii'}_{s-}|\mathrm{d}B(2)^n_s\right)\right].
\end{align*}
Then by the Schwarz inequality and the inequality of arithmetic and geometric means we obtain
\begin{align*}
\mathbf{A}_1\leq k_n\sum_{i}E\left[\left(\sum_{i',j,j'}\int_0^t|\Xi^{iji'j'}_s\bar{M}^{ii'}_{s-}|\mathrm{d}B(2)^n_s\right)^2\right].
\end{align*}
Therefore, by the Schwarz inequality and the second inequality of Lemma \ref{estXi}(a) we obtain
\begin{align*}
\mathbf{A}_1\lesssim k_n\bar{r}_n k_n^4 E\left[\sum_{i,i',j,j'}\int_0^t|\Xi^{iji'j'}_s||\bar{M}^{ii'}_{s-}|^2\mathrm{d}B(2)^n_s\right].
\end{align*}
Since $B(2)^n$ is a deterministic nondecreasing process or a $\mathbf{G}^{(n)}$-adapted point process, by Lemma \ref{HYlem12.3} we have
\begin{align*}
E\left[\sum_{i,i',j,j'}\int_0^t|\Xi^{iji'j'}_{s-}||\bar{M}^{ii'}_{s-}|^2\mathrm{d}B(2)^n_s\right]
=E\left[\sum_{i,i',j,j'}\int_0^t|\Xi^{iji'j'}_{s-}|E\left[|\bar{M}^{ii'}_{s-}|^2\big|\mathcal{F}_{S^{i\wedge i'}}\right]\mathrm{d}B(2)^n_s\right],
\end{align*}
hence by Lemma \ref{HYlem12.5}(a) and Lemma \ref{estXi}(a) we obtain
$\mathbf{A}_1\lesssim k_n\bar{r}_n k_n^8(k_n\bar{r}_n)^2=k_n^8(k_n\bar{r}_n)^3.$
Similarly we can also show that $\mathbf{A}_l\lesssim k_n^8(k_n\bar{r}_n)^{3}$ for $l\in\{2,3,4\}$. Consequently, we have $|E[\mathbf{I}]|\lesssim k_n^8(k_n\bar{r}_n)^{3}$, so that we conclude that $b_n^{-1}|E[\mathbf{I}]|=o(k_n^8)$ because $b_n^{-1}(k_n\bar{r}_n)^{3}=O(b_n^{3\xi'-\frac{5}{2}})=o(1)$.

After all, we conclude that $b_n^{-1/2}\Delta_{1,t}=o_p(k_n^4)$. By symmetry, we also obtain $b_n^{-1/2}\Delta_{2,t}=o_p(k_n^4)$.

Next we consider $\Delta_{3,t}$. By the use of associativity and linearity of integration, we have
\begin{align*}
\Delta_{3,t}=&\sum_{i,j,i',j'}(\bar{K}^{ij}_-\bar{K}^{i'j'}_-\bar{L}^{ij'}_-)\bullet\langle\bar{M}^{\prime n}_{\alpha'}(\hat{\mathcal{I}})^{i'},\bar{N}^n_\beta(\hat{\mathcal{J}})^j\rangle_t.
\end{align*}
Moreover, we have
\begin{align*}
\langle\bar{M}^{\prime n}_{\alpha'}(\hat{\mathcal{I}})^{i'},\bar{N}^n_\beta(\hat{\mathcal{J}})^{j}\rangle_t
=&\sum_{p,q=0}^{k_n-1}\alpha^{\prime n}_p \beta^n_{q}(\hat{I}^{i'+p}_-\hat{J}^{j+q}_-)\bullet\langle M^{\prime n},N^n\rangle_t\\
=&\sum_{p,q=0}^{k_n-1}\alpha^{\prime n}_p \beta^n_{q}\{\hat{I}^{i'+p}_-\hat{J}^{j+q}_-H(4)D(4)^n\}\bullet B(4)^n_t,
\end{align*}
hence we obtain
\begin{align*}
\Delta_{3,t}=\sum_{i,j,i',j'}\int_0^t\Lambda^{iji'j'}_{s-}\bar{L}^{ij'}_{s-}H(4)_sD(4)^n_s\mathrm{d}B(4)^n_s.
\end{align*}
Therefore, using Lemma \ref{HYlem12.4}(b), Lemma \ref{estXi}(b) and Lemma \ref{HYlem12.5}(b) instead of Lemma \ref{HYlem12.4}(a),  Lemma \ref{estXi}(a) and Lemma \ref{HYlem12.5}(a) respectively, we can adopt an argument similar to the above one. After all, we conclude that $b_n^{-1/2}\Delta_{3,t}=o_p(k_n^4)$. By symmetry, we also obtain $b_n^{-1/2}\Delta_{4,t}=o_p(k_n^4)$. 
\end{proof}

\begin{proof}[\upshape{\bfseries{Proof of Proposition \ref{HYprop5.1}}}]

By a localization procedure, we may assume that [SA3], [SA4], [SN] and [SC3] hold instead of [A3], $(\ref{SA4})$, [N] and [C3] respectively.

Lemma \ref{HYlem12.6and12.8} implies that it is sufficient to prove that the condition [H] holds for $M^n,M^{\prime n}\in\{X,\mathfrak{E}^X,\mathfrak{Z}^X\}$ and $N^n,N^{\prime n}\in\{Y,\mathfrak{E}^Y,\mathfrak{Z}^Y\}$, but it immediately follows from [SA3], [SA4], [SN] and [SC3].
\end{proof}


\section{Proof of Theorem \ref{HYthm6.1} and \ref{HYthm6.2}}\label{drift}

Before starting the proof, we strengthened the conditions [A5] and [A6] as follows:
\begin{enumerate}
\item[[{SA5]}] We have [A5], and the processes $A^{X}$, $A^{Y}$, $\underline{A}^{X}$, $\underline{A}^{Y}$, $(A^{X})'$, $(A^{Y})'$, $(\underline{A}^{X})'$ and $(\underline{A}^{Y})'$ are bounded. Furthermore, there exist a positive constant $C$ and $\lambda\in(0,3/4)$ satisfying
\begin{equation}\label{eqSA5}
E\left[|f_t-f_{\tau\wedge t}|^2\big|\mathcal{F}_{\tau\wedge t}\right]\leq C|t-\tau|^{1-\lambda}
\end{equation}
for every $t>0$ and any bounded $\mathbf{F}^{(0)}$-stopping time $\tau$, for the density processes $f=(A^X)'$, $(A^{Y})'$, $(\underline{A}^X)'$ and $(\underline{A}^{Y})'$.
\item[{[SA6]}] There exists a positive constant $C$ such that $b_n^{-1}H_n(t)\leq C$ for every $t$.
\end{enumerate}

The following lemma can be shown by arguments analogous to those used in the proofs of Lemma 13.1 and 13.2 in \cite{HY2011}.
\begin{lem}\label{HYlem13.1and13.2}
Suppose that $[\mathrm{A}2]$ and $[\mathrm{SA}4]$ are satisfied. Let $(M^n)$ be a sequence of square-integrable martingales such that there exists a positive constant $C_1$ satisfying
\begin{equation}\label{nagasa2}
\sup_{q\in\mathbb{N}}\langle M^n\rangle(\widehat{J}^q)_t\leq C_1\bar{r}_n
\end{equation}
for any $t\in\mathbb{R}_+$ and any $n\in\mathbb{N}$. Let $\alpha,\beta\in\Phi$ and let $A$ be an $\mathbf{F}^{(0)}$-adapted process with a bounded derivative such that there are a positive constant $C_2$ and a constant $\lambda\in(0,3/4)$ satisfying
\begin{equation}\label{SA5}
E\left[(A'_{t}-A'_{\tau\wedge t})^2\big|\mathcal{F}_{\tau\wedge t}\right]\leq C_2|t-\tau|^{1-\lambda}
\end{equation}
for any $t\in\mathbb{R}_+$ and any bounded $\mathbf{F}^{(0)}$-stopping time $\tau$. Let $D^n$ be a c\`adl\`ag $\mathbf{G}^{(n)}$-adapted process for each $n$ and suppose that $\sup_n|D^n|$ is a bounded process. Set $A^n=D^n\bullet A$ and define  
\begin{align*}
\mathbb{I}_t=\sum_{i,j=1}^{\infty}\bar{K}^{ij}_-\bullet\{\bar{A}^n_\alpha(\widehat{\mathcal{I}})^i_-\bullet\bar{M}^n_\beta(\widehat{\mathcal{J}})^j\}_t,\qquad
\mathbb{II}_t=\sum_{i,j=1}^{\infty}\bar{K}^{ij}_-\bullet\{\bar{M}^n_\beta(\widehat{\mathcal{J}})^j_-\bullet\bar{A}^n_\alpha(\widehat{\mathcal{I}})^i\}_t.
\end{align*}
for each $t\in\mathbb{R}_+$. Then
\begin{enumerate}[\normalfont (a)]
\item $b_n^{-1/4}\sup_{0\leq s\leq t}|\mathbb{I}_{s}|=o_p(k_n^2)$ for every $t$.
\item $b_n^{-1/4}|\mathbb{II}_{t}|=o_p(k_n^2)$ for every $t$.
\item Suppose that $[\mathrm{SC}1]$-$[\mathrm{SC}2]$ and $[\mathrm{SA}6]$ are fulfilled. Then $b_n^{-1/4}\sup_{0\leq s\leq t}|\mathbb{II}_{s}|=o_p(k_n^2)$ for every $t$ if $M^n\in\{M^Y,\mathfrak{E}^Y,\mathfrak{M}^Y\}$.
\end{enumerate}
\end{lem}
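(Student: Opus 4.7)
The three parts share the same type of integrand but demand three different arguments, since $\mathbb{I}$ is a local martingale whereas $\mathbb{II}$ has locally finite variation.

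For part~(a), I would first verify that $\mathbb{I}$ is a locally square-integrable martingale by an integration-by-parts argument analogous to Lemma~\ref{HYlem3.1}: the outer integrator $\bar K^{ij}_-$ is a left-continuous finite-variation process that jumps only at $R^\vee(i,j)$, and the inner stochastic integral $\bar A^n_\alpha(\widehat{\mathcal I})^i_-\bullet\bar M^n_\beta(\widehat{\mathcal J})^j$ is a local martingale whose value at $R^\vee(i,j)$ is zero, so the boundary correction vanishes. The Lenglart--Rebolledo inequality then reduces the claim to $\langle\mathbb{I}\rangle_t=o_p(k_n^4 b_n^{1/2})$. Expanding the sharp bracket, using that $\langle\bar M^n_\beta(\widehat{\mathcal J})^j,\bar M^n_\beta(\widehat{\mathcal J})^{j'}\rangle$ vanishes unless $|j-j'|<k_n$ because the $\widehat J^j$ are disjoint, combining the uniform bound $|\bar A^n_\alpha(\widehat{\mathcal I})^i_s|\lesssim k_n\bar r_n$ coming from boundedness of $A'D^n$ together with~$(\ref{SA4})$, the hypothesis $(\ref{nagasa2})$, and the counts $\#\{(i,j):\bar K^{ij}_t\neq 0\}=O_p(k_nb_n^{-1})$ together with $\#\{(i',j'):\bar K^{i'j'}_t\neq 0,\,|j-j'|<k_n\}=O(k_n^2)$ furnished by~$(\ref{sumbarK})$ and~[SC3], yields $\langle\mathbb{I}\rangle_t=O_p(k_n^6\bar r_n^3/b_n)$; dividing by $k_n^4 b_n^{1/2}$ gives $b_n^{3\xi'-5/2}=o(1)$, since $\xi'>9/10>5/6$.

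For part~(b), the process $\mathbb{II}$ is not a martingale, but $\bar A^n_\alpha(\widehat{\mathcal I})^i$ is absolutely continuous with density $a^{n,i}_s:=\sum_{p=0}^{k_n-1}\alpha^n_p 1_{\widehat I^{i+p}}(s)D^n_sA'_s$ uniformly bounded by $\|\alpha\|_\infty\|D^nA'\|_\infty$. A Fubini exchange rewrites $\mathbb{II}_t=\int_0^t\Xi^n_s\,\mathrm{d}s$ with $\Xi^n_s=\sum_{i,j}\bar K^{ij}_{s-}\bar M^n_\beta(\widehat{\mathcal J})^j_{s-}a^{n,i}_s$. My plan is to freeze $A'_s$ at its value at the left endpoint $\widehat S^{i+p-1}$ of the block containing $s$: the frozen sum is controlled in $L^2$ via orthogonality of the martingale increments across disjoint $\widehat J^{j+q}$, while the replacement error is handled by~$(\ref{SA5})$ with an arbitrarily small $\lambda$, which gives an extra factor $\bar r_n^{(1-\lambda)/2}$. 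Integrating over $[0,t]$ and applying Cauchy--Schwarz yields $E|\mathbb{II}_t|=o(k_n^2 b_n^{1/4})$, from which Markov's inequality delivers the claim.

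For part~(c) a pointwise moment estimate no longer suffices. I would use integration by parts: since $\bar A^n_\alpha(\widehat{\mathcal I})^i$ is continuous, the quadratic covariation $[\bar M^n_\beta(\widehat{\mathcal J})^j,\bar A^n_\alpha(\widehat{\mathcal I})^i]$ vanishes, and
\begin{align*}
\bar M^n_\beta(\widehat{\mathcal J})^j_{-}\bullet\bar A^n_\alpha(\widehat{\mathcal I})^i
=\bar A^n_\alpha(\widehat{\mathcal I})^i\,\bar M^n_\beta(\widehat{\mathcal J})^j-\bar A^n_\alpha(\widehat{\mathcal I})^i_{-}\bullet\bar M^n_\beta(\widehat{\mathcal J})^j.
\end{align*}
Summing over $i,j$ and integrating against $\bar K^{ij}_-$ expresses $\mathbb{II}_t$ as the difference between a boundary sum $\sum_{i,j}\bar K^{ij}_-\bullet(\bar A^n_\alpha(\widehat{\mathcal I})^i\,\bar M^n_\beta(\widehat{\mathcal J})^j)_t$ and a process of the same form as $\mathbb{I}$ with the roles of $(\alpha,A^n)$ and $(\beta,M^n)$ interchanged, which is then handled by part~(a). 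The boundary sum, evaluated using the fact that $\bar K^{ij}$ jumps once at $R^\vee(i,j)$, equals $\sum_{i,j}\bar K^{ij}_t[\bar A^n_\alpha(\widehat{\mathcal I})^i_t\,\bar M^n_\beta(\widehat{\mathcal J})^j_t-\bar A^n_\alpha(\widehat{\mathcal I})^i_{R^\vee(i,j)}\,\bar M^n_\beta(\widehat{\mathcal J})^j_{R^\vee(i,j)}]$; its supremum over $[0,t]$ is then controlled by Doob's inequality applied to $\bar M^n_\beta(\widehat{\mathcal J})^j$ (producing an $L^2$ bound of order $\sqrt{k_n\bar r_n}$), the uniform bound $|\bar A^n_\alpha(\widehat{\mathcal I})^i_s|\lesssim k_n\bar r_n$, the counting from~[SC3], and the quadratic estimate $H_n(t)=O(b_n)$ supplied by~[SA6]. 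The restriction $M^n\in\{M^Y,\mathfrak E^Y,\mathfrak M^Y\}$ enters at this last step, since it identifies the jumps of $\bar M^n_\beta(\widehat{\mathcal J})^j$ with $(\widehat T^j)$ and allows one to apply the noise moment bounds of~[SC1]--[SC2]. The main obstacle will be precisely this boundary estimate in~(c): neither~[A2] nor the martingale property alone suffices, and the interplay between~[SA6] and the structural assumption on $M^n$ is essential to improve the crude bound $O(k_n^{5/2}\bar r_n^{3/2}b_n^{-1})$ down to the required $o(k_n^2 b_n^{1/4})$.
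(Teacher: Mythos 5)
Your part (a) is correct and follows the paper's strategy (Lenglart plus the bracket estimate), although your $\langle\mathbb{I}\rangle_t=O_p(k_n^6\bar r_n^3 b_n^{-1})$ is obtained by the crude ``count quadruples times uniform bound'' scheme and is looser than the paper's $O_p(k_n^6\bar r_n^2)$; it still suffices because $\xi'>9/10>5/6$, so no harm done.

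For part (b) your general plan --- freeze $A'$, estimate the frozen sum in $L^2$ by martingale orthogonality, and handle the replacement error via~$(\ref{SA5})$ --- matches the paper in spirit, but the \emph{freezing point} you chose breaks the orthogonality argument. You freeze at $\widehat S^{\,i+p-1}$, a point on the $\mathcal I$-grid; the paper freezes at $\widehat T^{\,j}$, the grid point attached to the martingale block $\bar M^n_\beta(\widehat{\mathcal J})^j$ whose increments one wants to decorrelate. To kill the cross terms one needs the whole coefficient multiplying $\bar M^n_\beta(\widehat{\mathcal J})^j_s\bar M^n_\beta(\widehat{\mathcal J})^{j'}_u$ to be $\mathcal F_{\widehat T^{\,j\vee j'}}$-measurable, and then use optional sampling for $|j-j'|>k_n-1$; with the paper's freezing point this is exactly what Lemma~\ref{HYlem12.3} (via~[A2]) delivers. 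With your choice the random time $\widehat S^{\,i+p-1}$ can exceed $\widehat T^{\,j}$ on the event $\bar K^{ij}_s\widehat I^{\,i+p}_s\neq 0$, so $A'_{\widehat S^{\,i+p-1}}$ is not $\mathcal F_{\widehat T^{\,j}}$-measurable and the optional sampling step fails. Merely asserting ``orthogonality of the martingale increments across disjoint $\widehat J^{\,j+q}$'' is not enough: disjointness does not give $L^2$-orthogonality; adaptedness of the coefficients to the correct filtration does. (Also, the $L^2$ estimate must be on the frozen sum, i.e.\ on $E[\mathbb{II}_{1,t}^2]$, so that the cross terms actually cancel; your ``$E|\mathbb{II}_t|$ plus Cauchy--Schwarz'' formulation does not exploit that cancellation.)

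Part (c) is where the approach genuinely breaks down. Your integration-by-parts decomposition $\mathbb{II}_t=B_t-\mathbb{I}_t$ with $B_t=\sum_{i,j}\bar K^{ij}_-\bullet(\bar A^n_\alpha(\widehat{\mathcal I})^i\bar M^n_\beta(\widehat{\mathcal J})^j)_t$ destroys the very cancellation that makes $\mathbb{II}$ small: $\mathbb{II}$ is absolutely continuous (an integral against $A$), whereas $B_t$ and $\mathbb{I}_t$ are individually too large in the supremum norm. Even the sharpest estimate along these lines gives $E[\sup_s|B_s|]\lesssim k_n^2\sqrt{k_n\bar r_n}$ (uniform $\bar A^n$ bound, Doob on each $\bar M^n_\beta(\widehat{\mathcal J})^j$, then $\sum_i|\widehat I^{\,i+p}(T)|\leq T$ and $\sum_j\bar K^{ij}\lesssim k_n$), and $\sqrt{k_n\bar r_n}\,b_n^{-1/4}\sim b_n^{\xi'/2-1/2}$, which tends to $0$ only for $\xi'>1$; since $\xi'<1$, this \emph{always fails}, regardless of how [SA6] is brought in after the split. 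The paper avoids this by never forming the boundary term: it keeps the representation $\mathbb{II}_t=\int_0^t\Xi^n_s\,\mathrm ds$ with $\Xi^n_s=\sum_j\bar M^n_\beta(\widehat{\mathcal J})^j_s\Upsilon^j_s A'_s$, and shows C-tightness of $b_n^{-1/4}k_n^{-2}\mathbb{II}$ directly by Cauchy--Schwarz in $s$, bounding the increment by $k_n(t-s)^{1/2}\Theta_n(T)^{1/2}$ and then proving $E[\Theta_n(T)]\lesssim k_n^2$; this last estimate is where [SA6] (through $\sum_i|\widehat I^i(T)|^2\lesssim H_n(T)\lesssim b_n$) and the explicit form of $M^n$ enter. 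You correctly anticipated that [SA6] and the structural hypothesis on $M^n$ must combine at the end, but the decomposition you chose puts that combination out of reach; a modulus-of-continuity / C-tightness argument applied to $\mathbb{II}$ itself, rather than to the two ill-behaved pieces separately, is what is needed.
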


\begin{proof}
(a) The process $\mathbb{I}$ is clearly a locally square-integrable martingale with the predictable quadratic variation
\begin{equation*}
\langle\mathbb{I}\rangle_t
=\sum_{i,i',j,j'=1}^{\infty}\bar{K}^{i j}_-\bar{K}^{i' j'}_-\bar{A}^n_\alpha(\widehat{\mathcal{I}})^i_-\bar{A}^n_\alpha(\widehat{\mathcal{I}})^{i'}_-\bullet\langle\bar{M}^n_\beta(\widehat{\mathcal{J}})^j,\bar{M}^{n}_\beta(\widehat{\mathcal{J}})^{j'}\rangle_t.
\end{equation*}
Since
\begin{align*}
\langle\bar{M}^{n}_\beta(\widehat{\mathcal{J}})^j,\bar{M}^{n}_\beta(\widehat{\mathcal{J}})^{j'}\rangle_t
=\sum_{q,q'=0}^{k_n-1}\beta^n_q \beta^n_{q'}\widehat{J}^{j+q}_-\widehat{J}^{j'+q'}_-\bullet\langle M^{n}\rangle_t,
\end{align*}
we have
\begin{align*}
\langle\mathbb{I}\rangle_t
=&\sum_{i,i',j,j'=1}^{\infty}\sum_{q,q'=0}^{k_n-1}\beta^n_q \beta^n_{q'}\bar{K}^{i j}_-\bar{K}^{i' j'}_-\bar{A}^n_\alpha(\widehat{\mathcal{I}})^i_-\bar{A}^n_\alpha(\widehat{\mathcal{I}})^{i'}_-\widehat{J}^{j+q}_-\widehat{J}^{j'+q'}_-\bullet\langle M^n\rangle_t\\
=&\sum_{i,i',q=1}^{\infty}\sum_{j,j'=(q-k_n+1)\vee 1}^{q}\beta^n_{q-j} \beta^n_{q-j'}\bar{K}^{i j}_-\bar{K}^{i' j'}_-\bar{A}^n_\alpha(\widehat{\mathcal{I}})^i_-\bar{A}^n_\alpha(\widehat{\mathcal{I}})^{i'}_-\widehat{J}^{q}_-\bullet\langle M^n\rangle_t.
\end{align*}
Moreover,  since $|\bar{A}^n_\alpha(\widehat{\mathcal{I}})^i_s|\lesssim\sum_{p=0}^{k_n-1}|\widehat{I}^{i+p}(t)|$ and $\bar{K}^{ij}_s\leq\bar{K}^{ij}_t$ if $s\leq t$ , we have
\begin{align*}
\langle\mathbb{I}\rangle_t
\lesssim\sum_{i,i',q=1}^{\infty}\sum_{j,j'=(q-k_n+1)\vee 1}^{q}\sum_{p,p'=0}^{k_n-1}\bar{K}^{i j}_t\bar{K}^{i' j'}_t|\widehat{I}^{i+p}(t)||\widehat{I}^{i'+p'}(t)|\langle M^n\rangle(\widehat{J}^{q})_t,
\end{align*}
and thus [SA4] and $(\ref{nagasa2})$ yields
\begin{align*}
\langle\mathbb{I}\rangle_t
\lesssim k_n\bar{r}_n^2\sum_{i,i',q=1}^{\infty}\sum_{j,j'=(q-k_n+1)\vee 1}^{q}\sum_{p=0}^{k_n-1}\bar{K}^{i j}_t\bar{K}^{i' j'}_t|\widehat{I}^{i+p}(t)|.
\end{align*}
Since $\sum_{i'=1}^{\infty}\bar{K}^{i' j'}_t\lesssim  k_n$, we have
\begin{align*}
\langle\mathbb{I}\rangle_t
\lesssim k_n^3\bar{r}_n^2\sum_{i,q=1}^{\infty}\sum_{j=(q-k_n+1)\vee 1}^{q}\sum_{p=0}^{k_n-1}\bar{K}^{i j}_t|\widehat{I}^{i+p}(t)|
=k_n^3\bar{r}_n^2\sum_{p,q=0}^{k_n-1}\sum_{i=1}^{\infty}|\widehat{I}^{i+p}(t)|\sum_{j=1}^{\infty}\bar{K}^{i j}_t.
\end{align*}
Since $\sum_{i=1}^{\infty}|\widehat{I}^{i+p}(t)|\leq t$ and $\sum_{j=1}^{\infty}\bar{K}^{i j}_t\lesssim  k_n$, we obtain $\langle\mathbb{I}\rangle_t\lesssim k_n^6 \bar{r}_n^2$, and thus we have $b_n^{-1/2}\langle\mathbb{I}\rangle_t=O_p(k_n^4\cdot b_n^{2\xi'-3/2})=o_p(k_n^4)$ because $\xi'>9/10$. The Lenglart inequality implies that $b_n^{-1/4}\sup_{0\leq s\leq t}|\mathbb{I}_{s}|=o_p(k_n^2)$ as desired.

(b) We rewrite the target quantity as
\begin{align*}
\mathbb{II}_{t}
=&\sum_{i,j=1}^{\infty}\sum_{p=0}^{k_n-1}\alpha^n_p\bar{K}^{i j}_-\bar{M}^{n}_\beta(\widehat{\mathcal{J}})^j_-\widehat{I}^{i+p}_-D^n\bullet A_t\\
=&\sum_{i,j=1}^{\infty}\sum_{p=0}^{k_n-1}\alpha^n_p A'_{\widehat{T}^{j}}\int_0^t\bar{K}^{i j}_s\widehat{I}^{i+p}_sD^n_s \bar{M}^{n}_\beta(\widehat{\mathcal{J}})^j_s\mathrm{d}s
+\sum_{i,j=1}^{\infty}\sum_{p=0}^{k_n-1}\alpha^n_p\int_0^t\bar{K}^{i j}_s\widehat{I}^{i+p}_s D^n_s\bar{M}^{n}_\beta(\widehat{\mathcal{J}})^j_s\{ A'_s-A'_{\widehat{T}^{j}}\}\mathrm{d}s\\
=:&\mathbb{II}_{1,t}+\mathbb{II}_{2,t}.
\end{align*}

First we claim that $b_n^{-1/4}\mathbb{II}_{1,t}=o_p(k_n^2)$ as $n\rightarrow\infty$. Since $A'_{\widehat{T}^{j}}A'_{\widehat{T}^{j'}}\bar{K}^{i j}_s\widehat{I}^{i+p}_sD^n_s\bar{K}^{i' j'}_u\widehat{I}^{i'+p'}_uD^n_u$ is $\mathcal{F}_{\widehat{T}^{j\vee j'}}$-measurable due to Lemma \ref{HYlem12.3} and $E[\bar{M}^{n}_\beta(\widehat{\mathcal{J}})^j_s\bar{M}^{n}_\beta(\widehat{\mathcal{J}})^{j'}_u|\mathcal{F}_{\widehat{T}^{j\vee j'}}]=0$ if $|j-j'|>k_n-1$ due to the optional sampling theorem, we have
\begin{align*}
&E[\mathbb{II}_{1,t}^2]\\
=&\sum_{i,i'}\sum_{j,j':|j-j'|\leq k_n}
\sum_{p,p'=0}^{k_n-1}\alpha^n_p\alpha^n_{p'}\int_0^t\int_0^t
E\left[A'_{\widehat{T}^{j}}A'_{\widehat{T}^{j'}}\bar{K}^{i j}_s\widehat{I}^{i+p}_s D^n_s\bar{K}^{i' j'}_u\widehat{I}^{i'+p'}_u D^n_u\bar{M}^{n}_\beta(\widehat{\mathcal{J}})^j_s\bar{M}^{n}_\beta(\widehat{\mathcal{J}})^{j'}_u\right]\mathrm{d}s\mathrm{d}u\\
\lesssim &\sum_{i,i'}\sum_{j,j':|j-j'|\leq k_n}
\sum_{p,p'=0}^{k_n-1}\int_0^t\int_0^t
E\left[\bar{K}^{i j}_s\widehat{I}^{i+p}_s\bar{K}^{i' j'}_u\widehat{I}^{i'+p'}_u \left\{|\bar{M}^{n}_\beta(\widehat{\mathcal{J}})^j_s|^2+|\bar{M}^{n}_\beta(\widehat{\mathcal{J}})^{j'}_u|^2\right\}\right]\mathrm{d}s\mathrm{d}u.
\end{align*}
On the other hand, the optional sampling theorem and $(\ref{nagasa2})$ yield
\begin{equation}\label{estM}
E\left[|\bar{M}^{n}_\beta(\widehat{\mathcal{J}})^j_s|^2\big|\mathcal{F}_{\widehat{T}^j}\right]
=E\left[\langle\bar{M}^{n}_\beta(\widehat{\mathcal{J}})^j_s\rangle\big|\mathcal{F}_{\widehat{T}^j}\right]
=\sum_{q=0}^{k_n-1}(\beta^n_q)^2E\left[\langle M^n\rangle(\widehat{J}^{j+q})_t\big|\mathcal{F}_{\widehat{T}^j}\right]
\lesssim k_n\bar{r}_n.
\end{equation}
Since $\bar{K}^{i j}_s\widehat{I}^{i+p}_s\bar{K}^{i' j'}_u\widehat{I}^{i'+p'}_u$ is $\mathcal{F}_{\widehat{T}^\wedge(j,j')}$-measurable by Lemma \ref{HYlem12.3}, we obtain
\begin{align*}
E[\mathbb{II}_{1,t}^2]
\lesssim &k_n\bar{r}_nE\left[\sum_{i,i'}\sum_{j,j':|j-j'|\leq k_n}
\sum_{p,p'=0}^{k_n-1}
\bar{K}^{i j}_t|\widehat{I}^{i+p}(t)|\bar{K}^{i' j'}_t|\widehat{I}^{i'+p'}(t)|\right].
\end{align*}
Therefore, [SA4] and $(\ref{sumbarK})$ imply that
\begin{align*}
E[\mathbb{II}_{1,t}^2]
\lesssim k_n^4\bar{r}_n^2 E\left[\sum_{i,j}\sum_{p=0}^{k_n-1}
\bar{K}^{i j}_t|\widehat{I}^{i+p}(t)|\right]
=k_n^4\bar{r}_n^2 E\left[\sum_{p=0}^{k_n-1}\sum_{i=1}^{\infty}|\widehat{I}^{i+p}(t)|\sum_{j=1}^{\infty}
\bar{K}^{i j}_t\right].
\end{align*}
Since $\sum_{i=1}^{\infty}|\widehat{I}^{i+p}(t)|\leq t$ and $\sum_{j=1}^{\infty}\bar{K}^{ij}_t\lesssim k_n$, we conclude that $E[\mathbb{II}_{1,t}^2]\lesssim k_n^6\bar{r}_n^2$, hence $b_n^{-1/4}\mathbb{II}_{1,t}=o_p(k_n^2)$.

Next we estimate $\mathbb{II}_{2,t}$. By Lemma \ref{HYlem12.3} we have
\begin{align*}
E[|\mathbb{II}_{2,t}|]
\lesssim\sum_{i,j=1}^{\infty}\sum_{p=1}^{k_n-1}E\left[\int_0^t\bar{K}^{i j}_s\widehat{I}^{i+p}_sE\left[|\bar{M}^{n}_\beta(\widehat{\mathcal{J}})^j_s( A'_s-A'_{\widehat{T}^{j}})|\big|\mathcal{F}_{T^j}\right]\mathrm{d}s\right],
\end{align*}
hence by the Schwarz inequality and $(\ref{estM})$ we obtain
\begin{align*}
E[|\mathbb{II}_{2,t}|]
\lesssim(k_n\bar{r}_n)^{\frac{1}{2}}\sum_{i,j=1}^{\infty}\sum_{p=1}^{k_n-1}E\left[\int_0^t\bar{K}^{i j}_s\widehat{I}^{i+p}_s\left\{E\left[( A'_s-A'_{\widehat{T}^{j}})^2\big|\mathcal{F}_{T^j}\right]\right\}^{1/2}\mathrm{d}s\right].
\end{align*}
Moreover, if $\bar{I}^i\cap\bar{J}^j\neq\emptyset$ we have $|s-\widehat{T}^j|\leq|\widehat{S}^{i+k_n}-\widehat{T}^j|\vee|\widehat{S}^i-\widehat{T}^j|\leq(\widehat{S}^{i+k_n}-\widehat{S}^i)+(\widehat{T}^{j+k_n}-\widehat{T}^j)\leq 2k_nr_n(t)$ for any $s\in\bar{I}^i$. Hence by [SA4] and $(\ref{SA5})$ we conclude that 
\begin{align*}
E[|\mathbb{II}_{2,t}|]
\lesssim (k_n\bar{r}_n)^{1-\frac{\lambda}{2}}\sum_{i,j=1}^{\infty}\sum_{p=1}^{k_n-1}E\left[\int_0^t\bar{K}^{i j}_s\widehat{I}^{i+p}_s\mathrm{d}s\right]
\leq (k_n\bar{r}_n)^{1-\frac{\lambda}{2}}E\left[\sum_{p=1}^{k_n-1}\sum_{i=1}^{\infty}|\widehat{I}^{i+p}(t)|\sum_{j=1}^{\infty}\bar{K}^{i j}_t\right]
\end{align*}
for some $\lambda\in(0,3/4)$. Since $\sum_{i=1}^{\infty}|\widehat{I}^{i+p}(t)|\leq t$ and $\sum_{j=1}^{\infty}\bar{K}^{ij}_t\lesssim k_n$, we conclude that $E[|\mathbb{II}_{2,t}|]\lesssim k_n^2(k_n\bar{r}_n)^{1-\lambda/2}$, hence $b_n^{-1/4}\mathbb{II}_{2,t}=O_p(k_n^2b_n^{(\xi'-1/2)(1-\lambda/2)-1/4})=o_p(k_n^2)$ because $\xi'>9/10$ and $\lambda\in(0,3/4)$.

Consequently, we obtain $b_n^{-1/4}\mathbb{II}_{t}=o_p(k_n^2)$ as $n\to\infty$ for every $t$.

(c) In light of (b), it is sufficient to show that $(b_n^{-1/4}k_n^{-2}\mathbb{II})_{n\in\mathbb{N}}$ is C-tight. 

Fix a $T>0$. Rewrite $\mathbb{II}$ as
\begin{align*}
\mathbb{II}_t=\sum_{j=1}^{\infty}\{\bar{M}^n_\beta(\widehat{\mathcal{J}})^j_-\Upsilon^{j}_-\}\bullet A_t,
\end{align*}
where $\Upsilon^{j}_s=\sum_{i=1}^{\infty}\sum_{p=0}^{k_n-1}\alpha^n_p\bar{K}^{ij}_s\widehat{I}^{i+p}_sD^n_s$ for each $s\in\mathbb{R}_+$. Then for $0\leq s<t\leq T$
\begin{align*}
&b_n^{-1/4}|\mathbb{II}_t-\mathbb{II}_s|
\leq b_n^{-1/4}\sum_{j=1}^{\infty}\int_s^t\left|\bar{M}^n_\beta(\widehat{\mathcal{J}})^j_u\right|\left|A'_u\right|\left|\Upsilon^{j}_u\right|\mathrm{d}u\\
\leq &\left\{\sum_{j=1}^{\infty}\int_s^t\left(b_n^{-1/4}\bar{M}^n_\beta(\widehat{\mathcal{J}})^j_u\right)^2\left|\Upsilon^{j}_u\right|\mathrm{d}u\right\}^{1/2}\left\{\sum_{j=1}^{\infty}\int_s^t\left( A'_u\right)^2\left|\Upsilon^{j}_u\right|\mathrm{d}u\right\}^{1/2}
\lesssim k_n(t-s)^{1/2}\Theta_n(T)^{1/2},
\end{align*}
where
\begin{align*}
\Theta_n(\cdot)=\sum_{j=1}^{\infty}\int_0^{\cdot}\left(\bar{M}^n_\beta(\widehat{\mathcal{J}})^j_u\right)^2\Upsilon^{j}_u\mathrm{d}u.
\end{align*}
Since $\Upsilon^{j}_u$ is $\mathcal{F}_{\widehat{T}^j}$-measurable due to Lemma \ref{HYlem12.3},
\begin{align*}
&E[\Theta_n(T)]=E\left[\sum_{j=1}^{\infty}\int_0^{T}E\left[\left(b_n^{-1/4}\bar{M}^n_\beta(\widehat{\mathcal{J}})^j_u\right)^2\big|\mathcal{F}_{\widehat{T}^j}\right]\Upsilon^{j}_u\mathrm{d}u\right]
=E\left[\sum_{j=1}^{\infty}\int_0^{T}b_n^{-1/2}\langle\bar{M}^n_\beta(\widehat{\mathcal{J}})^j\rangle_u\Upsilon^{j}_u\mathrm{d}u\right]\\
&\leq E\left[b_n^{-1/2}\sum_{p=0}^{k_n-1}\sum_{i,j=1}^{\infty}\langle\bar{M}^n_\beta(\widehat{\mathcal{J}})^j\rangle_T\bar{K}^{ij}_T|\widehat{I}^{i+p}(T)|\right]
\leq E\left[b_n^{-1/2}\sum_{p,q=0}^{k_n-1}\sum_{i,j=1}^{\infty}\bar{K}^{ij}_T|\widehat{I}^{i+p}(T)|\langle M^n\rangle(\widehat{J}^{j+q})_T\right].
\end{align*}
If $M^n=M^Y$ or $M^n=\mathfrak{M}^Y$, [SC1] yields
\begin{align*}
&b_n^{-1/2}\sum_{p,q=0}^{k_n-1}\sum_{i,j=1}^{\infty}\bar{K}^{ij}_T|\widehat{I}^{i+p}(T)|\langle M^n\rangle(\widehat{J}^{j+q})_T
\lesssim b_n^{-1/2}\sum_{p,q=0}^{k_n-1}\sum_{i,j=1}^{\infty}\bar{K}^{ij}_T|\widehat{I}^{i+p}(T)||\widehat{J}^{j+q}(T)|\\
\lesssim &b_n^{-1}\sum_{p,q=0}^{k_n-1}\left(\sum_{i=1}^{\infty}|\widehat{I}^{i+p}(T)|^2+\sum_{j=1}^{\infty}|\widehat{J}^{j+q}(T)|^2\right)
\leq 4 k_n^2\cdot b_n^{-1}H_n(T),
\end{align*}
and thus by [SA6] we obtain $E[\Theta_n(T)]\lesssim k_n^2$. On the other hand, if $M^n=\mathfrak{E}^Y$, [SC2] yields
\begin{align*}
&b_n^{-1/2}\sum_{p,q=0}^{k_n-1}\sum_{i,j=1}^{\infty}\bar{K}^{ij}_T|\widehat{I}^{i+p}(T)|\langle M^n\rangle(\widehat{J}^{j+q})_T
\lesssim k_n^{-1}\sum_{p,q=0}^{k_n-1}\sum_{i,j=1}^{\infty}\bar{K}^{ij}_T|\widehat{I}^{i+p}(T)|1_{\{\widehat{T}^{j+q}\leq T\}}\\
\lesssim& \sum_{p=0}^{k_n-1}\sum_{i=1}^{\infty}|\widehat{I}^{i+p}(T)|\sum_{j=1}^{\infty}\bar{K}^{ij}_T
\lesssim k_n^2,
\end{align*}
and thus again we obtain $E[\Theta_n(T)]\lesssim k_n^2$. 

After all, for any $\eta>0$ we have
\begin{align*}
\sup_{n\in\mathbb{N}}P\left[w(b_n^{-1/4}k_n^{-2}\mathbb{II};\delta,T)\geq\eta\right]
\leq\sup_{n\in\mathbb{N}}P\left[k_n^{-1}\delta^{1/2}\Theta_n(T)^{1/2}\geq\eta\right]
\leq\eta^{-2}\delta\sup_{n\in\mathbb{N}}k_n^{-2}E[\Theta_n(T)]\to0
\end{align*}
as $\delta\to0$ if $M^n\in\{M^Y,\mathfrak{E}^Y,\mathfrak{M}^Y\}$, and thus we complete the proof of (c).
\end{proof}

\begin{lem}\label{final}
Suppose that $[\mathrm{A}2]$-$[\mathrm{A}5]$, $[\mathrm{N}]$ and $[\mathrm{C}3]$ hold. Then:
\begin{enumerate}[\normalfont (a)]
\item Suppose $[\mathrm{A}6]$ holds. $b_n^{-1/4}\{\mathbf{M}^n-\widetilde{\mathbf{M}}^n\}\xrightarrow{ucp}0$ as $n\to\infty$, where
\begin{align*}
\widetilde{\mathbf{M}}^n_t=\frac{1}{(\psi_{HY}k_n)^2}\sum_{i,j=0}^{\infty}\bar{K}^{ij}_t\left\{\bar{L}_{g,g}(M^X,M^Y)^{ij}_t+\bar{L}_{g',g'}(\underline{\mathfrak{U}}^X,\underline{\mathfrak{U}}^Y)^{ij}_t+\bar{L}_{g,g'}(M^X,\underline{\mathfrak{U}}^Y)^{ij}_t+\bar{L}_{g',g}(\underline{\mathfrak{U}}^X,M^Y)^{ij}_t\right\}
\end{align*}
and $\underline{\mathfrak{U}}^X=\mathfrak{E}^X+(k_n\sqrt{b_n})^{-1}\mathfrak{M}^X$, $\underline{\mathfrak{U}}^Y=\mathfrak{E}^Y+(k_n\sqrt{b_n})^{-1}\mathfrak{M}^Y$.
\item $b_n^{-1/4}\langle\widetilde{\mathbf{M}}^n,N\rangle_t\to^p0$ as $n\to\infty$ for any $N\in\{M^X,M^Y,\underline{M}^X,\underline{M}^Y\}$ and every $t$.
\end{enumerate}
\end{lem}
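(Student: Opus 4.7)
A localization procedure analogous to the ones used in Sections \ref{prooflembasic}--\ref{proofHYprop5.1} permits the replacement of [A5], [A6], [N], and [C1]--[C3] by their strengthened counterparts [SA5], [SA6], [SN], and [SC1]--[SC3] throughout, together with the uniform bounds $(\ref{SA4})$ and $(\ref{absmod2})$. The argument then splits neatly along the drift/martingale decomposition of the observed processes.

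For part (a), bilinearity of $\bar{L}_{\alpha,\beta}$ combined with the canonical decompositions $X=A^X+M^X$, $Y=A^Y+M^Y$, $\mathfrak{Z}^X=\mathfrak{A}^X+\mathfrak{M}^X$, $\mathfrak{Z}^Y=\mathfrak{A}^Y+\mathfrak{M}^Y$ (whence $\mathfrak{U}^X=\underline{\mathfrak{U}}^X+(k_n\sqrt{b_n})^{-1}\mathfrak{A}^X$, similarly for $Y$) expresses $\mathbf{M}^n-\widetilde{\mathbf{M}}^n$ as a finite sum of error terms of two kinds. \emph{Pure-drift terms} have the form $\frac{1}{k_n^2}\sum_{i,j}\bar{K}^{ij}_t\bar{F}_\alpha(\widehat{\mathcal{I}})^i_t\bar{G}_\beta(\widehat{\mathcal{J}})^j_t$ with $F,G$ of finite variation (integration by parts eliminates any bracket contribution); the bounds $|\bar{F}_\alpha(\widehat{\mathcal{I}})^i_t|\lesssim k_n\bar{r}_n$ from [SA5] and $\sum_{i,j}\bar{K}^{ij}_t\lesssim k_n/b_n$ from [C3] yield a total of order $O_p(b_n^{2\xi'-3/2})=o_p(b_n^{1/4})$ under $\xi'>9/10$, and the counterparts involving $\mathfrak{A}^X,\mathfrak{A}^Y$ are controlled at the same rate via the Schwarz inequality and [SA6]. \emph{Drift-martingale cross terms} of the form $\frac{1}{k_n^2}\sum_{i,j}\bar{K}^{ij}_-\bullet\bar{L}_{\alpha,\beta}(F,M)^{ij}$ split via the definition of $\bar{L}_{\alpha,\beta}$ into exactly the two expressions $\mathbb{I}$ and $\mathbb{II}$ of Lemma \ref{HYlem13.1and13.2}, whose hypotheses on $M^n$ are verified for every $M\in\{M^X,M^Y,\underline{M}^X,\underline{M}^Y,\mathfrak{E}^X,\mathfrak{E}^Y,\mathfrak{M}^X,\mathfrak{M}^Y\}$ by [SC1]--[SC3] and [SN]; parts (a) and (b) of that lemma then deliver the required $o_p(k_n^2b_n^{1/4})$ rate, with part (c) invoked for the supremum-in-$s$ control whenever the martingale factor sits on the $\widehat{\mathcal{J}}$-side.

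For part (b), Lemma \ref{HYlem3.1} ensures that each $\widetilde{\mathbf{M}}(k)^n$ is a locally square-integrable martingale, so the predictable covariation $\langle\widetilde{\mathbf{M}}^n,N\rangle$ is well defined. Using
\[
\langle\bar{L}_{\alpha,\beta}(V,W)^{ij},N\rangle=\bar{V}_\alpha(\widehat{\mathcal{I}})^i_-\bullet\langle\bar{W}_\beta(\widehat{\mathcal{J}})^j,N\rangle+\bar{W}_\beta(\widehat{\mathcal{J}})^j_-\bullet\langle\bar{V}_\alpha(\widehat{\mathcal{I}})^i,N\rangle
\]
together with $\langle\bar{V}_\alpha(\widehat{\mathcal{I}})^i,N\rangle=\sum_p\alpha^n_p\widehat{I}^{i+p}_-\bullet[V,N]$, the quantity $\langle\widetilde{\mathbf{M}}^n,N\rangle_t$ reduces to finitely many sums
\[
\frac{1}{k_n^2}\sum_{i,j,p}\alpha^n_p\int_0^t\bar{K}^{ij}_{s-}\bar{W}_\beta(\widehat{\mathcal{J}})^j_{s-}\widehat{I}^{i+p}_{s-}\,[V,N]'_s\,\mathrm{d}s
\]
and symmetric variants on the $\widehat{\mathcal{I}}$-side. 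Since $[V,N]$ vanishes whenever $V\in\{\mathfrak{E}^X,\mathfrak{E}^Y\}$ (by the product structure of $\mathcal{B}$) and otherwise admits a bounded density by [SC1] and [SA3], each such expression is precisely of the form $\mathbb{II}$ in Lemma \ref{HYlem13.1and13.2} with $A_s=s$ and $D^n=[V,N]'$; part (b) of that lemma then gives the required $o_p(k_n^2 b_n^{1/4})$ estimate, so that $\langle\widetilde{\mathbf{M}}^n,N\rangle_t=o_p(b_n^{1/4})$.

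The main obstacle is the bookkeeping in part (b) for the contributions to $\widetilde{\mathbf{M}}(2)^n,\widetilde{\mathbf{M}}(3)^n,\widetilde{\mathbf{M}}(4)^n$ coming from the factors $(k_n\sqrt{b_n})^{-1}\mathfrak{M}^X$ and $(k_n\sqrt{b_n})^{-1}\mathfrak{M}^Y$ inside $\underline{\mathfrak{U}}^X,\underline{\mathfrak{U}}^Y$: the $g'$-weighting inflates the naive $L^2$ bounds by factors of $k_n$, and one must verify that the resulting $k_n/(k_n\sqrt{b_n})=b_n^{-1/2}$ blowup is absorbed by the $\bar{r}_n$-smallness of $\langle\mathfrak{M}^X,N\rangle(\widehat{I}^{i+p})$, which itself rests on the boundedness of $[Z^X,N]'$ supplied by [SA3]. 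The tight constraint $\xi'>9/10$ leaves only a narrow margin, so the smoothness hypotheses [SA3] and [SN] must be exploited at full strength.
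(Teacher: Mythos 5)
Your treatment of part (a) is sound and actually more complete than the paper's one-line reduction to Lemma~\ref{HYlem13.1and13.2}: you correctly isolate the pure-drift terms, which that lemma (whose hypotheses require $M^n$ to be a martingale) does not cover, and bound them directly at rate $O_p(b_n^{2\xi'-3/2})=o_p(b_n^{1/4})$. For the drift--martingale cross terms, splitting $\bar{L}_{\alpha,\beta}$ into $\mathbb{I}$ and $\mathbb{II}$ and invoking parts (a) and (c) of Lemma~\ref{HYlem13.1and13.2} (and, where needed, the symmetric variants obtained by exchanging the roles of $\widehat{\mathcal{I}}$ and $\widehat{\mathcal{J}}$) is exactly the route the paper takes.

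Part (b), however, contains a genuine error in how you instantiate Lemma~\ref{HYlem13.1and13.2}. You propose to take $A_s=s$ and $D^n=[V,N]'$. That choice violates a hypothesis of the lemma: $D^n$ must be a c\`adl\`ag $\mathbf{G}^{(n)}$-adapted process, and $[V,N]'$ is only $\mathbf{F}^{(0)}$-adapted (and $\mathbf{H}^n$-adapted by [A3]); nothing in the standing assumptions makes it $\mathbf{G}^{(n)}$-adapted, since $\mathcal{G}^{(n)}_t=\mathcal{F}^{(0)}_{(t-b_n^{\xi-1/2})_+}$ is a strictly delayed filtration. Concretely, the proof of Lemma~\ref{HYlem13.1and13.2}(b) decomposes $\mathbb{II}$ into $\mathbb{II}_1+\mathbb{II}_2$; the estimate of $\mathbb{II}_1$ relies on Lemma~\ref{HYlem12.3} to conclude that $\bar{K}^{ij}_s\widehat{I}^{i+p}_sD^n_s$ is $\mathcal{F}_{R^\wedge(i,j)}$-measurable, which needs $D^n$ to be $\mathbf{G}^{(n)}$-adapted. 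With your assignment this step fails, and the H\"older estimate $(\ref{SA5})$ used to control $\mathbb{II}_2$ is rendered vacuous (since $A'\equiv1$), so the short-range dependence of $[V,N]'$ is no longer exploited at all. The correct instantiation is the one the paper uses: take $A=\langle V,N\rangle$, so that $A'=[V,N]'$ and $(\ref{SA5})$ holds by [SA3]; and take $D^n\in\{1,-\mathfrak{I}_-,-\mathfrak{J}_-\}$, which are $\mathbf{G}^{(n)}$-adapted under [A2] (after the localization that makes $\check{S}^k,\check{T}^k$ into $\mathbf{G}^{(n)}$-stopping times). With that assignment every one of the eight summands in the displayed expansion of $\langle\widetilde{\mathbf{M}}^n,N\rangle$ falls under Lemma~\ref{HYlem13.1and13.2}(b) (or its $\widehat{\mathcal{I}}$--$\widehat{\mathcal{J}}$-swapped counterpart), and the conclusion follows. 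Your closing worry about a $b_n^{-1/2}$ blowup from the factors $(k_n\sqrt{b_n})^{-1}\mathfrak{M}^X$, $(k_n\sqrt{b_n})^{-1}\mathfrak{M}^Y$ is also unfounded: $(k_n\sqrt{b_n})^{-1}\to\theta^{-1}$ by $(\ref{window})$, so the prefactor is asymptotically constant and the bound $\langle(k_n\sqrt{b_n})^{-1}\mathfrak{M}^X\rangle(\widehat{I}^p)_t\lesssim\bar{r}_n$ needed for $(\ref{nagasa2})$ holds without any borrowing from elsewhere.
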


\begin{proof}
By a localization procedure, we can assume that [SA3]-[SA6], [SN] and [SC3] instead of [A3]-[A6], [N] and [C3] respectively. Then, (a) immediately follows from Lemma \ref{HYlem13.1and13.2}. On the other hand, for $N\in\{M^X,M^Y,\underline{M}^X,\underline{M}^Y\}$ we have
\begin{align*}
&\langle\widetilde{\mathbf{M}}^n,N\rangle_t\\
=&\frac{1}{(\psi_{HY}k_n)^2}\sum_{i,j=0}^{\infty}\bigg(\bar{K}^{ij}_-\bullet\left\{\bar{M}^X_g(\widehat{\mathcal{I}})^i_-\bullet\overline{\langle M^Y,N\rangle}_g(\widehat{\mathcal{J}})^j\right\}_t+\bar{K}^{ij}_-\bullet\left\{\bar{M}^Y_g(\widehat{\mathcal{J}})^j_-\bullet\overline{\langle M^X,N\rangle}_g(\widehat{\mathcal{I}})^i\right\}_t\\
&\hphantom{\frac{1}{(\psi_{HY}k_n)^2}\sum_{i,j=0}^{\infty}(}+\bar{K}^{ij}_-\bullet\left\{\bar{\underline{\mathfrak{U}}}^Y_{g'}(\widehat{\mathcal{J}})^j_-\bullet\overline{\langle M^X,N\rangle}_g(\widehat{\mathcal{I}})^i\right\}_t+\bar{K}^{ij}_-\bullet\left\{\bar{M}^X_{g}(\widehat{\mathcal{I}})^i_-\bullet\overline{\langle \mathfrak{M}^Y,M^X\rangle}_{g'}(\widehat{\mathcal{J}})^j\right\}_t\\
&\hphantom{\frac{1}{(\psi_{HY}k_n)^2}\sum_{i,j=0}^{\infty}(}+\bar{K}^{ij}_-\bullet\left\{\bar{M}^Y_{g}(\widehat{\mathcal{J}})^j_-\bullet\overline{\langle \mathfrak{M}^X,N\rangle}_{g'}(\widehat{\mathcal{I}})^i\right\}_t+\bar{K}^{ij}_-\bullet\left\{\bar{\underline{\mathfrak{U}}}^X_{g'}(\widehat{\mathcal{I}})^i_-\bullet\overline{\langle M^Y,N\rangle}_g(\widehat{\mathcal{J}})^j\right\}_t\\
&\hphantom{\frac{1}{(\psi_{HY}k_n)^2}\sum_{i,j=0}^{\infty}(}+\bar{K}^{ij}_-\bullet\left\{\bar{\underline{\mathfrak{U}}}^Y_{g'}(\widehat{\mathcal{J}})^j_-\bullet\overline{\langle \mathfrak{M}^X,N\rangle}_{g'}(\widehat{\mathcal{I}})^i\right\}_t+\bar{K}^{ij}_-\bullet\left\{\bar{\underline{\mathfrak{U}}}^X_{g'}(\widehat{\mathcal{I}})^i_-\bullet\overline{\langle \mathfrak{M}^Y,N\rangle}_g(\widehat{\mathcal{J}})^j\right\}_t\bigg),
\end{align*}
hence Lemma \ref{HYlem13.1and13.2}(b) yields $\langle\widetilde{\mathbf{M}}^n,N\rangle_t=o_p(b_n^{1/4})$ because $(\ref{SA5})$ for $A=\langle L,M\rangle$ ($L,M=M^X,M^Y,\underline{M}^X,\underline{M}^Y$) holds by [SA3] and the processes $\mathfrak{I}$ and $\mathfrak{J}$ are $\mathbf{G}^{(n)}$-adapted by [A2].
\end{proof}

\begin{proof}[\upshape\bf Proof of Theorem \ref{HYthm6.1}]
Note that [A1$'$] implies [C3] due to Lemma \ref{HJYlem2.2}, in all cases (a), (b) and (c), [A2]-[A6], [N] and [C3] hold; hence Lemma \ref{final}(a) allows us to consider $\widetilde{\mathbf{M}}^n$ instead of $\mathbf{M}^n$. Moreover, [B2] holds by Proposition \ref{HYprop5.1},  [C1] hold by [A3], and for the case (b) [A1](i)-(iii) and [W] with $w$ given by $(\ref{avar})$ hold by Lemma \ref{HYlem4.1}(a)-(g), while for the case (c) [A1] and [W] with $w$ given by $(\ref{avarend})$ hold by Lemma \ref{HYlem4.1}. Therefore, we complete the proof due to Lemma \ref{final}(b) and Proposition \ref{HYprop3.2}.
\end{proof}

\begin{proof}[\upshape\bf Proof of Theorem \ref{HYthm6.2}]
Note that we do not need the condition [A6] in order to verify the condition [B1] (see the proof of Lemma \ref{HYlem13.1and13.2}(b) and Lemma \ref{final}), an argument similar to the proof of Theorem \ref{HYthm6.1} completes the proof. 
\end{proof}

\section{Proof of Theorem \ref{HKthm2.3}}\label{proofHKthm2.3}

By a localization procedure, we may assume that [SC1]-[SC3], $(\ref{SA4})$ and $(\ref{absmod2})$ hold.

According to Lemma \ref{lembasic}, it is sufficient to prove $\mathbf{M}^n\xrightarrow{ucp}0$. Therefore, it is sufficient to show that
\begin{equation}\label{aimConsistent}
\sup_{0\leq s\leq t}\left|\sum_{i,j}\bar{K}^{ij}_s\left\{\bar{V}_\alpha(\widehat{\mathcal{I}})^i_-\bullet\bar{W}_\beta(\widehat{\mathcal{J}})^j\right\}_s\right|=o_p(k_n^2),\qquad
\sup_{0\leq s\leq t}\left|\sum_{i,j}\bar{K}^{ij}_s\left\{\bar{W}_\beta(\widehat{\mathcal{J}})^j_-\bullet\bar{V}_\alpha(\widehat{\mathcal{I}})^i\right\}_s\right|=o_p(k_n^2)
\end{equation}
as $n\to\infty$ for any $(V,\alpha)\in\{(M^X,g),(\mathfrak{E}^X,g'),(\mathfrak{M}^X,g'),(A^X,g),(\mathfrak{A}^X,g')\}$, $(W,\beta)\in\{(M^Y,g),(\mathfrak{E}^Y,g'), (\mathfrak{M}^Y,g')$, $(A^Y,g),(\mathfrak{A}^Y,g')\}$ and $t>0$.

Consider the first equation of $(\ref{aimConsistent})$. Let $\mathbb{H}_s=\sum_{i,j}\bar{K}^{ij}_s\bar{V}_\alpha(\widehat{\mathcal{I}})^i_-\bullet\bar{W}_\beta(\widehat{\mathcal{J}})^j_s.$ First we assume that $(W,\beta)\in\{(M^Y,g),(\mathfrak{E}^Y,g'),(\mathfrak{M}^Y,g')\}$. By an argument similar to the proof of Lemma \ref{HYlem3.1}, we can rewrite $\mathbb{H}_s$ as $\mathbb{H}_s=\sum_{i,j}\bar{K}^{ij}_-\bullet\{\bar{V}_\alpha(\widehat{\mathcal{I}})^i_-\bullet\bar{W}_\beta(\widehat{\mathcal{J}})^j\}_s.$, and thus $\mathbb{H}$ is a locally square-integrable martingale. Therefore, it is sufficient to prove $\langle\mathbb{H}\rangle_t=o_p(k_n^4)$ as $n\to\infty$ for any $t>0$ due to the Lenglart inequality. Since $[\bar{W}_\beta(\widehat{\mathcal{J}})^j,\bar{W}_\beta(\widehat{\mathcal{J}})^{j'}]_t=0$ if $|j-j'|>k_n$, we have
\begin{align*}
[\mathbb{H}]_s=\sum_{i,j,i',j':|j-j'|\leq k_n}\bar{K}^{ij}_-\bar{K}^{i'j'}_-\bar{V}_\alpha(\widehat{\mathcal{I}})^i_-\bar{V}_\alpha(\widehat{\mathcal{I}})^{i'}_-\bullet[\bar{W}_\beta(\widehat{\mathcal{J}})^j,\bar{W}_\beta(\widehat{\mathcal{J}})^{j'}]_s.
\end{align*}
Hence, $(\ref{absmod2})$, $(\ref{noiseint})$, $(\ref{noiseest2})$, the Schwarz inequality and the fact that $\sum_i\bar{K}^{ij}\lesssim k_n$ for every $j$ yield
\begin{align*}
E_0\left[[\mathbb{H}]_s\right]\lesssim k_n^2\cdot k_n\bar{r}_n|\log b_n|\sum_{j,j':|j-j'|\leq k_n}[\bar{W}_\beta(\widehat{\mathcal{J}})^j,\bar{W}_\beta(\widehat{\mathcal{J}})^{j'}]_s,
\end{align*}
and thus [SC1]-[SC3], the Kunita-Watanabe inequality and the inequality of arithmetic and geometric means imply that $E_0\left[[\mathbb{H}]_s\right]\lesssim k_n^4\cdot k_n\bar{r}_n|\log b_n|=o(k_n^4)$ because $\xi'>1/2$. Therefore, we obtain the desired result due to Proposition 4.50 in \cite{JS}. 

Next we assume that $(W,\beta)\in\{(A^Y,g),(\mathfrak{A}^Y,g')\}$. Then, [SC1], $(\ref{absmod2})$, $(\ref{noiseest2})$ and the Schwarz inequality yield
\begin{align*}
E_0\left[\sup_{0\leq s\leq t}|\mathbb{H}_s|\right]\lesssim\sqrt{k_n\bar{r}_n|\log b_n|}\sum_{i,j}\bar{K}^{ij}_t\sum_{q=0}^{k_n-1}|\widehat{J}^{j+q}(t)|,
\end{align*}
hence the fact that $\sum_i\bar{K}^{ij}\lesssim k_n$ for every $j$ implies $E_0\left[\sup_{0\leq s\leq t}|\mathbb{H}_s|\right]\lesssim\sqrt{k_n\bar{r}_n|\log b_n|}\cdot k_n^2=o(k_n^2)$ because $\xi'>1/2$. 

Consequently, the first equation of $(\ref{aimConsistent})$ holds. By symmetry we also obtain the second equation of $(\ref{aimConsistent})$, and thus we complete the proof.\hfill $\Box$



\section*{Acknowledgements}

\addcontentsline{toc}{section}{Acknowledgements}

The author would like to express his gratitude to Professor Nakahiro Yoshida, who took care of his research and had valuable and helpful discussions.

\addcontentsline{toc}{section}{References}

 
\end{document}